\setlist{nosep} %vertical spacing of items in various list environments
\newtheorem{Thm}{Theorem}[section]
\newtheorem{lem}[Thm]{Lemma}
\newtheorem{Prop}[Thm]{Proposition}
\newtheorem{Cor}[Thm]{Corollary}
\newtheorem{Rem}[Thm]{Remark}
\newtheorem{ex}[Thm]{Example}
\newtheorem*{Thmn}{Main Theorem}
\theoremstyle{definition}
\newtheorem{Def}[Thm]{Definition}
\def\BOXSYMBOL{\RIfM@\bgroup\else$\bgroup\aftergroup$\fi
  \vcenter{\hrule\hbox{\vrule height.85em\kern.6em\vrule}\hrule}\egroup}
\newcommand{\BOX}{%
  \ifmmode\else\leavevmode\unskip\penalty9999\hbox{}\nobreak\hfill\fi
  \quad\hbox{\BOXSYMBOL}}
\newcommand{\eps}{\varepsilon}
\newcommand{\bbA}{\mathbb{A}}
\newcommand{\bbC}{\mathbb{C}}
\newcommand{\bbF}{\mathbb{F}}
\newcommand{\bbG}{\mathbb{G}}
\newcommand{\bbP}{\mathbb{P}}
\newcommand{\bbQ}{\mathbb{Q}}
\newcommand{\bbR}{\mathbb{R}}
\newcommand{\bbZ}{\mathbb{Z}}
\newcommand{\calC}{\mathcal{C}}
\newcommand{\calG}{\mathcal{G}}
\newcommand{\calM}{\mathcal{M}}
\newcommand{\calN}{\mathcal{N}}
\newcommand{\calO}{\mathcal{O}}
\newcommand{\calQ}{\mathcal{Q}}
\newcommand{\calS}{\mathcal{S}}
\newcommand{\calX}{\mathcal{X}}
\newcommand{\calY}{\mathcal{Y}}
\newcommand{\calZ}{\mathcal{Z}}
\newcommand{\frakF}{\mathfrak{F}}
\newcommand{\frakg}{\mathfrak{g}}
\newcommand{\frakI}{\mathfrak{I}}
\newcommand{\frakm}{\mathfrak{m}}
\newcommand{\frakP}{\mathfrak{P}}
\newcommand{\frakp}{\mathfrak{p}}
\newcommand{\frakS}{\mathfrak{S}}
\newcommand{\scrA}{\mathscr{A}}
\newcommand{\scrL}{\mathscr{L}}
\newcommand{\scrX}{\mathscr{X}}
\newcommand{\scrY}{\mathscr{Y}}
\newcommand{\scrZ}{\mathscr{Z}}
\newcommand{\Ad}{\mathop{\mathrm{Ad}}\nolimits}
\newcommand{\Arf}{\mathrm{Arf}}
\newcommand{\Art}{\mathrm{Art}}
\newcommand{\Aut}{\mathrm{Aut}}
\newcommand{\Frob}{\mathrm{Frob}}
\newcommand{\GL}{\mathrm{GL}}
\newcommand{\Gal}{\mathop{\mathrm{Gal}}\nolimits}
\newcommand{\Ker}{\mathop{\mathrm{Ker}}\nolimits}
\newcommand{\Lie}{\mathop{\mathrm{Lie}}\nolimits}
\newcommand{\diag}{\mathop{\mathrm{diag}}\nolimits}
\newcommand{\tr}{\mathop{\mathrm{tr}}\nolimits}
\newcommand{\Tr}{\mathop{\mathrm{Tr}}\nolimits}
\newcommand{\Trd}{\mathop{\mathrm{Trd}}\nolimits}
\newcommand{\Nrd}{\mathop{\mathrm{Nrd}}\nolimits}
\newcommand{\id}{\mathop{\mathrm{id}}\nolimits}
\newcommand{\cInd}{\mathop{\mathrm{c\mathchar`-Ind}}\nolimits}
\newcommand{\Ind}{\mathop{\mathrm{Ind}}\nolimits}
\newcommand{\rad}{\mathop{\mathrm{rad}}\nolimits}
\newcommand{\sgn}{\mathop{\mathrm{sgn}}\nolimits}
\newcommand{\LL}{\mathop{\mathrm{LL}}\nolimits}
\newcommand{\LJ}{\mathop{\mathrm{LJ}}\nolimits}
\newcommand{\Hom}{\mathop{\mathrm{Hom}}\nolimits}
\newcommand{\End}{\mathop{\mathrm{End}}\nolimits}
\newcommand{\Cont}{\mathop{\mathrm{Cont}}\nolimits}
\newcommand{\Spa}{\mathop{\mathrm{Spa}}\nolimits}
\newcommand{\Spec}{\mathop{\mathrm{Spec}}\nolimits}
\newcommand{\Spf}{\mathop{\mathrm{Spf}}\nolimits}
\newcommand{\Stab}{\mathop{\mathrm{Stab}}\nolimits}
\newcommand{\Nil}{\mathop{\mathrm{Nil}}\nolimits}
\newcommand{\Adic}{\mathop{\mathrm{Adic}}\nolimits}
\newcommand{\ba}{\bm{a}}
\newcommand{\bc}{\bm{c}}
\newcommand{\bd}{\bm{d}}
\newcommand{\bg}{\bm{g}}
\newcommand{\br}{\bm{r}}
\newcommand{\bt}{\bm{t}}
\newcommand{\bT}{\bm{T}}
\newcommand{\bu}{\bm{u}}
\newcommand{\bU}{\bm{U}}
\newcommand{\bw}{\bm{w}}
\newcommand{\bx}{\bm{x}}
\newcommand{\bX}{\bm{X}}
\newcommand{\by}{\bm{y}}
\newcommand{\bY}{\bm{Y}}
\newcommand{\bz}{\bm{z}}
\newcommand{\bZ}{\bm{Z}}
\newcommand{\bzeta}{\bm{\zeta}}
\newcommand{\bxi}{\bm{\xi}}
\newcommand{\lv}{\left\lvert}
\newcommand{\rv}{\right\rvert}
\newcommand{\Sets}{{\mathrm{Sets}}}
\newcommand{\LTp}{\calM _{H_0, \infty, \overline{\eta}}^{\text{ad}}}
\newcommand{\LTpone}{\calM _{\wedge H_0, \infty, \overline{\eta}}^{\text{ad}}}
\begin{document}
\title{Affinoids in the Lubin-Tate perfectoid space and special cases of the local Langlands correspondence}
\author{Kazuki Tokimoto}
\date{}
\maketitle

\footnotetext{2010 \textit{Mathematics Subject Classification}. 
 Primary: 11G25; Secondary: 11F80.} 

\begin{abstract}
Following Weinstein, Boyarchenko-Weinstein and Imai-Tsushima,
we construct a family of affinoids in the Lubin-Tate perfectoid space
and their formal models 
such that the cohomology of the reduction of each formal model realizes 
the local Langlands correspondence and the local Jacquet-Langlands correspondence
for certain representations.
In the terminology of the essentially tame local Langlands correspondence,
the representations treated here are characterized 
as being parametrized by 
minimal admissible pairs 
in which the field extensions are totally ramified.
\end{abstract}

\tableofcontents
\section*{Introduction}
 Let $K$ be a non-archimedean local field
 with finite residue field $k$.
 We write $p$ for the characteristic of $k$
 and $q$ the cardinality.
 Let $W_K$ be the Weil group of $K$
 and $D$ the central division algebra over $K$
 of invariant $1/n$.
 Denote by $\calO _K \subset K$ the valuation ring
 and by $\frakp \subset \calO _K$ the maximal ideal.
 We fix an algebraic closure $\overline{k}$ of $k$.
 Let $n\geq 1$ be an integer.
 Then 
 the Lubin-Tate spaces
 are defined to be certain deformation spaces
 of a one-dimensional formal $\calO _K$-module
 over $\overline{k}$
 of height $n$
 with level structures.
 The Lubin-Tate spaces 
 naturally form a projective system,
 called the Lubin-Tate tower,
 and
 the non-abelian Lubin-Tate theory asserts that 
 the $\ell$-adic cohomology of the Lubin-Tate tower,
 which admits a natural action of
 a large subgroup of
 $\GL _n(K)\times D^{\times}\times W_K$,
 realizes the local Langlands correspondence for $\GL _n(K)$
 and 
 the local Jacquet-Langlands correspondence
 simultaneously.
 However, as the proofs of this fact (\cite{BoyMDr}, \cite{HTsimSh})
 make heavy use of the theory of automorphic representations
 and the global geometry,
 the geometry of the Lubin-Tate spaces 
 and its relation to representations
 are not yet fully understood.
 
 Among the studies on the geometry of the Lubin-Tate spaces
 is work \cite{YoLTv} of Yoshida.
 There he constructed a semistable model 
 of the Lubin-Tate space of level $\frakp$
 and proved that 
 an affine open subscheme of the reduction
 is isomorphic to a Deligne-Lusztig variety for $\GL _n(k)$.
 The appearance of the Deligne-Lusztig variety
 reflects the fact that 
 some irreducible supercuspidal representations of $\GL _n(K)$
 can be constructed from irreducible cuspidal representations
 of $\GL _n(k)$.
 Note that this open subscheme can also be
 obtained as the reduction of an affinoid subspace
 in the Lubin-Tate space
 (see \cite{WeGred} for details in equal-characteristic).

 More recently, 
 Weinstein showed in \cite{WeSemi}
 that a certain limit space 
 of the Lubin-Tate tower
 makes sense as a perfectoid space.
 While it is no longer an ordinary finite-type analytic space,
 the Lubin-Tate perfectoid space has a simpler geometry;
 with a coordinate not available on the individual Lubin-Tate spaces,
 the defining equation is simpler 
 and the group actions can be made more explicit.
 Taking advantage of these properties,
 Weinstein \cite{WeSemi}, Boyarchenko-Weinstein \cite{BWMax}
 and Imai-Tsushima \cite{ITepitame}, \cite{ITsimpwild}
 constructed families of affinoid subspaces 
 in the Lubin-Tate perfectoid space
 and their formal models
 such that the cohomology of the reduction 
 of each formal model realizes the local Langlands 
 and Jacquet-Langlands correspondences
 for some representations. 
 The aim of this paper is to construct
 such a family of affinoids
 related to certain other representations.
 
 Let $H_0$ be the formal $\calO _K$-module over $\overline{k}$ of height $n$.
 Let $C$ be the completion of an algebraic closure of $K$.
 We denote the Lubin-Tate perfectoid space by $\calM _{H_0, \infty, \overline{\eta}}^{\text{ad}}$,
 which is an adic space over $C$.
 Let $\ell \neq p$ be a prime number.
 We fix an isomorphism $\overline{\bbQ} _{\ell}\simeq \bbC$.
 Set $G=\GL _n(K)\times D^{\times}\times W_K$.
 Here is our main theorem (recall that we make no assumption on the characteristic of $K$):
 \begin{Thmn}
  Suppose that $p$ does not divide $n$.
  Let $\nu >0$ be an integer which is coprime to $n$.
  Let $L/K$ be a totally ramified extension of degree $n$.
  Then there exist an affinoid \footnote{
  In this paper
  by an \emph{affinoid}
  we mean an open affinoid adic subspace.
  Also we often refer to the special fiber of a formal scheme $\scrA$ over $\calO _C$
  simply as the \emph{reduction} of $\scrA$.
  } 
  $\calZ _{\nu}\subset \calM _{H_0, \infty, \overline{\eta}}^{\emph{ad}}$
  and a formal model $\scrZ _{\nu}$ of $\calZ _{\nu}$
  such that the following hold.
  \begin{enumerate}[(1)]
   \item The stabilizer $\Stab _{\nu}$
   of $\calZ _{\nu}$ naturally acts on the formal model $\scrZ _{\nu}$
   and hence on the reduction $\overline{\scrZ} _{\nu}$.
   \item \label{item:mainthm}
           For an irreducible smooth representation $\pi$ of $\GL _n(K)$,
           we have
           \[
           \Hom _{\GL _n(K)}\left( \cInd _{\Stab _{\nu}}^{G} H_c^{n-1}
                                                                    \left( \overline{\scrZ} _{\nu}, \overline{\bbQ} _{\ell} \right) ((n-1)/2), 
                                                                    \pi \right)\neq 0
           \]
           if and only if the image $\tau$ of $\pi$ under the local Langlands correspondence
           is a character twist of an $n$-dimensional irreducible smooth representation
           of the form $\Ind _{L/K}\xi$
           for a character $\xi$ of $L^{\times}$
           which is non-trivial on $U_L^{\nu}$, but trivial on $U_L^{\nu +1}$.
           Moreover, if the above space is non-zero,
           it is isomorphic to $\rho \boxtimes \tau$
           as a representation of $D^{\times}\times W_K$,
           where $\rho$ is the image of $\pi$ under the local Jacquet-Langlands correspondence.
  \end{enumerate}
 \end{Thmn}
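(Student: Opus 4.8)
\medskip
\noindent\textbf{Sketch of the approach.}
The plan is to follow the strategy of Weinstein \cite{WeSemi}, Boyarchenko--Weinstein \cite{BWMax} and Imai--Tsushima \cite{ITepitame}, specialized to the totally ramified, essentially tame situation. Since $p\nmid n$, the extension $L/K$ is tamely ramified, and we fix a uniformizer $\pi_L$ of $L$ with $\pi_L^n$ a uniformizer $\pi$ of $K$. The special one-dimensional formal $\calO_K$-module of height $n$ over $\overline{k}$ carries an action of $\calO_L$ (because $\calO_L$ embeds into its endomorphism ring $\calO_D$), which pins down a ``CM point'' $x_0$ in the Lubin--Tate perfectoid space $\LTp$ together with a diagonal embedding $L^{\times}\hookrightarrow \GL_n(K)\times D^{\times}$. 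Using the explicit equal-characteristic description of $\LTp$ inside $\univcovn$ (the quasi-logarithm coordinates $X_1,\dots,X_n$ cut out by the determinant condition), one defines $\calZ_{\nu}$ as an affinoid subspace adapted to this $\calO_L$-structure and centered at $x_0$, given by explicit inequalities $|X_i-X_{i,0}|\le |\pi|^{c_i(\nu)}$ on the deviation of the coordinates from their values at $x_0$, with radii $c_i(\nu)$ chosen in terms of $\nu$ and $n$ so that the naive integral model has good reduction.

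First I would take $\scrZ_{\nu}$ to be the $\pi$-adic formal model attached to the ring of power-bounded functions of (the normalization of) the affinoid algebra of $\calZ_{\nu}$, and identify the reduction $\overline{\scrZ}_{\nu}$ explicitly. The key geometric point is that, for the correct radii, rescaling the deviation variables degenerates the defining equation to an Artin--Schreier-type equation whose inhomogeneous term is built from the norm and trace forms of $L/K$ — a higher-level analogue of the ``maximal'' curves of \cite{BWMax} — so that $\overline{\scrZ}_{\nu}$ is a smooth affine variety whose cohomology is that of the product of such an Artin--Schreier variety with a split torus coming from the reduced-norm/determinant directions. Part (1) is then immediate: $\Stab_{\nu}$ is the subgroup of $G$ generated by the diagonal $L^{\times}$, compact-open subgroups of $\GL_n(K)$ and $D^{\times}$ matched to the filtration index $\nu$, and the inertia contribution through the Weil-group action on $\overline{\eta}$, and by construction it preserves $\calZ_{\nu}$, hence acts on $\overline{\scrZ}_{\nu}$.

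Next I would compute $H_c^{\,n-1}(\overline{\scrZ}_{\nu},\overline{\bbQ}_{\ell})$ as a $\Stab_{\nu}$-representation. As $\overline{\scrZ}_{\nu}$ is, up to a toric factor, an Artin--Schreier variety on which a finite quotient of $\Stab_{\nu}$ acts, its cohomology is computed by Deligne--Lusztig induction together with Gauss-sum evaluations, and decomposes as a direct sum over the characters $\xi$ of $L^{\times}$ that are trivial on $U_L^{\nu+1}$ but not on $U_L^{\nu}$; the $\xi$-part is, on the $D^{\times}\times W_K$-factor, isomorphic to $\rho_{\xi}\boxtimes\tau_{\xi}$, where $\tau_{\xi}$ is a fixed character twist of $\Ind_{L/K}\xi$ and $\rho_{\xi}$ is the local Jacquet--Langlands transfer of the supercuspidal $\pi_{\xi}$ corresponding to $\tau_{\xi}$. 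Inducing via $\cInd_{\Stab_{\nu}}^{G}$ and comparing with the realization of supercuspidal representations of $\GL_n(K)$ as compact inductions of Bushnell--Kutzko types, together with Bushnell--Henniart's explicit essentially tame local Langlands correspondence — here the admissible pair $(L/K,\xi)$ is minimal because $L/K$ is totally ramified and $\gcd(\nu,n)=1$ — one then reads off both the nonvanishing criterion for $\Hom_{\GL_n(K)}(\,\cdot\,,\pi)$ in (2) and the identification of the nonzero Hom-space with $\rho\boxtimes\tau$; the twist $((1-n)/2)$ is precisely the one appearing in the usual normalization of non-abelian Lubin--Tate theory.

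The step I expect to be the main obstacle is the exact matching in the last two paragraphs, for two reasons. First, one must choose the radii $c_i(\nu)$ so that $\overline{\scrZ}_{\nu}$ is genuinely smooth and of the predicted Artin--Schreier shape for every $\nu$ coprime to $n$, not only for $\nu=1$ (the simple supercuspidal case): for larger $\nu$ there is a ``wild'' part of the cohomology carrying the conductor that must be separated cleanly from a ``tame'' part. Second, getting the character twists exactly right is delicate — tracing the $W_K$-action on $\overline{\eta}$ through the period map and reconciling it with the normalization of local class field theory, and keeping track of the rectifier character intervening in the essentially tame correspondence, so that the cohomology produces $\Ind_{L/K}\xi$ up to exactly the character twist allowed in the statement and no other. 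The remaining ingredients — the construction of $\calZ_{\nu}$ and $\scrZ_{\nu}$, the determination of $\Stab_{\nu}$, and the Gauss-sum computation of the cohomology — are, modulo bookkeeping, routine within the established framework.
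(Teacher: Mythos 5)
Your overall route (CM point, affinoid around it, reduction, cohomology, then matching via Bushnell--Henniart) is indeed the paper's route, but as written the sketch has concrete gaps that would make key steps fail. First, the affinoid cannot be cut out by conditions of the form $|X_i-X_{i,0}|\le|\pi|^{c_i(\nu)}$ alone: to see the jump at $\nu$ one must impose, in addition to the bounds $|Y_i|\le|\xi_i|^{q^{\mu}(q+1)/2}$ (resp. $|\xi_i|^{q^{\mu}}$) on the coordinate deviations, a strictly deeper bound $|Z|\le|\xi_1|^{q^{\nu}}$ on the auxiliary variable $Z=\sum_i Y_i^{q^{i-1}}$, which points in the ``determinant direction''. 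Without this extra coordinate the reduction does not acquire the Artin--Schreier-type covering structure at conductor $\nu$ at all, and the delicate estimates on the determinant map $\delta$ under this change of coordinates (the content of the lemmas in Subsection \ref{subsec:lemsondet}) are exactly what your sketch is silently assuming. Second, your description of the reduction as ``up to a toric factor, an Artin--Schreier variety'' with cohomology decomposing into one-dimensional $\xi$-isotypic pieces is wrong for even $\nu$: there the defining equation is not the pullback of the Artin--Schreier cover by a quadratic form, the relevant finite quotients $S_{1,\nu}$, $S_{2,\nu}$ of the stabilizer are non-abelian Heisenberg groups, and the middle cohomology is a sum $\bigoplus_{\psi}\rho_{1,\psi}\boxtimes\rho_{2,\psi}$ of Heisenberg representations of dimension $q^{(n-1)/2}$, not of characters. (Also the extra factor is the profinite set of connected components $N_{L/K}U_L^{\nu}$, not a split torus.)

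Third, the cohomology of $\overline{\scrZ}_{\nu}$ does not decompose ``over the characters $\xi$ of $L^{\times}$ trivial on $U_L^{\nu+1}$ but not on $U_L^{\nu}$'' with $\xi$-part $\rho_{\xi}\boxtimes\tau_{\xi}$: the stabilizer $\Stab_{\nu}=(U_{\frakI}^{(\nu)}\times U_D^{(\nu)}\times\{1\})\cdot\calS$ (with $U_{\frakI}^{(\nu)}=1+\frakp_L^{\nu}+\frakP_{C_1}^{\lfloor(\nu+1)/2\rfloor}$, not a standard congruence subgroup) is too small to exhibit $\Ind_{L/K}\xi$ or the full $\rho_{\xi}$ on the reduction itself. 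The identification in (2) only comes after compact induction to $G$, via a Mackey/Frobenius-reciprocity reduction to $\Hom_{U_{\frakI}^{(\nu)}}(\Pi_{\nu},\pi)$, a type-theoretic argument (Carayol's classification of representations containing $\psi_{\zeta\varphi_L^{-\nu}}$, multiplicity one) showing that nonvanishing forces $\pi\simeq\pi_{\xi}$ for a minimal pair $(L/K,\xi)$ with jump $\nu$, and then explicit trace computations pinning down the $D^{\times}$- and $W_L$-actions: the symplectic sign $\epsilon_{\xi}$ computed as a Jacobi symbol via orthogonal/symplectic representations of $\bbZ/n\bbZ$, the Deligne--Lusztig fixed point formula for the cyclic action, and the comparison with the rectifiers ${}_K\mu_{\xi}$, ${}_D\iota_{\xi}$ and Langlands constants. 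You flag the rectifier bookkeeping as the main obstacle, which is fair, but the even-$\nu$ Heisenberg structure, the two-scale definition of the affinoid, and the indirect Hom-space argument are equally essential and are missing from the proposal as it stands.
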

 Thus the cohomology of $\overline{\scrZ} _{\nu}$
 realizes the local Langlands correspondence and 
 the local Jacquet-Langlands correspondence
 for representations $\pi$ characterized by the condition 
 in \ref{item:mainthm}.
 
 Although Main Theorem simply asserts the existence,
 our proof is in fact very explicit;
 we construct
 $\calZ _{\nu}$, $\scrZ _{\nu}$,
 and compute $\overline{\scrZ} _{\nu}$, $\Stab _{\nu}$ and
 the cohomology.
 See, for instance,
 Subsection \ref{subsec:defofaf} for the definition of $\calZ _{\nu}$
 and Theorem \ref{Thm:reduction} for the description of $\overline{\scrZ} _{\nu}$.
 As will be further explained later in this introduction,
 the reductions $\overline{\scrZ} _{\nu}$ are of quite different flavor 
 from any of those appearing in the preceding work \cite{BWMax}, \cite{WeSemi}, \cite{ITepitame}, \cite{ITsimpwild} 
 if $\nu$ is even,
 and consequently the computation of their cohomology is based on new ideas.
 
 We remark that
 if $K$ is of equal-characteristic Main Theorem specializes to the main theorem of the previous version of this paper. 
 The computation of the reductions and the stabilizers (Section \ref{sec:affandred}) is more elaborate in this version, 
 whereas in fact the description of the reduction (Theorem \ref{Thm:reduction}), 
 and hence the arguments concerning the cohomology (Section \ref{sec:cohofred}) 
 and the representations (Section \ref{sec:real}) remain essentially the same.
 
 Main Theorem does not assert anything about
 the relation 
 between the cohomology of each reduction $\overline{\scrZ} _{\nu}$
 and that of the Lubin-Tate tower.
 Thus our result alone does not yield direct consequences
 for the non-abelian Lubin-Tate theory.
 We find it interesting nonetheless.
 Moreover, after the previous version of this paper appeared,
 Mieda \cite{MiGeomLLC} obtained a useful sufficient condition for the cohomology of the reduction of such a formal model 
 to contribute to the cohomology of the Lubin-Tate tower (see \cite{MiGeomLLC} for the precise formulation and statement)
 and verified that it holds in our situation.
 The same remarks apply to other preceding results \cite{BWMax}, \cite{WeSemi}, \cite{ITepitame}, \cite{ITsimpwild}.

 \vspace{\baselineskip} 
 Let us compare Main Theorem
 with the preceding results
 by listing the class of representations realized in the cohomology
 (cf.\ Remark \ref{Rem:minimalpairs} \ref{item:precedingresults}).
 In \cite{BWMax}, \cite{ITepitame}, \cite{WeSemi} as well as this paper,
 an irreducible smooth representation $\pi$ of $\GL _n(K)$ is realized in the cohomology of the reduction
 if and only if its local Langlands correspondent $\tau$ is of the form 
 $\Ind _{F/K}\xi$
 for a tamely ramified extension $F/K$ of degree $n$ 
 and a \emph{minimal} character $\xi$ of $F^{\times}$
 subject to the condition depending on each result.
 A minimal character has a numerical invariant called the \emph{jump} $\nu$ of $\xi$,
 which is a non-negative integer coprime to the ramification index $e$ of $F/K$
 (see Definition \ref{def:minimalpair} for the definitions of minimality and jump;
 these are essentially taken from the work \cite{BHetLLCIII} of Bushnell-Henniart).
 The extensions $F/K$ and the minimal characters $\xi$ allowed in each result are 
 characterized by the possible values of $n$, $e$, $\nu$ indicated in the following table: \footnote{
 In fact \cite{BWMax} only treats positive $\nu$. However, $\nu =0$ case is in a sense the simplest
 and seems to be well-known to experts.
 The Deligne-Lusztig variety appears in the reduction as in \cite{YoLTv}, \cite{WeGred}.
 See \cite[\S5]{MiGeomLLC} for an account in the perfectoid setting.}
 \begin{center}
 \begin{tabular}{|c|c|c|c|} \hline
         & $n$ &  $e$ & $\nu$ \\ \hline\hline
 this paper         & general & $n$   & general \\ \hline
 \cite{BWMax}    & general & $1$   & general \\ \hline
 \cite{ITepitame} & general & $n$   & $1$ \\ \hline
 \cite{WeSemi}    & $2$      & $1, 2$ & general \\ \hline 
 \end{tabular}
 \end{center}
 Thus, Main Theorem is the ``totally ramified version'' of \cite{BWMax},
 and generalizes \cite{ITepitame}
 and the ``totally ramified part'' of \cite{WeSemi}.
 We remark that \cite{ITepitame} has a companion paper
 \cite{ITsimpwild}
 and together they cover (character twists of) \emph{simple supercuspidal representations}.
 If $p$ divides $n$ as assumed in \cite{ITsimpwild},
 then $\tau$ is not of the above form and accordingly the analysis is far more subtle.
 Also, the aim of \cite{WeSemi} is related to but different from those of the other papers listed above,
 and the affinoids and the formal models are obtained in the course of proving the main result.
 
 The author learned from
 Imai and Tsushima
 that they had previously constructed
 what should be $\calZ _{2}$ and $\scrZ _{2}$
 in our notation,
 computed the reduction
 and verified the non-triviality of the middle-degree cohomology.
 Although this unannounced
 result
 preceded ours,
 our result was obtained independently.
 On a related note,
 in \cite{ITLTsurf}
 they study
 certain affinoids
 in the Lubin-Tate space of level $\frakp ^2$
 when $K$ is of equal-characteristic
 and $n=3$.
 These should be regarded as a finite-level counterpart
 specialized to $(n, \nu)=(3, 2)$ and $K$ equal-characteristic.
 
 \vspace{\baselineskip}
 The proof of Main Theorem naturally consists of the following four steps:
 \begin{enumerate}[(a)]
 \item constructing affinoids and formal models; \label{step1}
 \item computing the reduction and the stabilizer along with the induced action; \label{step2}
 \item computing the cohomology as a representation; \label{step3}
 \item comparing the resulting representation with the local Langlands correspondence and the local Jacquet-Langlands correspondence. \label{step4}
 \end{enumerate}
 Let us describe the key ideas in each steps,
 focusing mainly on the first two.
 
 \paragraph{Step (a)}
 In constructing affinoids and formal models 
 we work with a Cartesian diagram of formal schemes and the induced diagram of the adic generic fibers:
 \[
 \xymatrix{
 \calM _{H_0, \infty, \calO _C} \ar[r] \ar[d]_{\alpha _n} &\calM _{\wedge H_0, \infty, \calO _C} \ar[d]^{\alpha _1} \\
 \Nil ^{\flat, n}_{\calO _C} \ar[r]^{\Delta} &\Nil ^{\flat}_{\calO _C},
 }
 \qquad
 \xymatrix{
 \calM _{H_0, \infty, \overline{\eta}}^{\text{ad}} \ar[r] \ar[d]_{\alpha _n} &\calM _{\wedge H_0, \infty, \overline{\eta}}^{\text{ad}} \ar[d]^{\alpha _1} \\
 \Nil ^{\flat, n, \text{ad}}_{\overline{\eta}} \ar[r]^{\Delta} &\Nil ^{\flat, \text{ad}}_{\overline{\eta}}.
 }
 \]
 We refer to Subsection \ref{subsection:formalmodel} for the definitions of the
 objects and morphisms,
 and only remark that 
 \begin{itemize}
 \item $\calM _{H_0, \infty, \calO _C}$ is a formal model of the Lubin-Tate perfectoid space $\calM _{H_0, \infty, \overline{\eta}}^{\text{ad}}$,
 \item $\wedge H_0$ is the formal $\calO _K$-module over $\overline{k}$ of height one,
 \item $\Nil ^{\flat, n}_{\calO _C}=\Spf \calO _C[[X_1^{q^{-\infty}}, \dots, X_n^{q^{-\infty}}]]$ and $\Nil ^{\flat}_{\calO _C}=\Spf \calO _C[[T^{q^{-\infty}}]]$ are certain $K$-vector space objects 
 (we define $\calO _C[[X_1^{q^{-\infty}}, \dots, X_n^{q^{-\infty}}]]$ as the completion of 
 $\calO _C[X_1^{q^{-\infty}}, \dots, X_n^{q^{-\infty}}]=\varinjlim _{X_i\mapsto X_i^q} \calO _C[X_1, \dots, X_n]$
 with respect to an adic topology defined by the ideal generated by $X_1, \dots, X_n$ and 
 $\frakp$, and similarly $\calO _C[[T^{q^{-\infty}}]]$) and
 \item $\Delta$ is a multilinear and alternating morphism called the determinant morphism.
 \end{itemize}
 We take a $C$-valued point $\xi \in \calM _{H_0, \infty, \overline{\eta}}^{\text{ad}}(C)$ with CM by $L/K$ (Subsection \ref{subsec:CMpts})
 and construct an affinoid $\calX _{\nu} \subset \Nil ^{\flat, n, \text{ad}}_{\overline{\eta}}$ around $\alpha _n(\xi)$.
 The affinoid $\calZ _{\nu} \subset \calM _{H_0, \infty, \overline{\eta}}^{\text{ad}}$ is defined as the pull-back
 of $\calX _{\nu}$ in $\calM _{H_0, \infty, \overline{\eta}}^{\text{ad}}$ (Subsection \ref{subsec:defofaf}).
  
 The formal model $\scrZ _{\nu}$ of $\calZ _{\nu}$
 is defined
 as the fibered product involving suitable formal subschemes
 of $\Nil ^{\flat, n}_{\calO _C}$, $\Nil ^{\flat}_{\calO _C}$ and $\calM _{\wedge H_0, \infty, \calO _C}$.
 We first define a natural formal model $\scrX _{\nu}$ of $\calX _{\nu}$
 using the coordinate around $\alpha _n(\xi)$ used to define $\calX _{\nu}$.
 Then we study the set-theoretic image $\Delta (\calX _{\nu}) \subset \Nil ^{\flat, \text{ad}}_{\overline{\eta}}$
 to construct an affinoid $\calY _{\nu} \subset \Nil ^{\flat, \text{ad}}_{\overline{\eta}}$
 around $\Delta \circ \alpha _n(\xi)$ such that $\Delta (\calX _{\nu}) \subset \calY _{\nu}$,
 which allows us to define a natural formal model $\scrY _{\nu}$ of $\calY _{\nu}$.
 We note that $\scrX _{\nu}\simeq \Spf \calO _C\langle x_1^{q^{-\infty}}, \dots, x_n^{q^{-\infty}}\rangle$
 and $\scrY _{\nu}\simeq \Spf \calO _C\langle x^{q^{-\infty}}\rangle$ are rather simple formal schemes.
 Finally, using the Lubin-Tate theory we study $\alpha _1^{-1}(\calY _{\nu}) \subset \calM _{\wedge H_0, \infty, \overline{\eta}}^{\text{ad}}$ to define its formal model.
 
 The technical heart of the whole paper is the study of 
 $\Delta (\calX _{\nu}) \subset \Nil ^{\flat, \text{ad}}_{\overline{\eta}}$ above
 (and the related analysis of $\Delta \colon \scrX _{\nu}\to \scrY _{\nu}$ in Step \ref{step2}).
 Here we were inspired by 
 the $n=2$ case treated by Weinstein in \cite[\S5]{WeSemi}.
 For simplicity, suppose first that $K$ is of equal-characteristic.
 We often collectively write $\bX _i=(X_i^{q^{-l}})_{l\geq 0}$ and $\bT =(T^{q^{-l}})_{l\geq 0}$.\footnote{
 As should be clear from the following discussion, the use of $q$-th power compatible systems of topologically nilpotent elements,
 or equivalently valued points of $\Nil ^{\flat}$,
 is crucial in this paper.
 Accordingly, we employ subtle notation related to $\Nil ^{\flat}$.
 We do not explain such notation in this introduction and refer the reader to the main body of the paper,
 notably to \eqref{eq:nilop0}--\eqref{eq:nilop2} and Subsection \ref{subsec:nilnotation}.
 }
 Via the coordinate $\bT =(T^{q^{-l}})_{l\geq 0}$ of $\Nil ^{\flat, \text{ad}}_{\overline{\eta}}$,
 the point $\Delta \circ \alpha _n(\xi) \in \Nil ^{\flat, \text{ad}}_{\overline{\eta}}(C)$
 and the determinant morphism $\Delta$ define elements
 \[
 t^{q^{-l}}\in C, \qquad \Delta ^{q^{-l}}(\bX _1, \dots, \bX _n) \in \calO _C[[X_1^{q^{-\infty}}, \dots, X_n^{q^{-\infty}}]]
 \]
 for $l\geq 0$.
 To estimate the valuations of the functions $\Delta ^{q^{-l}}(\bX _1, \dots, \bX _n)-t^{q^{-l}}$ on $\calX _{\nu}$
 and to define the desired affinoid $\calY _{\nu}$ using the coordinate $U^{q^{-l}}=T^{q^{-l}}-t^{q^{-l}}$,
 we use a generalization of Weinstein's idea found in \cite[\S5.5, Lemma 2.14]{WeSemi}:
 when substituting $\bX _i$ with a suitable coordinate $\bY _i$ $(2\leq i\leq n)$ and $\bZ$
 around $\alpha _n(\xi) \in \calX _{\nu}$
 and expanding the functions by the additivity of the determinant morphism $\Delta$,
 certain cancellations occur and 
 we are reduced to estimate the valuations of 
 several ``main terms'' expressed in terms of $\Delta$, $\bY _i$ and $\bZ$.
 See \eqref{mainterm} for the cancellations\footnote{
 Strictly speaking, $\Delta (\bX _1, \dots, \bX _n)$ denotes the system $(\Delta ^{q^{-l}}(\bX _1, \dots, \bX _n))_{l\geq 0}$ in \eqref{mainterm}.
 }; 
 assuming $K$ is of equal-characteristic one may drop all $\Nil ^{\flat}_{H_0}$ and $\Nil ^{\flat}_{\wedge H_0}$ in the equation.
 We remark that finding the coordinate $\bY _i$ and $\bZ$ is vital in this argument
 and we achieved that by closely examining the construction of affinoids in the work \cite{ITepitame} of Imai-Tsushima 
 (see \cite[Remark 2.6]{ITepitame} for a comparison of our affinoids for $\nu =1$ and affinoids in \cite{ITepitame}). 
 
 To estimate the valuations of the main terms we apply Lemma \ref{lem:estimate}, 
 which is based on \cite[Lemma 5.7]{WeSemi}.
 Given estimates of valuations of $\bx _1, \dots, \bx _n$,
 this lemma together with Lemma \ref{lem:det} yields an approximation of $\Delta ^{q^{-l}}(\bx _1, \dots, \bx _n)$
 for $l\geq 0$.
 This allows us to obtain the required estimate 
 and define the affinoid $\calY _{\nu}$ and the formal model $\scrY _{\nu}$.
 
 Roughly speaking, when we try to eliminate the assumption on the characteristic,
 all the additions $+$ in the above discussion need to be replaced with the operations $+_{\Nil ^{\flat}_{H_0}}$,
 $+_{\Nil ^{\flat}_{\wedge H_0}}$ coming from those on the respective formal modules.
 It turns out that this causes little difficulty in practice; $\bX +_{\Nil ^{\flat}_{H_0}} \bY$ can be approximated by 
 the naive sum $\bX +\bY$, and in particular can be estimated in the same way.
 This fundamental principle is communicated to the author by Yoichi Mieda
 and is explained in more detail in Subsection \ref{subsec:lemsonnil}.
 
 \paragraph{Step (b)}
 By the construction of the formal model $\scrZ _{\nu}$, 
 the special fiber $\overline{\scrZ} _{\nu}$ is a fibered product in a natural way.
 Again the most laborious part in computing $\overline{\scrZ} _{\nu}$
 is the analysis of the morphism 
 $\overline{\scrX} _{\nu}\to \overline{\scrY} _{\nu}$
 induced by $\Delta \colon \scrX _{\nu} \to \scrY _{\nu}$.
 This goes similarly to the corresponding simpler computations in Step \ref{step1} as follows
 (cf.\ the proof of Theorem \ref{Thm:reduction}):
 Via the coordinate of $\scrY _{\nu}$
 the analysis amounts to approximating explicit functions on $\scrX _{\nu}$ related to 
 ``$\Delta ^{q^{-l}}(\bX _1, \dots, \bX _n)-t^{q^{-l}}$'' above
 (which will be more complicated in mixed-characteristic; in any case see \eqref{eq:moroffms2} for the precise form).
 By the same cancellations
 we have only to approximate the main terms
 expressed in terms of $\Delta, \bY _i$ and $\bZ$, 
 which is done by using Lemmas \ref{lem:det}, \ref{lem:estimate}.
 We note that these lemmas only provide approximations 
 with respect to a fixed valuation on the coordinate ring; 
 to turn such approximations into congruences
 we use Lemma \ref{lem:esttoapp}.
 
 In Theorem \ref{Thm:stabandaction}
 we concretely describe 
 the stabilizer $\Stab _{\nu}$ of $\calZ _{\nu}$
 in terms of the stabilizer $\calS$ of the CM point $\xi$
 and certain subgroups $U_{\frakI}^{(\nu)} \subset \GL _n(K)$, $U_D^{(\nu)}\subset D^{\times}$,
 and also explicitly compute the induced action on the special fiber $\overline{\scrZ} _{\nu}$
 (strictly speaking, we only write down the action of a subgroup of $\Stab _{\nu}$ of finite index;
 this is sufficient for our purpose. See Remark \ref{Rem:Stab} \ref{item:LandL'}).
 Given the formalism of $\Nil ^{\flat}$
 the proof of Theorem \ref{Thm:stabandaction}
 mostly reduces to 
 nearly straightforward computations
 except that we use some subtle properties of $\xi$ and $\calS$ (Subsection \ref{subsec:CMpts}).

 Before discussing Step \ref{step3}
 we note some properties
 of 
 the reductions $\overline{\scrZ} _{\nu}$.
 While only those with $\nu$ coprime to $n$ 
 are relevant to Main Theorem, 
 the affinoids $\calZ _{\nu}$
 and the formal models $\scrZ _{\nu}$
 are constructed for any $\nu >0$
 in a uniform way.
 The reductions $\overline{\scrZ} _{\nu}$
 are closely related to the perfections
 of algebraic varieties $Z_{\nu}$
 (see Theorem \ref{Thm:reduction}),
 which turn out to be periodic
 in $\nu$ with period $2n$.
 They fall into two ``series''
 depending on whether
 $\nu$ is odd or even.
 If $\nu$ is odd,
 $Z_{\nu}$ is the variety
 obtained by pulling back
 the standard Artin-Schreier covering
 $\bbA _{\overline{k}}^1\rightarrow \bbA _{\overline{k}}^1$
 by a morphism $\bbA _{\overline{k}}^{n-1}\rightarrow \bbA _{\overline{k}}^1$
 corresponding to a quadratic form which depends on $\nu$.
 If $\nu$ is even, 
 the defining equation of $Z_{\nu}$ 
 is more involved.
 However,
 it can be described 
 in terms of 
 the Lang torsor of an algebraic group $\calG _{\nu}$
 and a morphism related to a quadratic form 
 (see Subsection \ref{subsec:alggps} for more details). 
 Here the description is modeled on
 a similar description
 found in \cite[\S3.4]{BWMax},
 but the analogy is not so straightforward;
 the relevant algebraic groups are not the same
 and no quadratic forms occur in \cite{BWMax}.
 
 \paragraph{Step (c)}
 We compute the cohomology of $\overline{\scrZ} _{\nu}$ as a representation
 by combining representation-theoretic facts, which are elementary in essence,
 and some standard results in $\ell$-adic cohomolgy.
 If $\nu$ is odd, we are reduced to computing various invariants of the quadratic form
 and we efficiently do so by using some symmetry of the form 
 following Bushnell-Henniart \cite{BHetLLCII}.
 If $\nu$ is even, finite Heisenberg groups occur
 and by their well-known property
 the problem is to show that a suitable cohomology has the prescribed dimension.
 It turns out that the dimension can be computed by applying a result of Deligne in \cite{DelWeilII}.
 In both cases we also use \cite[Theorem 3.2]{DeLu} of Deligne-Lusztig
 to study actions of cyclic groups of order prime to $p$.
 
 \paragraph{Step (d)}
 This step is entirely representation-theoretic.
 The main ingredient is the essentially tame local Langlands and 
 Jacquet-Langlands correspondences
 of Bushnell-Henniart (\cite{BHetLLCI}, \cite{BHetLLCII}, \cite{BHetLLCIII}, \cite{BHetJL}).
 We include a summary of the results specialized to our situation in Subsection \ref{subsec:etc}. 
 
 \vspace{\baselineskip}
 
 We stress that although we did draw much inspiration from the preceding results
 \cite{WeSemi}, \cite{BWMax}, \cite{ITepitame}, 
 new phenomena arise and different techniques are required at several points in this paper.
 For instance, let us look at the computation of the cohomology.
 Recall that affinoids and formal models in Main Theorem are (partial) generalizations of those in \cite{ITepitame}
 and \cite{WeSemi}.
 However, $\nu$ is odd in these papers;
 in \cite{ITepitame} $\nu =1$ is assumed 
 and in \cite{WeSemi} $\nu$ has to be coprime to $e=n=2$.
 On the other hand, it turns out that
 the algebraic varieties $Z_{\nu}$, which are related to the reductions $\overline{\scrZ} _{\nu}$,
 are considerably more delicate if $\nu$ is even.
 Relying on simple yet useful ideas,
 we compute the cohomology
 by a method different from that in \cite{ITepitame}, \cite{WeSemi} and \cite{BWMax}.

\vspace{\baselineskip} 
 We describe the outline of the paper.
 
 In Section \ref{section:LTperf}, we review some basic facts on
 the Lubin-Tate perfectoid space, a formal model and the relevant group action,
 following \cite{WeSemi}, \cite{BWMax} and \cite{ITepitame}.
 We also introduce the formal scheme $\Nil ^{\flat}$ and related notation.
 
 In Section \ref{sec:affandred}, a family of affinoids and
 formal models is constructed,
 and the reductions are studied
 along with the induced actions
 of the stabilizers.
 In Subsection \ref{subsec:CMpts}
 we recall some facts on CM points from \cite{BWMax} 
 and fix a convenient choice of a CM point $\xi$ as in \cite{ITepitame}. 
 In Subsection \ref{subsec:defofaf}
 we define a family of affinoids indexed by $\nu >0$
 using a coordinate around the fixed CM point $\xi$.
 Subsection \ref{subsec:lemsonnil}
 contains a series of lemmas concerning approximation of valued points of $\Nil ^{\flat}$.
 They allow us to work with the mixed-characteristic setting 
 in almost the same way as the equal-characteristic setting.
 As explained above, the lemmas are largely based on the ideas communicated by Yoichi Mieda.
 In Subsection \ref{subsec:lemsondet}
 we discuss the approximation of the determinant morphism 
 $\Delta \colon \Nil ^{\flat, n, \text{ad}}_{\overline{\eta}} \to \Nil ^{\flat, \text{ad}}_{\overline{\eta}}$.
 We state two general lemmas (Lemmas \ref{lem:det} and \ref{lem:estimate})
 and apply them to two special cases that we need (Lemmas \ref{lem:estimate1} and \ref{lem:estimate2}).
 In Subsection \ref{subsec:redoffms}
 we define formal models and compute their reductions,
 building on the approximation of $\Delta$
 as the discussion of Steps (a) and (b) above.
 In Subsection \ref{subsec:stabandaction}
 we compute the stabilizer of the affinoids and the induced action on the reductions.
 In Subsection \ref{subsec:alggps}
 we define certain algebraic groups and 
 describe the algebraic varieties $Z_{\nu}$ 
 using the Lang torsors and certain quadratic forms.
 
 In Section \ref{sec:cohofred}
 we compute the cohomology
 of $\overline{\scrZ} _{\nu}$ together with 
 the relevant group actions.
 This is reduced to the corresponding
 computation for $Z_{\nu}$
 and is treated separately 
 for odd and even $\nu$
 as the discussion of Step (c) above.
 Subsections \ref{subsec:quadformsandcoh}, \ref{subsec:repofcycfgps}
 (resp.\ Subsection \ref{subsec:Heisenberg})
 contain key ingredients for the computations
 for the odd (resp.\ even) cases.
 
 In Section \ref{sec:real} 
 we prove Main Theorem.
 To this end,
 we apply the theory of the essentially tame local Langlands
 and Jacquet-Langlands correspondences
 developed in \cite{BHetLLCI}, \cite{BHetLLCII}, \cite{BHetLLCIII},
 \cite{BHetJL},
 as well as the results obtained in the previous sections.
 The review of the theory
 in the special cases that we need
 is given in Subsection \ref{subsec:etc}.
 In Subsection \ref{subsec:real}
 we finally complete the proof of Main Theorem.

 \paragraph{Acknowledgments}
 This paper is an improved version of the author's Ph.D. thesis
 submitted to The University of Tokyo in January 2016.  
 He wishes to express his deepest gratitude to his supervisor Takeshi Tsuji
 for his patient guidance, constant encouragement and uplifting words.
 Next he is grateful to Yoichi Mieda
 for making many comments on the previous version of this paper
 and especially for providing an argument to extend the result to the mixed-characteristic setting.
 Moreover he wants to thank Naoki Imai, Tomoki Mihara and Takahiro Tsushima
 for inspiring discussions, comments and for answering questions.
 He also wants to thank Tetsushi Ito for helpful comments.
 Furthermore he thanks Takeshi Saito for his encouragement and support.
 Finally he thanks the referee for reading the paper very carefully,
 pointing out many misprints and inaccuracies
 and suggesting a number of improvements in expositions.
   
This work was supported by the Program for Leading Graduate 
Schools, MEXT, Japan;
the Research Institute for Mathematical Sciences,
a Joint Usage/Research Center located in Kyoto University;
Iwanami Fujukai Foundation; 
and JSPS KAKENHI Grant Number 19K14503.

 \paragraph{Notation}
 For any non-archimedean valuation field $F$,
 we denote the valuation ring by $\calO _F\subset F$
 and its maximal ideal by $\frakp _F\subset \calO _F$. 
 If $R$ is a topological ring, we denote 
 by $\Nil (R)$
 the set of topologically nilpotent elements.

 For any non-archimedean local field $F$,
 we write $v_F$ for the additive valuation
 normalized so that $v_F(\varpi _F)=1$
 for any uniformizer $\varpi _F\in F$. 
 We write $U_F=U_F^0=\calO _F^{\times}$ and $U_F^i=1+\frakp _F^i$
 for $i\geq 1$.
 We denote by $W_F$ the Weil group of $F$ and
 the Artin reciprocity map $\Art _F \colon F^{\times}\rightarrow W_F^{\text{ab}}$
 is normalized so that the uniformizers are mapped to geometric Frobenius elements.
 A character $\xi$ of $F^{\times}$
 is often identified with a character of $W_F$
 via $\Art _F$. 

 We take a prime number $\ell \neq p$,
 and fix an isomorphism $\overline{\bbQ} _{\ell}\simeq \bbC$ 
 of fields.
 Smooth representations over $\overline{\bbQ} _{\ell}$
 are always identified with those over $\bbC$
 by this isomorphism.
 
 For an adic space $X$, a point $\tau \in X$ and a function $f\in \Gamma (X, \calO _X)$
 we write $\lvert f(\tau)\rvert$ for the valuation $\tau (f)$.

 We often denote a multiset by $[\cdot]$
 to distinguish it from a set $\{ \cdot \}$.

 \section{Preliminaries on Lubin-Tate perfectoid space} \label{section:LTperf}
  \subsection{Lubin-Tate perfectoid space and a formal model} \label{subsection:formalmodel}
  We summarize the relevant materials on the Lubin-Tate spaces,
  the Lubin-Tate perfectoid space and a formal model.
  Our basic references are \cite{WeSemi}, \cite{BWMax} and \cite{ITepitame}.
  In many parts,
  we closely follow their expositions.
  
  Let $K$ be a non-archimedean local field
  and $k$ its residue field. 
  Denote by $\frakp =\frakp _K$ 
  the maximal ideal of $\calO _K$.
  We take a uniformizer $\varpi $ of $K$.
  We write $q$ for the cardinality of $k$
  and $p$ for the characteristic of $k$.
  We fix an algebraic closure $\overline{K}$ of $K$
  and denote by $\overline{k}$ the residue field of $\overline{K}$.
  Let $C$ be the completion of $\overline{K}$.

  Let $n$ be a positive integer.
  Let $H _0$ be a one-dimensional formal $\calO _K$-module  
  over $\overline{k}$ of height $n$,
  which is unique up to isomorphism.
  Let $K^{\text{ur}}$ be the maximal unramified extension of $K$ in $\overline{K}$
  and $\widehat{K}^{\text{ur}}$ its topological closure in $C$.
  We denote by $\calC$ 
  the category of complete Noetherian local $\calO _{\widehat{K}^{\text{ur}}}$-algebras
  with residue field $\overline{k}$.
  Let $R\in \calC$.
  A pair $(H, \iota)$ 
  consisting of a formal $\calO _K$-module $H$ over $R$
  and an isomorphism $\iota \colon H_0\stackrel{\sim}{\rightarrow} H\otimes _R \overline{k}$
  is said to be a deformation of $H_0$ to $R$.
  For a formal $\calO _K$-module $H$ over $R$ and
  an integer $m\geq 0$,
  we mean by ``a Drinfeld level $\frakp ^m$-structure on $H$''
  what is called ``a structure of level $m$ on $H$''
  in \cite[p.\ 572 Definition]{DrEmod}.
  
  We define a functor $\calC \rightarrow \Sets$ 
  by associating
  to $R\in \calC$ 
  the set of isomorphism classes
  of triples $(H, \iota , \phi)$
  in which
  $(H, \iota)$ is 
  a deformation of $H_0$ over $R$
  and $\phi \colon ({\frakp ^{-m}}/{\calO _K})^n\rightarrow H[\frakp ^{m}](R)$ is 
  a Drinfeld level $\frakp ^m$-structure on $H$.
  This functor is representable by
  a regular local ring $R_m$ 
  of dimension $n$
  by \cite[Proposition 4.3]{DrEmod}.
  These rings $R_m$
  naturally form an inductive system
  $\{ R_m\}$.  
  We denote by $R_{\infty}=(\varinjlim R_m)^{\wedge}$
  the completion of the inductive limit
  with respect to 
  the ideal 
  generated by the maximal ideal of $R_0$.
  We put $\calM _{H_0, \infty}=\Spf R_{\infty}$.
    
  Let $(H, \iota)$ be a deformation
  of $H_0$ to $\calO _{\widehat{K}^{\text{ur}}}$
  and $A$ its coordinate ring.
  We set 
  $\widetilde{H}=\Spf (\varinjlim A)^{\wedge}$,
  where the transition maps are ring homomorphisms
  corresponding to the multiplication by 
  $\varpi$ of $H$
  and
  the completion is taken
  with respect to the ideal
  generated by a defining ideal of $A$.
  Then $\widetilde{H}$
  is a $K$-vector space object
  in the category $\Adic _{\calO _{\widehat{K}^{\text{ur}}}}$
  of complete adic $\calO _{\widehat{K}^{\text{ur}}}$-algebras.
  It is shown in \cite[Proposition 2.7]{WeSemi}
  that 
  $\widetilde{H}$, as a $K$-vector space object,
  does not depend on the choice of $(H, \iota)$ 
  and is isomorphic to $\Spf \calO _{\widehat{K}^{\text{ur}}}[[X^{q^{-\infty}}]]$
  as a formal scheme.
  Here $\calO _{\widehat{K}^{\text{ur}}}[[X^{q^{-\infty}}]]$
  is defined to be the $(\varpi , X)$-adic completion of
  $\calO _{\widehat{K}^{\text{ur}}}[X^{q^{-\infty}}]
  =\varinjlim _{X\mapsto X^q}\calO _{\widehat{K}^{\text{ur}}}[X]$. 
  In \cite[\S2.7]{WeSemi}
  a natural morphism $\widetilde{\alpha} _n\colon \calM _{H_0, \infty}\to \widetilde{H} ^n$ is constructed.

  Let $K^{\text{ab}}$ be the maximal abelian extension of $K$ in $\overline{K}$
  and $\widehat{K}^{\text{ab}}$ its topological closure in $C$.
  We denote by 
  $\wedge H_0$ 
  the formal $\calO _K$-module
  of height $1$ over $\overline{k}$
  and by $\wedge H$ 
  a deformation of $\wedge H_0$ over $\calO _{\widehat{K}^{\text{ur}}}$.
  Then from the above discussion specialized to $n=1$
  a formal scheme $\calM _{\wedge H_0, \infty}$
  and a morphism $\widetilde{\alpha} _1\colon \calM _{\wedge H_0, \infty}\to \widetilde{\wedge H}$
  are defined.
  By the Lubin-Tate theory
  $\calM _{\wedge H_0, \infty}$ and $\Spf \calO _{\widehat{K}^{\text{ab}}}$
  are isomorphic.
  
  A general theory of exterior powers of $\varpi$-divisible $\calO _K$-modules is 
  developed in \cite{HedPhD}.
  Based on the facts that the top exterior power of $H$ is $\wedge H$ (\cite[Theorem 2.10]{WeSemi})
  and that the universal alternating and multilinear homomorphism sends Drinfeld level structures of $H$
  to those of $\wedge H$ (\cite[Proposition 2.11]{WeSemi}),
  natural morphisms $\delta \colon \widetilde{H} ^n\to \widetilde{\wedge H}$
  and $\calM _{H_0, \infty}\to \calM _{\wedge H_0, \infty}$
  are constructed 
  in \cite[\S2.6, \S2.7]{WeSemi}.
  The morphism $\delta$ induces an alternating and multilinear homomorphism 
  $\widetilde{H} ^n(R)\to \widetilde{\wedge H} (R)$ for any $R\in \Adic _{\calO _{\widehat{K}^{\text{ur}}}}$
  and is called the determinant morphism.
  
  As a result we have the following commutative diagram
   \begin{equation} \label{eq:Wecart}
   \xymatrix{
   \calM _{H_0, \infty} \ar[r] \ar[d]_{\widetilde{\alpha} _n} &\calM _{\wedge H_0, \infty} \ar[d]^{\widetilde{\alpha} _1}   \\
   \widetilde{H} ^n \ar[r]^{\delta}  &\widetilde{\wedge H}  \\
   }
   \end{equation}

  \begin{Thm} (\cite[Theorem 2.17]{WeSemi}) \label{Thm:Wecart}
   The above diagram is Cartesian.
  \end{Thm}
  
  In the following,
  we describe the morphisms $\widetilde{\alpha} _1$ and $\delta$ more explicitly
  by making various choices,
  and then introduce the Lubin-Tate perfectoid space and its formal model. 
  Along the way we also define $\Nil ^{\flat}$, which plays a crucial role in Section \ref{sec:affandred}.
  Note that the explicit descriptions depend on the choices of $\varpi$ and
  $\{ t_m\} _{m\geq 1}$; we will rearrange them in Subsection \ref{subsec:CMpts}. 
  
  For a formal $\calO _K$-module $\Sigma$
  and $a\in \calO _K$,
  we denote by $+_{\Sigma}$ the addition of $\Sigma$
  and by $[a]_{\Sigma}$ the multiplication by $a$
  of $\Sigma$.
  Let $H$ be the formal $\calO _K$-module over $\calO _{\widehat{K} ^{\text{ur}}}$
  such that $\log _{H}(X)=\sum _{i\geq 0}X^{q^{in}}/{\varpi ^i}$ (cf.\ \cite[\S2.3]{BWMax}).
  It is in fact defined over $\calO _K$, but we usually regard it as an object over $\calO _{\widehat{K} ^{\text{ur}}}$.
  For $a\in \calO _C$ we denote by $\overline{a}$ its image in $\overline{k}$.
  We take $H _0$ 
  to be the reduction of $H$,
  so that
  \[
  [\varpi] _{H_0}(X)=X^{q^n},  \quad [\zeta] _{H_0}(X)=\overline{\zeta}X \text{ for $\zeta \in \mu _{q-1}(K)$}.
  \]
  We set $\calO _D=\End H_0$ and $D=\calO _D\otimes _{\calO _K}K$.
  Then $D$ is a central division algebra over $K$
  of invariant $1/n$.
  Denoting by $[a]$ 
  the action of $a\in \calO _D$,
  we define $\varphi _D \in \calO _D$
  to be the element
  such that 
  $[\varphi _D](X)=X^q$.
  Let $K_n$ be the unramified extension of $K$ of degree $n$ in $\overline{K}$.
  For $\zeta \in \mu _{q^n-1}(K_n)$
  we define $\zeta \in \calO _D$ by
  $[\zeta ](X)=\overline{\zeta}X$.
  This defines a $K$-algebra embedding 
  $K_n\rightarrow D$.
  Then $D$ is generated over $K_n$
  by $\varphi _D$
  and we have $\varphi _D^n=\varpi$, $\varphi _D \zeta=\zeta ^q\varphi _D$
  for $\zeta \in \mu _{q^n-1}(K_n)$.
  
  Let $\wedge H$ be the one-dimensional formal $\calO _K$-module
  over $\calO _{\widehat{K} ^{\text{ur}}}$ such that $\log _{\wedge H}(X)=\sum _{i\geq 0}(-1)^{i(n-1)}X^{q^i}/{\varpi ^i}$.
  We take $\wedge H_0$ to be the reduction of $\wedge H$.
  Let
  $\{ t_m\} _{m\geq 1}$
  be a system of elements of $\calO _{\overline{K}}$
  such that
  \begin{equation} \label{eq:systemt}
  t_m\in \calO _{\overline{K}}, \quad [\varpi ]_{\wedge H}(t_1)=0, \quad t_1\neq 0,
  \quad [\varpi ]_{\wedge H}(t_m)=t_{m-1} \text{ for $m\geq 2$}.
  \end{equation}
  We put $\varpi'=(-1)^{n-1}\varpi$.
  Then we have $\log _{\wedge H}(X)=\sum _{i\geq 0} X^{q^i}/{{\varpi'}^i}$
  and thus $[\varpi']_{\wedge H_0}(X)=X^q$.
  
  For $R\in \Adic _{\calO _{\widehat{K}^{\text{ur}}}}$
  we identify $H(R)$ and $\wedge H(R)$ with $\Nil (R)$,
  the set of topologically nilpotent elements of $R$,
  and in particular $\widetilde{H} (R)$ and $\widetilde{\wedge H}(R)$
  are regarded as appropriate subsets of $\Nil (R)^{\bbZ _{\geq 0}}$:
  \begin{align*}
  &\widetilde{H} (R)=\{ (\tilde{x} _m)_{m\geq 0}\in \Nil (R)^{\bbZ _{\geq 0}}\mid [\varpi]_{H}(\tilde{x} _{m+1})=\tilde{x} _m\}, \\
  &\widetilde{\wedge H} (R)=\{ (\tilde{x} _m)_{m\geq 0}\in \Nil (R)^{\bbZ _{\geq 0}}\mid [\varpi]_{\wedge H}(\tilde{x} _{m+1})=\tilde{x} _m\}.
  \end{align*}
  
  A choice of the above system $\{ t_m\} _{m\geq 1}$ amounts to
  fixing an isomorphism $\calM _{\wedge H_0, \infty}\simeq \Spf \calO _{\widehat{K}^{\text{ab}}}$,
  and one can easily describe the induced morphism 
  $\Spf \calO _{\widehat{K}^{\text{ab}}}\simeq \calM _{\wedge H_0, \infty}\stackrel{\widetilde{\alpha} _1}{\to} \widetilde{\wedge H}$
  by
  \[
  \Spf \calO _{\widehat{K} ^{\text{ab}}}(R) \to \widetilde{\wedge H}(R); \ 
  (f\colon \calO _{\widehat{K}^{\text{ab}}}\to R) \mapsto (f(t_{m+1}))_{m\geq 0}
  \] 
  for $R\in \Adic _{\calO _{\widehat{K}^{\text{ur}}}}$.
  We tacitly identify $\calM _{\wedge H_0, \infty}$ with $\Spf \calO _{\widehat{K}^{\text{ab}}}$.
 
  Let $\Nil ^{\flat}$ be the functor $\Adic _{\calO _{\widehat{K}^{\text{ur}}}} \to (\Sets)$ defined by 
  \[
  \Nil ^{\flat}(R)=\varprojlim _{x\mapsto x^q}\Nil (R),
  \]
  which is clearly representable by $\Spf \calO _{\widehat{K} ^{\text{ur}}}[[T^{q^{-\infty}}]]$.
  For an element $\bx \in \Nil ^{\flat}(R)$ 
  we usually use notation such as $\bx =(x^{q^{-l}})_{l\geq 0}$;
  this should cause no confusion.
  We normalize the isomorphism $\Spf \calO _{\widehat{K} ^{\text{ur}}}[[T^{q^{-\infty}}]]\simeq \Nil ^{\flat}$, so that 
  \[
  \Spf \calO _{\widehat{K} ^{\text{ur}}}[[T^{q^{-\infty}}]](R)\stackrel{\sim}{\to} \Nil ^{\flat}(R); \ (f\colon \calO _{\widehat{K} ^{\text{ur}}}[[T^{q^{-\infty}}]]\to R) \mapsto (f(T^{q^{-l}}))_{l\geq 0}.
  \]
  In \cite[\S2.6]{BWMax} an isomorphism $\lambda \colon \widetilde{H}\stackrel{\sim}{\to} \Nil ^{\flat}$ is given:
  \begin{equation} \label{eq:lambda}
  \lambda \colon (\tilde{x} _m)_{m\geq 0}\mapsto (\lim _{m\to \infty}\tilde{x} _m^{q^{mn-l}})_{l\geq 0}, 
  \quad \lambda ^{-1}\colon (x^{q^{-l}})_{l\geq 0}\mapsto (\lim _{l\to \infty}[\varpi ^{l-m}]_{H}(x^{q^{-ln}}))_{m\geq 0}.
  \end{equation}
  Similarly, recalling that $[\varpi']_{\wedge H_0}(X)= X^{q}$,
  we define an isomorphism $\lambda' \colon \widetilde{\wedge H}\to \Nil ^{\flat}$ as follows:
  \begin{align}
  \lambda' \colon (\tilde{x} _m)_{m\geq 0}
  \mapsto &(\lim _{m\to \infty}([(-1)^{(n-1)m}]_{\wedge H}(\tilde{x} _m))^{q^{m-l}})_{l\geq 0} \nonumber \\
  &=(\lim _{m\to \infty}(-1)^{q(n-1)m}\tilde{x} _m^{q^{m-l}})_{l\geq 0}, \label{eq:lambda'} \\ 
  {\lambda'} ^{-1}\colon (x^{q^{-l}})_{l\geq 0}
  \mapsto &(\lim _{l\to \infty}[(-1)^{(n-1)m}{\varpi'}^{l-m}]_{\wedge H}(x^{q^{-l}}))_{m\geq 0} \nonumber \\
  &=(\lim _{l\to \infty}[{\varpi}^{l-m}]_{\wedge H}((-1)^{(n-1)l}x^{q^{-l}}))_{m\geq 0}. \nonumber
  \end{align}
  We put $\alpha _1=\lambda' \circ \widetilde{\alpha} _1$.
  It follows that if we set 
  \[
  t^{q^{-l}}=\lim _{m\rightarrow \infty}(-1)^{q(n-1)(m-1)} t_m^{q^{m-1-l}}\in \widehat{K} ^{\text{ab}}
  \]
  for all $l\geq 0$,
  then $\alpha _1\colon \Spf \calO _{\widehat{K} ^{\text{ab}}} \to \Nil ^{\flat}$
  is induced by
  \begin{equation} \label{rightverticalarrow}
  \calO _{\widehat{K} ^{\text{ur}}}[[T^{q^{-\infty}}]] \to \calO _{\widehat{K} ^{\text{ab}}}; \ T^{q^{-l}}\mapsto t^{q^{-l}}.
  \end{equation}
  We denote by $v=v_K$
  the normalized valuation on $K$
  and extend it to $C$
  by continuity.
  Then $v(t)=1/(q-1)$.
  
  \begin{Rem}
  Although $\widetilde{H}$ and $\Nil ^{\flat}$
  are isomorphic and we mainly work with the latter
  in Section \ref{sec:affandred},
  we distinguish them for now;
  the former is more directly related to the Lubin-Tate spaces
  whereas the coordinate of the latter is useful in studying affinoids.
  \end{Rem}
  
  \paragraph{Operations on $\widetilde{H}$, $\widetilde{\wedge H}$ and $\Nil ^{\flat}$}
  As mentioned before, $\widetilde{H}$ and $\widetilde{\wedge H}$
  are $K$-vector space objects in $\Adic _{\calO _{\widehat{K}^{\text{ur}}}}$.
  We denote the associated operations with $\widetilde{H}$ and $\widetilde{\wedge H}$
  as subscripts;
  for $R\in \Adic _{\calO _{\widehat{K}^{\text{ur}}}}$, $a\in K$ with $a\varpi ^{m_0}\in \calO _K$ $(m_0\geq 0)$
  and $\tilde{\bx} =(\tilde{x} _m)_{m\geq 0}, \tilde{\by} =(\tilde{y} _m)_{m\geq 0}\in \widetilde{H} (R)$,
  we write
  \[
  [a]_{\widetilde{H}}(\tilde{\bx})=([a\varpi ^{m_0}]_{H}(\tilde{x} _{m+m_0}))_{m\geq 0}, \quad 
  \tilde{\bx} +_{\widetilde{H}} \tilde{\by}=(\tilde{x} _m+_H \tilde{y}_m)_{m\geq 0}  
  \]
  and similarly for $\widetilde{\wedge H}$.
  Moreover, the action of $\calO _K$ on $\widetilde{H}$ extends to
  $\calO _D=\End H_0$ by \cite[Lemma 2.5.3 (3)]{BWMax}, 
  and hence $D=\calO _D\otimes _{\calO _K}K$ acts on $\widetilde{H}$.
  We also denote this action by $[d]_{\widetilde{H}}(\bx)$ for $d\in D$ and $\bx \in \widetilde{H}(R)$.
  
  All these induce the corresponding operations on $\Nil ^{\flat}$ via $\lambda$ and $\lambda'$.
  We denote each of such operations with $\Nil ^{\flat} _{H_0}$ (resp.\ $\Nil ^{\flat}_{\wedge H_0}$) as a subscript
  (although it has nothing to do with any base change of $\Nil ^{\flat}$), 
  if the operation is induced via $\lambda$ (resp.\ $\lambda'$).
  Note that in fact it only depends on the choice of $H_0$ (resp.\ $\wedge H_0$).
  We clearly have the following.
  \begin{align} \label{eq:nilop0}
  &\bx +_{\Nil ^{\flat}_{H_0}} \by =\lambda (\lambda ^{-1}(\bx) +_{\widetilde{H}} \lambda ^{-1}(\by)), \\ 
  &\bx +_{\Nil ^{\flat}_{\wedge H_0}} \by =\lambda' ({\lambda'} ^{-1}(\bx) +_{\widetilde{\wedge H}} {\lambda'} ^{-1}(\by)), \\ \label{eq:nilop}
  &[\zeta _{n}]_{\Nil ^{\flat}_{H_0}}(\bx)=(\zeta _n^{q^{-l}}x^{q^{-l}})_{l\geq 0}, \quad
  [\varphi _D^{l_0}]_{\Nil ^{\flat}_{H_0}}(\bx)=(x^{q^{l_0-l}})_{l\geq 0}, \\ \label{eq:nilop2}
  &[\zeta]_{\Nil ^{\flat}_{\wedge H_0}}(\by)=(\zeta y^{q^{-l}})_{l\geq 0}, \quad
  [{\varpi'} ^{l_0}]_{\Nil ^{\flat}_{\wedge H_0}}(\by)=(y^{q^{l_0-l}})_{l\geq 0},
  \end{align}
  where
  $\bx =(x^{q^{-l}})_{l\geq 0}, \by =(y^{q^{-l}})_{l\geq 0}\in \Nil ^{\flat}(R)$,
  $\zeta _n\in \mu _{q^{n}-1}(K_n)$, $\zeta \in \mu _{q-1}(K)$,
  and $\zeta _n^{q^{-l}}$ are defined as the unique elements
  satisfying $(\zeta _n^{q^{-l}})_{l\geq 0} \in \Nil ^{\flat}(\calO _{\widehat{K} ^{\text{ur}}})$
  and $\zeta _n^{q^{-l}} \in \mu _{q^{n}-1}(K_n)$ for all $l\geq 1$.
  We occasionally write $\bx ^{q^{l_0}}$ for $[\varphi _D^{l_0}]_{\Nil ^{\flat}_{H_0}}(\bx)$
  and $\zeta \bx$ for $[\zeta]_{\Nil ^{\flat}_{\wedge H_0}}(\bx)$.
  Also we write $\bx \by=(x^{q^{-l}}y^{q^{-l}})_{l\geq 0}$.

  \paragraph{Description of $\delta$} 
  We set
  \begin{equation} \label{eq:S}
  S=\left\{
      (m_j)\in \bbZ ^n
      \Biggm| 
      \sum _{j=1}^n m_j=\sum _{j=1}^n (j-1),
      \
      m_j\not\equiv m_{j'} \bmod n
      \text{ (if $j\neq j'$)}
      \right\}.
  \end{equation}
  Let $R\in \Adic _{\calO _{\widehat{K}^{\text{ur}}}}$ and $(\tilde{\bx} _1, \dots , \tilde{\bx} _n)\in \widetilde{H}^n(R)$, 
  and set $\bx _i=\lambda (\tilde{\bx} _i)\in \Nil ^{\flat}$.
  It follows from \cite[the proof of Thm.\ 2.10.3]{BWMax} 
  that $\delta \colon \widetilde{H} ^n(R)\to \widetilde{\wedge H}(R)$ is described by 
  \[
  \delta (\tilde{\bx} _1, \dots, \tilde{\bx} _n)
  =(\widetilde{\wedge H})\sum _{m=(m_i)\in S} [\sgn \sigma _m]_{\widetilde{\wedge H}}({\lambda'}^{-1}(\bx_1^{q^{m_1}}\cdots \bx_n^{q^{m_n}})),
  \]
  where 
  \begin{itemize}
  \item the symbol $(\widetilde{\wedge H}) \sum$ means the summation with respect to $+_{\widetilde{\wedge H}}$, 
  \item and we define
      \begin{equation} \label{eq:sigmam}
    \sigma _m= \begin{pmatrix}
                     \overline{0}     & \overline{1}     & \cdots &\overline{n-1}    \\
                     \overline{m} _1 & \overline{m} _2 & \cdots &\overline{m} _n 
                    \end{pmatrix}  
   \end{equation}
   as the permutation of $\bbZ /{n\bbZ}$
   (here 
   $\overline{j}$ denotes the image of $j\in \bbZ$ in $\bbZ /{n\bbZ}$).
  \end{itemize}
  We also define $\Delta \colon \Nil ^{\flat, n}\to \Nil ^{\flat}$
  as the morphism induced via $\lambda$ and $\lambda'$;
  we set $\Delta =\lambda' \circ \delta \circ \lambda ^{-1}$.
  Here we simply write $\lambda$ for 
  \[
  \lambda ^n\colon \widetilde{H}^n\stackrel{\sim}{\to} \Nil ^{\flat, n}; \ (\tilde{\bx} _1, \dots , \tilde{\bx} _n)\mapsto (\lambda (\tilde{\bx} _1), \dots, \lambda (\tilde{\bx} _n)).
  \]
  As before we put $\alpha _n=\lambda \circ \widetilde{\alpha} _n$.
  Thus we now have the following commutative diagram.
   \[
   \xymatrix{
   \calM _{H_0, \infty} \ar[r] \ar[d]^{\widetilde{\alpha} _n} \ar@(dl, ul)[dd]_{\alpha _n} &\calM _{\wedge H_0, \infty} \ar@{=}[r] \ar[d]^{\widetilde{\alpha} _1} \ar@(dr, ur)[dd]^{\alpha _1} &\Spf \calO _{\widehat{K}^{\text{ab}}}  \\
   \widetilde{H} ^n \ar[r]^{\delta} \ar[d]_{\wr}^{\lambda} &\widetilde{\wedge H} \ar[d]^{\lambda'}_{\wr} & \\
   \Nil ^{\flat, n} \ar[r]^{\Delta} & \Nil ^{\flat}. &
   }
   \]

  \paragraph{Base change and related formal models}
   It is the base change to $C$ of the adic generic fiber 
   of $\calM _{H_0, \infty}$
   that admits the group action studied in the non-abelian Lubin-Tate theory.
   In view of \eqref{eq:Wecart}
   we define a formal model of this base change as follows.
   
   We write $\widetilde{H} _{\calO _C}$ 
   (resp.\ $\widetilde{\wedge H} _{\calO _C}$)
   for the base change of $\widetilde{H}$
   (resp.\ $\widetilde{\wedge H}$) 
   from $\calO _{\widehat{K} ^{\text{ur}}}$
   to $\calO _C$.
   Similarly
   we write $\Nil ^{\flat}_{\calO _C}$
   for the base change of $\Nil ^{\flat}$.
   The isomorphisms $\lambda$ and $\lambda'$
   induce isomophisms
   $\widetilde{H} ^n_{\calO _C} \stackrel{\sim}{\to} \Nil ^{\flat, n}_{\calO _C}$
   and
   $\widetilde{\wedge H} _{\calO _C} \stackrel{\sim}{\to} \Nil ^{\flat}_{\calO _C}$,
   which we also denote by $\lambda$ and $\lambda'$.
   We identify
   \[
   \Nil ^{\flat, n}_{\calO _C}=\Spf \calO _C[[X_1^{q^{-\infty}}, \dots, X_n^{q^{-\infty}}]], \quad
   \Nil ^{\flat}_{\calO _C}=\Spf \calO _C[[T^{q^{-\infty}}]].
   \]
   
   Let 
   $\Cont (U_K, \calO _{C})$
   denote the $\calO _{C}$-algebra of continuous maps
   from $U_K$ to $\calO _{C}$.
   We use similar notation 
   for other topological rings as well.
   We define 
   \[
   \calM _{\wedge H_0, \infty, \calO _C}=\Spf \Cont (U_K, \calO _C).
   \]
   Note that $\widehat{K} ^{\text{ab}}\widehat{\otimes} _{\widehat{K} ^{\text{ur}}} C\simeq \Cont (U_K, C)$
   by the local class field theory.
   The formal scheme $\calM _{\wedge H_0, \infty, \calO _C}$
   is considered over $\calM _{\wedge H_0, \infty}=\Spf \calO _{\widehat{K} ^{\text{ab}}}$ by
   \[
   \calO _{\widehat{K}^{\text{ab}}} \rightarrow \Cont (U_K, \calO _{C});
   \ a\mapsto \left( \Art _K(x)(a)\right) _{x\in U_K},
   \]
   which induces a canonical morphism 
   \begin{equation} \label{eq:alpha1onring}
   \calO _C[[T^{q^{-\infty}}]]
   \to \calO _{\widehat{K}^{\text{ab}}}\widehat{\otimes} _{\calO _{\widehat{K} ^{\text{ur}}}} \calO _C
   \to \Cont (U_K, \calO _{C}); \
   T^{q^{-l}}\mapsto \left( \Art _K(x)(t^{q^{-l}})\right) _{x\in U_K}.
   \end{equation}
   
   We set 
   \[
   \calM _{H_0, \infty, \calO _C}=\widetilde{H} ^n_{\calO _C}\times _{\widetilde{\wedge H} _{\calO _C}} \calM _{\wedge H_0, \infty, \calO _C} (\simeq \Nil ^{\flat, n}_{\calO _C}\times _{\Nil ^{\flat}_{\calO _C}} \calM _{\wedge H_0, \infty, \calO _C})
   \]
   and define the Lubin-Tate perfectoid space
   $\calM _{H_0, \infty, \overline{\eta}}^{\text{ad}}$
   as the adic generic fiber.
   By $\lambda$ and $\lambda'$ we have $\calM _{H_0, \infty, \calO _C}\simeq \Spf R_{\infty, \calO _C}$,
   where 
   \[
   R_{\infty, \calO _C}=\calO _C[[X_1^{q^{-\infty}}, \dots, X_n^{q^{-\infty}}]]\widehat{\otimes} _{\calO _C[[T^{q^{-\infty}}]]} \Cont (U_K, \calO _C).
   \]
   With this algebra, we have
   \[
   \calM _{H_0, \infty, \overline{\eta}}^{\text{ad}}
   \simeq \{ \tau \in \Spa (R_{\infty, \calO _{C}}, R_{\infty, \calO _{C}})
   \mid \lvert \varpi (\tau)\rvert \neq 0\}.
   \]
   We also define
   \begin{align*}
  \LTpone &=\{ \tau \in \Spa (\Cont (U_K, \calO _{C}), \Cont (U_K, \calO _{C}))
                                                        \mid \lvert \varpi (\tau)\rvert \neq 0\} \\
              &=\Spa (\Cont (U_K, C), \Cont (U_K, \calO _C)).
  \end{align*}
  Setting $B_n=\calO _{C}[[X_1^{q^{-\infty}}, \dots , X_n^{q^{-\infty}}]]$
  and $B_1=\calO _{C}[[T^{q^{-\infty}}]]$,
  we similarly define
  \begin{align*}
   \Nil ^{\flat, n, \text{ad}}_{\overline{\eta}}
  &=\{ \tau \in \Spa (B_n, B_n)
    \mid \lvert \varpi (\tau)\rvert \neq 0\}, \\
   \Nil ^{\flat, \text{ad}}_{\overline{\eta}}
  &=\{ \tau \in \Spa (B_1, B_1)
    \mid \lvert \varpi (\tau)\rvert \neq 0\}. 
  \end{align*}
  
  In summary, we have the following commutative diagrams:
  \[
 \xymatrix{
 \calM _{H_0, \infty, \calO _C} \ar[r] \ar[d]_{\alpha _n} &\calM _{\wedge H_0, \infty, \calO _C} \ar[d]^{\alpha _1} \\
 \Nil ^{\flat, n}_{\calO _C} \ar[r]^{\Delta} &\Nil ^{\flat}_{\calO _C},
 }
 \qquad
 \xymatrix{
 \calM _{H_0, \infty, \overline{\eta}}^{\text{ad}} \ar[r] \ar[d]_{\alpha _n} &\calM _{\wedge H_0, \infty, \overline{\eta}}^{\text{ad}} \ar[d]^{\alpha _1} \\
 \Nil ^{\flat, n, \text{ad}}_{\overline{\eta}} \ar[r]^{\Delta} &\Nil ^{\flat, \text{ad}}_{\overline{\eta}}.
 }
 \]
  
  \subsection{Group actions on formal models}
   Put
   $G=\GL _n(K)\times D^{\times}\times W_K$.
   We define $N_G$ by
   \[
   N_G\colon \GL _n(K)\times D^{\times}\times W_K\rightarrow K^{\times} 
   ; \ (g, d, \sigma)\mapsto (\det g^{-1})(\Nrd d)(\Art _K^{-1}\sigma),
   \]
   where $\Nrd \colon D^{\times}\rightarrow K^{\times}$
   is the reduced norm,
   and set $G^0=\Ker (v\circ N_G)$.
   As studied in the non-abelian Lubin-Tate theory,
   $\LTp$ admits a natural right action of $G^0$.
  This extends to an action on the formal model $\calM _{H_0, \infty, \calO _C}$.
  The latter can in turn be described in terms of group actions 
  on $\widetilde{H} ^n_{\calO _C}$, $\calM _{\wedge H_0, \infty, \calO _C}$ and $\widetilde{\wedge H} _{\calO _C}$,
  which we now explain, following \cite[\S2.11]{BWMax} and \cite[\S1.2]{ITepitame}.
  Let $\Adic _{\calO _C}$ be the category of complete adic $\calO _C$-algebras
  and 
  let $R\in \Adic _{\calO _C}$.
  
  The larger group $G$ acts on $\widetilde{H} ^n_{\calO _C}$.
   Let $g=(g_{i, j})_{1\leq i, j\leq n}\in \GL _n(K)$ and
   $d\in D^{\times}$.
   Then $g$ and $d$ act on $\widetilde{H} ^n_{\calO _C}(R)$ as
   \begin{align*}
   &g\colon (\tilde{\bx} _i)_{1\leq i\leq n}
   \mapsto \left( (\widetilde{H})\sum _{1\leq j\leq n} [g_{j, i}]_{\widetilde{H}}(\tilde{\bx} _j )\right) _{1\leq i\leq n}, \\
   &d\colon (\tilde{\bx} _i)_{1\leq i\leq n}
   \mapsto \left(  [d^{-1}]_{\widetilde{H}}(\tilde{\bx} _i )\right) _{1\leq i\leq n}.
   \end{align*}
   Let $\sigma \in W_K$ and set $n_{\sigma}=v(\Art _K^{-1}(\sigma))$.
   Then $\sigma \in W_K$ acts on $\widetilde{H} ^n_{\calO _C}$ as
   the composite of $1\times \sigma \colon \widetilde{H} ^n_{\calO _C}\times _{\calO _C} \Spf \calO _C\to \widetilde{H} ^n_{\calO _C}\times _{\calO _C, \sigma}\Spf \calO _C$ 
   and 
   $\varphi _D^{-n_{\sigma}} \colon \widetilde{H} ^n_{\calO _C}\times _{\calO _C, \sigma}\Spf \calO _C
   \to \widetilde{H} ^n_{\calO _C}\times _{\calO _C} \Spf \calO _C$.
   
   For each $i$, the coordinate $X_i^{q^{-l}}$ defines $\bX _i=(X_i^{q^{-l}})_{l\geq 0}\in \Nil ^{\flat}(B_n)$.
   One can easily check that 
   via $\lambda$ 
   each of the above induces an automorphism of the coordinate ring $B_n$ of $\Nil ^{\flat, n}_{\calO _C}$ as follows:
   \begin{align}
   \label{eq:GLactiononLT}
   &g^{\ast}\colon \bX _i
   \mapsto (\Nil ^{\flat}_{H_0})\sum _{1\leq j\leq n} [g_{j, i}]_{\Nil ^{\flat}_{H_0}}(\bX _j ) 
   \quad \text{ for $1\leq i\leq n$}, \\
   \label{eq:DactiononLT}
   &d^{\ast}\colon \bX _i
   \mapsto [d^{-1}]_{\Nil ^{\flat}_{H_0}}(\bX _i )
   \quad \text{ for $1\leq i\leq n$}, \\
   \label{eq:WeilactiononLT}
   &\sigma ^{\ast} \colon
   \bX _i\mapsto \bX _i^{q^{-n_{\sigma}}}, \quad x\mapsto \sigma (x), 
   \quad \text{for $1\leq i\leq n$ and $x\in \calO _{C}$}.
   \end{align}
   Here we indicate the image of $X_i^{q^{-l}}$ for $l\geq 0$ all at once
   by writing the image of $\bX _i$.
   We often use similar notation throughout the paper;
   for a ring homomorphism $f\colon R_1\to R_2$ and $\br =(r^{q^{-l}})_{l\geq 0} \in \Nil ^{\flat}(R_1)$,
   we may write the image of $\br$ 
   by the induced map $\Nil ^{\flat}(R_1)\to \Nil ^{\flat}(R_2)$
   to indicate $f(r^{q^{-l}})$ for $l\geq 0$
   concisely.
   
   Similarly, we have a natural action of $K^{\times}\times W_K$ on $\widetilde{\wedge H} _{\calO _C}$,
   which we regard as that of $G$ by means of $N_G|_{\GL _n(K)\times D^{\times}}$ 
   so that $\delta \colon \widetilde{H} ^n_{\calO _C}\to \widetilde{\wedge H} _{\calO _C}$ is $G$-equivariant. 
   We only record the action on $B_1$ induced via $\lambda'$:
   \[
   (g, d, \sigma)^{\ast} \colon 
   \bT \mapsto [N_G((g, d, 1))^{-1}{\varpi'} ^{-n_{\sigma}}]_{\Nil ^{\flat}_{\wedge H_0}}(\bT), \quad x\mapsto \sigma (x), 
   \quad \text{for $x\in \calO _{C}$},
   \]
   where $n_{\sigma}=v(\Art _K^{-1}(\sigma))$ as before.
   
   We inflate the natural group action on $\calM _{\wedge H_0, \infty, \calO _C}$ to $G^0$ in the following way.
   Let $(g, d, \sigma)\in G^0$ and put $u=N_G((g, d, \sigma)) \in U_K$.
   Then $(g, d, \sigma)$ acts on $\calM _{\wedge H_0, \infty, \calO _C}=\Spf \Cont (U_K, \calO _C)$ as
   \begin{equation} \label{eq:actiononCont}
   (g, d, \sigma)^{\ast} \colon \Cont (U_K, \calO _C)\to \Cont (U_K, \calO _C); \ (a_x)_{x\in U_K}\mapsto (\sigma (a_{u^{-1}x}))_{x\in U_K}.
   \end{equation}
   Again $\widetilde{\alpha} _1\colon \calM _{\wedge H_0, \infty, \calO _C}=\Spf \Cont (U_K, \calO _C)\to \widetilde{\wedge H} _{\calO _C}$
   is $G^0$-equivariant.
   
   The action of $G^0$ on $\calM _{H_0, \infty, \calO _C}$ that we shall study is the one induced by the Cartesian diagram
   and the two equivariant morphisms.
   
   \subsection{Notation related to $\Nil ^{\flat}$} \label{subsec:nilnotation}
   Let $R\in \Adic _{\calO _C}$.
   While we are mainly interested in elements of $\Nil ^{\flat}(R)$,
   we introduce useful notation for elements of 
   a larger set $\Nil (R)^{\bbZ _{\geq 0}}$.
   \begin{Def} \label{Def:Nil}
   Let $\bx =(x_l)_{l\geq 0}, \by =(y_l)_{l\geq 0} \in \Nil (R)^{\bbZ _{\geq 0}}$.
   \begin{enumerate}[(1)]
   \item 
   We define
   \begin{align*}
   \bx +\by&=(x_l+y_l)_{l\geq 0}, \\
   -\bx &=(-x_l)_{l\geq 0}, \\
   \bx \by&=(x_ly_l)_{l \geq 0}.
   \end{align*}
   \item \label{item:ineqforNil}
   For $\tau \in \Spa (R, R)$ 
   we write 
   \[
   \lvert \bx (\tau) \rvert \leq \lvert \by (\tau) \rvert
   \]
   to mean 
   \[
   \lvert x_l (\tau) \rvert \leq \lvert y_l (\tau) \rvert \quad \text{ for all $l\geq 0$}.
   \]
   We also define the notation $\lvert \bx (\tau) \rvert < \lvert \by (\tau) \rvert$
   and $\lvert \bx (\tau) \rvert = \lvert \by (\tau) \rvert$ similarly.
   \end{enumerate}
   \end{Def}
   Thus one needs to be careful 
   of the difference between
   $\bx +_{\Nil ^{\flat}_{H_0}}\by$ and
   $\bx +\by$ for $\bx, \by \in \Nil ^{\flat}(R)$
   (in mixed characteristic).
   These expressions will simplify the exposition
   and also clarify the argument later.
   (See Lemma \ref{lem:app2}, for instance, for a typical use of these expressions.)

\section{Affinoids and the reductions of formal models} \label{sec:affandred}
 \subsection{CM points} \label{subsec:CMpts}
 We briefly review
 some facts on CM points,
 following \cite[\S3.1]{BWMax} and \cite[\S1.3, \S2.1]{ITepitame}.
 
 For a deformation $H'$ of $H_0$
 over $\calO _{C}$,
 we set
 \begin{align*}
 &T_{\frakp}H' =\varprojlim H' [\frakp ^m](\calO _{C}), \\ 
 &V_{\frakp}H' =T_{\frakp}H' \otimes _{\calO _K}K,
 \end{align*}
 where each transition map $H' [\frakp ^{m+1}](\calO _{C})\to H' [\frakp ^m](\calO _{C})$
 is the multiplication by $\varpi$.
 By 
 \cite[Definition 2.10.1]{BWMax},
 a point
 $\xi \in \LTp (C)$ \footnote{More precisely, we mean $\xi \in \LTp (C, \calO _C)$ here. We use similar abbreviations below.}
 defines a corresponding triple
 $(H' , \iota , \phi)$,
 where $H'$ is a formal $\calO _K$-module
 over $\calO _{C}$,
 $\iota \colon H_0\stackrel{\sim}{\rightarrow} H' \otimes _{C}\overline{k}$
 is an isomorphism 
 and $\phi \colon \calO _K^n\stackrel{\sim}{\rightarrow} T_{\frakp}H'$
 is an isomorphism of $\calO _K$-modules. 
 
 \begin{Def}
  Let $L\subset C$ be a separable extension of $K$ of degree $n$
  and let $H'$ be a deformation of $H_0$ to $\calO _{C}$.
  
  We say 
  that $H'$ has CM by $L$
  if there exists a $K$-isomorphism
  $L\xrightarrow{\sim} (\End H')\otimes _{\calO _K}K$
  such that
  the induced homomorphism
  $L\rightarrow \End (\Lie H')\otimes _{\calO _K}K\simeq C$
  agrees with the inclusion
  $L\subset C$.
  We also say 
  that $\xi \in \LTp (C)$
  has CM by $L$
  if the corresponding deformation has
  CM by $L$.
 \end{Def}
 Note that the $K$-isomorphism in the definition
 is determined uniquely
 by the compatibility with the induced homomorphism,
 if it exists.
  
 A point $\xi \in \LTp (C)$
 with CM by $L$
 defines $K$-embeddings
 $i_{\xi}\colon L\rightarrow M_n(K); \ x\mapsto i_{\xi}(x)$
 and
 $i_{\xi}^D\colon L\rightarrow D; \ x\mapsto i_{\xi}^D(x)$
 by the commutativity of the following diagrams
 \[
 \xymatrix{
 K^n \ar[r]^(0.45){\phi \otimes \text{id}} \ar[d]_{i_{\xi}(x)} &V_{\frakp}H' \ar[d]^{V_{\frakp} (x)}  \\
 K^n \ar[r]_(0.45){\phi \otimes \text{id}} &V_{\frakp}H',
 }
 \qquad 
 \xymatrix{
 H_0 \ar[r]^(0.35){\iota} \ar[d]_{i_{\xi}^D(x)} &H'\otimes _{\calO _C} \overline{k} \ar[d]^{x\otimes \text{id}} \\
 H_0 \ar[r]_(0.35){\iota} &H'\otimes _{\calO _C} \overline{k},
 }
 \]
 where the second diagram is considered in the category with isogenies inverted.
 We set $\Delta _{\xi}=(i_{\xi}, i_{\xi}^D)\colon L\rightarrow M_n(K)\times D$.
 After we fix our choice of CM point using Proposition \ref{Prop:explicitCM},
 we will usually suppress $\Delta _{\xi}$ from the notation
 and simply consider $L$ as embedded in $M_n(K)\times D$.

 The following are consequences of the Lubin-Tate theory
 as proved in \cite[Lemmas 3.1.2, 3.1.3]{BWMax} (see also \cite[Lemmas 1.9, 3.4]{ITepitame}).
 \begin{Prop} \label{Prop:S}
 Let $L$ and $\xi \in \calM _{H_0, \infty, \overline{\eta}}^{\emph{ad}} (C)$ be as above.
 \begin{enumerate}[(1)]
 \item \label{item:stabofCMinalggps}
 The group $G^0\cap (\GL _n(K)\times D^{\times})$ acts transitively
  on the set of points on $\calM _{H_0, \infty, \overline{\eta}}^{\emph{ad}}(C)$ with CM by $L$.
  The stabilizer of $\xi$ in this group is $\Delta _{\xi}(L^{\times})$.
 \item \label{item:defofL'}
 Define $L'\subset \overline{K}$ as the finite separable extension for which
 $
 W_{L'}=\{ \sigma \in W_K \mid \sigma (L)=L\}.
 $
 Then, for an element $\sigma \in W_K$,
 the translation 
  $\xi ^{(1, \varphi _D^{-n_{\sigma}}, \sigma )}$ has CM by $L$
 if and only if $\sigma \in W_{L'}$.
 \item \label{item:jinsimplecase}
 If $\sigma \in W_L$, then $\xi ^{(1, (\Art _L^{-1}\sigma )^{-1}, \sigma)} =\xi$.
 \end{enumerate}
 \end{Prop}
 
 By \ref{item:stabofCMinalggps} and \ref{item:defofL'}, for any $\sigma \in W_{L'}$,
 there exists an element $(g, d)\in \GL _n(K)\times D^{\times}$,
 uniquely up to multiplication by $\Delta_{\xi}(L^{\times})$,
 such that $(g, d, \sigma)\in G^0$ and
 $\xi ^{(g, d, \sigma)}=\xi$.
 We define a map $j_{\xi} \colon W_{L'}\rightarrow \Delta_{\xi}(L^{\times}) \backslash (\GL _n(K)\times D^{\times})$
 by $j_{\xi}(\sigma )=\Delta_{\xi}(L^{\times})(g, d)$.
 Then the stabilizer $\calS$ of $\xi$ in $G^0$ is 
 \begin{equation} \label{eq:stabofxi}
 \calS =\{ (g, d, \sigma )\in \GL _n(K)\times D^{\times} \times W_{L'}\mid j_{\xi}(\sigma )=\Delta_{\xi}(L^{\times})(g, d) \}.
 \end{equation}
 The assertion \ref{item:jinsimplecase} says
 that $j_{\xi}(\sigma )=\Delta_{\xi}(L^{\times})(1, (\Art _L^{-1}\sigma )^{-1})$ 
 if $\sigma \in W_L$.
 
 Let $\calN _1$ and $\calN _2$ be the normalizers of $i_{\xi}(L^{\times})$ and
 $i_{\xi}^D(L^{\times})$ in $\GL _n(K)$ and $D^{\times}$ respectively.
 Then by Skolem-Noether's theorem
 both the groups are extensions of $\Gal (L/L')$ by $L^{\times}$.
 In particular, we have canonical morphisms $\calN _1\to \Gal (L/L')$
 and $\calN _2\to \Gal (L/L')$,
 with which we form a fibered product $\calN =\calN _1\times _{\Gal (L/L')}\calN _2$.
 Let $W_{L/L'}=W_{L'}/{\overline{[W_L, W_L]}}$ be the relative Weil group.
 
 \begin{Prop} (\cite[Proposition 3.1.4]{BWMax}) \label{Prop:j}
 We have the following.
 \begin{enumerate}[(1)]
 \item \label{imofj}
 Let $\sigma \in W_{L'}$.
 If $j_{\xi}(\sigma)=\Delta _{\xi}(L^{\times})(g, d, 1)$,
 then 
 \[
 \sigma (x)=gxg^{-1}=dxd^{-1}
 \]
 for $x\in L^{\times}$.
 In particular, $j_{\xi}(\sigma)\in \Delta _{\xi}(L^{\times}) \backslash \calN$.
 \item \label{jasisom}
 The map $j_{\xi}\colon W_{L'}\rightarrow \Delta_{\xi}(L^{\times}) \backslash (\GL _n(K)\times D^{\times})$
 induces an isomorphism of groups
 $j_{\xi}\colon W_{L/L'}\stackrel{\sim}{\rightarrow} \Delta_{\xi}(L^{\times}) \backslash \calN$.
 \end{enumerate}
 \end{Prop}

 Now let $n\geq 2$
 and assume $p\nmid n$.
 We put $n_q=\gcd (n, q-1)$.
 
 For any uniformizer $\varpi \in K$,
 we set $L_{\varpi}=K[X]/(X^n-\varpi)$.
 \begin{lem} (\cite[Lemma 2.1]{ITepitame}) \label{lem:Lpi}
  Let $T(K, n)$ be the set of isomorphism classes of totally ramified
  extensions of $K$ of degree $n$.
  Then the following map is a bijection$:$
  \[
  \mu _{(q-1)/{n_q}}(K)\backslash (\frakp - \frakp^2)/{\frakp ^2}
  \rightarrow T(K, n); \ \varpi \mapsto L_{\varpi}.
  \] 
 \end{lem}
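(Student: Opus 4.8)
The plan is to treat surjectivity and injectivity of $\varpi\mapsto L_{\varpi}$ separately; the common tool is the equal-characteristic splitting $\calO_F^{\times}=k_F^{\times}\times(1+\frakp_F)$ for a local field $F$, together with two easy consequences of $p\nmid n$: the $n$-th power map is bijective on the pro-$p$ group $1+\frakp_F$, and $(k^{\times})^n$ is exactly $\mu_{(q-1)/n_q}(K)$, the unique subgroup of the cyclic group $k^{\times}\subset K^{\times}$ of index $n_q$. Note also that $X^n-\varpi$ is Eisenstein, so $L_{\varpi}$ is genuinely a totally ramified extension of $K$ of degree $n$; the map does land in $T(K,n)$.

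For surjectivity I would invoke the structure theory of tamely ramified extensions. Given $L\in T(K,n)$, choose a uniformizer $\pi_L$; then $\pi_L^n=u\varpi_0$ for some uniformizer $\varpi_0$ of $K$ and some $u\in\calO_L^{\times}$. Since $\calO_L^{\times}=k^{\times}(1+\frakp_L)$ and $(1+\frakp_L)^n=1+\frakp_L$, one has $\calO_L^{\times}=\calO_K^{\times}\cdot(\calO_L^{\times})^n$, so we may write $u=vw^n$ with $v\in\calO_K^{\times}$ and $w\in\calO_L^{\times}$. Then $\pi:=\pi_Lw^{-1}$ is again a uniformizer of $L$ and $\pi^n=v\varpi_0=:\varpi$ is a uniformizer of $K$; as $X^n-\varpi$ is Eisenstein, $K(\pi)/K$ is totally ramified of degree $n$, so $K(\pi)\subseteq L$ and $[L:K]=n$ force $L=K(\pi)\simeq L_{\varpi}$.

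For well-definedness and injectivity I would first establish the reformulation: for uniformizers $\varpi,\varpi'$ of $K$, one has $L_{\varpi}\simeq L_{\varpi'}$ over $K$ if and only if $\varpi'/\varpi\in(\calO_K^{\times})^n$. Granting this, since $(\calO_K^{\times})^n=(k^{\times})^n(1+\frakp_K)=\mu_{(q-1)/n_q}(K)(1+\frakp_K)$ and $\varpi(1+\frakp_K)=\varpi+\frakp^2$, the condition $\varpi'/\varpi\in(\calO_K^{\times})^n$ says exactly that $\varpi'\equiv\zeta\varpi\pmod{\frakp^2}$ for some $\zeta\in\mu_{(q-1)/n_q}(K)$, i.e. that $\varpi$ and $\varpi'$ have the same image in $\mu_{(q-1)/n_q}(K)\backslash(\frakp-\frakp^2)/\frakp^2$; this yields well-definedness and injectivity simultaneously. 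The ``if'' direction of the reformulation is immediate: if $\varpi'=w^n\varpi$ with $w\in\calO_K^{\times}$ and $\pi^n=\varpi$ in $L_{\varpi}$, then $X\mapsto w\pi$ defines a nonzero, hence (by a dimension count over $K$) bijective, $K$-algebra map $L_{\varpi'}\to L_{\varpi}$. For ``only if'', a $K$-isomorphism $L_{\varpi'}\xrightarrow{\sim}L_{\varpi}$ carries the canonical root of $X^n-\varpi'$ to some $\alpha\in L_{\varpi}$ with $\alpha^n=\varpi'$; since $v_{L_{\varpi}}(\alpha)=\tfrac1n v_{L_{\varpi}}(\varpi')=1$, $\alpha$ is a uniformizer of $L_{\varpi}$, so $\alpha=u\pi$ with $u\in\calO_{L_{\varpi}}^{\times}$ and $\varpi'/\varpi=u^n$. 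Writing $u=\mu u_1$ with $\mu\in\mu_{q-1}(L_{\varpi})=k^{\times}$ and $u_1\in1+\frakp_{L_{\varpi}}$ and reducing modulo $\frakp_{L_{\varpi}}$, the residue of $\varpi'/\varpi$ equals $\mu^n\in(k^{\times})^n$, and since $(1+\frakp_K)^n=1+\frakp_K$ this forces $\varpi'/\varpi\in(k^{\times})^n(1+\frakp_K)=(\calO_K^{\times})^n$.

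The step I expect to be the real obstacle — the one genuinely using equal characteristic — is the unit-group bookkeeping feeding both halves: for surjectivity, that an $L$-uniformizer can be rescaled by an $L$-unit so its $n$-th power falls into $K$; for injectivity, that an $n$-th power identity over $L_{\varpi}$ between elements of $K^{\times}$ descends to one over $K$. Both reduce to the splitting $\calO_F^{\times}=k_F^{\times}\times(1+\frakp_F)$ plus unique $n$-divisibility of $1+\frakp_F$, which are transparent here but would need care in mixed characteristic. Everything else is the routine Eisenstein-and-dimension argument; as a consistency check, $\mu_{(q-1)/n_q}(K)$ acts freely on the $(q-1)$-element set $(\frakp-\frakp^2)/\frakp^2$ (as $\zeta\in k^{\times}$ with $\zeta\equiv1\bmod\frakp$ forces $\zeta=1$), so the source has $n_q$ elements, recovering the classical count $\#T(K,n)=n_q$.
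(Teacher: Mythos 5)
Your proof is correct: the surjectivity argument (rescale a uniformizer of $L$ by an $L$-unit using $\calO_L^{\times}=k^{\times}(1+\frakp_L)$ and unique $n$-divisibility of $1+\frakp_L$, then conclude via the Eisenstein polynomial and a degree count) and the injectivity/well-definedness reduction to $\varpi'/\varpi\in(\calO_K^{\times})^n=\mu_{(q-1)/n_q}(K)(1+\frakp_K)$ are both sound, and the identification $(k^{\times})^n=\mu_{(q-1)/n_q}(K)$ is used correctly. The paper gives no proof of its own, citing \cite[Lemma 2.1]{ITepitame} instead, and your argument is exactly the standard tame Kummer-theoretic one underlying that reference, so nothing further is needed.
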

 
 Let $L/K$ be a totally ramified extension of degree $n$ in $\overline{K}$.
 From this point on,
 we work with this fixed field $L$.
 Although we do not indicate in the notation
 the constructions to follow 
 depend on the choice of $L$.
 
 By Lemma \ref{lem:Lpi},
 there exists a uniformizer $\varphi _L\in L$
 such that $\varpi =\varphi _L^n \in K$.
 We apply the arguments of Section \ref{section:LTperf}
 with respect to this uniformizer $\varpi \in K$.
 In particular, 
 $H$, $H_0$, $\varphi _D\in D$
 and $\wedge H$
 are defined.
 We set
 \[
 \varphi =\begin{pmatrix}
             \bm{0} & I_{n-1} \\
             \varpi & \bm{0}
             \end{pmatrix}
             \in M_n(K).
 \]
 Note that
 $\sigma \in W_K$
 lies in $W_{L'}$
 if and only if
 $\varphi _L^{-1}\sigma (\varphi _L)\in \mu _{n_q}(K)$.
 
  For a point $\xi \in \LTp (C)$,
  we write $(\bxi _1, \dots , \bxi _n)\in \Nil ^{\flat, n, \text{ad}}_{\overline{\eta}}(C)=\Nil ^{\flat, n}(\calO _C)$
  for the image.
  We also write $\bxi _i=(\xi _i^{q^{-l}})_{l\geq 0}$ for $1\leq i\leq n$.
  \begin{Prop} \label{Prop:explicitCM}
  Let $L\subset C$, $\varphi _L\in L$, $\varpi \in K$ be as above.
  There exists a point $\xi \in \calM _{H_0, \infty, \overline{\eta}}^{\emph{ad}} (C)$ 
  with CM by $L$ 
  satisfying the following conditions$:$
  \begin{enumerate}[(1)]
   \item $\bxi _i=\bxi _{i+1}^q$ for $1\leq i\leq n-1$.
   \item $v(\xi _i)=1/(nq^{i-1}(q-1))$ for $1\leq i\leq n$.
   \item $i_{\xi}(\varphi _L)=\varphi$, $i_{\xi}^D(\varphi _L)=\varphi _D$.
   \item \label{item:Weilactiononxi}
   For any $\sigma \in W_{K}$,
   we have
   $\overline{\xi _1^{-1} \sigma (\xi _1)}^{q-1} = \overline{\varphi _L^{-1}\sigma (\varphi _L)}$
   in $\overline{k}$.
  \end{enumerate}
  \end{Prop}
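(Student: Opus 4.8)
The plan is to produce the point $\xi$ explicitly as a CM point by $L$, and then verify properties (1)--(4) one at a time. First I would recall from Subsection \ref{subsec:CMpts} that any deformation with CM by $L$ corresponds, via $\Delta_\xi$, to a $K$-embedding $L \hookrightarrow M_n(K) \times D$, and by Proposition (part (\ref{item:stabofCMinalggps})) the group $G^0 \cap (\GL_n(K) \times D^\times)$ acts transitively on the set of CM points by $L$; hence it suffices to produce \emph{one} such point and then translate it by a suitable element of this group so that the normalization conditions hold. The natural candidate is the point whose associated embeddings send $\varphi_L$ to $\varphi$ and $\varphi_D$ respectively, which is condition (3); the content of the proposition is that after a harmless translation one can simultaneously arrange the coordinate conditions (1), (2) and the Weil-group compatibility (4).

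The key computation is to pin down the coordinates $(\xi_1,\dots,\xi_n)$. Here I would use the explicit description of the $\GL_n(K)$- and $D^\times$-actions on $B_n$ from \eqref{eq:GLactiononLT}, \eqref{eq:DactiononLT}, together with the fact (Proposition part (\ref{item:jinsimplecase})) that for $\sigma \in W_L$ one has $(1,(\Art_L^{-1}\sigma)^{-1},\sigma)^\ast \xi = \xi$. Condition (3), $i_\xi^D(\varphi_L) = \varphi_D$, says that the image of $\varphi_L$ in $D$ acts on the coordinates via $X_i \mapsto X_i^q$; combined with the requirement that the point be fixed by the corresponding element of $\calS$, this forces a relation among the $\xi_i$ of the form $\xi_i = \xi_{i+1}^q$, giving (1). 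For (2), I would compute the valuation of $\xi_1$ using the determinant map $\delta$: the point $\xi$ lies over a point $t$ of $(\widetilde{\wedge\Sigma}_{0,\infty,\overline{\eta}})^{\mathrm{ad}}$ with $v(t) = 1/(q-1)$ (recall $v(t)=1/(q-1)$ from Section \ref{section:LTperf}), and $t = \delta(\xi_1,\dots,\xi_n)$; since $\xi_i = \xi_{i+1}^q$ and $v(\xi_i)$ is decreasing, the Moore determinant $\delta$ is dominated by its leading term, yielding $v(t) = v(\Delta(\xi_1,\dots,\xi_n)) = \sum_{j=1}^n v(\xi_j) q^{j-1}$ up to the shift in $\delta$, which together with (1) solves to $v(\xi_i) = 1/(nq^{i-1}(q-1))$. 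The choice of translating element can then be fixed (up to $\Delta_\xi(L^\times)$) to make the leading coefficients come out correctly.

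Finally, for (4) I would argue as follows. Given $\sigma \in W_{L'}$, we have $\varphi_L^{-1}\sigma(\varphi_L) \in \mu_{n_q}(K)$, and we may pick $\zeta \in \overline{K}$ with $\zeta^{q-1} = \varphi_L^{-1}\sigma(\varphi_L)$ (possible since $\mu_{n_q}(K) \subset \mu_{q-1}(K) = k^\times$ consists of $(q-1)$-st powers in $\overline{K}$). Using the explicit Weil action \eqref{eq:WeilactiononLT}, $\sigma^\ast$ sends $X_1 \mapsto X_1^{q^{-n_\sigma}}$ and acts on scalars by $\sigma$; and $\sigma$ translates the CM point by an element of $\calS$ of the shape $(g,d,\sigma)$ with $d \equiv$ (a lift of) $\zeta$ in the appropriate sense. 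Tracking the leading coefficient of $\xi_1$ under this composite — which amounts to reducing the action mod $\frakp_{\overline{K}}$, where the higher-order terms in \eqref{eq:DactiononLT} and \eqref{eq:GLactiononLT} vanish — gives $\xi_1^{-1}\sigma(\xi_1) \equiv \zeta \pmod{\frakp_{\overline{K}}}$.

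The main obstacle I anticipate is bookkeeping in step (4): matching the leading coefficient of $\xi_1$ under the combined Weil/$\GL_n/D^\times$ action requires carefully unwinding how the element $(g,d,\sigma) \in \calS$ with $(g,d,\sigma)^\ast\xi = \xi$ depends on $\sigma$, i.e. computing the map $j_\xi$ beyond the case $\sigma \in W_L$ handled by part (\ref{item:jinsimplecase}), and keeping track of all the twists by $(-1)^{n-1}$ and powers of $q$ coming from the model of $\widehat{\wedge\Sigma_0}$ and the normalization $\varpi' = (-1)^{n-1}\varpi$. The valuation bookkeeping in (2) is routine once (1) is in hand, since the Moore determinant's valuation is governed entirely by its dominant monomial when the $v(\xi_i)$ are strictly decreasing in the right pattern.
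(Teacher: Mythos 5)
Your derivation of (1) from the normalization (3) (via the fixed-point property of $\Delta _{\xi}(\varphi _L)=(\varphi , \varphi _D)$) and of (2) from the determinant map (the dominant monomial of $\delta$ under the chain $\xi _i=\xi _{i+1}^q$, using that the image in $\LTpone$ has valuation $1/(q-1)$) is sound, and this is genuinely a different route from the paper, which simply invokes the explicit construction of the CM point in \cite[Lemma 2.2]{ITepitame} (division points of an explicit Lubin--Tate formal $\calO _L$-module) and reads all four properties off that construction. However, your argument has a real gap at (4), and a smaller one at the start. For (4) you assert that the element of $\calS$ lying over $\sigma \in W_{L'}$ has the shape $(g, d, \sigma)$ with $d\equiv$ a lift of $\zeta$ ``in the appropriate sense''; but this is precisely the content of (4), not something the CM formalism of Subsection \ref{subsec:CMpts} provides. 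Proposition 2.2 pins down $j_{\xi}(\sigma)$ only for $\sigma \in W_L$ (part (\ref{item:jinsimplecase})), where indeed your leading-coefficient computation goes through and yields $\zeta \in \mu _{q-1}$; for $\sigma \in W_{L'}\setminus W_L$ the formalism gives only existence of $(g,d)$ up to $\Delta _{\xi}(L^{\times})$, with no control on its leading coefficient in $k_n^{\times}$, so nothing forces the reduction of $\xi _1^{-1}\sigma (\xi _1)$ to be a $(q-1)$-st root of $\varphi _L^{-1}\sigma (\varphi _L)$. Note also that the paper itself, in the proof of Theorem \ref{Thm:stabandaction}, uses (4) as an input to determine the Weil components of $\Stab _{\nu}$, so one cannot recover it from those computations without circularity. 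The missing ingredient is exactly an explicit handle on $\xi _1$: e.g.\ its realization as a limit of $\varphi _L$-division points, on which the action of $\sigma \in W_{L'}$ can be computed by comparing the formal modules attached to $\varphi _L$ and to $\sigma (\varphi _L)=\zeta ^{q-1}\varphi _L$.

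Secondly, your opening step presupposes that a point with CM by $L$ exists: the transitivity statement in Proposition 2.2 (\ref{item:stabofCMinalggps}) acts on the set of such points but does not produce one. Existence again comes from an explicit construction (a height-one formal $\calO _L$-module viewed as a formal $\calO _K$-module of height $n$, with a compatible system of division points), which is exactly what \cite[Lemma 2.2]{ITepitame} supplies; once you have that construction in hand, properties (1)--(3) can be arranged as you describe (or read off directly), and (4) follows from it rather than from the abstract translation argument.
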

  \begin{proof}
  This is essentially \cite[Lemma 2.2]{ITepitame},
  where a construction of $\xi \in \LTp (C)$
  is given.
  The assertion \ref{item:Weilactiononxi} is not stated there 
  but can be deduced as follows.
  
  The underlying CM formal $\calO _K$-module of $\xi$ is
  the lift $\calG ^L$ of $H_0$ considered as a formal $\calO _L$-module of height one
  by the embedding $L=K(\varphi _L)\to D$,
  and the Drinfeld level structure on $\calG ^L$ as a formal $\calO _K$-module is
  given based on a Drinfeld level structure on $\calG ^L$ as a formal $\calO _L$-module.
  Thus let $t_{L, 1}\in C$ be a non-zero root of $[\varphi _L]_{\calG ^L}(X)$ satisfying
  $t_{L, 1}\equiv \xi _1 \pmod{\varphi _L\calO _C}$ as in \cite[Lemma 2.2]{ITepitame}.
  By the congruence we see that $\overline{t_{L, 1}^{-1}\sigma (t_{L, 1})}=\overline{\xi _1^{-1}\sigma (\xi _1)}$.
  On the other hand, 
  since $\calG ^L$ is defined over $\calO _{\widehat{L} ^{\text{ur}}}$
  and we have 
  \[
  [\varphi _L]_{\calG ^L}(X)\equiv X^q \pmod{\varphi _L\calO _{\widehat{L} ^{\text{ur}}}[[X]]}, \quad
  [\varphi _L]_{\calG ^L}(X)\equiv \varphi_LX \pmod{\text{deg 2}},
  \]
  we compare the equation $[\varphi _L]_{\calG ^L}(t_{L, 1})=0$ with its translate by $\sigma$ to see
  $\overline{\varphi _L^{-1}\sigma (\varphi _L)} =\overline{t_{L, 1}^{-1}\sigma (t_{L, 1})} ^{q-1}$
  as desired.
  \end{proof}
  The continuous $\calO _C$-algebra homomorphism
  \[
  \calO _C[[T^{q^{-\infty}}]]\to \calO _C; \ \bT \mapsto \Delta (\bxi _1, \dots, \bxi _n)
  \]
  factors through \eqref{eq:alpha1onring},
  which is to say, $\Delta (\bxi _1, \dots, \bxi _n)=\Art _K(x)(\bt)$
  for some $x\in U_K$.
  We replace the choice of \eqref{eq:systemt}
  by the translates by $\Art _K(x)$ (cf.\ \cite[\S2.1]{ITepitame}),
  so that 
  \begin{equation} \label{eq:tasdet}
  \Delta (\bxi _1, \dots, \bxi _n)=\bt.
  \end{equation}
  
  \subsection{Construction of affinoids} \label{subsec:defofaf}
  We put
  \begin{align*}
  \bY _i &=\bX _i-_{\Nil ^{\flat}_{H_0}}\bxi _i \in \Nil ^{\flat}(B_n) \quad \text{ for all $1\leq i\leq n$}, \\
  \bZ &=(\Nil ^{\flat}_{H_0})\sum _{1\leq i\leq n} [\varphi _D^{i-1}]_{\Nil ^{\flat}_{H_0}}(\bY _i) \\
        &=(\Nil ^{\flat}_{H_0})\sum _{1\leq i\leq n} \bY _i^{q^{i-1}} \in \Nil ^{\flat}(B_n).
  \end{align*}
  By writing $\bY _i=(Y_i^{q^{-l}})_{l\geq 0}$, $\bZ =(Z^{q^{-l}})_{l\geq 0}$
  we define elements $Y_i^{q^{-l}}, Z^{q^{-l}}\in B_n$.
  
  From now on, we fix a valuation $\lvert \cdot \rvert$ of rank one on $C$ inducing the natural topology.
  Accordingly, for a point $\tau$ of an adic space over $C$ and $c\in C$ we simply write $\lvert c\rvert$ for $\lvert c(\tau)\rvert$.

  For each integer $\nu >0$, 
  we define an affinoid
  $\calX _{\nu} \subset \Nil ^{\flat, n, \text{ad}}_{\overline{\eta}}$ 
  by
  \begin{equation*}
   \lvert \bZ (\tau) \rvert \leq \lvert \bxi _1 \rvert ^{q^{\nu}}, 
   \ \ \lvert \bY _i(\tau) \rvert \leq \begin{cases}
                              \lvert \bxi _i \rvert ^{q^{\mu} (q+1)/2} &\text{ if $\nu =2\mu +1$ is odd} \\
                               \lvert \bxi _i \rvert ^{q^{\mu}} &\text{ if $\nu =2\mu$ is even}
                              \end{cases} 
  \end{equation*}
  (in the notation of Definition \ref{Def:Nil} \ref{item:ineqforNil})
  for all $1\leq i\leq n$,
  and an affinoid
  $\calZ _{\nu} \subset \LTp$ by
  the pull-back of $\calX _{\nu}$ in $\LTp$.
  Note that, for instance, $\calX _{2\mu}$ can equally be defined as
  the rational subset 
  \[
  R\left( \frac{ Z}{\xi _1^{q^{\nu}}}, \frac{  Y_1 }{\xi _1^{q^{\mu}}}, \dots, \frac{  Y_n }{\xi _n^{q^{\mu}}}\right) \subset \Spa (B_n, B_n)
  \]
  and is indeed an affinoid.
  
  Take a square root $\xi _n^{q^{-l}/2}$ of $\xi _n^{q^{-l}}$ for each $l\geq 0$
  so that $(\xi _n^{q^{-l}/2})_{l\geq 0} \in \Nil ^{\flat}(\calO _C)$ 
  and put $\xi _i^{q^{-l}/2}=(\xi _n^{q^{-l}/2})^{q^{n-i}}$.
  We regard 
  \[
  \scrX _{\nu}=\Spf \calO _{C} \langle {z'}^{q^{-\infty}}, {y'_2}^{q^{-\infty}}, \dots , {y'_n}^{q^{-\infty}}\rangle
  \]
  as a formal model of $\calX _{\nu}$ by
  \begin{align}
   &(\calO _{C} \langle {z'}^{q^{-\infty}}, {y'_2}^{q^{-\infty}}, \dots , {y'_n}^{q^{-\infty}}\rangle, \calO _{C} \langle {z'}^{q^{-\infty}}, {y'_2}^{q^{-\infty}}, \dots , {y'_n}^{q^{-\infty}}\rangle) \nonumber \\
   &\qquad \qquad \qquad \qquad \qquad \qquad \qquad \to (\Gamma (\calX _{\nu}, \calO _{\calX _{\nu}}), \Gamma (\calX _{\nu}, \calO _{\calX _{\nu}}^{+})) \nonumber \\
   &\bz' \mapsto \bxi _1^{-q^{\nu }}\bZ, \quad \by' _i \mapsto \begin{cases} \label{eq:capitaltoprime}
                                          \bxi _i^{-q^{\mu } (q+1)/2}\bY _i &\text{ if $\nu =2\mu +1$ is odd} \\
                                          \bxi _i^{-q^{\mu }}\bY _i &\text{ if $\nu =2\mu$ is even},
                                          \end{cases}
  \end{align}
  which induces an isomorphism of $\calX _{\nu}$ and the adic generic fiber of $\scrX _{\nu}$. 
  Here we put $\bz'=({z'}^{q^{-l}})_{l\geq 0}$, $\by' _i=({y'_i}^{q^{-l}})_{l\geq 0}$.
  
  To construct a formal model of $\calZ _{\nu}$
  and to study its special fiber,
  we prove several lemmas
  on the approximation 
  of elements of $\Nil ^{\flat}$ (Subsection \ref{subsec:lemsonnil}) and
  of the determinant morphism  
  (Subsection \ref{subsec:lemsondet}).  
  
 \subsection{Approximation of valued points of $\Nil ^{\flat}$} \label{subsec:lemsonnil} 
 Let $R$ be a complete adic $\calO _C$-algebra.
 Let $\Spa (R, R)_{\overline{\eta}}=\{ \tau \in \Spa (R, R)\mid \lvert \varpi (\tau)\rvert \neq 0\}$.
 In the following we use the notation introduced in Subsection \ref{subsec:nilnotation}.
 
 Recall the isomorphisms $\lambda, \lambda'$ defined in \eqref{eq:lambda}, \eqref{eq:lambda'}
 and the formula \eqref{eq:nilop0}--\eqref{eq:nilop2}.
 
 \begin{lem} \label{lem:app1}
 Let $\tau \in \Spa (R, R)_{\overline{\eta}}$.
 Let $\bc=(c^{q^{-l}})_{l\geq 0}\in \Nil ^{\flat}(\calO _C)$ such that $c\neq 0$.
 Let $\widetilde{\bx}=(\widetilde{x} _m)_{m\geq 0}\in \widetilde{H}(R)$ and 
 put $\bx =(x^{q^{-l}})_{l\geq 0}=\lambda (\widetilde{\bx})\in \Nil ^{\flat}(R)$.
 Then the following are equivalent.
 \begin{enumerate}[(i)]
 \item \label{item:tilde} For sufficiently large $m$, we have $\lvert \widetilde{x}_m (\tau)\rvert \leq  \lvert c^{q^{-nm}}\rvert$ (resp.\ $\lvert \widetilde{x}_m (\tau)\rvert < \lvert c^{q^{-nm}}\rvert$).
 \item We have $\lvert \bx (\tau)\rvert \leq \lvert \bc \rvert$ (resp.\ $\lvert \bx (\tau)\rvert < \lvert \bc \rvert$).
 \end{enumerate}
 The same equivalence holds for $\widetilde{\wedge H}$ and $\lambda'$ (with $nm$ replaced by $m$ in \ref{item:tilde}).
 \end{lem}
 \begin{proof}
 First we assume (i) and take $l\geq 0$. 
 Then for sufficiently large $m$ we have $\lvert \widetilde{x}_m^{q^{nm-l}}(\tau) \rvert \leq \lvert c^{q^{-l}}\rvert$.
 By $\lvert c\rvert \neq 0$ and by continuity of the valuation,
 the ideal $\{ y\in R\mid \lvert y(\tau)\rvert <\lvert c^{q^{-l}}\rvert \}$ is open in $R$.
 Thus, $\{ y\in R\mid \lvert y(\tau)\rvert \leq \lvert c^{q^{-l}}\rvert \}$ is also open and hence closed. 
 This shows that $x^{q^{-l}}= \lim _{m\to \infty}\widetilde{x}_m^{q^{nm-l}}$ belongs to the latter ideal, 
 which is to say, $\lvert x^{q^{-l}}(\tau)\rvert \leq \lvert c^{q^{-l}}\rvert$.
 Conversely, let us assume (ii).
 Since $[\varpi ^{l-m}]_H(X)\equiv X^{q^{n(l-m)}} \pmod{\varpi, X^{q^{n(l-m)+1}}}$ for $l\geq m$,
 we have $\lvert [\varpi ^{l-m}]_H(x^{q^{-nl}})(\tau)\rvert \leq \max \{ \lvert x^{q^{-nm}}(\tau)\rvert, \lvert \varpi \rvert \}$.
 Now for sufficiently large $m$ for which $\lvert \varpi \rvert \leq \lvert c^{q^{-nm}} \rvert$, we have
 $\lvert \widetilde{x} _m(\tau)\rvert=\lvert \lim _{l\to \infty}[\varpi ^{l-m}]_H(x^{q^{-nl}})(\tau)\rvert \leq \max \{ \lvert x^{q^{-nm}}(\tau)\rvert, \lvert \varpi \rvert \} \leq \lvert c^{q^{-nm}} \rvert$ by continuity of the valuation.
 A similar argument works if we replace $\leq \lvert c^{q^{-\ast}}\rvert$ with $<\lvert c^{q^{-\ast}}\rvert$.
 
 The assertion for $\wedge H$ and $\lambda'$ is proved in essentially the same way.
 \end{proof}
 
 \begin{lem} \label{lem:app2}
 Let $\tau \in \Spa (R, R)_{\overline{\eta}}$.
 Let $\bc=(c^{q^{-l}})_{l\geq 0}\in \Nil ^{\flat}(\calO _C)$ such that $c\neq 0$
 and let $\bx=(x^{q^{-l}})_{l\geq 0}, \by=(y^{q^{-l}})_{l\geq 0} \in \Nil ^{\flat}(R)$.
 Then the following hold.
 \begin{enumerate}[(1)]
 \item We put $\bz =(z^{q^{-l}})_{l\geq 0}=\bx+_{\Nil ^{\flat}_{H_0}} \by \in \Nil ^{\flat}(R)$.
 Assume that $\lvert \bx (\tau)\rvert \leq \lvert \bc \rvert$, $\lvert \by (\tau)\rvert \leq \lvert \bc \rvert$.
 Then we have $\lvert \bz (\tau)\rvert \leq \lvert \bc \rvert$
 and $\lvert (\bz - (\bx +\by))(\tau)\rvert < \lvert \bc \rvert$.
 \item We put $\bz =(z^{q^{-l}})_{l\geq 0}=[-1]_{\Nil ^{\flat}_{H_0}}(\bx) \in \Nil ^{\flat}(R)$.
 Assume that $\lvert \bx (\tau) \rvert \leq \lvert \bc \rvert$.
 Then we have $\lvert \bz (\tau) \rvert \leq \lvert \bc \rvert$
 and $\lvert (\bz - (-\bx))(\tau)\rvert <\lvert \bc \rvert$.
 \end{enumerate}
 The same properties hold for $\widetilde{\wedge H}$ and $\lambda'$.
 \end{lem}
 \begin{proof}
 We prove (1).
 If we put 
 $(\widetilde{x} _m)_{m\geq 0}=\lambda ^{-1}(\bx), (\widetilde{y} _m)_{m\geq 0}=\lambda ^{-1}(\by)\in \widetilde{H}(R)$,
 then by Lemma \ref{lem:app1} we have $\lvert \widetilde{x}_m(\tau)\rvert \leq \lvert c^{q^{-nm}}\rvert$, $\lvert \widetilde{y}_m(\tau)\rvert \leq \lvert c^{q^{-nm}}\rvert$ for sufficiently large $m$.
 We have $\lambda ^{-1}(\bz)=(\widetilde{z} _m)_{m\geq 0}=(\widetilde{x} _m+_H\widetilde{y} _m)_{m\geq 0}$.
 Since $X+_{H}Y\equiv X+Y \pmod{\text{deg }2}$, we have
 $\lvert (\widetilde{z} _m- (\widetilde{x} _m+\widetilde{y} _m))(\tau)\rvert <\lvert c^{q^{-nm}}\rvert$
 for sufficiently large $m$.
 This shows
 \begin{align*}
 &\lvert (\widetilde{z} _m^{q^{nm-l}}- (\widetilde{x} _m+\widetilde{y} _m)^{q^{nm-l}})(\tau)\rvert \\
 &=\lvert (\widetilde{z} _m- (\widetilde{x} _m+\widetilde{y} _m))(\tau)\rvert
 \cdot \lvert (\widetilde{z} _m^{q^{nm-l}-1}+\dots + (\widetilde{x} _m+\widetilde{y} _m)^{q^{nm-l}-1})(\tau)\rvert 
 <\lvert c^{q^{-l}}\rvert.
 \end{align*}
 On the other hand,
 since $\lvert p\rvert <1$, 
 we have 
\[
\lvert ((\widetilde{x} _m+\widetilde{y} _m)^{q^{nm-l}}-(\widetilde{x} _m^{q^{nm-l}}+\widetilde{y} _m^{q^{nm-l}}))(\tau) \rvert <\lvert c^{q^{-l}}\rvert
\] 
and therefore 
 \begin{align*}
 &\lvert (\widetilde{z} _m^{q^{nm-l}} -(\widetilde{x} _m^{q^{nm-l}}+\widetilde{y} _m^{q^{nm-l}}))(\tau) \rvert \\
 &\leq\max \{ \lvert (\widetilde{z} _m^{q^{nm-l}}- (\widetilde{x} _m+\widetilde{y} _m)^{q^{nm-l}})(\tau) \rvert, \\
 &\qquad \qquad \lvert ((\widetilde{x} _m+\widetilde{y} _m)^{q^{nm-l}}-(\widetilde{x} _m^{q^{nm-l}}+\widetilde{y} _m^{q^{nm-l}}))(\tau) \rvert \} \\
 &<\lvert c^{q^{-l}}\rvert.
 \end{align*} 
 for every $l\geq 0$.
 Taking the limit $m\to \infty$ we have 
 $\lvert (z^{q^{-l}}- (x^{q^{-l}}+y^{q^{-l}}))(\tau)\rvert <\lvert c^{q^{-l}}\rvert$ by continuity of the valuation.
 
 We can prove (2) similarly.
 \end{proof}
 
 \begin{Rem}
 Lemmas \ref{lem:app1} and \ref{lem:app2}, crucial in the following argument, are based on lemmas communicated to the author by Yoichi Mieda.
 The author had essentially been aware of Lemma \ref{lem:app1}, but not of Lemma \ref{lem:app2}.
 The latter would be easier to prove if $H$ could be taken so that $X+_{H_0} Y=X+Y$, which is not possible if $K$ is of mixed-characteristic.
 \end{Rem}
 
 We put $v_D=v\circ \Nrd$. 
 We write $k_n$ for the residue field of $\calO _{K_n}$,
 which is identified with that of $\calO _D$ via the embedding $K_n\to D$.
 For $d\in \calO _D$ we denote by $\overline{d} \in k_n$ the image in $k_n$.
 \begin{lem} \label{lem:appD}
 Let $\tau \in \Spa (R, R)_{\overline{\eta}}$.
 Let $\bc=(c^{q^{-l}})_{l\geq 0}\in \Nil ^{\flat}(\calO _C)$ such that $c\neq 0$
 and let $\bx=(x^{q^{-l}})_{l\geq 0}\in \Nil ^{\flat}(R)$. 
 Assume that $\lvert \bx (\tau) \rvert \leq \lvert \bc \rvert$.
 Then the following hold.
 \begin{enumerate}[(1)]
 \item \label{htnnilaction}
 Let $d\in D^{\times}$.
 We define $\zeta _d\in \mu _{q^n-1}(\calO _{K_n})$ as the unique element 
 such that $\overline{\zeta} _d=\overline{d\varphi _D^{-v_D(d)}}$,
 and $\bzeta _d=(\zeta _d^{q^{-l}})_{l\geq 0}\in \Nil ^{\flat}(\calO _{\widehat{K} ^{\text{ur}}})$ as the unique element such that $\zeta _d^{q^{-l}}\in \mu _{q^{n-1}}(\calO _{K_n})$ for every $l\geq 0$.
 We put $\bz =(z^{q^{-l}})_{l\geq 0}=[d]_{\Nil ^{\flat}_{H_0}}(\bx)\in \Nil ^{\flat}(R)$.
 Then we have $\lvert \bz (\tau) \rvert \leq \lvert \bc ^{q^{v_D(d)}}\rvert$
 and
 $\lvert (\bz - \bzeta _d\bx ^{q^{v_D(d)}})(\tau)\rvert < \lvert \bc ^{q^{v_D(d)}}\rvert$.
 
 \item \label{ht1nilaction}
 Let $a\in K^{\times}$.
 We define $\zeta _a\in \mu _{q-1}(\calO _{K})$ as the unique element 
 such that $\overline{\zeta} _a=\overline{a{\varpi'}^{-v(a)}}$.
 We put $\bz =(z^{q^{-l}})_{l\geq 0}=[a]_{\Nil ^{\flat}_{\wedge H_0}}(\bx)\in \Nil ^{\flat}(R)$.
 Then we have 
 $\lvert \bz (\tau) \rvert \leq \lvert \bc ^{q^{v(a)}}\rvert$
 and $\lvert (\bz - \zeta _a\bx ^{q^{v(a)}})(\tau) \rvert < \lvert \bc ^{q^{v(a)}}\rvert$.
 \end{enumerate}
 \end{lem}
 \begin{proof}
 We prove \ref{htnnilaction}.
 By \eqref{eq:nilop} we have
 \begin{equation*}
 [\zeta] _{\Nil ^{\flat}_{H_0}}(\bx)=(\zeta ^{q^{-l}}x^{q^{-l}})_{l\geq 0}, \quad [\varphi _D^i]_{\Nil ^{\flat}_{H_0}}(\bx)=(x^{q^{i-l}})_{l\geq 0}
 \end{equation*}
 for any $\zeta \in \mu _{q^n-1}(\calO _{K_n})\subset \calO _D$ and $i\in \bbZ$.
 Observing that if $d=\lim _{i\to \infty}d_i$ then $[d]_{\Nil ^{\flat}_{H_0}}(\bx)=\lim _{i\to \infty}[d_i]_{\Nil ^{\flat}_{H_0}}(\bx)$,
 we deduce the assertion from the above equalities and Lemma \ref{lem:app2}.
 
 We can prove \ref{ht1nilaction} similarly, using \eqref{eq:nilop2} and Lemma \ref{lem:app2} for $\wedge H$ and $\lambda'$.
 \end{proof}
 
 So far we have discussed approximations of valued points of $\Nil ^{\flat}$ with respect to a valuation.
 The following lemma allows us to 
 pass from such approximations to congruences later.
 \begin{lem} \label{lem:esttoapp}
 Let $R=\calO _C\langle T_1^{q^{-\infty}}, \dots, T_n^{q^{-\infty}}\rangle$,
 the $\varpi$-adic completion of $\calO _C[T_1^{q^{-\infty}}, \dots, T_n^{q^{-\infty}}]$
 with $T_1, \dots, T_n$ indeterminates.
 Let $f\in R[1/\varpi]$ and $c\in C$ with $c\neq 0$.
 If $\lvert f(\tau)\rvert <\lvert c\rvert$ for all $\tau \in \Spa (R, R)_{\overline{\eta}}=\Spa (R[1/\varpi], R)$,
 then $f\in c\frakp _CR$.
 \end{lem}
 \begin{proof}
 This is merely a variant of an analogous well-known fact for $\calO _C\langle T_1, \dots, T_n\rangle$.
 
 As $c$ is invertible in $R[1/\varpi]$ we may assume $c=1$.
 In particular, we have $\lvert f(\tau)\rvert \leq 1$ for all $\tau \in \Spa (R[1/\varpi], R)$
 and this implies $f\in R$ by \cite[Lemma 3.3 (i)]{HuCont} (see also \cite[Proposition 2.12 (iii)]{SchPerf}).
 Assume, for a contradiction, that $f\not\in \frakp _CR$.
 Then the image $\overline{f}$ in $R/\frakp _CR=\overline{k}[T_1^{q^{-\infty}}, \dots, T_n^{q^{-\infty}}]$
 would be non-zero, and thus there would exist $\overline{\ba} _1, \dots, \overline{\ba} _n\in \varprojlim _{x\mapsto x^q}\overline{k}$
 such that $\overline{f} (\overline{\ba} _1, \dots, \overline{\ba} _n)\neq 0$.
 Now lifts $\ba _1, \dots, \ba _n\in \varprojlim _{x\mapsto x^q}\calO _C$
 would satisfy $f(\ba _1, \dots, \ba _n)\in \calO _C^{\times}$
 and give rise to a point $\tau \in \Spa (R[1/\varpi], R)$
 such that $\lvert f(\tau)\rvert =1$, which is a contradiction.
 \end{proof}
  
 \subsection{Approximation of the determinant morphism} \label{subsec:lemsondet}
  \begin{lem} \label{lem:cancel} 
 Let $\bx _1, \dots \bx _n \in \Nil ^{\flat}(R)$. 
   Then we have 
   \[
   \Delta (\bx _1, \dots , \bx _n)=\Delta (\bx _2^q, \dots , \bx _n^q, \bx _1^{q^{-n+1}}).
   \]
   In particular, for $\bx, \bw \in \Nil ^{\flat}(R)$ we have
   \begin{align*}
   \Delta (\bw ^{q^{n-1}}, \bx ^{q^{n-2}}, \dots , \bx)&=\Delta (\bx ^{q^{n-1}}, \dots , \bx ^q, \bw)=\Delta (\bx ^{q^{n-1}}, \dots , \bx ^{q^2}, \bw ^q, \bx) \\
   &=\dots =\Delta (\bx ^{q^{n-1}}, \dots , \bx ^{q^{i+1}}, \bw ^{q^i}, \bx ^{q^{i-1}}, \dots , \bx).
   \end{align*}
  \end{lem}
  \begin{Rem}
  In the previous version of the present paper,
  this lemma was stated in an unnecessarily restricted situation 
  and proved by a painstaking computation.
  Put this way the lemma is proved in a simpler and more natural way.
  The simplified statement and proof given here are kindly informed to the author by Yoichi Mieda.
  
  Another (perhaps even simpler) proof is to note $\Nrd \varphi _D^{-1}=(-1)^{n-1}\varpi ^{-1}$ 
  and apply \cite[Lemma 2.14]{WeSemi} to deduce
  \begin{align*}
  \Delta \left([\varphi _D^{-1}]_{\Nil ^{\flat}_{H_0}}(\bx _1), \dots , [\varphi _D^{-1}]_{\Nil ^{\flat}_{H_0}}(\bx _n)\right) &=[(-1)^{n-1}\varpi ^{-1}]_{\Nil ^{\flat}_{\wedge H_0}}\left( \Delta (\bx _1, \dots , \bx _n)\right) \\
  &=\Delta \left( \bx _2, \dots , \bx _n, [\varpi ^{-1}]_{\Nil ^{\flat}_{H_0}}(\bx _1)\right), 
  \end{align*}
  where in the last equality we use the multilinear and alternating property of $\Delta$.
  \end{Rem}
  \begin{proof}
  Let $S$ and $\sigma _m$ be as in \eqref{eq:S} and \eqref{eq:sigmam}, respectively.
  We define 
  \[
  f\colon S\to S; \ (m_1, \dots , m_{n-1}, m_n)\mapsto (m_2-1, \dots , m_n-1, m_1+n-1),
  \]
  which is clearly a bijection.
  Then we have 
  \begin{align*}
  &\delta \circ \lambda ^{-1}(\bx _2^q, \dots , \bx _n^q, \bx _1^{q^{-n+1}}) \\
  &=\widetilde{\wedge H}\sum _{m=(m_j)\in S}[\sgn \sigma _m]_{\widetilde{\wedge H}} \lambda'^{-1}(\bx _2^{q^{m_1+1}} \cdots  \bx _n^{q^{m_{n-1}+1}} \bx _1^{q^{m_n-n+1}}) \\
  &=\widetilde{\wedge H}\sum _{m=(m_j)\in S}[\sgn \sigma _{f(m)}]_{\widetilde{\wedge H}} \lambda'^{-1}(\bx _1^{q^{m_1}} \cdots  \bx _n^{q^{m_{n}}}).
  \end{align*}
  Observing that 
  \begin{align*}
  \sigma _{f(m)}
  &=
  \begin{pmatrix}
  \overline{0} & \cdots & \overline{n-2} & \overline{n-1} \\
  \overline{m_2-1} & \cdots & \overline{m_n-1} & \overline{m_1+n-1}
  \end{pmatrix} \\
  &=
  \begin{pmatrix}
  0 & 1 & \cdots  & n-1 \\
  n-1 & 0 & \cdots & n-2
  \end{pmatrix}
  \sigma _m
  \begin{pmatrix}
  0 & 1 & \cdots  & n-1 \\
  n-1 & 0 & \cdots & n-2
  \end{pmatrix}^{-1},
  \end{align*}
  we see that $\sgn \sigma _m=\sgn \sigma _{f(m)}$ and thus
  $\delta \circ \lambda ^{-1}(\bx _2^q, \dots , \bx _n^q, \bx _1^{q^{-n+1}})=\delta \circ \lambda ^{-1}(\bx _1, \dots , \bx _n)$
  as desired.
  \end{proof}

  \begin{Def}
  Let $\bx, \by \in \Nil ^{\flat}(R)$.
  For $\iota \in \{ \pm 1\}$ we write $\bx \equiv _{\iota} \by$ to mean
  \begin{quote}
  for any $\bc \in \Nil ^{\flat}(\calO _C)$ and $\tau \in \Spa (R, R)_{\overline{\eta}}$,
  \begin{align*}
  &\text{if } \lvert \by (\tau) \rvert \leq \lvert \bc \rvert, 
  \text{then } \lvert \bx (\tau) \rvert \leq \lvert \bc \rvert \text{ and } \lvert (\by- \iota \bx) (\tau) \rvert <\lvert \bc \rvert 
  \end{align*}
  and conversely
  \begin{align*}
  &\text{if } \lvert \bx (\tau) \rvert \leq \lvert \bc \rvert,
  \text{then } \lvert \by (\tau) \rvert \leq \lvert \bc \rvert \text{ and } \lvert (\bx- \iota \by) (\tau)\rvert <\lvert \bc \rvert.
  \end{align*}
  \end{quote}
  \end{Def}
  \begin{Rem}
  Let $\bx , \by , \bz \in \Nil ^{\flat}(R)$ and $\iota, \iota _1, \iota _2\in \{ \pm 1\}$. 
  The relation $\equiv _{\iota}$ clearly satisfies the following.
  \begin{itemize}
  \item If $\bx =\by$, then $\bx \equiv _{1}\by$.
  \item If $\bx \equiv _{\iota} \by$, then $\by \equiv _{\iota} \bx$.
  \item If $\bx \equiv _{\iota _1} \by$ and $\by \equiv _{\iota _2}\bz$, then $\bx \equiv _{\iota _1\iota _2} \bz$.
  \end{itemize}
  \end{Rem}

  \begin{lem} \label{lem:det}
   Let $\bx _1, \dots , \bx _n \in \Nil ^{\flat}(R)$.
   \begin{enumerate}[(1)]
   \item \label{pimult}
   We have 
   \[
   \Delta (\bx _1^{q^n}, \dots , \bx _n)\equiv _{(-1)^{n-1}} \Delta (\bx _1, \dots , \bx _n)^{q}.
   \]
   \item \label{pimult2}
   We have 
   \[
   \Delta (\bx _1, \dots , \bx _n)^{q}= \Delta (\bx _2, \dots , \bx _n, \bx _1^{q^n}).
   \]
   \item \label{perm}
   For $\sigma \in \frakS _n$, we have 
   \[
   \Delta (\bx _{\sigma (1)}, \dots , \bx _{\sigma (n)})\equiv _{\sgn \sigma} \Delta (\bx _1, \dots , \bx _n).
   \]
   \end{enumerate}
  \end{lem}
  \begin{proof}
  By the multilinear and alternating property of $\Delta$, we have
  \begin{align*}
   \Delta (\bx _1^{q^n}, \dots , \bx _n)
   &= [\varpi]_{\Nil ^{\flat}_{\wedge H_0}}\Delta (\bx _1, \dots , \bx _n) \\
   &= [(-1)^{n-1}\varpi']_{\Nil ^{\flat}_{\wedge H_0}}\Delta (\bx _1, \dots , \bx _n), \\
   \Delta (\bx _{\sigma (1)}, \dots , \bx _{\sigma (n)})
   &= [(\sgn \sigma)]_{\Nil ^{\flat}_{\wedge H_0}}\Delta (\bx _1, \dots , \bx _n).
  \end{align*}
  Hence \ref{pimult} and \ref{perm} follow from Lemma \ref{lem:app2} (2) for $\wedge H$ and $\lambda'$,
  and \ref{pimult2} follows from the equality $\sgn (12 \cdots n)=(-1)^{n-1}$.
  \end{proof}
  \begin{Rem}
  If $K$ is of equal-characteristic or $p\neq 2$, then 
  \[
  [-1]_{\Nil ^{\flat}_{\wedge H_0}}\Delta (\bx _1, \dots , \bx _n)
  = -\Delta (\bx _1, \dots , \bx _n)
  \]
  and hence \ref{pimult} and \ref{perm} can be stated with $\ast =\iota \cdot \ast$ in place of $\ast \equiv _{\iota} \ast$.
  Otherwise, $-1$ does not lie in $\mu _{q-1}(K)$ and
  this leads to a slightly careful formulation of \ref{pimult} and \ref{perm} as above.
  \end{Rem}

  \begin{lem} \label{lem:estimate}
   Let $\tau \in \Spa (R, R)_{\overline{\eta}}$.
   Let $r_1\geq r_2 \geq \dots \geq r_n$ be positive rational numbers
   such that $r_1< r_nq^n $.
   Let $\bx \in \Nil ^{\flat}(\calO _{C})$ 
   and $\bx _i \in \Nil ^{\flat}(R)$ 
   $(1\leq i\leq n)$.
   Suppose that $\lvert \bx _i(\tau) \rvert \leq \lvert \bx^{r_i}\rvert$ for all $1\leq i\leq n$.
   Then 
   \begin{gather*}
    \lvert \Delta (\bx_1, \dots , \bx_n) (\tau) \rvert \leq
    \lvert \bx ^{\sum _ir_iq^{i-1}}\rvert, \\
    \left\vert \left( \Delta (\bx_1, \dots , \bx_n) - 
    \sum _{\substack{\sigma \in \frakS _n \\ r_{\sigma (1)}\geq \dots \geq r_{\sigma (n)}}} (\sgn \sigma) \bx _{\sigma (1)}\bx _{\sigma (2)}^{q^{1}}\dots \bx _{\sigma (n)}^{q^{n-1}} \right) (\tau)\right\vert
    <\lvert \bx ^{\sum _ir_iq^{i-1}}\rvert.
   \end{gather*}
  \end{lem}
  
  \begin{proof}
  Let $S_0=\{ (\sigma ^{-1}(1)-1, \dots, \sigma ^{-1}(n)-1)\in S\mid \sigma \in \frakS _n, r_{\sigma (1)}\geq \dots \geq r_{\sigma (n)} \}$.
  In the following we shall show that for $(m_1, \dots , m_n)\in S$
  \begin{equation} \label{estim}
  \sum _ir_iq^{m_i}
  \begin{cases}
  =\sum _ir_iq^{i-1}  \ &\text{if $(m_1, \dots , m_n)\in S_0$} \\
  >\sum _ir_iq^{i-1} \ &\text{otherwise}.
  \end{cases}
  \end{equation}
  This will imply the lemma because then
  \[
  \lv \bx _1^{q^{m_1}}\dotsm \bx _n^{q^{m_n}} (\tau) \rv 
  \begin{cases}
  \leq \lvert \bx ^{\sum _ir_iq^{i-1}}\rvert \ &\text{if $(m_1, \dots , m_n)\in S_0$} \\
  <\lvert \bx ^{\sum _ir_iq^{i-1}}\rvert \ &\text{otherwise}
  \end{cases}
  \]
  and thus by Lemma \ref{lem:app2}
  \begin{align*}
  \lv \bx ^{\sum _ir_iq^{i-1}}\rv
  &> \lv \left( \Delta (\bx _1, \dots , \bx _n) 
  - \sum _{m=(m_i)\in S_0}(\sgn \sigma _m) \bx _{1}^{q^{m_1}}\bx _{2}^{q^{m_2}}\dots \bx _{n}^{q^{m_n}}\right) (\tau)\rv \\
  &= \lv \left( \Delta (\bx _1, \dots , \bx _n) 
  -\sum _{\substack{\sigma \in \frakS _n \\ r_{\sigma (1)}\geq \dots \geq r_{\sigma (n)}}} (\sgn \sigma) \bx _{\sigma (1)}^{q^{0}}\bx _{\sigma (2)}^{q^{1}}\dots \bx _{\sigma (n)}^{q^{n-1}} \right) (\tau) \rv
  \end{align*}
  as desired.  
  
  Now we show \eqref{estim}.
   Put 
   $
   d _i=\log _qr_i
   $
   for all $1\leq i\leq n$,
   so that 
   $d_1-d_n<n$ and
   $\sum _iq^{m_i}r_i=\sum _iq^{m_i+d_i}$.
   To facilitate our argument,
   we introduce a total order structure $\geq$
   on the set of all the multisets of $n$ real numbers
   by deeming 
   $[m_1, \dots , m_n]\geq [m'_1, \dots , m'_n]$
   if and only if,
   when altering the indexing so that
   $m_1\geq \dots \geq m_n$
   and 
   $m'_1\geq \dots \geq m'_n$,
   we have
   $(m_1, \dots , m_n)\geq (m'_1, \dots , m'_n)$
   with respect to the lexicographic order on 
   $\bbR ^n$.
   Then it is easily verified that,
   assuming 
   $\sum _{1\leq i\leq n}m_i=\sum _{1\leq i\leq n}m'_i$,
   we have 
   $\sum _{1\leq i\leq n}q^{m_i}\geq \sum _{1\leq i\leq n}q^{m'_i}$
   if and only if
   $[m_1, \dots , m_n]\geq [m'_1, \dots , m'_n]$.
   Thus, 
   putting
   $f(m_1, \dots , m_n)=[m_1+d_1, \dots , m_n+d_n]$,
   we are to show that
   the set
   \[
   O=\left\{
         f(m_1, \dots , m_n)
         \mid
         (m_1, \dots , m_n)\in S
         \right\}
         \]
   admits the smallest element
   $f(0, \dots , n-1)$
   (with respect to the induced order structure)
   and that
   it is attained only by those
   $(m_1, \dots , m_n)\in S$
   such that 
   \begin{equation} \label{atcond}
   \{ m_1, \dots , m_n\}=\{ 0, \dots , n-1\}
   \text{ and } 
   d_{\sigma (1)}\geq \dots \geq d_{\sigma (n)},
   \end{equation}
   where
   \[ 
   \sigma =\begin{pmatrix}
                 m_1+1 & \dots & m_n+1 \\
                 1       & \dots & n
                \end{pmatrix}
             \in \frakS _n.
   \]

   We show that if 
   $(m_1, \dots , m_n)\in S$
   does not satisfy
   (\ref{atcond})
   then $f(m_1, \dots , m_n)$
   admits a strictly smaller element in $O$.
   
   First assume that 
   $\{ m_1, \dots , m_n\} \neq \{ 0, \dots , n-1\}$.
   Then there exist 
   $1\leq i, j\leq n$
   such that 
   $m_i\geq n$
   and 
   $m_j\leq -1$.
   Now replacing 
   $m_i$ with $m_j+n$ and
   $m_j$ with $m_i-n$ yields 
   a strictly smaller element:\footnote{
                                                 Here the inequality 
                                                 is written
                                                 as if
                                                 $i<j$,
                                                 but this is only for a notational convenience.
                                                 We do not assume $i<j$ 
                                                 and the argument clearly works without this assumption.
                                                 }
   \[
   f(m_1, \dots , m_i, \dots , m_j, \dots , m_n)
   >
   f(m_1,  \dots , m_j+n, \dots , m_i-n, \dots , m_n)
   \]
   because
   $
   m_i+d_i>m_j+d_j, \ 
   m_j+n+d_i, \ 
   m_i-n+d_j.
   $
   
   Next assume that 
   $\{ m_1, \dots , m_n\} =\{ 0, \dots , n-1\}$
   but (\ref{atcond}) does not hold.
   Then there exists
   $1\leq i\leq n-1$
   such that 
   $d_{\sigma (i)} <d_{\sigma (i+1)}$
   with $\sigma$ as before
   (so that $\sigma (i)>\sigma (i+1)$).
   Now interchanging 
   $m_{\sigma (i)}=i-1$ and 
   $m_{\sigma (i+1)}=i$
   yields 
   a strictly smaller element:
   \[
   f(m_1, \dots , \stackrel{\sigma (i+1)}{\stackrel{\smile}{i}}, \dots , 
     \stackrel{\sigma (i)}{\stackrel{\smile}{i-1}}, \dots , m_n)
   >
   f(m_1, \dots , \stackrel{\sigma (i+1)}{\stackrel{\smile}{i-1}}, \dots , 
     \stackrel{\sigma (i)}{\stackrel{\smile}{i}}, \dots , m_n).
   \]
   
   Finally, given an element
   $(m_1, \dots , m_n)\in S$
   such that 
   $f(m_1, \dots , m_n)\neq f(0, \dots , n-1)$,
   we may apply the above procedures finitely many times 
   to obtain strict inequalities
   $f(m_1, \dots , m_n)> f(m'_1, \dots , m'_n)> \cdots$
   until we eventually 
   find 
   some element
   $(\tilde{m}_1, \dots , \tilde{m}_n)\in S$
   such that 
   \[
   f(m_1, \dots , m_n)>\cdots >f(\tilde{m}_1, \dots , \tilde{m}_n)=f(0, \dots , n-1).
   \]
   Therefore, 
   $f(0, \dots , n-1)$
   is indeed the smallest in $O$.
   The same argument shows that 
   $f(m_1, \dots , m_n)=f(0, \dots , n-1)$
   only if 
   $(m_1, \dots , m_n)$
   satisfies (\ref{atcond}).
   Now the proof is complete. 
   \end{proof}
   
   \begin{Cor} \label{Cor:tandxi}
   For $l\geq 0$ we have
   \[
   t^{q^{-l}}\equiv \xi _n^{nq^{n-1-l}} \pmod{t^{q^{-l}}\frakp _C\calO _C}.
   \]
   \end{Cor}
   \begin{proof}
   This immediately follows by applying Lemma \ref{lem:estimate}
   with $\bx _i=\bxi _i$, $\bx =\bxi _n$, $r_i=q^{n-i}$.
   (Note that we do not need to use Lemma \ref{lem:esttoapp}, since we are working with elements in $\calO _C$.)
   \end{proof}
      
  Although the following two lemmas are in principle simple applications of Lemmas \ref{lem:det}, \ref{lem:estimate},
  they involve many cases.
  To state them concisely
  we define,
  for an integer $0\leq \mu \leq n-1$ 
  and a rational number $1\leq c<q$,
  \begin{align*}
   &M_1(\mu)=(n+\mu (q-1))q^{n-1}, \\
   &M_2(\mu, c)=\begin{cases}
                         (n+2(c-1)+2\mu (q-1))q^{n-1}      &\text{if $0\leq \mu <n/2$} \\
                         (n+2(c-1)+(2\mu -n)(q-1))q^n &\text{if $n/2\leq \mu <n$}.
                        \end{cases}
  \end{align*}
  
  \begin{lem} \label{lem:estimate1}
   Let $\tau \in \Spa (R, R)_{\overline{\eta}}$.
   Let $0\leq \mu \leq n-1$ be an integer.
   Let $\bx _n\in \Nil ^{\flat}(\calO _{C})$, 
   $\bT \in \Nil ^{\flat}(R)$. 
   Put
   $\bx _{i}=\bx _n^{q^{n-i}}$ 
   for all $1\leq i\leq n-1$.
   Suppose that
   $\lvert \bT (\tau) \rvert \leq \lvert \bx _1^{q^{\mu }}\rvert$.
   Put
   \[
   \Delta =\Delta (\bT, \bx _2, \dots , \bx_n)
   \]
   and $M_1=M_1(\mu)$.   
   
   Then the following assertions hold.
   \begin{enumerate}[(1)]
    \item \label{1:mu=0}
            Suppose that $\mu =0$.
            Then
            \begin{gather*}
             \lvert \Delta (\tau) \rvert \leq \lvert \bx _n^{M_1}\rvert,
             \\
             \lvert (\Delta - \bx _n^{(n-1)q^{n-1}}\bT) (\tau)  \rvert <\lvert \bx _n^{M_1}\rvert
             . 
            \end{gather*}
    \item \label{1:mu>0}
            Suppose that $\mu >0$.
            Then
            \begin{gather*}
             \lvert \Delta (\tau) \rvert \leq \lvert \bx _n^{M_1}\rvert, \\
             \lvert (\Delta - (-1)^{\mu} \bx _n^{(n-\mu -1+(\mu -1)q)q^{n-1}}
             (\bx _n^{q^{n}}\bT ^{q^{-\mu}}-\bx _n^{q^{n-1}}\bT ^{q^{-\mu +1}}) ) (\tau) \rvert
             <\lvert \bx _n^{M_1}\rvert
             . 
            \end{gather*}

   \end{enumerate}
  \end{lem}
  \begin{proof}  
   The case \ref{1:mu=0} follows immediately from Lemma \ref{lem:estimate}.
   
   Suppose that $\mu >0$.
   In this case 
   we arrange the order of the variables of $\Delta (\bT, \bx _2, \dots , \bx _n)$
   to apply Lemma \ref{lem:estimate}.
   By Lemma \ref{lem:det} \ref{pimult2} and \ref{perm} we have
   \begin{align*}
   \Delta (\bT, \bx _2, \dots , \bx _n)&\equiv _{1}\Delta (\bx _2, \dots , \bx _n, \bT ^{q^{-n}})^{q} \\
   &\equiv _{(-1)^{\mu}}\Delta (\bx _2, \dots , \bx _{n-\mu}, \bT ^{q^{-n}}, \bx _{n-\mu +1}, \dots , \bx _n)^{q}.
   \end{align*}
   Now we apply Lemma \ref{lem:estimate} with $\bx =\bx _n$ and
   \[
   (r_1, \dots , r_n)=(q^{n-2}, q^{n-3}, \dots , q^{\mu}, q^{\mu -1}, q^{\mu -1}, \dots , 1).
   \]
   Then $\sum _ir_iq^{i-1}=(n-\mu)q^{n-2}+\mu q^{n-1}=M_1q^{-1}$.
   Since $\lvert \bT ^{q^{-n}} (\tau) \rvert \leq \lvert \bx _n^{q^{\mu -1}}\rvert$,
   we have
   \begin{align*}
   &\left\lvert \left( \Delta (\bx _2, \dots , \bx _{n-\mu}, \bT ^{q^{-n}}, \bx _{n-\mu +1}, \dots , \bx _n) \right. \right. \\
   &\quad -\bx _2\dotsm \bx _{n-\mu}^{q^{n-\mu-2}}
   ((\bT ^{q^{-n}})^{q^{n-\mu -1}}\bx _{n-\mu +1}^{q^{n-\mu }}-(\bT ^{q^{-n}})^{q^{n-\mu }}\bx _{n-\mu +1}^{q^{n-\mu -1}}) \\
   &\quad \quad \left. \cdot \left. \bx _{n-\mu +2}^{q^{n-\mu +1}}\dotsm \bx_n^{q^{n-1}} \right) (\tau) \right\rvert \\
   &<\lvert \bx _n^{M_1q^{-1}}\rvert.
   \end{align*}
   A simple computation shows that
   \begin{align*}
   \bx _2\dotsm \bx _{n-\mu}^{q^{n-\mu-2}}&=\bx _n^{(n-\mu -1)q^{n-2}}, \\
   (\bT ^{q^{-n}})^{q^{n-\mu -1}}\bx _{n-\mu +1}^{q^{n-\mu }}-(\bT ^{q^{-n}})^{q^{n-\mu }}\bx _{n-\mu +1}^{q^{n-\mu -1}}&=\bx _n^{q^{n-1}}\bT ^{q^{-\mu -1}}-\bx _{n}^{q^{n-2}}\bT ^{q^{-\mu}}, \\
   \bx _{n-\mu +2}^{q^{n-\mu +1}}\dotsm \bx_n^{q^{n-1}}&=\bx _n^{(\mu -1)q^{n-1}}
   \end{align*}
   in $\Nil (R)^{\bbZ _{\geq 0}}$. 
   Hence we conclude that 
   \begin{align*}
   \lvert \bx _n^{M_1}\rvert &>
   \lv \left( \Delta  
   -(-1)^{\mu}(\bx _n^{(n-\mu -1)q^{n-2}})^q
   \cdot ((\bx _n^{q^{n-1}}\bT ^{q^{-\mu -1}})^q-(\bx _{n}^{q^{n-2}}\bT ^{q^{-\mu}})^q) \right. \right. \\
   &\qquad \left. \left. \cdot (\bx _n^{(\mu -1)q^{n-1}})^q \right) (\tau) \rv \\
   &=\lv \left( \Delta 
   -(-1)^{\mu}\bx _n^{(n-\mu -1)q^{n-1}+(\mu -1)q^n}
   (\bx _n^{q^{n}} \bT ^{q^{-\mu}}-\bx _{n}^{q^{n-1}} \bT ^{q^{-\mu +1}}) \right) (\tau) \rv
   \end{align*}
   as desired.   
  \end{proof}
  
  \begin{lem} \label{lem:estimate2}
   Let $\tau \in \Spa (R, R)_{\overline{\eta}}$.
   Let $1\leq i<j\leq n$ be integers, $0\leq \mu \leq n-1$ an integer 
   and $1\leq c<q$ a rational number.
   Let $\bx _n\in \Nil ^{\flat}(\calO _{C})$ 
   and $\bT _i, \bT _j \in \Nil ^{\flat}(R)$.
   Put
   $\bx _{i}=\bx _n^{q^{n-i}}$ 
   for all $1\leq i\leq n$.
   Suppose that
   $\lvert \bT _i (\tau) \rvert \leq \lvert \bx _i^{cq^{\mu }}\rvert$ and $\lvert \bT _j (\tau) \rvert \leq \lvert \bx _j^{cq^{\mu }}\rvert$
   for $1\leq i<j\leq n$.
   Put
   \[
   \Delta =\Delta (\bx _1, \dots , \bT _i, \dots , \bT_j, \dots , \bx_n)
   \]
   and $M_2=M_2(\mu, c)$.
   
   Then the following assertions hold.
   \begin{enumerate}[(1)]
    \item \label{2:vanish}
            $\lvert \Delta (\tau) \rvert \leq \lvert \bx _n^{M_2}\rvert$.
    \item \label{2:c>1, mu:small}
            Suppose that $c>1$ and $\mu <n/2$.
            Then 
            \[
            \lvert \bx _n^{M_2}\rvert > 
            \begin{cases}
            \lvert (\Delta -\bx _n^{(n-2\mu -2+2\mu q)q^{n-1}}\bT _i^{q^{i-\mu-1}}\bT _j^{q^{j-\mu-1}}) (\tau) \rvert & \\
            & \hspace{-25pt} \text{if $\mu <j-i<n-\mu$} \\
            \lvert \Delta (\tau) \rvert & \hspace{-25pt} \text{otherwise.} \\
            \end{cases}
            \]
    \item \label{2:c>1, mu:large}
            Suppose that $c>1$ and $\mu \geq n/2$.
            Then 
            \[
            \lvert \bx _n^{M_2}\rvert > 
            \begin{cases}
            \lvert (\Delta -\bx _n^{(2(n-\mu -1)+(2\mu -n)q)q^n} \bT _i^{q^{i-\mu }}\bT _j^{q^{j-\mu }}) (\tau) \rvert & \\
            & \hspace{-25pt} \text{if $n-\mu \leq j-i\leq \mu$} \\
            \lvert \Delta (\tau) \rvert  & \hspace{-25pt} \text{otherwise.} \\
            \end{cases}
            \]
    \item \label{2:c=1, mu=0}
            Suppose that $c=1$ and $\mu =0$.
            Then 
            \[
            \lvert (\Delta - \bx _n^{(n-2)q^{n-1}} \bT _i^{q^{i-\mu }} \bT _j^{q^{j-\mu }}) (\tau) \rvert <\lvert \bx _n^{M_2}\rvert.
            \]
    \item \label{2:c=1, mu:small}
            Suppose that $c=1$ and $0<\mu <n/2$.
            
            Then 
            $\lvert (\Delta - \bx _n^{(n-2\mu -2+(2\mu -2)q)q^{n-1}}\bd _1) (\tau) \rvert <\lvert \bx _n^{M_2}\rvert$,
            where
            \[
            \bd _1= \begin{cases}
                               (\bx _n^{q^{n}}\bT _i^{q^{i-\mu -1}}-\bx _n^{q^{n-1}}\bT _i^{q^{i-\mu}})
                               (\bx _n^{q^{n}}\bT _j^{q^{j-\mu -1}}-\bx _n^{q^{n-1}}\bT _j^{q^{j-\mu}}) & \\
                               & \hspace{-65pt} \text{if $\mu <j-i<n-\mu$} \\
                               -(\bx _n^{q^{n}}\bT _i^{q^{i-\mu -1}}-\bx _n^{q^{n-1}}\bT _i^{q^{i-\mu}})
                               \bT _j^{q^{j-\mu}}                                                      & \hspace{-65pt} \text{if $\mu =j-i<n-\mu$} \\
                               -\bT _i^{q^{i-\mu}} 
                               (\bx _n^{q^{n}}\bT _j^{q^{j-\mu -1}}-\bx _n^{q^{n-1}}\bT _j^{q^{j-\mu}}) & \hspace{-65pt} \text{if $\mu <j-i=n-\mu$} \\
                               0 \quad \text{(constant system)}                                              & \hspace{-65pt} \text{otherwise.}
                              \end{cases}
            \]
    \item \label{2:c=1, mu=n/2}
            Suppose that $c=1$ and $\mu =n/2$ (hence $n$ is even).
            Then 
            \[
            \lvert \bx _n^{M_2}\rvert > 
            \begin{cases}
            \lvert (\Delta -\bx _n^{(n-2)q^n}
                               \bT _i^{q^{i-\mu }}
                               \bT _j^{q^{j-\mu }}) (\tau) \rvert  & \text{if $j-i=n/2$} \\
            \lvert \Delta (\tau) \rvert & \text{otherwise.}
            \end{cases}
            \]
    \item \label{2:c=1, mu:large}
            Suppose that $c=1$ and $\mu >n/2$.
            
            Then 
            $\lvert (\Delta - \bx _n^{(2(n-\mu -1)+(2\mu -n-2)q)q^n}\bd _2) (\tau) \rvert <\lvert \bx _n^{M_2}\rvert$,
            where
            \[
            \bd _2= \begin{cases}
                               (\bx _n^{q^{n+1}}\bT _i^{q^{i-\mu}}-\bx _n^{q^{n}}\bT _i^{q^{i-\mu +1}})
                               (\bx _n^{q^{n+1}}\bT _j^{q^{j-\mu}}-\bx _n^{q^{n}}\bT _j^{q^{j-\mu +1}}) & \\
                               & \hspace{-65pt} \text{if $n-\mu <j-i<\mu$} \\
                               \bT _i^{q^{i-\mu}}
                               (\bx _n^{q^{n+1}}\bT _j^{q^{j-\mu}}-\bx _n^{q^{n}}\bT _j^{q^{j-\mu +1}}) & \hspace{-65pt} \text{if $n-\mu =j-i<\mu$} \\
                               (\bx _n^{q^{n+1}}\bT _i^{q^{i-\mu}}-\bx _n^{q^{n}}\bT _i^{q^{i-\mu +1}})
                               \bT _j^{q^{j-\mu}}                                                       & \hspace{-65pt} \text{if $n-\mu <j-i=\mu$} \\
                               0 \quad \text{(constant system)}                                              & \hspace{-65pt} \text{otherwise.}
                              \end{cases}
            \]
   \end{enumerate}
  \end{lem}
  \begin{proof}
   To lighten the notation
   we define $\bx _i$ for all $i\in \bbZ$ by $\bx _i=\bx _n^{q^{n-i}}$.
   
   Let us first prove \ref{2:vanish}.
   Again we use Lemma \ref{lem:det} repeatedly so that we can apply Lemma \ref{lem:estimate}.
   
   Suppose that $j-i\leq \mu$.
   By Lemma \ref{lem:det} \ref{pimult2}, \ref{perm} we have
   \begin{align}
   &\Delta (\bx _1, \dots , \bT _i, \dots , \bT_j, \dots , \bx_n) \nonumber \\
   &\equiv _{1}\Delta (\bx _{j+1-n}, \dots , \bx _0, 
                                 \bx _1, \dots , \bx _{i-1}, 
                                 \bT _i, \bx  _{i+1}, \dots , \bx _{j-1}, \bT _j)^{q^{j-n}} \nonumber \\
   &\equiv _{(-1)^{\mu}} \Delta (\bx _{j+1-n}, \dots , \bx _{j-\mu -1}, 
             \bT _j, \bx _{j-\mu}, \dots , \bx _{i-1}, 
             \bT _i, \bx  _{i+1}, \dots , \bx _{j-1})^{q^{j-n}} \nonumber \\
   &\equiv _{1} \Delta (\bx _{i+1-n}, \dots , \bx _{j-1-n}, 
                 \bx _{j+1-n}, \dots , \bx _{j-\mu -1}, 
                 \bT _j, \bx _{j-\mu}, \dots , \bx _{i-1}, \bT _i)^{q^{i-n+1}} \nonumber \\
   &\begin{cases}
   \equiv _{(-1)^{\mu +1}} 
   \Delta (\bx _{i+1-n}, \dots , \bx _{j-1-n}, 
             \bx _{j+1-n}, \dots , \bx _{i-\mu -1}, 
             \bT _i, \bx _{i-\mu}, \dots \cr
           \qquad \qquad \qquad  \dots , \bx _{j-\mu -1}, 
             \bT _j, \bx _{j-\mu}, \dots , \bx _{i-1})^{q^{i-n+1}} \cr
             \qquad \qquad \qquad \qquad \qquad \qquad \text{if $j-i\leq \mu$ and $j-i<n-\mu$} \\
   \equiv _{(-1)^{\mu}}
   \Delta (\bx _{i+1-n}, \dots , \bx _{i-\mu -1},
                                   \bT _i, \bx _{i-\mu}, \dots , \bx _{j-1-n}, 
                                   \bx _{j+1-n}, \dots \cr
                                 \qquad \qquad \qquad  \dots , \bx _{j-\mu -1}, 
                                   \bT _j, \bx _{j-\mu}, \dots , \bx _{i-1})^{q^{i-n+1}} \cr
             \qquad \qquad \qquad \qquad \qquad \qquad \text{if $n-\mu \leq j-i\leq \mu$}. \\
   \end{cases} \label{eq:j-i:small}
   \end{align}

   Thus, if $j-i\leq \mu$ and $j-i<n-\mu$, 
   then Lemma \ref{lem:estimate} applied with $\bx =\bx _n$ and
   \begin{align*}
   &(r_1, \dots , r_n) \\
   &=(q^{2n-i-1}, q^{2n-i-2}, \dots , q^{2n-j+1}, q^{2n-j-1}, 
                         \dots, q^{n-i+\mu +1}, cq^{n-i+\mu}, q^{n-i+\mu}, 
                         \dots \\ 
   &\quad \quad \dots, q^{n-j+\mu +1}, cq^{n-j+\mu}, q^{n-j+\mu}, \dots, q^{n-i+1})
   \end{align*}
   shows that 
   \begin{align*}
   &\lvert \Delta (\bx _{i+1-n}, \dots , \bx _{j-1-n}, 
             \bx _{j+1-n}, \dots , \bx _{i-\mu -1}, 
             \bT _i, \\
             &\quad \bx _{i-\mu},  
             \dots, \bx _{j-\mu -1}, 
             \bT _j, \bx _{j-\mu}, \dots , \bx _{i-1})^{q^{i-n+1}} (\tau) \rvert 
             \leq \lvert \bx _n^M\rvert,
   \end{align*}
   where
   \begin{align*}
   M&=\left( \sum _{1\leq h\leq n}r_hq^{h-1}\right) q^{i-n+1} \nonumber \\
     &=\big( (j-i-1)q^{2n-i-1}+(n-\mu -(j-i)-1)q^{2n-i-2}+cq^{2n-i-2} \nonumber \\
     &\qquad +(j-i)q^{2n-i-1}+cq^{2n-i-1}+(\mu -(j-i))q^{2n-i}\big) q^{i-n+1} \nonumber \\
     &=\big(
         n-\mu -(j-i)+c-1
         +(2(j-i)+c-1)q
         +(\mu -(j-i))q^2
         \big) q^{n-1}. 
   \end{align*}
   Let us show $M\geq M_2$.
   Now
   \begin{align}
    &\frac{1}{q^{n-1}}
     (
     M
     -(n+2(c-1)+2\mu (q-1))q^{n-1}
     ) \nonumber \\
    &=\mu -(j-i)-(c-1)
      +(2(j-i-\mu)+c-1)q
      +(\mu -(j-i))q^2
      \nonumber \\
    &=(\mu -(j-i))(q-1)^2
      +(c-1)(q-1) 
      \nonumber \\
    &\geq 0. \label{estofbound1}
   \end{align}
   Here, the equality holds
   if and only if 
   $\mu =j-i$ and $c=1$
   (in which case 
   $\mu <n/2$).
   Similarly, noting $q>c$ we have
   \begin{align}
    &\frac{1}{q^{n-1}}
     (
     M
     -(n+2(c-1)+(2\mu -n)(q-1))q^n 
     )\nonumber \\
    &=n-\mu -(j-i)+c-1
      +(2(j-i-(n-\mu))-(c-1))q \nonumber \\
    &\qquad +(n-\mu -(j-i))q^2
      \nonumber \\
    &=(n-\mu -(j-i))(q-1)^2
      -(c-1)(q-1) 
      \nonumber \\
    &> (n-\mu -(j-i)-1)(q-1)^2 \nonumber \\
    &\geq 0. \label{estofbound2}
   \end{align}
   We see that $M\geq M_2$.
   
   Applying similarly Lemma \ref{lem:estimate},
   we see that 
   if $n-\mu \leq j-i\leq \mu$, then 
   \begin{align} 
   &\lvert \Delta (\bx _{i+1-n}, \dots , \bx _{i-\mu -1},
                                   \bT _i, \bx _{i-\mu}, \dots , \bx _{j-1-n}, \nonumber \\
                                   &\qquad \bx _{j+1-n}, \dots , \bx _{j-\mu -1}, 
                                   \bT _j, \bx _{j-\mu}, \dots , \bx _{i-1})
                                   ^{q^{i-n+1}} (\tau) \rvert \leq \lvert \bx _n^{M_2}\rvert. \label{est:mu:large}
   \end{align}
       
   Suppose that $j-i>\mu$.
   By Lemma \ref{lem:det} \ref{pimult2}, \ref{perm} we have
   \begin{align}
   &\Delta (\bx _1, \dots , \bT _i, \dots , \bT_j, \dots , \bx_n) \nonumber \\
            &\equiv _{1} \Delta (\bx _{j+1-n}, \dots , \bx _0, 
                               \bx _1, \dots , \bx _{i-1}, 
                               \bT _i, \bx  _{i+1}, \dots , \bx _{j-1}, \bT _j
                              )^{q^{j-n}} \nonumber \\
            &\equiv _{(-1)^{\mu}} 
               \Delta (\bx _{j+1-n}, \dots , \bx _{i-1},
                               \bT _i, \bx  _{i+1}, \dots , \bx _{j-\mu -1}, 
                               \bT _j, \bx _{j-\mu}, \dots , \bx _{j-1} 
                               )^{q^{j-n}} \nonumber \\
            &\equiv _{1} 
               \Delta (
                         \bT _j^{q^n}, \bx _{j-\mu -n}, \dots , \bx _{j-1-n},
                         \bx _{j+1-n}, \dots , \bx _{i-1},
                         \bT _i, \bx  _{i+1}, \dots , \bx _{j-\mu -1} 
                         )^{q^{j-n-\mu -1}} \nonumber \\
            &\begin{cases}
            \equiv _{(-1)^{\mu -1}} 
               \Delta (
                         \bT _j^{q^n}, \bx _{j-\mu -n}, \dots , \bx _{i-\mu -1},
                         \bT _i, \bx _{i-\mu}, \dots \cr
                         \qquad \qquad \qquad 
                         \dots , \bx _{j-1-n},
                         \bx _{j+1-n}, \dots , \bx _{i-1},
                         \bx  _{i+1}, \dots , \bx _{j-\mu -1} 
                         )^{q^{j-\mu -n-1}} \cr
                         \qquad \qquad \qquad \qquad \qquad \qquad 
                         \text{if $j-i>\mu$ and $j-i\geq n-\mu$} \\
            \equiv _{(-1)^{\mu}}
               \Delta (
                        \bT _j^{q^n}, \bx _{j-\mu -n}, \dots , \bx _{j-1-n},
                        \bx _{j+1-n}, \dots \cr
                        \qquad \qquad \qquad 
                        \dots , \bx _{i-\mu -1},
                        \bT _i, \bx _{i-\mu}, \dots , \bx _{i-1},
                        \bx  _{i+1}, \dots , \bx _{j-\mu -1} 
                        )^{q^{j-\mu -n-1}} \cr
                        \qquad \qquad \qquad \qquad \qquad \qquad 
                        \text{if $\mu <j-i<n-\mu$}.
               \end{cases} \label{eq:j-i:large}
   \end{align}

   If $j-i>\mu$ and $j-i\geq n-\mu$,
   then Lemma \ref{lem:estimate} 
   applied with $\bx =\bx _n$ and
   \begin{align*}
   &(r_1, \dots , r_n) \\
   &=(cq^{2n-j+\mu}, q^{2n-j+\mu}, \dots , q^{n-i+\mu +1}, cq^{n-i+\mu}, q^{n-i+\mu}, \dots, q^{2n-j+1}, q^{2n-j-1}, \dots \\ 
   &\quad \quad \dots, q^{n-i+1}, q^{n-i-1}, \dots, q^{n-j+\mu +1})
   \end{align*}
   shows that
   \begin{align*}
   &\lvert \Delta 
              (
              \bT _j^{q^n}, \bx _{j-\mu -n}, \dots , \bx _{i-\mu -1},
              \bT _i, \bx _{i-\mu}, \dots, \bx _{j-1-n}, \\
              &\qquad \bx _{j+1-n}, \dots , \bx _{i-1},
              \bx  _{i+1}, \dots , \bx _{j-\mu -1} 
              ) ^{q^{j-\mu -n-1}} (\tau) \rvert 
             \leq \lvert \bx_n^{M'} \rvert,
   \end{align*}
   where
   \begin{align*}
   M'&=\left( \sum _{1\leq h\leq n}r_hq^{h-1}\right) q^{j-\mu -n-1} \\
     &=\big( cq^{2n-j+\mu}+ (n-(j-i))q^{2n-j+\mu +1} \\
     &\qquad +cq^{2n-j+\mu +1} +(\mu +j-i-n)q^{2n-j+\mu +2} \\
     &\qquad \qquad +(n-(j-i)-1)q^{2n-j+\mu +1}+(j-i-\mu -1)q^{2n-j+\mu} \big) q^{j-\mu -n-1} \\
     &=\big(
              j-i-\mu +c-1
              +(2n-2(j-i)+c-1)q
              +(j-i-(n-\mu))q^2
              \big) q^{n-1}.
   \end{align*}
   We have
   \begin{align}
    &\frac{1}{q^{n-1}}
     (        
     M'
     -(n+2(c-1)+2\mu (q-1))q^{n-1}
     ) \nonumber \\
    &
    =(j-i-(n-\mu))(q-1)^2
      +(c-1)(q-1) 
    \geq 0, \label{estofbound3}
       \\ 
    &\frac{1}{q^{n-1}}
    (
     M'
     -(n+2(c-1)+(2\mu -n)(q-1))q^n
     ) \nonumber \\
    &
    =(j-i-\mu)(q-1)^2
      -(c-1)(q-1) 
    >(j-i-\mu -1)(q-1)^2   
    \geq 0. \label{estofbound4}
   \end{align}
   Thus, $M'\geq M_2$ as desired.
   In the first inequality,
   the equality holds 
   if and only if
   $j-i=n-\mu$ and $c=1$
   (in which case
   $\mu <n/2$).
   
   Similarly, it follows from Lemma \ref{lem:estimate} that
   if $\mu < j-i< n-\mu$, then 
   \begin{align} 
   &\lvert \Delta 
              (
              \bT _j^{q^n}, \bx _{j-\mu -n}, \dots , \bx _{i-\mu -1},
              \bT _i, \bx _{i-\mu}, \dots, \bx _{j-1-n}, \nonumber \\
              &\qquad \bx _{j+1-n}, \dots , \bx _{i-1},
              \bx  _{i+1}, \dots , \bx _{j-\mu -1} 
              ) ^{q^{j-\mu -n-1}} (\tau) \rvert \leq \lvert \bx _n^{M_2}\rvert. \label{est:mu:small}
   \end{align}
   The assertion \ref{2:vanish}
   follows from
   \eqref{estofbound1}, 
   \eqref{estofbound2}, 
   \eqref{est:mu:large}, 
   \eqref{estofbound3}, 
   \eqref{estofbound4}, 
   \eqref{est:mu:small}.

   In proving the rest of the lemma,
   we first assume
   $c=1$.
   If $\mu \geq n/2$
   then we see from the previous argument 
   that $\lvert \Delta (\tau) \rvert < \lvert \bx _n^{M_2}\rvert$
   unless $n-\mu \leq j-i\leq \mu$.
   Assuming $\mu \geq n/2$ 
   and $n-\mu \leq j-i\leq \mu$,
   we have,
   by the second case of 
   \eqref{eq:j-i:small},
   \begin{align*}
    &\Delta (\bx _1, \dots, \bT _i, \dots, \bT _j, \dots, \bx _n) \equiv _{1} \\
    &\Delta (\bx _{i+1-n}, \dots , \bx _{i-\mu -1},
                                   \bT _i, \bx _{i-\mu}, \dots , \bx _{j-1-n}, \\
                                   &\qquad \bx _{j+1-n}, 
                                   \dots , \bx _{j-\mu -1}, 
                                   \bT _j, \bx _{j-\mu}, \dots , \bx _{i-1}) ^{q^{i-n+1}}
   \end{align*}
   and by Lemma \ref{lem:estimate},
    \begin{align*}
    &\lvert (\Delta (\bx _{i+1-n}, \dots , \bx _{i-\mu -1},
                                   \bT _i, \bx _{i-\mu}, \dots , \bx _{j-1-n}, \\
                                   &\qquad \bx _{j+1-n}, 
                                   \dots , \bx _{j-\mu -1}, 
                                   \bT _j, \bx _{j-\mu}, \dots , \bx _{i-1}) ^{q^{i-n+1}} 
                                   - \bd )(\tau) \rvert <\lvert \bx _n^{M_2}\rvert,
    \end{align*}
   where
   \[
   \bd =\begin{cases}
               \bx _{i+1-n}^{q^{i+1-n}} \dotsm \bx _{i-\mu -1}^{q^{i-\mu -1}}(
                                                                                       \bT _i^{q^{i-\mu}} \bx _{i-\mu}^{q^{i-\mu +1}}
                                                                                        -\bT _i^{q^{i-\mu +1}} \bx _{i-\mu}^{q^{i-\mu}}
                                                                                       )
                                                                                       \bx _{i-\mu +1}^{q^{i-\mu +2}} \dotsm \bx _{j-1-n}^{q^{j-n}} 
                                                                                       \cr
                                                                                       \qquad 
                                                                                       \cdot
                                                                                       \bx _{j+1-n}^{q^{j+1-n}} \dotsm \bx _{j-\mu -1}^{q^{j-\mu -1}} 
                                                                                       (
                                                                                       \bT _j^{q^{j-\mu}} \bx _{j-\mu}^{q^{j-\mu +1}}
                                                                                       -\bT _j^{q^{j-\mu+1}} \bx _{j-\mu}^{q^{j-\mu}}
                                                                                       )
                                                                                       \bx _{j-\mu +1}^{q^{j-\mu +2}} \dotsm \bx _{i-1}^{q^i}
                                                                                       \cr
                                                                                       \qquad \qquad
                                                                                       \text{if $n-\mu <j-i<\mu$}
                                                                                       \\
               \bx _{i+1-n}^{q^{i+1-n}} \dotsm \bx _{i-\mu -1}^{q^{i-\mu -1}}
                                                                                       \bT _i^{q^{i-\mu}} 
                                                                                       \cr
                                                                                       \qquad 
                                                                                       \cdot
                                                                                       \bx _{j+1-n}^{q^{j+1-n}} \dotsm \bx _{j-\mu -1}^{q^{j-\mu -1}} 
                                                                                       (
                                                                                       \bT _j^{q^{j-\mu}} \bx _{j-\mu}^{q^{j-\mu +1}}
                                                                                       -\bT _j^{q^{j-\mu+1}} \bx _{j-\mu}^{q^{j-\mu}}
                                                                                       )
                                                                                       \bx _{j-\mu +1}^{q^{j-\mu +2}} \dotsm \bx _{i-1}^{q^i}
                                                                                       \cr
                                                                                       \qquad \qquad
                                                                                       \text{if $n-\mu =j-i<\mu$}
                                                                                       \\
               \bx _{i+1-n}^{q^{i+1-n}} \dotsm \bx _{i-\mu -1}^{q^{i-\mu -1}}(
                                                                                       \bT _i^{q^{i-\mu}} \bx _{i-\mu}^{q^{i-\mu +1}}
                                                                                       -\bT _i^{q^{i-\mu +1}} \bx _{i-\mu}^{q^{i-\mu}}
                                                                                       )
                                                                                       \bx _{i-\mu +1}^{q^{i-\mu +2}} \dotsm \bx _{j-1-n}^{q^{j-n}} 
                                                                                       \cr
                                                                                       \qquad 
                                                                                       \cdot
                                                                                       \bx _{j+1-n}^{q^{j+1-n}} \dotsm \bx _{j-\mu -1}^{q^{j-\mu -1}} 
                                                                                       \bT _j^{q^{j-\mu}} 
                                                                                       \cr
                                                                                       \qquad \qquad
                                                                                       \text{if $n-\mu <j-i=\mu$}
                                                                                       \\
               \bx _{i+1-n}^{q^{i+1-n}} \dotsm \bx _{i-\mu -1}^{q^{i-\mu -1}}
                                                                                       \bT _i^{q^{i-\mu}} 
                                                                                       \cr
                                                                                       \qquad 
                                                                                       \cdot
                                                                                       \bx _{j+1-n}^{q^{j+1-n}} \dotsm \bx _{j-\mu -1}^{q^{j-\mu -1}} 
                                                                                       \bT _j^{q^{j-\mu}} 
                                                                                       \cr
                                                                                       \qquad \qquad
                                                                                       \text{if $n-\mu =j-i=\mu$}.
              \end{cases}
   \]
   From this, 
   (\ref{estofbound2}) and (\ref{estofbound4}), 
   the assertions
   \ref{2:c=1, mu=n/2}
   and
   \ref{2:c=1, mu:large}
   follow.
  
   Except for minor complications
   the assertions \ref{2:c=1, mu=0} and \ref{2:c=1, mu:small}
   are proved similarly; 
   we apply Lemma \ref{lem:estimate}
   to the second case of (\ref{eq:j-i:large})
   (resp.\ the first case of (\ref{eq:j-i:small}) with $j-i=\mu$
   and the first case of (\ref{eq:j-i:large}) with $j-i=n-\mu$)
   to obtain the desired estimates
   for those $(i, j)$ 
   such that $\mu <j-i<n-\mu$
   (resp.\ $\mu =j-i<n-\mu$ and $\mu <j-i=n-\mu$).
   Note that in the second case of (\ref{eq:j-i:large})
   we have to treat separately the cases 
   where $\mu =0$ and $\mu >0$.
   
   The assertions 
   \ref{2:c>1, mu:small} and \ref{2:c>1, mu:large}
   are similarly and more easily proved by
   applying Lemma \ref{lem:estimate}
   to the second case of (\ref{eq:j-i:large})
   and (\ref{eq:j-i:small})
   respectively.   
  \end{proof}

  \subsection{Reductions of formal models} \label{subsec:redoffms}
  Let $\nu >0$ be an integer
  and set $M_3(\nu)=(1-s/n)q^r+(s/n)q^{r+1}$,
  where
  $\nu =rn+s$ with $r, s\in \bbZ$ and $0\leq s\leq n-1$.
  We put 
  \[
  \bU=(U^{q^{-l}})_{l\geq 0}=\bT -_{\Nil ^{\flat}_{\wedge  H_0}}\bt \in \Nil ^{\flat}(B_1)
  \]
  and define an affinoid $\calY _{\nu} \subset \Nil ^{\flat, \text{ad}}_{\overline{\eta}}$ by
  \[
  \lvert \bU (\tau) \rvert \leq \lvert \bt \rvert ^{M_3(\nu)}(=\lvert \bxi _n \rvert ^{nq^{n-1}M_3(\nu)}).
  \]
  
  \begin{Prop} \label{Prop:premstudyonaf}
   \begin{enumerate}[(1)]
    \item \label{item:imofaf}
    The set-theoretic image
    of $\calX _{\nu}\subset \Nil ^{\flat, n, \emph{ad}}_{\overline{\eta}}$
    in $\Nil ^{\flat, \emph{ad}}_{\overline{\eta}}$
    under
    $\Delta \colon \Nil ^{\flat, n, \emph{ad}}_{\overline{\eta}} \rightarrow \Nil ^{\flat, \emph{ad}}_{\overline{\eta}}$
    is contained in an affinoid
    $\calY _{\nu} \subset \Nil ^{\flat, \emph{ad}}_{\overline{\eta}}$.
    \item \label{item:imcapunit}
    The pull-back of $\calY _{\nu}$ in
    $\calM _{\wedge H_0, \infty, \overline{\eta}}^{\emph{ad}}
    \simeq U_K$,
    which we simply denote by
    $\calY _{\nu}\cap \calM _{\wedge H_0, \infty, \overline{\eta}}^{\emph{ad}}$,
    is identified with $U_K^{\lceil \nu /n\rceil}$.
   \end{enumerate}
  \end{Prop}

  \begin{proof}
  Let us prove \ref{item:imofaf}.
  Let $\tau \in \calX _{\nu}$.
  We are to show
  $\lvert (\Delta (\bX _1, \dots, \bX _n)-_{\Nil ^{\flat}_{\wedge H_0}} \bt)(\tau) \rvert \leq \lvert \bt ^{M_3(\nu)} \rvert$.
  In $\Nil ^{\flat}(B_n)$,
  we expand as follows:
  \begin{align}  
  &\Delta (\bX_1, \dots , \bX_n)-_{\Nil ^{\flat}_{\wedge H_0}}\bt \nonumber \\ 
  &=\Delta (\bxi _1-_{\Nil ^{\flat}_{H_0}}\bY _2^q-_{\Nil ^{\flat}_{H_0}} \dots -_{\Nil ^{\flat}_{H_0}}\bY _n^{q^{n-1}}+_{\Nil ^{\flat}_{ H_0}}\bZ, \nonumber \\ 
  &\qquad \qquad \bxi _2+_{\Nil ^{\flat}_{H_0}}\bY _2, \dots , \bxi _n+_{\Nil ^{\flat}_{H_0}}\bY _n)-_{\Nil ^{\flat}_{\wedge H_0}}\bt \nonumber \\ 
  &=\Delta (\bxi _1, \dots , \bxi _n)-_{\Nil ^{\flat}_{\wedge H_0}} \bt \nonumber \\ 
  &\quad +_{\Nil ^{\flat}_{\wedge H_0}} \left( {\Nil ^{\flat}_{\wedge H_0}}\right) \sum _{2\leq i\leq n} \left( \Delta ([-1]_{\Nil ^{\flat}_{H_0}} (\bY _i^{q^{i-1}}), \dots , \bxi _n) \right. \nonumber \\
  &\left. \quad \quad +_{\Nil ^{\flat}_{\wedge H_0}} \Delta (\bxi _1, \dots , \bY _i, \dots , \bxi _n) \right) \nonumber \\ 
  &\quad \quad \quad +\Delta (\bZ, \bxi _2, \dots , \bxi _n) \nonumber \\  
  &\quad \quad \quad \quad +_{\Nil ^{\flat}_{\wedge H_0}} \left( {\Nil ^{\flat}_{\wedge H_0}}\right) \sum _{2\leq i, j\leq n} \Delta ([-1]_{\Nil ^{\flat}_{H_0}} (\bY _i^{q^{i-1}}), \bxi _2, \dots , \bY _j, \dots , \bxi _n) \nonumber \\ 
  &\quad \quad \quad \quad \quad +_{\Nil ^{\flat}_{\wedge H_0}} \left( {\Nil ^{\flat}_{\wedge H_0}}\right) \sum _{2\leq i< j\leq n} \Delta (\bxi _1, \dots , \bY _i, \dots , \bY _j, \dots , \bxi _n) 
                                    +_{\Nil ^{\flat}_{\wedge H_0}} \cdots \nonumber \\ 
                                   &=\Delta (\bZ, \bxi _2, \dots , \bxi _n) \label{mainterm} \\ 
                                   &\quad +_{\Nil ^{\flat}_{\wedge H_0}} \left( {\Nil ^{\flat}_{\wedge H_0}}\right) \sum _{2\leq i, j\leq n} \Delta ([-1]_{\Nil ^{\flat}_{H_0}} (\bY _i^{q^{i-1}}), \bxi _2, \dots , \bY _j, \dots , \bxi _n) \nonumber \\ 
                                    &\quad \quad +_{\Nil ^{\flat}_{\wedge H_0}} \left( {\Nil ^{\flat}_{\wedge H_0}}\right) \sum _{2\leq i< j\leq n} \Delta (\bxi _1, \dots , \bY _i, \dots , \bY _j, \dots , \bxi _n) 
                                    +_{\Nil ^{\flat}_{\wedge H_0}} \cdots, \nonumber
  \end{align}
  where we use \eqref{eq:tasdet} and Lemma \ref{lem:cancel} in the last equality.
  Note that 
  we may estimate each term separately by Lemma \ref{lem:app2} and
  the terms not indicated here are negligible because $\lvert \bZ (\tau) \rvert <\lvert \bxi _1 \rvert$
  and $\lvert \bY _i (\tau) \rvert <\lvert \bxi _i \rvert$.
  
  The required estimates are obtained by applying 
  Lemmas \ref{lem:det}, \ref{lem:estimate1} and \ref{lem:estimate2}.
  For instance,
  expressing $\nu =rn+s$ 
  with $r, s\in \bbZ$ and $0\leq s<n$,
  we have $\Delta (\bZ, \bxi _2, \dots , \bxi _n) \equiv _{(-1)^{r(n-1)}} \Delta (\bZ ^{q^{-rn}}, \bxi _2, \dots , \bxi _n)^{q^r}$
  and
  \begin{equation*}
  \lvert \Delta (\bZ ^{q^{-rn}}, \bxi _2, \dots , \bxi _n) ^{q^r} (\tau) \rvert 
                                                         \leq \lvert \bxi _n \rvert ^{M_1(s)q^r} 
                                                         =\lvert \bt \rvert ^{n^{-1}q^{-(n-1)}M_1(s)q^r} 
                                                         =\lvert \bt \rvert ^{M_3(\nu)}.
  \end{equation*}
  Similarly,
  if $c=(q+1)/2$ or $c=1$
  according to the parity of $\nu$,
  and if $\mu =r'n+s'$
  with $r', s'\in \bbZ$ and $0\leq s'<n$,
  then 
  we have
  \begin{align*}
  &\Delta ([-1]_{\Nil ^{\flat}_{H_0}} (\bY _i^{q^{i-1}}), \bxi _2, \dots , \bY _j, \dots , \bxi _n) \\
  &\quad \equiv _{(-1)^{2r'(n-1)}} \Delta \left( [-1]_{\Nil ^{\flat}_{H_0}} (\bY _i^{q^{i-1-r'n}}), \bxi _2, \dots , \bY _j^{q^{-r'n}}, \dots , \bxi _n \right) ^{q^{2r'}}
  \end{align*}
  and 
  \begin{align*}
  &\lv \Delta \left( [-1]_{\Nil ^{\flat}_{H_0}} (\bY _i^{q^{i-1-r'n}}), \bxi _2, \dots , \bY _j^{q^{-r'n}}, \dots , \bxi _n \right) ^{q^{2r'}} (\tau) \rv \\ 
  &\leq \lvert \bxi _n \rvert ^{M_2(s', c)q^{2r'}} 
  =\lvert \bt \rvert ^{n^{-1}q^{-(n-1)}M_2(s', c)q^{2r'}}, \\
  &\lvert \Delta (\bxi _1, \dots , \bY _i, \dots , \bY _j, \dots , \bxi _n) (\tau) \rvert 
  \leq \lvert \bt \rvert ^{n^{-1}q^{-(n-1)}M_2(s', c)q^{2r'}},
  \end{align*}
  and one can check the equality $n^{-1}q^{-(n-1)}M_2(s', c)q^{2r'}=M_3(\nu)$ 
  by case-by-case calculations.

  Now we prove \ref{item:imcapunit}.
  It follows from \eqref{eq:alpha1onring}
  that under the canonical identifications
  $\LTpone (C)=U_K$,
  $\Nil ^{\flat, \text{ad}}_{\overline{\eta}} (C)=\Nil ^{\flat} (\calO _C)$,
  the morphism 
  $\alpha _1\colon \LTpone \rightarrow \Nil ^{\flat, \text{ad}}_{\overline{\eta}}$
  induces a map
  \begin{equation*}
  U_K\rightarrow \Nil ^{\flat} (\calO _C); \ x\mapsto (\Art _K(x)(t^{q^{-l}}))_{l\geq 0} 
  =[x]_{\Nil ^{\flat}_{\wedge H_0}}(\bt).
  \end{equation*}
  As we have $[x]_{\Nil ^{\flat}_{\wedge H_0}}(\bt)-_{\Nil ^{\flat}_{\wedge H_0}} \bt=[x-1]_{\Nil ^{\flat}_{\wedge H_0}}(\bt)$,
  Lemma \ref{lem:appD} \ref{ht1nilaction} shows that 
  $[x]_{\Nil ^{\flat}_{\wedge H_0}}(\bt) \in \Nil ^{\flat, \text{ad}}_{\overline{\eta}} (C)$
  is in the image of the inclusion $\calY _{\nu}(C)\hookrightarrow \Nil ^{\flat, \text{ad}}_{\overline{\eta}} (C)$
  if and only if $q^{v(x-1)}\geq M_3(\nu)$,
  which is to say,
  $\calY _{\nu}\cap \LTpone \simeq U_K^{\lceil \nu /n\rceil}$.
  \end{proof}
  \begin{Rem}
  The cancellations in \eqref{mainterm} are the point of departure of this paper.
  As noted in the introduction
  we generalized the computation in the $n=2$ case found in \cite[\S5.5]{WeSemi}
  by comparing the coordinate with that used in \cite{ITepitame}. 
  \end{Rem}
  
  We regard $\scrY _{\nu}=\Spf \calO _{C} \langle u^{q^{-\infty}}\rangle$
  as a formal model of $\calY _{\nu}$ by
  \begin{align*}
  (\calO _{C} \langle u^{q^{-\infty}}\rangle, \calO _{C} \langle u^{q^{-\infty}}\rangle) &\to 
  (\Gamma (\calY _{\nu}, \calO _{\calY _{\nu}}), \Gamma (\calY _{\nu}, \calO _{\calY _{\nu}}^{+})) \\
  \bu &\mapsto \bxi _n^{-nq^{n-1}M_3(\nu)} \bU.
  \end{align*}
  Then by the universality of a rational subset $\calY _{\nu}\subset \Nil ^{\flat, \text{ad}}_{\overline{\eta}}$
  (\cite[Prop.\ 1.3]{HuGen})
  the morphism $\Delta \colon \Nil ^{\flat, n, \text{ad}}_{\overline{\eta}} \to \Nil ^{\flat, \text{ad}}_{\overline{\eta}}$
  restricts to $\calX _{\nu}\to \calY _{\nu}$ and further extends to $\scrX _{\nu} \to \scrY _{\nu}$,
  the latter being
  induced by
  \begin{align} \label{eq:moroffms2}
  \calO _{C} \langle u^{q^{-\infty}}\rangle &\to \calO _{C} \langle {z'}^{q^{-\infty}}, {y'_2}^{q^{-\infty}}, \dots , {y'_n}^{q^{-\infty}}\rangle \\ \nonumber 
  \bu &\mapsto \bxi _n^{-nq^{n-1}M_3(\nu)} (\Delta (\bX _1, \dots , \bX _n)-_{\Nil ^{\flat}_{\wedge H_0}} \bt).
  \end{align}
  Here 
  \[
  \bxi _n^{-nq^{n-1}M_3(\nu)} (\Delta (\bX _1, \dots , \bX _n)-_{\Nil ^{\flat}_{\wedge H_0}} \bt)
  \in \Nil ^{\flat} (\calO _{C} \langle {z'}^{q^{-\infty}}, {y'_2}^{q^{-\infty}}, \dots , {y'_n}^{q^{-\infty}}\rangle)
  \]
  by Proposition \ref{Prop:premstudyonaf} \ref{item:imofaf}
  and Lemma \ref{lem:esttoapp}.
  On the other hand,
  the morphism
  $U_K^{\lceil \nu /n\rceil}\simeq \calY _{\nu} \cap \LTpone \rightarrow \calY _{\nu}$
  extends to a morphism 
  $\Spf \Cont (U_K^{\lceil \nu /n\rceil}, \calO _{C}) \rightarrow 
  \scrY _{\nu}=\Spf \calO _{C} \langle u^{q^{-\infty}}\rangle$
  of formal models
  induced by
  \begin{align} \label{eq:moroffms}
  \calO _{C} \langle u^{q^{-\infty}}\rangle &\to \Cont (U_K^{\lceil \nu /n\rceil}, \calO _{C}) \\ \nonumber
  \bu &\mapsto \left( \bxi _n^{-nq^{n-1}M_3(\nu)} ([x]_{\Nil ^{\flat}_{\wedge H_0}}(\bt)-_{\Nil ^{\flat}_{\wedge H_0}} \bt) \right)_{x\in U_K^{\lceil \nu /n\rceil}}.
  \end{align}
  Here $\bxi _n^{-nq^{n-1}M_3(\nu)} ([x]_{\Nil ^{\flat}_{\wedge H_0}}(\bt)-_{\Nil ^{\flat}_{\wedge H_0}} \bt)$
  indeed lies in $\Nil ^{\flat} (\calO _C)$
  by Proposition \ref{Prop:premstudyonaf} \ref{item:imcapunit}.
  With these morphisms
  we finally define a formal model $\scrZ _{\nu}$ of $\calZ _{\nu}$ by
  \begin{equation} \label{fmofZ}
  \scrZ _{\nu}=\scrX _{\nu} \times _{\scrY _{\nu}} \Spf \Cont (U_K^{\lceil \nu /n\rceil}, \calO _{C}).
  \end{equation}

  For a formal scheme $\scrA$ over $\calO _C$
  we denote its special fiber by $\overline{\scrA}$.
  
  We note that in the following theorem only the cases where $n$ and $\nu$ are coprime 
  (in particular $n$ does not divide $\nu$)
  are relevant to Main Theorem.
  \begin{Thm} \label{Thm:reduction}
   Let $\nu >0$ be an integer.
   Let $\calZ _{\nu}$ be the affinoid 
   defined in Subsection \ref{subsec:defofaf}
   and $\scrZ _{\nu}$ its formal model
   defined by (\ref{fmofZ}).
   For each integer $0\leq m \leq n-1$, 
   define a set $T(m)$ by
   \[  
   T(m)=\begin{cases}
               \{ (i, j)\in \bbZ ^2 \mid 1\leq i<j\leq n, \ m <j-i<n-m \} & \text{if $m <n/2$} \\
               \{ (i, j)\in \bbZ ^2 \mid 1\leq i<j\leq n, \ n-m \leq j-i\leq m \} & \text{if $m \geq n/2$}. 
              \end{cases}
   \]
   
   Then the special fiber $\overline{\scrZ} _{\nu}$ of $\scrZ _{\nu}$ 
   fits into the following Cartesian diagrams
   \begin{equation} \label{eq:cartdiag}
    \begin{CD}
     \overline{\scrZ} _{\nu} @>>> U_K^{\lceil \nu /n\rceil}=N_{L/K}U_L^{\nu} \\
     @VVV                             @VVV \\
     Z_{\nu}^{\emph{perf}} @>>> {N_{L/K}U_L^{\nu}}/{N_{L/K}U_L^{\nu +1}} \\
     @VVV @VVV \\
     \overline{\scrX} _{\nu}=\bbA _{\overline{k}}^{n, \emph{perf}} @>>> \overline{\scrY} _{\nu}=\bbA _{\overline{k} }^{1, \emph{perf}},
    \end{CD}
   \end{equation}
   where
   $Z_{\nu}$ is a smooth affine variety 
   defined below,
   $(\cdot)^{\emph{perf}}$ denotes the inverse perfection 
   of an affine scheme in characteristic $p$
   and we simply write
   \begin{align*}
   N_{L/K}U_L^{\nu}&=\Spec \Cont (N_{L/K}U_L^{\nu}, \overline{k} )=\Spec \Cont (U_K^{\lceil \nu/n\rceil}, \overline{k} ), \\
   {N_{L/K}U_L^{\nu}}/{N_{L/K}U_L^{\nu +1}}&=\Spec \Cont ({N_{L/K}U_L^{\nu}}/{N_{L/K}U_L^{\nu +1}}, \overline{k} ) \\
   &=
   \begin{cases}
   \Spec \overline{k} & \text{ if  $n$ does not divide $\nu$} \\
   \Spec \Cont (k, \overline{k}) & \text{ if $n$ divides $\nu$}.
   \end{cases}
   \end{align*}
   \begin{enumerate}[(1)]
    \item \label{item:red:divisible}
    Suppose that $n$ divides $\nu$ 
            (so that ${N_{L/K}U_L^{\nu}}/{N_{L/K}U_L^{\nu +1}}$
            is identified with $k$ via $\varpi'$).
            Then $Z_{\nu}$ is the trivial affine space bundle 
            $\amalg _{k} \bbA _{\overline{k} }^{n-1}$ 
            over ${N_{L/K}U_L^{\nu}}/{N_{L/K}U_L^{\nu +1}}=\Spec \Cont (k, \overline{k})$.
    \item \label{item:red:smallodd}
    Suppose\footnote{In the assertions to follow, the relations between various $y_i$ (resp.\ $z$) and various $y_i'$ (resp.\ $z'$)
                        are given in the proof.}
                         that $\nu$ is odd and $\nu \equiv \nu' \bmod {2n}$ with $0<\nu' <n$.
            Define $\mu' <n/2$ by $\nu' =2\mu' +1$.
            Then $Z_{\nu}$ is an affine variety defined by
            \[
             \begin{cases}
             y_1+\cdots +y_n=0                                              & \\
             z^q-z=-\displaystyle \sum _{(i, j)\in T(\mu')} y_iy_j &
             \end{cases}
            \]
            in $\bbA _{\overline{k}}^{n+1}$.
    \item \label{item:red:largeodd}
    Suppose that $\nu$ is odd and $\nu \equiv \nu' \bmod {2n}$ with $n<\nu' <2n$.
            Define $\mu' \geq n/2$ by $\nu' =2\mu' +1$.
            Then $Z_{\nu}$ is an affine variety defined by
            \[
             \begin{cases}
             y_1+\cdots +y_n=0                                              & \\
             z^q-z=\displaystyle \sum _{(i, j)\in T(\mu')} y_iy_j &
             \end{cases}
            \]
            in $\bbA _{\overline{k}}^{n+1}$.
    \item \label{item:red:smalleven}
    Suppose that $\nu$ is even and $\nu \equiv \nu' \bmod {2n}$ with $0<\nu' <n$.
            Define $\mu' <n/2$ by $\nu' =2\mu'$.
            Then $Z_{\nu}$ is an affine variety defined by
            \[
             \begin{cases}
             y_1+\cdots +y_n=0                                              & \\
             z^q-z= & \\
             \displaystyle \sum _{(i, j)\in T(\mu')} (y_i^q-y_i)(y_j^q-y_j)
                     + \displaystyle \sum _{j-i=\mu'} (y_i^q-y_i)y_j^q
                     + \displaystyle \sum _{j-i=n-\mu'} y_i^q(y_j^q-y_j)&
              \end{cases}
            \]
            in $\bbA _{\overline{k}}^{n+1}$.
    \item \label{item:red:largeeven}
    Suppose that $\nu$ is even and $\nu \equiv \nu' \bmod {2n}$ with $n<\nu' <2n$.
            Define $\mu' >n/2$ by $\nu' =2\mu'$.
            Then $Z_{\nu}$ is an affine variety defined by
            \[
             \begin{cases}
             y_1+\cdots +y_n=0                                              & \\
             z^q-z= &\\
             -\displaystyle \sum _{(i, j)\in T(n-\mu')} (y_i^q-y_i)(y_j^q-y_j)
                     + \displaystyle \sum _{j-i=\mu'} (y_i^q-y_i)y_j
                     + \displaystyle \sum _{j-i=n-\mu'} y_i(y_j^q-y_j)&
             \end{cases}
            \]
            in $\bbA _{\overline{k}}^{n+1}$.

   \end{enumerate}
  \end{Thm}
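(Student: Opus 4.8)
The plan is to split the computation of $\overline{\scrZ}_\nu$ into two essentially independent parts and then obtain the Cartesian diagrams formally. Since $\scrZ_\nu$ was defined by the fibre product \eqref{fmofZ} and reduction modulo $\frakp_{\bbC_p}$ commutes with the fibre products of the affine formal schemes involved, we have
\[
\overline{\scrZ}_\nu=\overline{\scrX}_\nu\underset{\overline{\scrY}_\nu}{\times}\Spec\Map\bigl(U_K^{\lceil\nu/n\rceil},\overline{k}\bigr),
\]
so it is enough to determine (A) the reduction of the morphism $\scrX_\nu\rightarrow\scrY_\nu$ induced by $\delta$, i.e.\ the image $\overline{u}$ of $u$ in $\overline{k}[{z'}^{q^{-\infty}},{y_2'}^{q^{-\infty}},\dots,{y_n'}^{q^{-\infty}}]$ expressed through $\overline{z'}$ and the $\overline{y_i'}$, and (B) the reduction of the morphism \eqref{eq:moroffms} together with the identification $U_K^{\lceil\nu/n\rceil}=N_{L/K}U_L^\nu$. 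Given (A) and (B), the lower square of \eqref{eq:cartdiag} is Cartesian by the very definition of $Z_\nu$ (as the variety whose inverse perfection is $\overline{\scrX}_\nu\times_{\overline{\scrY}_\nu}(N_{L/K}U_L^\nu/N_{L/K}U_L^{\nu+1})$), and the upper square is Cartesian because the map $U_K^{\lceil\nu/n\rceil}\to\overline{\scrY}_\nu$ factors as $U_K^{\lceil\nu/n\rceil}\twoheadrightarrow N_{L/K}U_L^\nu/N_{L/K}U_L^{\nu+1}\hookrightarrow\overline{\scrY}_\nu=(\bbA^1_{\overline{k}})^{\mathrm{perf}}$; the assertions \eqref{item:red:divisible}--\eqref{item:red:largeeven} are then just the explicit form of $Z_\nu$.

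For (B): since $\varpi=\varphi_L^n$ and $p\nmid n$, the extension $L/K$ is tamely totally ramified, and the norm filtration is $N_{L/K}U_L^m=U_K^{\lceil m/n\rceil}$ for $m\ge 1$, the jumps occurring exactly at $m\equiv 1\pmod n$; hence $N_{L/K}U_L^\nu/N_{L/K}U_L^{\nu+1}$ is trivial when $n\nmid\nu$ and, via $\varpi$, a copy of $k$ when $n\mid\nu$ (for $n\mid\nu$ the induced map $U_L^\nu/U_L^{\nu+1}\to U_K^{\nu/n}/U_K^{\nu/n+1}$ is $a\mapsto na$, bijective as $p\nmid n$). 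Using the explicit action of $U_K$ on $t$ recalled in Section \ref{section:LTperf} and the coherence of the system $\{t_m\}$, one computes $[x](t)-t$ for $x\in U_K^{\lceil\nu/n\rceil}$ up to higher valuation, sees that after division by $\xi_n^{nq^{n-1}M_3(\nu)}$ its reduction is a locally constant function of $x$ factoring through $N_{L/K}U_L^\nu/N_{L/K}U_L^{\nu+1}$ and injective there, and thereby obtains the top two rows of \eqref{eq:cartdiag} and the closed immersion into $\overline{\scrY}_\nu$.

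The substance is (A). One starts from the expansion of $\delta(X_1,\dots,X_n)-t$ obtained in the proof of Proposition \ref{Prop:premstudyonaf} --- legitimate because $\Delta$, hence $\delta$, is additive in each variable --- substitutes for $Y_i$ and $Z$ the normalized coordinates of \eqref{eq:capitaltoprime}, and uses the identity $\xi_i^{q^{i-1}}=\xi_n^{q^{n-1}}$ throughout. The summands not written out in that expansion are strictly negligible (valuation $>nq^{n-1}M_3(\nu)$), since there $|Y_i|<|\xi_i|$; for each of those written out, one moves any $-Y_i^{q^{i-1}}$ back into slot $i$ by Lemma \ref{lem:cancel}, brings the term to the shape $\delta_0(\dots)$ by the cyclic relations of Remark \ref{Rem:det} --- writing $\nu=rn+s$ and $\mu=r'n+s'$ with $0\le s,s'<n$ to reduce to effective parameters in $[0,n)$ --- and extracts its leading part modulo $|t|^{M_3(\nu)}$ from Lemma \ref{lem:estimate1} (for $\delta(Z,\xi_2,\dots,\xi_n)$) or Lemma \ref{lem:estimate2} (for $\delta(\xi_1,\dots,Y_i,\dots,Y_j,\dots,\xi_n)$ and $\delta(-Y_i^{q^{i-1}},\xi_2,\dots,Y_j,\dots,\xi_n)$). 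Dividing by $\xi_n^{nq^{n-1}M_3(\nu)}$ and reducing: the identity $Z=\sum_iY_i^{q^{i-1}}$, in which $|Z|$ is strictly below the common value of the $|Y_i^{q^{i-1}}|$ because $q^\nu>c$, yields a linear relation among the $\overline{y_i'}$ which becomes $\sum_iy_i=0$ after renaming; the term $\delta(Z,\xi_2,\dots,\xi_n)$ contributes the $\overline{z'}$-dependence --- linear in $\overline{z'}$ when $n\mid\nu$ (Lemma \ref{lem:estimate1}\eqref{1:mu=0}, and in that case the two-$Y$ terms turn out to be negligible, giving the trivial bundle of \eqref{item:red:divisible}) and of Artin--Schreier shape $z^q-z$ when $n\nmid\nu$ (the Frobenius-difference leading term of Lemma \ref{lem:estimate1}\eqref{1:mu>0}); and the two-$Y$ terms contribute the quadratic part in the $y_i$, with the index set $T(\mu')$, the sign, and --- in the even cases --- the factors $y_i^q-y_i$ coming directly from the relevant sub-cases of Lemma \ref{lem:estimate2}, $\mu'$ being determined by $\nu\bmod 2n$ (equivalently by $s'=\mu\bmod n$), which accounts for the periodicity $2n$. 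After renaming $y_i=\overline{y_i'}^{q^{i-1}}$ and $z=\overline{z'}$ --- absorbing into the passage to perfections the unit constants and Frobenius twists thrown off by the lemmas --- one arrives at the equations of $Z_\nu$ listed in \eqref{item:red:divisible}--\eqref{item:red:largeeven}.

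The main obstacle is the bookkeeping in (A): one must track, uniformly in $\nu$, exactly which pairs $(i,j)$ produce a non-negligible contribution, keep the exponents and unit constants sharp enough that every surviving term lands in valuation precisely $nq^{n-1}M_3(\nu)$ (this is the content of the equalities $n^{-1}q^{-(n-1)}M_1(s)q^r=M_3(\nu)$ and $n^{-1}q^{-(n-1)}M_2(s',c)q^{2r'}=M_3(\nu)$ already verified in the proof of Proposition \ref{Prop:premstudyonaf}), and --- in the even cases --- check that the two Frobenius-difference factors delivered by Lemma \ref{lem:estimate2}\eqref{2:c=1, mu:small}, \eqref{2:c=1, mu=n/2}, \eqref{2:c=1, mu:large} recombine, after normalization and perfection, into exactly the products $(y_i^q-y_i)(y_j^q-y_j)$, $(y_i^q-y_i)y_j^q$, $y_i^q(y_j^q-y_j)$, respectively $(y_i^q-y_i)y_j$, $y_i(y_j^q-y_j)$, of \eqref{item:red:smalleven} and \eqref{item:red:largeeven}. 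The sign differences between \eqref{item:red:smallodd} and \eqref{item:red:largeodd} (and between \eqref{item:red:smalleven} and \eqref{item:red:largeeven}) come from the $(-1)^{n-1}$'s and $(-1)^\mu$'s threaded through Remark \ref{Rem:det}, Lemma \ref{lem:estimate1} and the identities \eqref{eq:j-i:small}--\eqref{eq:j-i:large}, and must be followed through with care.
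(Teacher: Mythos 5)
Your proposal is correct and follows essentially the same path as the paper's own proof: formal reduction of the fibre product (\ref{fmofZ}), explicit computation of the reduced map (\ref{eq:moroffms}) for the right-hand column, and a term-by-term analysis of the expansion of $\delta (X_1, \dots , X_n)-t$ via Lemma \ref{lem:cancel}, Remark \ref{Rem:det} and Lemmas \ref{lem:estimate1}, \ref{lem:estimate2}, everything being normalized by $\xi _n^{nq^{n-1}M_3(\nu)}$ and the coordinates renamed up to Frobenius twists, which is harmless on the perfection. One small caveat: in case \eqref{item:red:divisible} the two-$Y$ terms need not be negligible (for instance when $2n$ divides $\nu$, so $s'=0$ and $c=1$, Lemma \ref{lem:estimate2} yields a nonzero leading term for every pair $(i,j)$), but this is immaterial, since the $z$-dependence supplied by Lemma \ref{lem:estimate1} \eqref{1:mu=0} is linear, so one can solve for $z$ in terms of the $y_i$, which is all that the trivial-bundle assertion requires.
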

  \begin{Rem}
  	An analogous Cartesian diagram appears in the work of Boyarchenko-Weinstein \cite[Theorem 3.6.1]{BWMax}.
  	We were inspired by their result.
  \end{Rem}
  \begin{proof}
  It follows from (\ref{eq:moroffms}) and Lemma \ref{lem:appD} \ref{ht1nilaction} that
  $\Spec \Cont  (U_K^{\lceil \nu /n\rceil}, \overline{k} )=N_{L/K}U_L^{\nu}\rightarrow \overline{\scrY} _{\nu}=\bbA _{\overline{k} }^{1, \text{perf}}=\Spec \overline{k} [u^{q^{-\infty}}]$
  is given by 
  \[
  \overline{k} [u^{q^{-\infty}}]\rightarrow \Cont (U_K^{\lceil \nu /n\rceil}, \overline{k}); \ \bu \mapsto \begin{cases}
  (0)_{x\in U_K^{\lceil \nu /n\rceil}} &\text{ if $n$ does not divide $\nu$} \\
  (\overline{y})_{1+y{\varpi'} ^{\nu /n}\in U_K^{ \nu /n}} &\text{ if $n$ divides $\nu$}
 \end{cases}
  \]
  where $0, \overline{y} \in k=\mu _{q-1}(\overline{k})\cup \{ 0\}$ are identified with the constant system
  and in the second case we apply Lemma \ref{lem:appD} \ref{ht1nilaction} and Corollary \ref{Cor:tandxi} to compute
  $\overline{\bxi _n^{-nq^{n-1}M_3(\nu)} [y{\varpi'}^{\nu /n}]_{\Nil ^{\flat}_{\wedge H_0}}(\bt)} =\overline{y}$ in $\Nil ^{\flat}(\overline{k})$. 
  This shows the desired factorization of
  $N_{L/K}U_L^{\nu}\rightarrow \overline{\scrY} _{\nu}$.

  Thus,
  it remains to 
  study the morphism 
  $\overline{\scrX} _{\nu} 
  \rightarrow 
  \overline{\scrY} _{\nu}$
  induced as the reduction of \eqref{eq:moroffms2}.
  We claim that
  (in \ref{item:red:smallodd} to \ref{item:red:largeeven})
  an isomorphism is given by\footnote{Since we are working with perfect rings, there are many other obvious possibilities;
                                                   for instance, we may leave out all $r$ from the definition of the above map.} 
  \begin{equation} \label{eq:zyandprime}
  \bz'\mapsto \bz ^{q^{-(r-\nu)}}, \quad \by'_i\mapsto {\by _i}^{q^{-(r-\mu +i-1)}} \ (2\leq i\leq n),
  \end{equation}
  where we write $\nu =rn+s$ with $r, s\in \bbZ$ and $0\leq s<n$,
  and $\mu =\lfloor \nu /2\rfloor$.
  By Lemma \ref{lem:esttoapp}
  it suffices to approximate 
  $\Delta (\bX _1, \dots , \bX _n)-_{\Nil ^{\flat}_{\wedge H_0}} \bt 
  \in \Nil ^{\flat}(\calO _{C} \langle {z'}^{q^{-\infty}}, {y'_2}^{q^{-\infty}}, \dots , {y'_n}^{q^{-\infty}}\rangle)$
  in terms of valuations.
  As in the proof of Proposition \ref{Prop:premstudyonaf} \ref{item:imofaf}
  we only need to study the three kinds of terms explicitly appearing in \eqref{mainterm}
  by Lemma \ref{lem:app2}.
  We again use Lemma \ref{lem:det} to reduce to the case 
  where $0\leq \nu <n$ (resp.\ where $0\leq \mu <n$),
  so that Lemma \ref{lem:estimate1} (resp.\ Lemma \ref{lem:estimate2})
  is applicable.
  Since the computation is rather complicated, we only indicate several typical cases.
  Let $\tau \in \calX _{\nu}$.
  
  Since $\Delta (\bZ, \bxi_2, \dots , \bxi _n) \equiv _{(-1)^{r(n-1)}} \Delta (\bZ ^{q^{-rn}}, \bxi_2, \dots , \bxi _n)^{q^r}$,
  we have, by Lemma \ref{lem:estimate1}, 
  $\lvert (\Delta (\bZ, \bxi_2, \dots , \bxi _n)-\bm{f}) (\tau)\rvert 
  <\lvert \bxi _n \rvert ^{nq^{n-1}M_3(\nu)}$,
  where, if $s>0$, 
  \begin{align*}
  \bm{f} &=
     (-1)^{r(n-1)} \cdot (-1)^{s}\bxi _n^{(n-s-1+(s-1)q)q^{n-1+r}} \\
     &\qquad \cdot \left( \bxi _n^{q^{n+r}} (\bZ ^{q^{-rn}})^{q^{-s+r}}-\bxi _n^{q^{n-1+r}} (\bZ ^{q^{-rn}})^{q^{-s+1+r}}
                                                                           \right)  \\
   &=(-1)^{\nu -r}  \bxi _n^{(n-s-1+(s-1)q)q^{n-1+r}} \\
   &\qquad \cdot \left( \bxi _n^{q^{n+r}} (\bxi _n^{q^{n-1+\nu}}\bz')^{q^{-\nu +r}}-\bxi _n^{q^{n-1+r}} (\bxi _n^{q^{n-1+\nu}}\bz')^{q^{-\nu +1+r}} 
                                                                       \right) \\
   &=(-1)^{\nu -r}  \bxi _n^{(n-s+sq)q^{n-1+r}}  ({\bz'}^{q^{-\nu +r}}- {\bz'}^{q^{-\nu +1+r}}) \\
   &=(-1)^{\nu -r}  \bxi _n^{nq^{n-1}M_3(\nu)}  (\bz- \bz ^q) \\
  \intertext{and, if $s=0$,}
  \bm{f} &=(-1)^{r(n-1)} \bxi _n^{(n-1)q^{n-1+r}} \bZ ^{q^{-rn+r}}  \\
   &=(-1)^{\nu -r} \bxi _n^{nq^{n-1}M_3(\nu)} \bz.
  \end{align*}
  We put 
  $\Delta (\bZ, \bxi_2, \dots , \bxi _n)=(\Delta ^{q^{-l}}(\bZ, \bxi_2, \dots , \bxi _n))_{l\geq 0}$
  and 
  $\bm{f}=(f^{(l)})_{l\geq 0}$.  
  Then by Lemma \ref{lem:esttoapp} we have 
  \[
  \Delta ^{q^{-l}}(\bZ, \bxi_2, \dots , \bxi _n)\equiv f^{(l)} 
  \]
  modulo $(\xi _n^{q^{-l}})^{nq^{n-1}M_3(\nu)}\frakp _C \calO _{C} \langle {z'}^{q^{-\infty}}, {y'_2}^{q^{-\infty}}, \dots , {y'_n}^{q^{-\infty}}\rangle$.
  In particular, \ref{item:red:divisible} follows.
  
  Suppose that we are in the case \ref{item:red:smallodd}.
  In particular, we have $\mu =(\nu -1)/2=r'n+\mu '$ with $r'=r/2\in \bbZ$.
  Similarly to the above computation, since
  $\Delta (\bxi _1, \dots, \bY _i, \dots , \bY _j, \dots , \bxi _n)\equiv _{(-1)^{2r'(n-1)}} \Delta (\bxi _1, \dots, \bY _i^{q^{-r'n}}, \dots , \bY _j^{q^{-r'n}}, \dots , \bxi _n)^{q^{2r'}}$,
  we have, by Lemma \ref{lem:estimate2} \ref{2:c>1, mu:small},
  $\lvert (\Delta (\bxi _1, \dots, \bY _i, \dots , \bY _j, \dots , \bxi _n) -\bm{f} _{i, j}) (\tau) \rvert
  <\lvert \bxi _n\rvert ^{nq^{n-1}M_3(\nu)}$,
  where, if $\mu '<j-i<n-\mu '$,
  \begin{align*}
   \bm{f} _{i, j}&= \bxi _n^{(n-2\mu '-2+2\mu 'q)q^{n-1+2r'}} (\bY _i^{q^{-r'n}})^{q^{i-\mu '-1+2r'}} (\bY _j^{q^{-r'n}})^{q^{j-\mu '-1+2r'}} \\
          &=\bxi _n^{(n-2\mu '-2+2\mu 'q)q^{n-1+r}} (\bxi _i^{q^{\mu}(q+1)/2}\by'_i)^{q^{i-\mu -1+r}} (\bxi _j^{q^{\mu}(q+1)/2}\by'_j)^{q^{j-\mu -1+r}} \\
          &=\bxi _n^{(n-2\mu '-2+2\mu 'q)q^{n-1+r}} \bxi _n^{q^{n-1+r}(q+1)/2}{\by'_i}^{q^{i-\mu -1+r}} \bxi _n^{q^{n-1+r}(q+1)/2}{\by'_j}^{q^{j-\mu -1+r}} \\
          &=\bxi _n^{(n-\nu '+\nu 'q)q^{n-1+r}} {\by'_i}^{q^{i-\mu -1+r}} {\by'_j}^{q^{j-\mu -1+r}} \\
          &=\bxi _n^{nq^{n-1}M_3(\nu)} \by _i \by _j
  \end{align*}
  and, otherwise, $\bm{f} _{i, j}=0$.
  Essentially the same computation shows that 
  \[
  \lvert (\Delta ([-1]_{\Nil ^{\flat}_{H_0}}(\bY _i^{q^{i-1}}), \bxi _2, \dots, \bY _j, \dots , \bxi _n) -\bm{f}' _{i, j})(\tau) \rvert
  <\lvert \bxi _n \rvert ^{nq^{n-1}M_3(\nu)},
  \]
  where
  \[
  \bm{f}' _{i, j}=
  \begin{cases}
  -\bxi _n^{nq^{n-1}M_3(\nu)} \by _i \by _j
  &\text{ if $\mu '<j-1<n-\mu '$} \\
  0
  &\text{ otherwise}.
  \end{cases}
  \]
  Therefore,
  if we put 
  \begin{gather*}
  \bxi _n^{-nq^{n-1}M_3(\nu)}(\Delta (\bX _1, \dots, \bX _n)-_{\Nil ^{\flat}_{\wedge H_0}} \bt)=(\rho ^{(l)})_{l\geq 0}, \\
  \quad \bm{f} _{i, j}=(f_{i, j}^{(l)})_{l\geq 0},
  \quad \bm{f}' _{i, j}=({f'}_{i, j}^{(l)})_{l\geq 0},
  \end{gather*}
  then by Lemma \ref{lem:app2}
  and Lemma \ref{lem:esttoapp}
  we have
  \begin{align*}
  \rho ^{(l)}
  &\equiv 
  (\xi _n^{q^{-l}})^{-nq^{n-1}M_3(\nu)}(f^{(l)}
  +\sum _{\substack{2\leq i, j\leq n \\ \mu '<j-i<n-\mu '}} f_{i, j}^{(l)}
  +\sum _{\substack{2\leq i, j\leq n \\ \mu '<j-1<n-\mu '}} {f'}_{i, j}^{(l)}) \\
  &\equiv (z^q-z +\sum _{(i, j)\in T(\mu ')} y _iy _j)^{q^{-l}}
  \end{align*}
  modulo $\frakp _C \calO _{C} \langle {z'}^{q^{-\infty}}, {y'_2}^{q^{-\infty}}, \dots , {y'_n}^{q^{-\infty}}\rangle$.
  Here and below we identify 
  \begin{align*}
  \overline{\scrX _{\nu}}&(=\Spec \overline{k}[z^{q^{-\infty}}, y_2^{q^{-\infty}}, \dots, y_n^{q^{-\infty}}]) \\
  &=\Spec \overline{k}[z^{q^{-\infty}}, y_1^{q^{-\infty}}, \dots, y_n^{q^{-\infty}}]/((y_1+\dots +y_n)^{q^{-l}}\mid l\geq 0).
  \end{align*}
  This completes the proof of \ref{item:red:smallodd}.
  
  Suppose that we are in the case \ref{item:red:smalleven}.
  In this case $\mu =\nu /2=r'n+\mu '$ with $r'=r/2\in \bbZ$.
  Again, by Lemma \ref{lem:estimate2} \ref{2:c=1, mu:small},
  $\lvert (\Delta (\bxi _1, \dots, \bY _i, \dots , \bY _j, \dots , \bxi _n) -\bg _{i, j}) (\tau)\rvert
  <\lvert \bxi _n \rvert ^{nq^{n-1}M_3(\nu)}$,
  where, if $\mu '<j-i<n-\mu '$,
  \begin{align*}
  \bg _{i, j}&= \bxi _n^{(n-2\mu '-2+(2\mu '-2)q)q^{n-1+2r'}} \\
  &\quad \quad \cdot \left( \bxi _n^{q^{n+2r'}}(\bY _i^{q^{-r'n}})^{q^{i-\mu '-1+2r'}}-\bxi _n^{q^{n-1+2r'}}(\bY _i^{q^{-r'n}})^{q^{i-\mu '+2r'}}
                                                                        \right) \\
                                                                        &\quad \quad \quad \quad \cdot \left( \bxi _n^{q^{n+2r'}}(\bY _j^{q^{-r'n}})^{q^{j-\mu '-1+2r'}}-\bxi _n^{q^{n-1+2r'}}(\bY _j^{q^{-r'n}})^{q^{j-\mu '+2r'}}
                                                                          \right) \\
         &= \bxi _n^{(n-2\mu '-2+(2\mu '-2)q)q^{n-1+r}} \left( \bxi _n^{q^{n+r}}(\bxi _i^{q^{\mu}}\by' _i)^{q^{i-\mu -1+r}}-\bxi _n^{q^{n-1+r}}(\bxi _i^{q^{\mu}}\by' _i)^{q^{i-\mu +r}}
                                                                        \right) \\
                                                                        &\quad \quad \cdot \left( \bxi _n^{q^{n+r}}(\bxi _j^{q^{\mu }}\by' _j)^{q^{j-\mu -1+r}}-\bxi _n^{q^{n-1+r}}(\bxi _j^{q^{\mu}}\by' _j)^{q^{j-\mu +r}}
                                                                          \right) \\
          &=\bxi _n^{(n-\nu' +\nu' q)q^{n-1+r}} ( {\by' _i}^{q^{i-\mu -1+r}}-{\by' _i}^{q^{i-\mu +r}}) 
                                                             ( {\by' _j}^{q^{j-\mu -1+r}}-{\by' _j}^{q^{j-\mu +r}}) \\
          &=\bxi _n^{nq^{n-1}M_3(\nu)} 
          ( {\by_i}-{\by_i}^q ) 
          ( {\by_j}-{\by_j}^q )
  \end{align*}
  and 
  \[
  \bg _{i, j}=\begin{cases}
            -\bxi _n^{nq^{n-1}M_3(\nu)}  ( \by _i-\by _i^q ) \by _j^q &\text{ if $\mu '=j-i$} \\
            -\bxi _n^{nq^{n-1}M_3(\nu)}  \by _i^q ( \by_j-\by _j^q ) &\text{ if $j-i=n-\mu '$} \\
            0 &\text{ otherwise.}
           \end{cases}
  \]
  We also compute an approximation for $\Delta ^{q^{-l}}([-1]_{\Nil ^{\flat}_{H_0}}(\bY _i^{q^{i-1}}), \bxi _2, \dots, \bY _j, \dots , \bxi _n)$ in essentially the same way
  and thus obtain as before (by Lemma \ref{lem:esttoapp})
  \begin{align*}
  &\rho ^{(l)} \\ 
  &\equiv (z-z^q)^{q^{-l}} \\
  &\quad +\sum _{\substack{ 2\leq i, j\leq n \\ \mu '<j-i<n-\mu '}} \left( \left( y_i-y_i^q\right) \left( y_j-y_j^q\right) \right)^{q^{-l}}
                     -\sum _{\substack{ 2\leq i, j\leq n \\ \mu '<j-1<n-\mu '}} \left( \left( y_i-y_i^q\right) \left( y_j-y_j^q\right) \right)^{q^{-l}} \\
  &\quad \quad -\sum _{\substack{ 2\leq i, j\leq n \\ \mu '=j-i}} \left( \left( y_i-y_i^q\right) y_j^q\right) ^{q^{-l}}
            +\sum _{\substack{ 2\leq i, j\leq n \\ \mu '=j-1}} \left( \left( y_i-y_i^q\right) y_j^q\right)^{q^{-l}} \\
  &\quad \quad \quad
  -\sum _{\substack{ 2\leq i, j\leq n \\ j-i=n-\mu '}} \left( y_i^q \left( y_j-y_j^q\right) \right)^{q^{-l}}
            +\sum _{\substack{ 2\leq i, j\leq n \\ j-1=n-\mu '}} \left( y_i^q \left( y_j-y_j^q\right) \right)^{q^{-l}} \\
  &\equiv \Biggl( z-z^q \\
  &\quad +\sum _{(i, j)\in T(\mu ')} \left( y_i-y_i^q\right) \left( y_j-y_j^q\right) 
  -\sum _{\mu '=j-i} \left( y_i-y_i^q\right) y_j^q 
  -\sum _{j-i=n-\mu '} y_i^q \left( y_j-y_j^q\right) \Biggr)^{q^{-l}}.
  \end{align*}
  The other cases are treated in the same vein.
  \end{proof}
  
 \subsection{Stabilizers of the affinoids and their actions on the reductions} \label{subsec:stabandaction}
 To state the main result of this subsection
 we define several subgroups of $\GL _n(K)$ and $D^{\times}$.
 
 Let $\frakI \subset M_n(K)$ (resp.\ Let $\frakP \subset \frakI$) be the inverse image of the set of upper triangular matrices
 (resp.\ upper triangular matrices with all the diagonal entries zeros) by the canonical map $M_n(\calO )\rightarrow M_n(k)$.
 Note that $\frakP ^i=\varphi ^i\frakI$ for $i\geq 0$.
 We set $U_{\frakI}=\frakI ^{\times}$ and $U_{\frakI}^i=1+\frakP ^i\subset U_{\frakI}$ for $i\geq 1$ 
 as usual.
 
 Recall that $L$ is identified with $K(\varphi )\subset M_n(K)$
 via $i_{\xi}$.
 We also define 
 $C_1$
 to be the orthogonal complement of 
 $L\subset M_n(K)$
 with respect to the non-degenerate symmetric pairing
 $M_n(K)\times M_n(K)\rightarrow K; \ (x, y)\mapsto \tr (xy)$.
 For $i\geq 1$ we set $\frakP _{C_1}^i=\frakP ^i\cap {C_1}$ and 
 $
 U_{\frakI}^{(i)}=1+\frakP ^i+\frakP _{C_1}^{\lfloor(i+1)/2\rfloor}.
 $
 Then we have $\frakP ^i=\frakp _L^i\oplus \frakP _{C_1}^i$\footnote{This follows 
 either from the tame ramification assumption (\cite[Lemma 3]{HoTame}, \cite[Remarks after (1.4.10), (1.3.8)]{BKtypes}), or from Proposition \ref{Prop:GL(nu)explicit} below.
 }
 and therefore
 $U_{\frakI}^{(i)}=1+\frakp _L ^i+\frakP _{C_1}^{\lfloor(i+1)/2\rfloor}$.
 Also, $U_{\frakI}^{(i)}$ is clearly a subgroup of $U_{\frakI}^{\lfloor(i+1)/2\rfloor}$
 containing $U_{\frakI}^i$.
 \begin{Rem}
 The author learned the importance of $C_1$ and $U_{\frakI}^{(i)}$
 in \cite{BWMax},
 where similar objects are considered for unramified extensions $L/K$,
 although these have been studied before elsewhere
 (see for instance, \cite[(6.2)]{BFGdiv}).
 \end{Rem}
 As $\varphi$ is monomial,
 we can make the above objects more explicit.
 \begin{Prop} \label{Prop:GL(nu)explicit}
 Let $D_n(K)\subset M_n(K)$ be the $K$-subspace of diagonal matrices
 and $D_n(K)^0=\{ a\in D_n(K)\mid \tr a=0\}$.
 \begin{enumerate}[(1)]
 \item We have
 $C_1=\bigoplus _{0\leq i\leq n-1}\varphi ^{i}D_n(K)^0$
 and any $K$-basis of $D_n(K)^0$ is an $L$-basis of $C_1$.
 \item
 The orthogonal projection $p_1\colon M_n(K)\to L$
 is given by
 \[
 p_1\colon M_n(K)\to L; \ a\mapsto 
 \frac{1}{n}\tr a+\frac{1}{n}\varphi\tr (\varphi ^{-1}a) +\cdots +\frac{1}{n}\varphi ^{n-1}\tr (\varphi ^{-(n-1)}a).
 \]
 \item \label{UforGL}
 We have 
 \begin{align*}
 U_{\frakI }^m
 &=\{ 1+(a_{i, j})_{1\leq i, j\leq n}\in M_n(K)\mid \sum _{1\leq j\leq n}a_{j, i}\varphi _L^{i-j}\in \frakp _L^m \text{ for all $1\leq i\leq n$}\} \\
 &=\{ 1+(a_{i, j})_{1\leq i, j\leq n}\in M_n(K)\mid a_{j, i}\varphi _L^{i-j}\in \frakp _L^m \text{ for all $1\leq i, j\leq n$}\}, \\
 U_{\frakI}^{(m)}&=\{ g\in U_{\frakI }^{\lfloor (m+1)/2\rfloor} \mid p_1(g-1)\in \frakp _L^m\}.
 \end{align*}
 \end{enumerate}
 \end{Prop} 
 \begin{proof}
 All the assertions follow from simple computations.
 We only show the first and second equalities of \ref{UforGL}.
 The set $\{ \diag(0,\cdots,0,\overset{j}{\check{1}},0,\cdots,0) \mid 1\leq j\leq n\}$
 is a $K$-basis of $D_n(K)$
 and hence an $L$-basis of $M_n(K)=\bigoplus _{0\leq i\leq n-1}\varphi ^{i}D_n(K)=D_n(K)\otimes _K L$,
 thus giving rise to an isomorphism
 \[
 M_n(K)\to L^n; \ \sum _{0\leq i\leq n-1}\varphi ^{i}\diag(a_1^{(i)}, \dots, a_n^{(i)})\mapsto \left( \sum _{0\leq i\leq n-1}a_j^{(i)}\varphi _L^i\right) _{1\leq j\leq n}.
 \]
 We deduce the desired equality by observing that 
 this isomorphism sends $(a_{i, j})_{1\leq i, j\leq n}\in M_n(K)$ to $(\sum _{1\leq j\leq n}a_{j, i}\varphi _L^{i-j})_{1\leq i\leq n}$
 and $\frakP ^m\subset M_n(K)$ to $(\frakp _L^m)^n$.
 The second equality follows because $v_D(a_{j, i}\varphi _L^{i-j})$ are distinct for $1\leq j\leq n$.
 \end{proof}
 \begin{Prop} \label{Prop:gpS1}
   The subgroup ${U_{\frakI}^{(m+1 )}}\subset {U_{\frakI}^{(m )}}$ is normal. 
   The quotient group $S_{1, m}={U_{\frakI}^{(m)}}/{U_{\frakI}^{(m+1)}}$ is described as follows.
  \begin{enumerate}[(1)]
   \item Suppose that $m$ is odd.
   Then $S_{1, m}$ is isomorphic
   to the additive group $k$: 
   \begin{equation*}
    S_{1, m}\rightarrow k; \ (1+a)U_{\frakI}^{(m+1)}\mapsto 
   \overline{\tr (\varphi ^{-m}a)}
   \end{equation*}
   \item \label{item:evenS1}
   Suppose that $m$ is even.
   Put
   \[
   T_{1, m}=\{ (v, (w_i))\in k\times k^{{\bbZ} /{n\bbZ}}
                  \mid 
                  \sum _{i}w_i=0 \}.
   \]
   Then the following is a bijection:
   \begin{equation*}
    S_{1, m}\rightarrow T_{1, m}
    ; \ (1+a)U_{\frakI}^{(m+1)}\mapsto 
    \left( \overline{\tr (\varphi ^{-m}a)},
     \overline{\varphi ^{-\lfloor (m+1)/2\rfloor} a}\right).
   \end{equation*}
   Under this identification, 
   the induced group operation on $T_{1, m}$ is described as follows:
   \begin{equation} \label{eq:S1formula}
    (v, (w_i))\cdot (v', (w'_i))
    =(v+v'+\sum _{i}w_i w'_{i+(m/2)}, (w_i+w_i')). 
   \end{equation}
  \end{enumerate}
 \end{Prop}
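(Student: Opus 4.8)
The plan is to handle both parities through the associated graded of the $\frakP$-adic filtration. For $r\in\bbZ$ the quotient $\frakP^r/\frakP^{r+1}$ is an $n$-dimensional $k$-vector space, a basis being the images of the elementary matrices $e_i^{(r)}:=\varpi^l E_{j,i}$ (one for each column $i\in\bbZ/n\bbZ$), where $(j,l)$ is the unique pair with $1\le j\le n$ and $i-j+ln=r$; when $n\mid r$ this is the diagonal unit $\varpi^{r/n}E_{i,i}$. I would record three facts: \textbf{(i)} $e_i^{(r)}e_{i'}^{(r')}=e_{i'}^{(r+r')}$ if $i'\equiv i+r'\pmod n$ and $=0$ otherwise, so multiplication is diagonal on the associated graded; \textbf{(ii)} $\varphi^{r}\equiv\sum_i e_i^{(r)}\pmod{\frakP^{r+1}}$ and $e_i^{(0)}=E_{i,i}$, whence $\varphi^{-r}e_i^{(r)}=E_{i,i}$; \textbf{(iii)} with $\frakP^i=\frakp_L^i\oplus\frakP_{C_1}^i$ as recalled above, the image of $\frakP_{C_1}^{\mu}$ in $\frakP^{\mu}/\frakP^{\mu+1}\cong k^n$ is exactly $\{(w_i)\mid\sum_i w_i=0\}$. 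Fact (iii) holds because $\frakP_{C_1}^{\mu}=\frakP^{\mu}\cap L^{\perp}$ and $\varphi^{-\mu}\in L$, so by (ii) the value $\Tr(\varphi^{-\mu}\sum_i w_i e_i^{(\mu)})\equiv\sum_i w_i\pmod{\frakp_K}$ of the pairing vanishes on $\frakP_{C_1}^{\mu}$, while equality of dimensions (using $\frakp_L^{\mu}/\frakp_L^{\mu+1}\cong k$, and $\overline{\varphi^\mu}=(1,\dots,1)$ by (ii)) forces the reverse inclusion. The hypothesis $p\nmid n$ enters here, and more crucially through $\overline{\Tr_{L/K}(u)}=n\,\bar u$ for $u\in\calO_L$.

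I would first settle normality directly: for $1+a\in U_{\frakI}^{(m)}$ and $1+b\in U_{\frakI}^{(m+1)}$ one has $(1+a)(1+b)(1+a)^{-1}=1+b+[a,b](1+a)^{-1}$, and since $a\in\frakP^{\lfloor(m+1)/2\rfloor}$, $b\in\frakP^{\lfloor(m+2)/2\rfloor}$ with $\lfloor(m+1)/2\rfloor+\lfloor(m+2)/2\rfloor=m+1$, the commutator $[a,b]$ lies in $\frakP^{m+1}$, so the conjugate lies in $1+b+\frakP^{m+1}\subseteq U_{\frakI}^{(m+1)}$; hence $S_{1,m}$ is a group. Now take $m=2\mu+1$ and set $\phi(1+a)=\overline{\Tr(\varphi^{-m}a)}\in k$. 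Expanding $a$ in the basis of (i): levels $r>m$ contribute to $\Tr(\varphi^{-m}a)$ inside $\frakp_K$, while levels $r<m$ contribute nothing, either because $\varphi^{-m}e_i^{(r)}$ is off-diagonal ($n\nmid r-m$) or because the diagonal coefficient sum $\sum_{i-j+ln=r}a_{j,i}^{(l)}$ vanishes by membership in $U_{\frakI}^{(m)}$ (automatically for $r<\lfloor(m+1)/2\rfloor$); thus $\phi(1+a)$ equals the sum of the level-$m$ coefficients of $a$, which is the asserted formula. Then $\phi$ is a homomorphism ($\Tr(\varphi^{-m}ab)\in\frakp_K$ since $a,b\in\frakP^{\mu+1}$ forces $ab\in\frakP^{m+1}$), it is surjective (the multiples $1+c\varphi^m$ already surject onto $k$ because $p\nmid n$), and $\ker\phi=U_{\frakI}^{(m+1)}$ by the same computation; this proves (1).

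For $m=2\mu$, set $\phi(1+a)=\bigl(\overline{\Tr(\varphi^{-m}a)},(a_i(\mu))_i\bigr)$ and write $a=a_L+a_C\in\frakp_L^{m}\oplus\frakP_{C_1}^{\mu}$. The level-$\mu$ part of $a$ is that of $a_C$, so $\sum_i a_i(\mu)=0$ by (iii) and $\phi$ lands in $T_{1,m}$; moreover $\Tr(\varphi^{-m}a)=\Tr(\varphi^{-m}a_L)$ since $a_C\perp\varphi^{-m}$, and writing $a_L=u\varphi^{m}$ this equals $\Tr_{L/K}(u)\equiv n\bar u\pmod{\frakp_K}$. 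These identities give $\ker\phi=U_{\frakI}^{(m+1)}$ and surjectivity (realize a prescribed $(w_i)$ by some $a_C\in\frakP_{C_1}^{\mu}$, then add a scalar multiple of $\varphi^{m}$, using $p\nmid n$, to fix the first coordinate). Finally I would compute $\phi\bigl((1+a)(1+b)\bigr)$ with $b=b_L+b_C$: in $a+b+ab$ every summand of $ab$ has level $\ge 2\mu\ge\mu+1$, so the second coordinate of $\phi$ is $(a_i(\mu)+b_i(\mu))_i$; for the first, $\Tr(\varphi^{-m}ab)\equiv\Tr(\varphi^{-m}a_Cb_C)\pmod{\frakp_K}$ because the mixed terms $a_Lb_C$, $a_Cb_L$, $a_Lb_L$ have level $\ge 3\mu\ge m+1$, and by (i), $a_Cb_C\equiv\sum_i w_iw'_{i+\mu}\,e_{i+\mu}^{(2\mu)}\pmod{\frakP^{2\mu+1}}$ with $w_i=a_i(\mu)$, $w'_i=b_i(\mu)$, so by (ii), $\varphi^{-m}a_Cb_C\equiv\sum_i w_iw'_{i+\mu}E_{i+\mu,i+\mu}$ and $\overline{\Tr(\varphi^{-m}ab)}=\sum_i w_iw'_{i+\mu}$. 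Since $\mu=m/2$, this is precisely the law \eqref{eq:S1formula}, proving (2).

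The main obstacle is the even case: one must verify that the only cross-term surviving at level $m=2\mu$ in the product is $a_Cb_C$, and then read off from the multiplication rule (i) that $w_i$ pairs with $w'_{i+m/2}$ — this index shift by $m/2$ is the source of the translation appearing in \eqref{eq:S1formula}. The bookkeeping of which intermediate coefficient sums vanish (the $\frakP_{C_1}$-conditions in the explicit description of $U_{\frakI}^{(m)}$) is likewise what makes the odd-case formula for $\phi$ come out cleanly, and keeping track of the normalizing factor $n$ — harmless only because $p\nmid n$ — is essential throughout.
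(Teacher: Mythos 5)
Your argument is correct and amounts precisely to the ``simple computation'' the paper leaves unwritten: working in the graded pieces $\frakP^r/\frakP^{r+1}$ with the basis $e_i^{(r)}$, using the level-$m$ coefficient-sum functional $\overline{\Tr(\varphi^{-m}\cdot)}$, the identification of the image of $\frakP_{C_1}^{m/2}$ with the hyperplane $\sum_i w_i=0$ (this is where $p\nmid n$ enters), and the surviving cross-term $a_Cb_C$, your treatment of normality, the kernel computations and surjectivity is complete and matches the definitions $U_{\frakI}^{(m)}=1+\frakp_L^m+\frakP_{C_1}^{\lfloor (m+1)/2\rfloor}$ and the explicit description in Remark \ref{Rem:GL(nu)explicit}. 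One remark: the cocycle you obtain, $\sum_i w_i w'_{i+(m/2)}$, is the intended reading of \eqref{eq:S1formula} (the printed right-hand side has $w_{i+(m/2)}$ where $w'_{i+(m/2)}$ is meant, as the comparison with Proposition \ref{Prop:gpS2} shows), so your computation confirms the stated group law.
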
  
 \begin{proof}
 The first assertion being straightforward, 
 we only show the second assertion.
 Thus we assume $m$ is even, so that $\lfloor (m+1)/2\rfloor=m/2$
 and $\lfloor (m+2)/2\rfloor=m/2+1$.
 We consider the following composite of maps:
 \begin{alignat*}{2}
 &U_{\frakI}^{(m)}=1+(\frakp _L^m\oplus \frakP _{C_1}^{m/2}) &\quad &1+a=1+(p_1(a), a-p_1(a)) \\
 &\to ({\frakp _L^m}/{\frakp _L^{m+1}})\oplus ({\frakP _{C_1}^{m/2}}/{\frakP _{C_1}^{m/2+1}}) &\quad &\mapsto (p_1(a)+\frakp _L^{m+1}, (a-p_1(a))+\frakP _{C_1}^{m/2+1}) \\
 &\stackrel{\sim}{\to} T_{1, m} &\quad &\mapsto (\overline{\varphi ^{-m}p_1(a)}, \overline{\varphi ^{-m/2}(a-p_1(a))}) \\
 &\stackrel{(n, \text{id})}{\stackrel{\sim}{\to}} T_{1, m} &\quad & \mapsto (\overline{\tr (\varphi ^{-m}a)}, \overline{\varphi ^{-m/2}a}). 
 \end{alignat*}
 We check that this composite is a group homomorphism as follows.
 Let $b, b'\in \frakp _L^{m}$ and $c, c'\in \frakP _{C_1}^{m/2}$.
 As $bb', bc', cb'\in \frakP ^{m+1}$, we have
 $(1+b+c)(1+b'+c')-1\equiv (b+b'+p_1(cc'))+(c+c') \pmod{\frakp _L^{m+1}+\frakP _{C_1}^{m/2+1}}$.
 We express $c=\sum _{1\leq j\leq n}\varphi ^{m/2 +j-1}d_j$, 
 $c'=\sum _{1\leq j\leq n}\varphi ^{m/2 +j-1}d'_j$
 with $d_j, d'_j\in D_n(K)^0\cap M_n(\calO)$,
 and also $d_1=\diag(w_1, \dots, w_n)$, $d'_1=\diag(w'_1, \dots, w'_n)$.
 Then we have
 $\tr (\varphi ^{-m}cc')\equiv \sum _{i}w_iw'_{i+(m/2)} \pmod{\frakp}$
 (here the index is identified with its image in ${\bbZ}/n{\bbZ}$)
 because $\varphi \diag(w'_1, \dots, w'_n) \varphi ^{-1}=\diag(w'_2, \dots, w'_n, w'_1)$.
 
 The homomorphism clearly induces the isomorphism stated in the proposition.
 \end{proof}
 For even $m$,
 we identify $S_{1, m}$ with $T_{1, m}$.
  
 We have similar subgroups for $D^{\times}$.
 Let $\calO _D$ be the maximal order of $D$
 and $\frakP _D\subset \calO _D$ the maximal ideal.
 Note that $\frakP _D^i=\varphi _D^i\calO _D$ for all $i$.
 We set $U_D=\calO _D^{\times}$ and $U_D^i=1+\frakP _D^i\subset U_D$
 as usual.
 
 Recall that $L$ is identified with $K(\varphi _D)\subset D$
 via $i_{\xi}^D$.
 Similarly to the above, we define
 $C_2$
 to be the orthogonal complement of 
 $L\subset D$
 with respect to the non-degenerate symmetric pairing
 $D\times D\rightarrow K; \ (x, y)\mapsto \Trd (xy)$.
We set $\frakP _{C_2}^i=\frakP _D^i\cap {C_2}$ and 
 $
 U_{D}^{(i)}=1+\frakP _D^i+\frakP _{C_2}^{\lfloor(i+1)/2\rfloor}.
 $
 Then we have $\frakP _D^i=\frakp _L^i\oplus \frakP _{C_2}^i$\footnote{This follows either from the tame ramification assumption (\cite[(6.2.3)]{BFGdiv}), or from Remark \ref{Rem:D(nu)explicit} below.}
 and 
 $U_{D}^{(i)}=1+\frakp _L ^i+\frakP _{C_2}^{\lfloor(i+1)/2\rfloor}$.
 Also, $U_{D}^{(i)}$ is clearly a subgroup of $U_{D}^{\lfloor(i+1)/2\rfloor}$
 containing $U_{D}^i$.
 
 \begin{Rem} \label{Rem:D(nu)explicit}
 As is common it is often convenient to realize $D$ as a $K$-subalgebra of $M_n(K_n)$ in the following way.
 Let $F\in \Gal ({K_n}/K)$ be the lift of the arithmetic Frobenius element.
 We also denote by $F$ the $K$-algebra automorphism of $M_n(K_n)$ 
 obtained by applying $F$ to each entries.
 Then we have a $K$-algebra isomorphism:
 \begin{align*}
 D&\stackrel{\sim}{\to} M_n(K_n)^{F\circ \Ad(\varphi ^{-1})}=\{ a\in M_n(K_n)\mid F(a)=\varphi a\varphi ^{-1}\} \\
 K_n\ni a&\mapsto M(a)=\diag(a, F(a), \dots, F^{n-1}(a)) \\
 \varphi _D&\mapsto \varphi.
 \end{align*}
 With this realization the following are easy to verify. 
 \begin{enumerate}[(1)]
 \item Let $K_n^0=\{ a\in K_n\mid \Tr _{K_n/K}a=0\}$. 
 We have
 $C_2=\bigoplus _{0\leq i\leq n-1}K_n^0\varphi _D^{i}$
 and any $K$-basis of $K_n^0$ is an $L$-basis of $C_2$.
 \item
 The orthogonal projection $p_2\colon D\to L$
 is given by
 \[
 p_2\colon D\to L; \ a\mapsto 
 \frac{1}{n}\Trd a+\frac{1}{n}\Trd (\varphi _D^{-1}a)\varphi _D+\cdots +\frac{1}{n}\Trd (\varphi _D^{-(n-1)}a)\varphi _D^{n-1}.
 \]
 \item \label{UforD}
 We have 
 \begin{align*}
 U_{D}^m&=\{ d\in D^{\times}\mid v_D(d-1)\geq m\}, \\
 U_{D}^{(m)}&=\{ d\in U_{D}^{\lfloor (m+1)/2\rfloor} \mid p_2(d-1)\in \frakp _L^m\}.
 \end{align*}
 \end{enumerate} 
 \end{Rem}
 As before, we have the following proposition.
 \begin{Prop} \label{Prop:gpS2}
   The subgroup ${U_{D}^{(m+1 )}}\subset {U_{D}^{(m )}}$ is normal. 
   The quotient group $S_{2, m}={U_{D}^{(m)}}/{U_{D}^{(m+1)}}$ is described as follows.
  \begin{enumerate}[(1)]
   \item Suppose that $m$ is odd.
   Then $S_{2, m}$ is isomorphic
   to the additive group $k$: 
   \begin{equation*}
    S_{2, m}\rightarrow k; \ d{U_{D}}^{(m+1)}\mapsto 
    \overline{\Trd ((d^{-1}-1)\varphi _D^{-m})}
    \end{equation*}
   \item \label{item:evenS2}
   Suppose that $m$ is even.
   Put
   \[
   T_{2, m}=\{ (v, w)\in k\times k_n
                  \mid 
                  \Tr _{k_n/k}w=0 \}.
   \]
   Then the following is a bijection:
   \begin{equation*}
    S_{2, m}\rightarrow T_{2, m}
    ; \ d{U_{D}}^{(m+1)}\mapsto 
    \left(\overline{\Trd ((d^{-1}-1)\varphi _D^{-m})},
     \overline{(d^{-1}-1)\varphi _D^{-m/2}}\right)
   \end{equation*}
   and under this identification, 
   the induced group operation on $T_{2, m}$ is described as follows:
   \begin{equation*}
    (v, w)\cdot (v', w')
    =(v+v'+\Tr _{k_n/k} (w^{q^{m/2}}w'), w+w') .
   \end{equation*}
  \end{enumerate}
 \end{Prop}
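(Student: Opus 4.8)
The plan is to adapt the argument for Proposition \ref{Prop:gpS1}, but to work throughout with the skew power-series expansion that the equal-characteristic hypothesis makes available, exactly as in Remark \ref{Rem:D(nu)explicit}: for $d\in U_D$ we write $d^{-1}=1+\sum_{l\geq 1}d_l\varphi_D^l$ with $d_l\in k_n$. The one computation needed at the outset is the multiplication rule. If $d,d'\in U_D$ with $(d')^{-1}=1+\sum_l d'_l\varphi_D^l$, then $(dd')^{-1}=(d')^{-1}d^{-1}$, and moving every $\varphi_D$ past the scalars by $\varphi_D\zeta=\zeta^q\varphi_D$ gives
\[
(dd')^{-1}=1+\sum_{l\geq 1}\Bigl(d_l+d'_l+\sum_{\substack{a+b=l\\ a,b\geq 1}}d'_a\,d_b^{\,q^a}\Bigr)\varphi_D^l .
\]
Thus the $\varphi_D^l$-coefficient of $(dd')^{-1}-1$ is $d_l+d'_l+\sum_{a+b=l,\,a,b\geq 1}d'_a d_b^{q^a}$.

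Next I would introduce the candidate maps: $\psi\colon U_D^{(m)}\to k$, $d\mapsto \Tr d_m$, when $m$ is odd, and $\psi\colon U_D^{(m)}\to T_{2,m}$, $d\mapsto(\Tr d_m,\,d_{m/2})$, when $m$ is even --- the latter lands in $T_{2,m}$ because $\Tr d_{m/2}=0$ for $d\in U_D^{(m)}$, since $m/2<m$. The identification of $\Tr d_m$ with the reduced-trace expression displayed in the statement is a direct computation: expanding $\varphi_D^{-m}(d^{-1}-1)=\sum_l d_l^{q^{-m}}\varphi_D^{\,l-m}$ and reducing modulo $\frakp_K$, only the term $l=m$ survives and contributes $\Tr_{k_n/k}(d_m^{q^{-m}})=\Tr d_m$; replacing $d^{-1}-1$ by $-(d-1)$ costs only terms in $\frakP_D^{m+1}$, because $d^{-1}-1\in\frakP_D^{(m+1)/2}$ for $d\in U_D^{(m)}$ with $m$ odd, and such terms reduce to $0$.

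It then remains to check three things, each a short manipulation with Remark \ref{Rem:D(nu)explicit}. First, $\psi$ is surjective: realize a prescribed value by $d^{-1}=1+\zeta\varphi_D^m$ in the odd case (with $\zeta\in k_n$, $\Tr\zeta=v$, possible since the trace $k_n\to k$ is surjective) and by $d^{-1}=1+w\varphi_D^{m/2}+\zeta\varphi_D^m$ in the even case, and verify that such $d$ satisfies the coefficient conditions defining $U_D^{(m)}$. Second, $\ker\psi=U_D^{(m+1)}$: this is a direct comparison of the conditions of Remark \ref{Rem:D(nu)explicit} for $U_D^{(m)}$ and $U_D^{(m+1)}$, using $\lfloor(m+2)/2\rfloor=\lfloor(m+1)/2\rfloor$ for odd $m$ and $\lfloor(m+2)/2\rfloor=\lfloor(m+1)/2\rfloor+1$ for even $m$; adjoining to membership in $U_D^{(m)}$ the vanishing of $\Tr d_m$ (and, when $m$ is even, of $d_{m/2}$) is precisely membership in $U_D^{(m+1)}$. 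Third, $\psi$ is a homomorphism for the asserted group law: evaluate $\psi$ on a product by means of the multiplication rule above, and note that a cross-term $d'_a d_b^{q^a}$ with $a+b=l$ can be nonzero only when both $a,b\geq\lfloor(m+1)/2\rfloor$, since $d_b=0$ for $0<b<\lfloor(m+1)/2\rfloor$ and likewise for $d'_a$. For $l=m/2$ this forces every cross-term to vanish, so the $w$-coordinate is additive; for $l=m$ it leaves, in the even case, exactly the term $a=b=m/2$, i.e.\ the extra summand $\Tr(d_{m/2}^{q^{m/2}}d'_{m/2})=\Tr(w^{q^{m/2}}w')$, and in the odd case nothing. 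With these three facts, $U_D^{(m+1)}$ is normal in $U_D^{(m)}$ as the kernel of a group homomorphism, and $\psi$ induces the asserted isomorphism $S_{2,m}\xrightarrow{\sim}k$ (resp.\ $T_{2,m}$), with the group structure on the target transported along $\psi$; the homomorphism computation just described shows this is the claimed one.

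The only genuine hazard is bookkeeping: keeping the two floor functions $\lfloor(m+1)/2\rfloor$ and $\lfloor(m+2)/2\rfloor$ correctly separated across the parities of $m$ in the kernel and homomorphism computations, and tracking the Frobenius twists $(\cdot)^{q^a}$ and the sign in the reduced-trace reformulation. Conceptually the proof is the same as that of Proposition \ref{Prop:gpS1}.
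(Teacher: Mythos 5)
Your proposal is correct, and it is essentially the computation the paper has in mind: the paper gives no written proof (it is left as the "simple computation" analogous to Proposition \ref{Prop:gpS1}), and your argument — expanding $(dd')^{-1}=(d')^{-1}d^{-1}$ coefficientwise using $\varphi_D\zeta=\zeta^q\varphi_D$, checking that the map $d\mapsto \Tr d_m$ (resp. $d\mapsto(\Tr d_m, d_{m/2})$) is a surjective homomorphism with kernel exactly $U_D^{(m+1)}$ via Remark \ref{Rem:D(nu)explicit}, and matching the cross term $\Tr(w^{q^{m/2}}w')$ — is exactly that computation, including the correct treatment of the reduced-trace reformulation in the odd case.
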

 \begin{Rem}
 In the definition of the bijection in Proposition \ref{Prop:gpS2} \ref{item:evenS2}
 one could replace $d^{-1}$ (appearing twice) with $d$.
 The resulting operation on $T_{2, m}$ would be opposed (see the following proof).
 Although the bijection thus obtained may be more natural,
 we choose the above normalization in this paper.
 \end{Rem}
 \begin{proof}
 Again we only show the second assertion.
 Thus we assume $m$ is even, so that $\lfloor (m+1)/2\rfloor=m/2$.
 Let $T_{2, m}^{\text{op}}$ be the opposite group of $T_{2, m}$;
 for $(v, w), (v', w')\in T_{2, m}^{\text{op}}$
 we have 
 \[
 (v, w)\cdot (v', w')
    =(v+v'+\Tr _{k_n/k} (w{w'}^{q^{m/2}}), w+w').
 \]
 We consider the following composite of maps:
 \begin{alignat*}{2}
 &U_D^{(m)}=1+(\frakp _L^m\oplus \frakP _{C_2}^{m/2}) &\quad &d=1+(p_2(d-1), d-1-p_2(d-1)) \\
 &\to ({\frakp _L^m}/{\frakp _L^{m+1}})\oplus ({\frakP _{C_2}^{m/2}}/{\frakP _{C_2}^{m/2+1}}) &\quad &\mapsto (p_2(d-1)+\frakp _L^{m+1}, \\
 &&&\qquad (d-1-p_2(d-1))+\frakP _{C_2}^{m/2+1}) \\
 &\stackrel{\sim}{\to} T_{2, m} &\quad &\mapsto \left( \overline{p_2(d-1)\varphi _D^{-m}} \right. , \\
 &&&\qquad \left. \overline{(d-1-p_2(d-1))\varphi _D^{-m/2}}\right) \\
 &\stackrel{(n, \text{id})}{\stackrel{\sim}{\to}} T_{2, m} &\quad & \mapsto \left( \overline{\Trd ((d-1)\varphi _D^{-m})}, \overline{(d-1)\varphi _D^{-m/2}}\right). 
 \end{alignat*}
 We check as follows that this composite is a group homomorphism if we regard the target as $T_{2, m}^{\text{op}}$.
 Let $b, b'\in \frakp _L^{m}$ and $c, c'\in \frakP _{C_2}^{m/2}$.
 As $bb', bc', cb'\in \frakP _D^{m+1}$, we have
 $(1+b+c)(1+b'+c')-1\equiv (b+b'+p_2(cc'))+(c+c') \pmod{\frakp _L^{m+1}+\frakP _{C_2}^{m/2+1}}$.
 In the notation of Remark \ref{Rem:D(nu)explicit}
 we express $c=\sum _{1\leq j\leq n}M(w^{(j)})\varphi ^{m/2 +j-1}$, 
 $c'=\sum _{1\leq j\leq n}M({w'}^{(j)})\varphi ^{m/2 +j-1}$
 with $w^{(j)}, {w'}^{(j)}\in \calO _{K_n}$
 and put $w=w^{(1)}$, $w'={w'}^{(1)}$. 
 Then we have
 \[
 \Trd (cc'\varphi _D^{-m})\equiv \sum _{1\leq i\leq n}w^{q^i}w'^{q^{i+m/2}} \pmod{\frakp _L}
 \]
 as desired,
 because $\varphi M(w') \varphi ^{-1}=M(F(w'))$.
 
 The homomorphism clearly induces an isomorphism $S_{2, m}\stackrel{\sim}{\to} T_{2, m}^{\text{op}}$.
 We compose two anti-isomorphisms
 \begin{align*}
 &S_{2, m}\stackrel{\sim}{\to} S_{2, m}; \ dU_D^{(m+1)}\mapsto d^{-1}U_D^{(m+1)}, \\
 &T_{2, m}^{\text{op}}\stackrel{\sim}{\to} T_{2, m}; \ (v, w)\mapsto (v, w)
 \end{align*}
 on the source and on the target respectively,
 to obtain the isomorphism $S_{2, m}\stackrel{\sim}{\to} T_{2, m}$ in the statement.
 \end{proof}
 For even $m$,
 we identify $S_{2, m}$ with $T_{2, m}$.

 \begin{Thm} \label{Thm:stabandaction}
 Let $\nu >0$ be an integer.
 Let $\Stab _{\nu}\subset G^0$ be the stabilizer of the affinoid $\calZ _{\nu}$.
 We denote by $\Frob _q$ the $q$-th power geometric Frobenius element.
 \begin{enumerate}[(1)]
 \item \label{item:stabinducesaction}
 We have $\Stab _{\nu}=(U_{\frakI}^{(\nu)}\times U_D^{(\nu)}\times \{ 1\})\cdot \calS$.
 The action of $\Stab _{\nu}$ on the affinoid $\calZ _{\nu}$ induces 
 actions 
 on the formal models $\scrZ _{\nu}$,  $\Spf \Cont (U_K^{\lceil \nu/n\rceil})$, $\scrX _{\nu}$, $\scrY _{\nu}$
 and
 on the six objects in the Cartesian diagrams (\ref{eq:cartdiag}),
 in which the morphisms are equivariant for the actions.
 \item \label{item:actiononcomps}
  In \ref{item:stabinducesaction}, $(g, d, \sigma) \in \Stab _{\nu}$ acts on $N_{L/K}U_L^{\nu}=U_K^{\lceil \nu /n\rceil}$ 
 as the composite of the translation by $N_G((g, d, \sigma))^{-1}\in U_K^{\lceil \nu /n\rceil}$ and $\Frob _q^{n_\sigma}$, where $n_{\sigma}=v(\Art _K^{-1}(\sigma))$.
 The action on ${N_{L/K}U_L^{\nu}}/{N_{L/K}U_L^{\nu +1}}$ is the action uniquely induced from that on $N_{L/K}U_L^{\nu}$.
 \item \label{item:actionofvarphi}
 The action of $\varphi _G=(\varphi , \varphi _D, 1)\in \calS$
 on $\bbA _{\overline{k}}^{n, \emph{perf}}$ is described as
 \[
 z^{q^{-l}}\mapsto z^{q^{-l}}, \quad y_1^{q^{-l}}\mapsto y_n^{q^{-l}}, \quad y_i^{q^{-l}}\mapsto y_{i-1}^{q^{-l}} \quad \emph{for $2\leq i\leq n$ and $l\geq 0$}.
 \]
 \item \label{item:actionofU_L}
 The action of $\Delta _{\xi}(U_L)\subset \calS$
 on $\bbA _{\overline{k}}^{n, \emph{perf}}$ is trivial.
 \item \label{item:actionofWeilgp}
 For $\sigma \in W_L$, set $a_{\sigma}=\Art _L^{-1}(\sigma )\in L\subset D$, $n_{\sigma}=v_L(a_{\sigma})=v(\Art _K^{-1}(\sigma ))$
 and $u_{\sigma}=a_{\sigma}\varphi _D^{-n_{\sigma}} \in U_D$.
 Then the action of $(1, a_{\sigma}^{-1}, \sigma )\in \calS$
 on $\bbA _{\overline{k}}^{n, \emph{perf}}$ is described as $\Frob _q^{n_{\sigma}}$ if $\nu$ is even, 
 and as the composite of $\Frob _q^{n_{\sigma}}$ and the automorphism
 \[
 z^{q^{-l}}\mapsto z^{q^{-l}}, \quad y_i^{q^{-l}}\mapsto \overline{u} _{\sigma}^{(q-1)/2}y_i^{q^{-l}} \quad \emph{for $1\leq i\leq n$ and $l\geq 0$}
 \]
 if $\nu$ is odd.
 \item \label{item:actionofGL} 
 The action of $U_{\frakI}^{(\nu)}=\Stab _{\nu}\cap \GL _n(K)$ on $\bbA _{\overline{k}}^{n, \emph{perf}}$
 factors through 
 $U_{\frakI}^{(\nu)}\rightarrow S_{1, \nu}$.
 If $\nu$ is odd, then the induced action of $x\in k=S_{1, \nu}$ is described as 
 \[
 z^{q^{-l}}\mapsto z^{q^{-l}}+x, \quad y_i^{q^{-l}}\mapsto y_i^{q^{-l}} \quad \emph{for $1\leq i\leq n$ and $l\geq 0$}.
 \]
 If $\nu =2\mu$ is even, then the induced action of $(v, (w_i))\in S_{1, \nu}$ is described as 
 \[
 z^{q^{-l}}\mapsto z^{q^{-l}}+v+\sum _{i\in {\bbZ}/{n\bbZ}}w_iy_{i-\mu}^{q^{-l}}, \quad y_i^{q^{-l}}\mapsto y_i^{q^{-l}}+w_i \quad \emph{for $i\in {\bbZ}/{n\bbZ}$ and $l\geq 0$},
 \]
 where we regard $\{ y_i^{q^{-l}}\}$ as indexed by ${\bbZ}/{n\bbZ}$.
 \item \label{item:actionofD} 
 The action of $U_D^{(\nu)}=\Stab _{\nu}\cap D^{\times}$ on $\bbA _{\overline{k}}^{n, \emph{perf}}$ 
 factors through 
 $U_D^{(\nu)}\rightarrow S_{2, \nu}$.
 If $\nu$ is odd, then the induced action of $x\in k=S_{2, \nu}$ is described as 
 \[
 z^{q^{-l}}\mapsto z^{q^{-l}}+x, \quad y_i^{q^{-l}}\mapsto y_i^{q^{-l}} \quad \emph{for $1\leq i\leq n$ and $l\geq 0$}.
 \]
 If $\nu =2\mu $ is even, then the induced action\footnote{Here, $\nu =rn+s$. Complicated values like $r-\nu +i-1-l$
                                                                               result from our choices of the normalization  
                                                                               \eqref{eq:zyandprime}
                                                                               and the definition 
                                                                               ``$w=\overline{(d^{-1}-1)\varphi _D^{-m/2}}$''
                                                                               (instead of $w=\overline{\varphi _D^{-m/2}(d^{-1}-1)}$)
                                                                               in Proposition \ref{Prop:gpS2} \ref{item:evenS2}.}
  of $(v, w)\in S_{2, \nu}$ is described as 
 \begin{align*}
 &z^{q^{-l}}\mapsto z^{q^{-l}}+v+\sum _{i\in {\bbZ}/{n\bbZ}}w^{q^{r-\nu +i-1-l}}y_{i}^{q^{-l}}, \\
 &y_i^{q^{-l}}\mapsto y_i^{q^{-l}}+w^{q^{r-\mu +i-1-l}} \quad \emph{for $i\in {\bbZ}/{n\bbZ}$ and $l\geq 0$},
 \end{align*}
 where we regard $\{ y_i^{q^{-l}}\}$ as indexed by ${\bbZ}/{n\bbZ}$. 
 \end{enumerate}
 \end{Thm}
 
 \begin{Rem} \label{Rem:Stab}
 \begin{enumerate}[(1)]
 \item \label{item:LandL'}
  The subgroups and elements appearing in
  \ref{item:actionofvarphi} to \ref{item:actionofD}
  do not generate the whole group $\Stab _{\nu}$
  unless $L'= L$.
  Later, we shall study the action of $\Stab _{\nu}$
  on the cohomology in an indirect way (see Remark \ref{Rem:Stabastorsor}, Theorem \ref{Thm:realizationinPi}).
 \item
 Although a similar computation applies,
 we omit the description of the action on $\overline{\scrY} _{\nu}$.
 \end{enumerate}
 \end{Rem}
 
 Since we have $N_G(\calS)=\{ 1\}$, we easily check that 
 $N_G$ maps $(U_{\frakI}^{(\nu)} \times U_D^{(\nu)} \times \{ 1\}) \cdot \calS$
 into $U_K^{\lceil \nu /n\rceil}$.
 Thus it immediately follows from \eqref{eq:actiononCont} 
 that $(g, d, \sigma)\in (U_{\frakI}^{(\nu)} \times U_D^{(\nu)} \times \{ 1\}) \cdot \calS$
 stabilizes 
 $U_K^{\lceil \nu /n\rceil} \subset U_K\simeq \LTpone$
 and 
 acts on the reduction $U_K^{\lceil \nu /n\rceil}=\Spec (U_K^{\lceil \nu /n\rceil}, \overline{k})$
 in the way stated in \ref{item:actiononcomps}.
 
 We denote the stabilizer of $\calX _{\nu}$ in $G^0$ by $\Stab _{\nu}'$.
 In the rest of this subsection we prove that
 $\Stab _{\nu}'=(U_{\frakI}^{(\nu)} \times U_D^{(\nu)} \times \{ 1\}) \cdot \calS$
 and the elements act on $\overline{\scrX} _{\nu}=\bbA _{\overline{k}}^{n, \text{perf}}$ in the way stated 
 in \ref{item:actionofvarphi} to \ref{item:actionofD};
 this is sufficient to conclude.
 The proof to follow is inspired by that in \cite[\S3]{ITepitame}.
 
 We begin by describing how a general element of $G^0$ acts on $\Nil ^{\flat, n}_{\calO _C}$.
 Take an element in $G^0$ and express it as $(g, d\varphi _D^{-n_{\sigma}}, \sigma)$ with $(g, d, 1)\in G^0$
 and $\sigma \in W_K$.
 Here $n_{\sigma}=v(\Art _{K}^{-1}(\sigma))$.
 We also write $g=(g_{i, j})_{1\leq i, j\leq n}\in \GL _n(K)$.
 By \eqref{eq:GLactiononLT}, \eqref{eq:DactiononLT}, \eqref{eq:WeilactiononLT}
 we have
 \begin{align}
 (g, d\varphi _D^{-n_{\sigma}}, \sigma)^{\ast}\bxi _i&=\sigma (\bxi _i), \label{eq:actiononxi} \\
 (g, d\varphi _D^{-n_{\sigma}}, \sigma)^{\ast}\bX _i&=(\Nil ^{\flat}_{H_0})\sum _{1\leq j\leq n}[g_{j, i}d^{-1}]_{\Nil ^{\flat}_{H_0}} (\bX _j) \nonumber
 \end{align}
 for $1\leq i\leq n$ in $\Nil ^{\flat}(B_n)$.
 Using these we compute
 \begin{align}
 (g, d\varphi _D^{-n_\sigma}, \sigma )^{\ast}\bY _i&=(g, d\varphi _D^{-n_{\sigma}}, \sigma )^{\ast} (\bX _i-_{\Nil ^{\flat}_{H_0}} \bxi _i) \nonumber \\
                                                                &\stackrel{(\ast)}{=}(g, d\varphi _D^{-n_{\sigma}}, \sigma )^{\ast} \bX _i-_{\Nil ^{\flat}_{H_0}} (g, d\varphi _D^{-n_{\sigma}}, \sigma )^{\ast} \bxi _i \nonumber \\
                                                                &=(\Nil ^{\flat}_{H_0})
                                                                   \sum _{1\leq j\leq n}[g_{j, i}d^{-1}]_{\Nil ^{\flat}_{H_0}}(\bY _j+_{\Nil ^{\flat}_{H_0}} \bxi _j) -_{\Nil ^{\flat}_{H_0}} \sigma (\bxi _i) \nonumber \\
                                                                &\stackrel{(\ast \ast)}{=}(\Nil ^{\flat}_{H_0})
                                                                   \sum _{1\leq j\leq n}[g_{j, i}d^{-1}]_{\Nil ^{\flat}_{H_0}}(\bY _j) \nonumber \\
                                                                   &\quad \quad +_{\Nil ^{\flat}_{H_0}} 
                                                                   (\Nil ^{\flat}_{H_0})
                                                                   \sum _{1\leq j\leq n}[g_{j, i}d^{-1}]_{\Nil ^{\flat}_{H_0}}(\bxi _j) 
                                                                   -_{\Nil ^{\flat}_{H_0}} \sigma (\bxi _i) \nonumber \\
                                                                   &=(\Nil ^{\flat}_{H_0})
                                                                   \sum _{1\leq j\leq n}[g_{j, i}d^{-1}]_{\Nil ^{\flat}_{H_0}}(\bY _j) \nonumber \\
                                                                   &\quad \quad +_{\Nil ^{\flat}_{H_0}} 
                                                                   \left[ \sum _{1\leq j\leq n}g_{j, i}d^{-1}\varphi _D^{i-j}\right] _{\Nil ^{\flat}_{H_0}}(\bxi _i) \label{eq:actiononYi} 
                                                                   -_{\Nil ^{\flat}_{H_0}} \sigma (\bxi _i). 
 \end{align}
 For $(\ast)$, note that $(g, d\varphi _D^{-n_\sigma}, \sigma )^{\ast}$ is a continuous $\calO _K$-automorphism of $B_n$
 while if we write $\bX -_{\Nil ^{\flat}_{H_0}} \bY=(f^{q^{-l}})_{l\geq 0}$ with $f^{q^{-l}}\in \calO _C[[X^{q^{-\infty}}, Y^{q^{-\infty}}]]$
 then $f^{q^{-l}}$ lies in fact in $\calO _K[[X^{q^{-\infty}}, Y^{q^{-\infty}}]]$, for $H$ is defined over $\calO _K$.
 On the other hand, $(\ast \ast)$ holds since the action of $D^{\times}$ on $\Nil ^{\flat}_{\calO _C}(R)$ respects the $K$-vector space structure for any $R\in \Adic _{\calO _C}$.
 Similarly, we have
 \begin{align}
 (g, d\varphi _D^{-n_\sigma}, \sigma )^{\ast}\bZ&=(g, d\varphi _D^{-n_\sigma}, \sigma )^{\ast}\left( (\Nil ^{\flat}_{H_0})\sum _{1\leq i\leq n} [\varphi _D^{i-1}]_{\Nil ^{\flat}_{H_0}} (\bY _i) \right) \nonumber \\
                                                             &=(\Nil ^{\flat}_{H_0})
                                                                   \sum _{1\leq i\leq n} [\varphi _D^{i-1}]_{\Nil ^{\flat}_{H_0}} ((g, d\varphi _D^{-n_\sigma}, \sigma )^{\ast}\bY _i) \nonumber \\
                                                             &=(\Nil ^{\flat}_{H_0})
                                                                   \sum _{1\leq i, j\leq n} [\varphi _D^{i-1}g_{j, i}d^{-1}]_{\Nil ^{\flat}_{H_0}} (\bY _j) \nonumber \\
                                                             &\qquad +_{\Nil ^{\flat}_{H_0}} (\Nil ^{\flat}_{H_0})
                                                                   \sum _{1\leq i\leq n} \left[ \varphi _D^{i-1}\sum _{1\leq j\leq n}g_{j, i}d^{-1}\varphi _D^{i-j}\right] _{\Nil ^{\flat}_{H_0}} (\bxi _i) \nonumber \\
                                                             &\qquad \qquad  -_{\Nil ^{\flat}_{H_0}} (\Nil ^{\flat}_{H_0})\sum _{1\leq i\leq n}[\varphi _D^{i-1}]_{\Nil ^{\flat}_{H_0}} (\sigma (\bxi _i)) \nonumber \\
                                                             &=(\Nil ^{\flat}_{H_0})
                                                                   \sum _{1\leq i, j\leq n} [\varphi _D^{i-1}g_{j, i}d^{-1}]_{\Nil ^{\flat}_{H_0}} (\bY _j) \label{eq:actiononZ} \\
                                                             &\qquad +_{\Nil ^{\flat}_{H_0}} 
                                                                   \left[ \sum _{1\leq i, j\leq n}g_{j, i}(\varphi _D^{i-1}d^{-1}\varphi _D^{-(i-1)})\varphi _D^{i-j}\right] _{\Nil ^{\flat}_{H_0}} (\bxi _1) \nonumber \\
                                                             &\qquad \qquad -_{\Nil ^{\flat}_{H_0}} (\Nil ^{\flat}_{H_0})\sum _{1\leq i\leq n}[\varphi _D^{i-1}]_{\Nil ^{\flat}_{H_0}} (\sigma (\bxi _i)). \nonumber
 \end{align}
  
 \paragraph{Action of $U_{\frakI}^{(\nu)}$}
 Let us prove that $\GL _n(K)\cap \Stab _{\nu}'=U_{\frakI}^{(\nu)}$
 and that $U_{\frakI}^{(\nu)}$ acts on $\bbA _{\overline{k}}^{n, \text{perf}}$ as in the assertion \ref{item:actionofGL}.
 For $g=(g_{i, j})_{1\leq i, j\leq n}=1+(a_{i, j})_{1\leq i, j\leq n}\in \GL _n(K)\cap G^0$,
 the equation \eqref{eq:actiononYi} reads
   \begin{align} 
   g^{\ast}\bY _i  
    &=(\Nil ^{\flat}_{H_0})\sum _{1\leq j\leq n} [g_{j, i}]_{\Nil ^{\flat}_{H_0}}(\bY _j)+_{\Nil ^{\flat}_{H_0}} \left[ \sum _{1\leq j\leq n}g_{j, i}\varphi _D^{i-j}\right]_{\Nil ^{\flat}_{H_0}}(\bxi _i) 
   -_{\Nil ^{\flat}_{H_0}} \bxi _i \nonumber \\
    &=(\Nil ^{\flat}_{H_0})\sum _{1\leq j\leq n} [g_{j, i}]_{\Nil ^{\flat}_{H_0}}(\bY _j)+_{\Nil ^{\flat}_{H_0}} \left[ \sum _{1\leq j\leq n} a_{j, i}\varphi _D^{i-j}\right] _{\Nil ^{\flat}_{H_0}}(\bxi _i)  \label{eq:GLY}, 
   \end{align}
   and thus
   \begin{align} 
   g^{\ast}\bZ 
    &=(\Nil ^{\flat}_{H_0})\sum _{1\leq i\leq n} [\varphi _D^{i-1}]_{\Nil ^{\flat}_{H_0}} (g^{\ast}\bY _i) \nonumber \\
    &=(\Nil ^{\flat}_{H_0})\sum _{1\leq i, j\leq n} [\varphi _D^{i-1}g_{j, i}]_{\Nil ^{\flat}_{H_0}} (\bY _j) \nonumber \\
    &\qquad +_{\Nil ^{\flat}_{H_0}} (\Nil ^{\flat}_{H_0})\sum _{1\leq i\leq n} \left[ \varphi _D^{i-1}(\sum _{1\leq j\leq n}a_{j, i}\varphi _D^{i-j})\right] _{\Nil ^{\flat}_{H_0}} (\bxi _i) \nonumber \\
    &=(\Nil ^{\flat}_{H_0})\sum _{1\leq i, j\leq n} [\varphi _D^{i-1}g_{j, i}]_{\Nil ^{\flat}_{H_0}} (\bY _j)
       +_{\Nil ^{\flat}_{H_0}} \left[ \sum _{1\leq i, j\leq n}a_{j, i}\varphi _D^{i-j}\right] _{\Nil ^{\flat}_{H_0}} (\bxi _1)
    \label{eq:GLR}.
   \end{align}
 We write 
 \begin{equation} \label{eq:cnu}
 c_{\nu}=\begin{cases}
 q^{\mu}(q+1)/2 &\text{ if $\nu =2\mu +1$ is odd} \\
 q^{\mu} &\text{ if $\nu =2\mu$ is even}.
 \end{cases}
 \end{equation}
 
 Assume that $g$ stabilizes $\calX _{\nu}$.
 We consider the condition $\xi ^{(g, 1, 1)} \in \calX _{\nu}$ (here and in the rest of the proof, we simply write $\xi$ for the image of $\xi \in \calZ _{\nu}$ in $\calX _{\nu}$).
 By (\ref{eq:GLY}),
 we must have
 \[
 \lv 
 \left[ \sum _{1\leq j\leq n} a_{j, i}\varphi _D^{i-j}\right] _{\Nil ^{\flat}_{H_0}}(\bxi _i)
 \rv
 \leq \lv \bxi _i \rv ^{c_{\nu}}
 \]
 for all $1\leq i\leq n$.
 By Lemma \ref{lem:appD} \ref{htnnilaction}, this is equivalent to
 \[
 v_D\left( \sum _{1\leq j\leq n} a_{j, i}\varphi _D^{i-j}\right) \geq \lfloor (\nu +1)/2\rfloor,
 \]
 for all $1\leq i\leq n$, 
 which in turn is equivalent to $g\in U_{\frakI}^{\lfloor (\nu +1)/2\rfloor}$ by Proposition \ref{Prop:GL(nu)explicit} \ref{UforGL}.
 
 Similarly,
 by (\ref{eq:GLR}),
 we must have
 \[
 \lv
 \left[ \sum _{1\leq i, j\leq n}a_{j, i}\varphi _D^{i-j}\right] _{\Nil ^{\flat}_{H_0}} (\bxi _1)
 \rv
   \leq \lv \bxi _1\rv ^{q^{\nu}}.
 \]
 Again by Lemma \ref{lem:appD} \ref{htnnilaction}, this is equivalent to
 \[
 v_D\left( \sum _{1\leq i, j\leq n}a_{j, i}\varphi _D^{i-j}\right) \geq \nu.
 \]
 Noting that $\sum _{1\leq i, j\leq n}a_{j, i}\varphi _D^{i-j}=np_1(g-1)$,
 we have $\Stab _{\nu}' \cap \GL _n(K)\subset U_{\frakI}^{(\nu )}$ 
 by Proposition \ref{Prop:GL(nu)explicit} \ref{UforGL}.
 
 Conversely,
 let $g=(g_{i, j})_{1\leq i, j\leq n}=1+(a_{i, j})_{1\leq i, j\leq n}\in U_{\frakI}^{(\nu )}$.
 We have, 
 by (\ref{eq:GLY}) and \eqref{eq:GLR}, 
 \begin{align*}
   g^{\ast}\bY _i 
     &=\bY _i+_{\Nil ^{\flat}_{H_0}}(\Nil ^{\flat}_{H_0})\sum _{1\leq j\leq n} [a_{j, i}]_{\Nil ^{\flat}_{H_0}}(\bY _j)+_{\Nil ^{\flat}_{H_0}} \left[ \sum _{1\leq j\leq n} a_{j, i}\varphi _D^{i-j}\right] _{\Nil ^{\flat}_{H_0}}(\bxi _i) \\
   g^{\ast}\bZ
     &=\bZ 
      +_{\Nil ^{\flat}_{H_0}} (\Nil ^{\flat}_{H_0})\sum _{1\leq j\leq n} \left[ \sum _{1\leq i\leq n}\varphi _D^{i-1}a_{j, i}\right] _{\Nil ^{\flat}_{H_0}} (\bY _j) \\
       &\qquad +_{\Nil ^{\flat}_{H_0}} \left[ \sum _{1\leq i, j\leq n}a_{j, i}\varphi _D^{i-j}\right] _{\Nil ^{\flat}_{H_0}} (\bxi _1) .
 \end{align*}
 To estimate and approximate these functions we work in a term-by-term manner.
 
 Let $1\leq i\leq n$.
 We have by Lemma \ref{lem:appD} \ref{htnnilaction} 
 and the condition for $g\in U_{\frakI} ^{\lfloor (\nu+1)/2\rfloor}$
 in Proposition \ref{Prop:GL(nu)explicit} \ref{UforGL},
 \begin{equation} \label{eq:nbdaction1}
 \lvert [a_{j, i}]_{\Nil ^{\flat}_{H_0}}(\bY _j)(\tau) \rvert 
 \leq \lvert \bxi _j \rvert ^{c_{\nu}q^{v_D(a_{j, i})}} 
 \leq \lvert \bxi _i \rvert ^{c_{\nu}q^{\lfloor (\nu +1)/2\rfloor}} 
 < \lvert \bxi _i \rvert ^{c_{\nu}}
 \end{equation}
 for $1\leq j\leq n$ and
 \begin{align*}
 \lv \left[ \sum _{1\leq j\leq n} a_{j, i}\varphi _D^{i-j}\right] _{\Nil ^{\flat}_{H_0}}(\bxi _i) \rv 
 \leq \lvert \bxi _i \rvert ^{q^{\lfloor (\nu +1)/2\rfloor}}
 < \lv \bxi _i \rv ^{c_{\nu}} \quad &\text{ if $\nu$ is odd}, \\
 \lv \left[ \sum _{1\leq j\leq n} a_{j, i}\varphi _D^{i-j}\right] _{\Nil ^{\flat}_{H_0}}(\bxi _i) -\zeta _i\bxi _i ^{q^{\lfloor (\nu +1)/2\rfloor}}\rv 
 < \lvert \bxi _i \rvert ^{q^{\lfloor (\nu +1)/2\rfloor}}
 = \lvert \bxi _i \rvert ^{c_{\nu}} \quad &\text{ if $\nu$ is even},
 \end{align*}
 where $\zeta _i \in \mu _{q-1}(K)\cup \{0\}$ is the element satisfying
 $\overline{\zeta} _i=\overline{(\sum _{1\leq j\leq n}a_{j, i}\varphi _D^{i-j})\varphi _D^{-\lfloor (\nu +1)/2\rfloor}}$.
 Therefore, we have by Lemma \ref{lem:app2} 
 \begin{align}
 \lvert (g^{\ast} \bY _i -\bY _i)(\tau) \rvert < \lvert \bxi _i \rvert ^{c_{\nu}} \quad &\text{ if $\nu$ is odd}, \label{eq:GLappr1} \\
 \lvert (g^{\ast} \bY _i -(\bY _i+\zeta _i\bxi _i^{q^{\mu}}))(\tau) \rvert < \lvert \bxi _i \rvert ^{c_{\nu}} \quad &\text{ if $\nu =2\mu$ is even}. \label{eq:GLappr2}
 \end{align}
 
 We turn to $g^{\ast}\bZ$.
 We have by Proposition \ref{Prop:GL(nu)explicit} \ref{UforGL}
 \[
 v_D\left( \sum _{1\leq i\leq n}\varphi _D^{i-1}a_{j, i}\right) \geq \lfloor (\nu +1)/2\rfloor +j-1
 \]
 for each $1\leq j\leq n$.
 Moreover, we have for each $1\leq i\leq n$
 \[
 \overline{a_{j, i}\varphi _D^{i-j}\cdot \varphi _D^{-\lfloor (\nu +1)/2\rfloor}}=
 \begin{cases}
 \overline{\zeta} _i &\text{if $i-j\equiv \lfloor (\nu +1)/2\rfloor \pmod{n}$} \\
 0 &\text{otherwise.}
 \end{cases}
 \]
 Thus we infer that for $1\leq j\leq n$
 \[
 \overline{(\sum _{1\leq i\leq n}\varphi _D^{i-1}a_{j, i})\varphi _D^{-(\lfloor (\nu +1)/2\rfloor +j-1)}}=\overline{\zeta} _{j+\lfloor (\nu +1)/2\rfloor}.
 \]
 Here $\zeta _{j+\lfloor (\nu +1)/2\rfloor}$ is understood to be $\zeta _{j'}$ with $j'\equiv j+\lfloor (\nu +1)/2\rfloor \pmod{n}$.
 Hence, by Lemma \ref{lem:appD} \ref{htnnilaction}
 \begin{align*}
 &\lv \left[ \sum _{1\leq i\leq n}\varphi _D^{i-1}a_{j, i}\right] _{\Nil ^{\flat}_{H_0}} (\bY _j) (\tau)\rv && \\
 &\leq \lvert \bxi _j \rvert ^{c_{\nu}q^{v_D(\sum _{1\leq i\leq n}\varphi _D^{i-1}a_{j, i})}}
 \leq \lvert \bxi _1 \rvert ^{c_{\nu}q^{\lfloor (\nu +1)/2\rfloor}} 
 < \lvert \bxi _1 \rvert ^{q^{\nu}} \quad &&\text{ if $\nu$ is odd}, \\
 &\lv (\left[ \sum _{1\leq i\leq n}\varphi _D^{i-1}a_{j, i}\right] _{\Nil ^{\flat}_{H_0}} (\bY _j) -\zeta _{j+\mu} \bY _j^{q^{\mu +j-1}})(\tau)\rv &&\\
 &< \lvert \bxi _j \rvert ^{c_{\nu}q^{v_D(\sum _{1\leq i\leq n}\varphi _D^{i-1}a_{j, i})}}
 \leq \lvert \bxi _1 \rvert ^{c_{\nu}q^{\lfloor (\nu +1)/2\rfloor}} 
 = \lvert \bxi _1 \rvert ^{q^{\nu}} \quad &&\text{ if $\nu =2\mu$ is even}
 \end{align*}
 for $1\leq j\leq n$.
 By Lemma \ref{lem:appD} \ref{htnnilaction} and the condition for $g\in U_{\frakI}^{(\nu)}$ in Proposition \ref{Prop:GL(nu)explicit} \ref{UforGL},
 \[
 \lv \left[ \sum _{1\leq i, j\leq n}a_{j, i}\varphi _D^{i-j}\right] _{\Nil ^{\flat}_{H_0}} (\bxi _1)-\zeta \bxi _1^{q^{\nu}}\rv 
 < \lvert \bxi _1 \rvert ^{q^{\nu}},
 \]
 where $\zeta \in \mu _{q-1}(K)\cup \{0\}$ is the unique element satisfying 
 \[
 \overline{\zeta}=\overline{(\sum _{1\leq i, j\leq n}a_{j, i}\varphi _D^{i-j})\varphi _D^{-\nu}}
 =\overline{np_1(g-1)\varphi _D^{-\nu}}=\overline{\tr ((g-1)\varphi _D^{-\nu})}.
 \]
 Therefore we have by Lemma \ref{lem:app2}
 \begin{align}
 \lvert (g^{\ast}\bZ -(\bZ +\zeta \bxi _1^{q^{\nu}}))(\tau)\rvert <\lvert \bxi _1 \rvert ^{q^{\nu}} \quad &\text{ if $\nu$ is odd}, \label{eq:GLappr3} \\
 \lvert (g^{\ast}\bZ -(\bZ +\sum _{1\leq j\leq n}\zeta _{j+\mu} \bY _j^{q^{\mu +j-1}}+\zeta \bxi _1^{q^{\nu}}))(\tau) \rvert <\lvert \bxi _1 \rvert ^{q^{\nu}} \quad &\text{ if $\nu =2\mu$ is even}. \label{eq:GLappr4}
 \end{align}
 
 Now \eqref{eq:GLappr1}, \eqref{eq:GLappr2}, \eqref{eq:GLappr3}, \eqref{eq:GLappr4}
 show that $g$ indeed lies in $\Stab _{\nu}'$ and 
 moreover, noting \eqref{eq:capitaltoprime} and \eqref{eq:zyandprime}, one applies Lemma \ref{lem:esttoapp} to deduce that it acts on $\bbA _{\overline{k}}^{n, \text{perf}}$ as in the assertion \ref{item:actionofGL}.
 
 \paragraph{Action of $U_D^{(\nu )}$}
 Let us prove that $D^{\times} \cap \Stab _{\nu}'=U_D^{(\nu)}$
 and that $U_D^{(\nu)}$ acts on $\bbA _{\overline{k}}^{n, \text{perf}}$ as in the assertion \ref{item:actionofD}.
 The argument is analogous to the above.
 Let $d\in D^{\times}\cap G^0$.
 By
 \eqref{eq:actiononYi}, \eqref{eq:actiononZ}
 we have
 \begin{align}
 d^{\ast}\bY _i 
   &=[d^{-1}]_{\Nil ^{\flat}_{H_0}} (\bY _i) +_{\Nil ^{\flat}_{H_0}} [d^{-1}-1]_{\Nil ^{\flat}_{H_0}}(\bxi _i), \label{eq:DY}\\
  d^{\ast}\bZ 
   &=(\Nil ^{\flat}_{H_0})\sum _{1\leq i\leq n} [\varphi _D^{i-1}]_{\Nil ^{\flat}_{H_0}} (d^{\ast}\bY _i) \nonumber \\
   &=(\Nil ^{\flat}_{H_0})\sum _{1\leq i\leq n} [\varphi _D^{i-1}d^{-1}]_{\Nil ^{\flat}_{H_0}} (\bY _i) \nonumber \\
      &\qquad +_{\Nil ^{\flat}_{H_0}} \left[ \sum _{1\leq i\leq n}\varphi _D^{i-1}(d^{-1}-1)\varphi _D^{-(i-1)}\right] _{\Nil ^{\flat}_{H_0}} (\bxi _1) . \label{eq:DR}
 \end{align}
 Let $c_{\nu}$ be as in \eqref{eq:cnu}.
 
 Assume that $d$ stabilizes $\calX _{\nu}$.
 By the condition
 $\xi ^{(1, d, 1)}\in \calX _{\nu}$,
 we necessarily have
 \begin{align*}
 &\lv
 [d^{-1}-1]_{\Nil ^{\flat}_{H_0}}(\bxi _i)
 \rv
 \leq \lv \bxi _i \rv ^{c_{\nu}} \\
 \intertext{for $1\leq i\leq n$ and}
 &\lv
 \left[ \sum _{1\leq i\leq n}\varphi _D^{i-1}(d^{-1}-1)\varphi _D^{-(i-1)}\right] _{\Nil ^{\flat}_{H_0}} (\bxi _1)
 \rv
 \leq \lv \bxi _1 \rv ^{q^{\nu}}.
 \end{align*}
 In view of $\sum _{1\leq i\leq n}\varphi _D^{i-1}a\varphi _D^{-(i-1)}=np_2(a)$ for $a\in D$ (which can be verified by using the realization in Remark \ref{Rem:D(nu)explicit}), 
 these conditions are seen to be equivalent to 
 $d^{-1}\in U_D^{(\nu )}$, i.e.\ $d\in U_D^{(\nu )}$, 
 by Lemma \ref{lem:appD} \ref{htnnilaction} and Remark \ref{Rem:D(nu)explicit} \ref{UforD}.
 
 Conversely,
 let $d\in U_D^{(\nu )}$.
 We have, by (\ref{eq:DY}) \eqref{eq:DR},
 \begin{align*}
 d^{\ast}\bY _i 
   &=\bY _i+_{\Nil ^{\flat}_{H_0}}[d^{-1}-1]_{\Nil ^{\flat}_{H_0}} (\bY _i)+_{\Nil ^{\flat}_{H_0}} [d^{-1}-1]_{\Nil ^{\flat}_{H_0}}(\bxi _i), \\
  d^{\ast}\bZ 
   &=\bZ +_{\Nil ^{\flat}_{H_0}} (\Nil ^{\flat}_{H_0})\sum _{1\leq i\leq n} [\varphi _D^{i-1}(d^{-1}-1)]_{\Nil ^{\flat}_{H_0}} (\bY _i) \\
   &\qquad +_{\Nil ^{\flat}_{H_0}} \left[ \sum _{1\leq i\leq n}\varphi _D^{i-1}(d^{-1}-1)\varphi _D^{-(i-1)}\right] _{\Nil ^{\flat}_{H_0}} (\bxi _1) . 
 \end{align*}
 
 Let $1\leq i\leq n$.
 By Lemma \ref{lem:appD} \ref{htnnilaction}
 and $d\in U_D^{\lfloor (\nu+1)/2\rfloor}$
 we have
 \begin{align}
 \lvert [d^{-1}-1]_{\Nil ^{\flat}_{H_0}} (\bY _i)(\tau)\rvert
 \leq \lvert \bY _i(\tau)\rvert ^{q^{\lfloor (\nu+1)/2\rfloor}}
 <  \lvert \bY _i(\tau)\rvert 
 \leq \lvert \bxi _i\rvert ^{c_{\nu}}, \quad & \label{eq:nbdaction2} \\
 \lvert [d^{-1}-1]_{\Nil ^{\flat}_{H_0}}(\bxi _i) \rvert
 \leq \lvert \bxi _i\rvert ^{q^{\lfloor (\nu+1)/2\rfloor}}
 <\lvert \bxi _i\rvert ^{c_{\nu}} \quad &\text{ if $\nu$ is odd}, \nonumber \\
 \lvert [d^{-1}-1]_{\Nil ^{\flat}_{H_0}}(\bxi _i) -\bzeta \bxi _i^{q^{\lfloor (\nu+1)/2\rfloor}}\rvert
 < \lvert \bxi _i\rvert ^{q^{\lfloor (\nu+1)/2\rfloor}}
 =\lvert \bxi _i\rvert ^{c_{\nu}} \quad &\text{ if $\nu$ is even}, \nonumber
 \end{align}
 for $1\leq i\leq n$,
 where $\zeta \in \mu _{q^n-1}(K_n)\cup \{0\}$ is now the unique element satisfying
 $\overline{\zeta}=\overline{(d^{-1}-1)\varphi _D^{-\lfloor (\nu +1)/2\rfloor}}$
 and $\bzeta =(\zeta ^{q^{-l}})_{l\geq 0}\in \Nil ^{\flat}(\calO _{\widehat{K} ^{\text{ur}}})$
 is the element such that $\zeta ^{q^{-l}}\in \mu _{q^n-1}(K_n)\cup \{0\}$ for all $l\geq 0$.
 As before we have by Lemma \ref{lem:app2}
 \begin{align}
 \lvert (d^{\ast}\bY _i-\bY _i) (\tau) \rvert
 <\lvert \bxi _i\rvert ^{c_{\nu}} \quad &\text{ if $\nu$ is odd}, \label{eq:Dappr1} \\
 \lvert (d^{\ast}\bY _i-(\bY _i+ \bzeta \bxi _i^{q^{\mu}})) (\tau) \rvert
 <\lvert \bxi _i\rvert ^{c_{\nu}} \quad &\text{ if $\nu =2\mu$ is even}. \label{eq:Dappr2}
 \end{align}
 
 Similarly, by Lemma \ref{lem:appD} \ref{htnnilaction}
 and $d\in U_D^{\lfloor (\nu+1)/2\rfloor}$
 \begin{align*}
 &\lvert [\varphi _D^{i-1}(d^{-1}-1)]_{\Nil ^{\flat}_{H_0}} (\bY _i)(\tau)\rvert && \\
 &\leq \lvert \bxi _i\rvert ^{c_{\nu}q^{\lfloor (\nu+1)/2\rfloor+i-1}}
 =\lvert \bxi _1\rvert ^{c_{\nu}q^{\lfloor (\nu+1)/2\rfloor}}
 <\lvert \bxi _1\rvert ^{q^{\nu}} \quad &&\text{ if $\nu$ is odd}, \\
 &\lvert ([\varphi _D^{i-1}(d^{-1}-1)]_{\Nil ^{\flat}_{H_0}} (\bY _i)-\bzeta ^{q^{i-1}}\bY _i^{q^{\lfloor (\nu+1)/2\rfloor+i-1}})(\tau)\rvert && \\
 &< \lvert \bxi _i\rvert ^{c_{\nu}q^{\lfloor (\nu+1)/2\rfloor+i-1}}
 =\lvert \bxi _1\rvert ^{c_{\nu}q^{\lfloor (\nu+1)/2\rfloor}}
 =\lvert \bxi _1\rvert ^{q^{\nu}} \quad &&\text{ if $\nu$ is even}
 \end{align*}
 for $1\leq i\leq n$.
 By Lemma \ref{lem:appD} \ref{htnnilaction}
 and $d\in U_D^{(\nu)}$
 \[
 \lv
 \left[ \sum _{1\leq i\leq n}\varphi _D^{i-1}(d^{-1}-1)\varphi _D^{-(i-1)}\right] _{\Nil ^{\flat}_{H_0}} (\bxi _1)-\zeta' \bxi _1^{q^{\nu}}
 \rv
 < \lv \bxi _1 \rv ^{q^{\nu}},
 \]
 where $\zeta' \in \mu _{q-1}(K)\cup \{0\}$ is the element satisfying 
 \[
 \overline{\zeta'}=\overline{(\sum _{1\leq i\leq n}\varphi _D^{i-1}(d^{-1}-1)\varphi _D^{-(i-1)})\varphi _D^{-\nu}}
 =\overline{np_2(d^{-1}-1)\varphi _D^{-\nu}}=\overline{\Trd ((d^{-1}-1)\varphi _D^{-\nu})}.
 \]
 Hence we have by Lemma \ref{lem:app2}
 \begin{align}
 \lvert (d^{\ast}\bZ -(\bZ +\zeta' \bxi _1^{q^{\nu}}))(\tau)\rvert
 < \lvert \bxi _1\rvert ^{q^{\nu}}  \quad &\text{ if $\nu$ is odd}, \label{eq:Dappr3} \\
 \lvert (d^{\ast}\bZ -(\bZ +\sum _{1\leq i\leq n}\bzeta ^{q^{i-1}}\bY _i^{q^{\mu +i-1}}+\zeta' \bxi _1^{q^{\nu}}))(\tau)\rvert
 < \lvert \bxi _1\rvert ^{q^{\nu}}  \quad &\text{ if $\nu =2\mu$ is even}. \label{eq:Dappr4}
 \end{align}
 
 Now \eqref{eq:Dappr1}, \eqref{eq:Dappr2}, \eqref{eq:Dappr3}, \eqref{eq:Dappr4} show that $d$ indeed lies in $\Stab _{\nu}'$
 and moreover, noting \eqref{eq:capitaltoprime} and \eqref{eq:zyandprime}, one applies Lemma \ref{lem:esttoapp} to deduce that it acts on $\bbA _{\overline{k}}^{n, \text{perf}}$ as in the assertion \ref{item:actionofD}.
 
 \paragraph{Action of $\varphi _G$}
 Let us prove that $\varphi _G\in \calS$ stabilizes $\calX _{\nu}$ 
 and induces the stated action on $\bbA _{\overline{k}}^{n, \text{perf}}$.
 If we write $\varphi=(g_{i, j})_{1\leq i, j\leq n}$
 then \eqref{eq:actiononYi}
 reduces to
 \begin{equation*}
 \varphi _G^{\ast}\bY _i=\begin{cases}
                              \bY _n^{q^{n-1}} +_{\Nil ^{\flat}_{H_0}} 
                                                                   \left[ \sum _{1\leq j\leq n}g_{j, i}\varphi _D^{i-j-1}\right] _{\Nil ^{\flat}_{H_0}}(\bxi _i) -_{\Nil ^{\flat}_{H_0}} \bxi _i &\text{if $i=1$} \\
                              \bY _{i-1}^{q^{-1}} +_{\Nil ^{\flat}_{H_0}} 
                                                                   \left[ \sum _{1\leq j\leq n}g_{j, i}\varphi _D^{i-j-1}\right] _{\Nil ^{\flat}_{H_0}}(\bxi _i) -_{\Nil ^{\flat}_{H_0}} \bxi _i &\text{otherwise}.
                             \end{cases}
 \end{equation*}
 Now 
 we see that $\left[ \sum _{1\leq j\leq n}g_{j, i}\varphi _D^{i-j-1}\right] _{\Nil ^{\flat}_{H_0}}(\bxi _i) =\bxi _i$
 in view of $\varphi _G\in \calS$ or by a simple computation,
 and hence
 \begin{equation*}
 \varphi _G^{\ast}\bY _i=\begin{cases}
                              \bY _n^{q^{n-1}}&\text{if $i=1$} \\
                              \bY _{i-1}^{q^{-1}}&\text{otherwise},
                             \end{cases}
                            \qquad \varphi _G^{\ast}\bZ =(\Nil ^{\flat}_{H_0})\sum _{1\leq i\leq n}[\varphi _D^{i-1}]_{\Nil ^{\flat}_{H_0}}(\varphi _G^{\ast}\bY _i)=\bZ.
 \end{equation*}
 From this we find that $\varphi _G$ stabilizes $\calX _{\nu}$ 
 and acts on $\bbA _{\overline{k}}^{n, \text{perf}}$ in a manner stated in \ref{item:actionofvarphi}.
 
 \paragraph{Action of $\Delta _{\xi}(U_L)$}
 Let $u\in U_L$.
 We claim that
 $\Delta _{\xi}(u)$ lies in $\Stab _{\nu}'$ 
 and that it acts trivially on $\bbA _{\overline{k}}^{n, \text{perf}}$
 as stated in \ref{item:actionofU_L}.
 We may assume that $u$ lies in $U_L^1$
 since $\Delta _{\xi}(K^{\times})$ acts trivially on $\LTp$.
 We simply write $u$ for $i_{\xi}^D(u)$
 and express $i_{\xi}(u)=(g_{i, j})_{1\leq i, j\leq n}\in \GL _n(K)$. 
 We have 
 by \eqref{eq:actiononYi}
 \begin{align*}
 \Delta _{\xi}(u)^{\ast}\bY _i&=(\Nil ^{\flat}_{H_0})\sum _{1\leq j\leq n} [u^{-1}g_{j, i}]_{\Nil ^{\flat}_{H_0}}(\bY _j) \\
 &\qquad +_{\Nil ^{\flat}_{H_0}} (\Nil ^{\flat}_{H_0})\sum _{1\leq j\leq n} [u^{-1}g_{j, i}]_{\Nil ^{\flat}_{H_0}}(\bxi _j)-_{\Nil ^{\flat}_{H_0}} \bxi _i.
 \end{align*}
 Since $\Delta _{\xi}(u)\in \calS$, we see that 
 $(\Nil ^{\flat}_{H_0})\sum _{1\leq j\leq n} [u^{-1}g_{j, i}]_{\Nil ^{\flat}_{H_0}}(\bxi _j)= \bxi _i$.
 Therefore, arguing as in \eqref{eq:nbdaction1} and \eqref{eq:nbdaction2}
 we deduce
 \begin{align*}
 \Delta _{\xi}(u)^{\ast}\bY _i&=(\Nil ^{\flat}_{H_0})\sum _{1\leq j\leq n} [u^{-1}g_{j, i}]_{\Nil ^{\flat}_{H_0}}(\bY _j) \\
                                      &\equiv _1(\Nil ^{\flat}_{H_0})\sum _{1\leq j\leq n} [g_{j, i}]_{\Nil ^{\flat}_{H_0}}(\bY _j) \\
                                      &\equiv _1\bY _i
 \end{align*}
 by Lemma \ref{lem:appD} \ref{htnnilaction} and $u\in U_L^1$.
 For the last line, note that if $(g_{i, j})_{1\leq i, j\leq n}=1+(a_{i, j})_{1\leq i, j\leq n}$,
 then $\sum _{1\leq j\leq n}a_{j, i}\varphi _D^{i-j}=u-1\in \frakp _L$
 and thus $v_D(a_{j, i}\varphi _D^{i-j})>0$ for $1\leq j\leq n$.
 
 Similarly we have $\sum _{1\leq i\leq n}\varphi _D^{i-j}g_{j, i}=u$ for $1\leq j\leq n$
 and thus
 \begin{align*}
    \Delta _{\xi}(u)^{\ast}\bZ&=(\Nil ^{\flat}_{H_0})\sum _{1\leq i\leq n} [\varphi _D^{i-1}]_{\Nil ^{\flat}_{H_0}} (\Delta _{\xi}(u)^{\ast}\bY _i) \\
                                      &=(\Nil ^{\flat}_{H_0})\sum _{1\leq i, j\leq n} [\varphi _D^{i-1}u^{-1}g_{j, i}]_{\Nil ^{\flat}_{H_0}}(\bY _j) \\
                                      &=(\Nil ^{\flat}_{H_0})\sum _{1\leq j\leq n} [\varphi _D^{j-1}u^{-1}]_{\Nil ^{\flat}_{H_0}} \left(
                                          \left[ \sum _{1\leq i\leq n}\varphi _D^{i-j}g_{j, i}\right] _{\Nil ^{\flat}_{H_0}}(\bY _j) \right) \\
                                      &=(\Nil ^{\flat}_{H_0})\sum _{1\leq j\leq n} [\varphi _D^{j-1}]_{\Nil ^{\flat}_{H_0}} (\bY _j) \\
                                      &=\bZ.
 \end{align*}
 By Lemma \ref{lem:esttoapp} we obtain the claim.
 
 \paragraph{Action of $(1, a_{\sigma}^{-1}, \sigma)$}
 Let $\sigma \in W_L$
 and set $a_{\sigma}, n_{\sigma}, u_{\sigma}$
 as in \ref{item:actionofWeilgp}.
 Again by \eqref{eq:actiononxi}, \eqref{eq:actiononYi}, \eqref{eq:actiononZ}
 and by $(1, a_{\sigma}^{-1}, \sigma) \in \calS$,
 we have,
 \begin{align*}
 &(1, a_{\sigma}^{-1}, \sigma)^{\ast}\bxi _i=\sigma(\bxi_i)=[u_{\sigma}]_{\Nil ^{\flat}_{H_0}} (\bxi _i),  \\
 &(1, a_{\sigma}^{-1}, \sigma)^{\ast}\bY _i=[u_{\sigma}]_{\Nil ^{\flat}_{H_0}} (\bY _i), \\
 &(1, a_{\sigma}^{-1}, \sigma)^{\ast}\bZ =(\Nil ^{\flat}_{H_0})\sum _{1\leq i\leq n} [\varphi _D^{i-1}u_{\sigma}]_{\Nil ^{\flat}_{H_0}} (\bY _i)
 =[u_{\sigma}]_{\Nil ^{\flat}_{H_0}}(\bZ).
 \end{align*}
 This shows that $(1, a_{\sigma}^{-1}, \sigma) \in \Stab _{\nu}'$.
 
 Suppose that $\nu =2\mu +1$ is odd.
 Then it follows from the above and (\ref{eq:capitaltoprime}) that
 \begin{align*}
 (1, a_{\sigma}^{-1}, \sigma)^{\ast}\by _i' &=\left( [u_{\sigma}]_{\Nil ^{\flat}_{H_0}} (\bxi _i)
                                                        \right) ^{-q^{\mu}(q+1)/2}
                                                  \cdot \left( [u_{\sigma}]_{\Nil ^{\flat}_{H_0}} (\bY _i) \right) \\
                                                    &=\left( [u_{\sigma}]_{\Nil ^{\flat}_{H_0}} (\bxi _i)
                                                        \right) ^{-q^{\mu}(q+1)/2}
                                                  \cdot \left( [u_{\sigma}]_{\Nil ^{\flat}_{H_0}} (\bxi _i^{q^{\mu}(q+1)/2}\by _i') \right).
 \end{align*}
 By Lemma \ref{lem:appD} \ref{htnnilaction}, Lemma \ref{lem:esttoapp} we see that
 \[
 \overline{(1, a_{\sigma}^{-1}, \sigma)^{\ast}{y_i'}^{q^{-l}}}= \overline{u_{\sigma}^{(q-1)/2}{y_i'}^{q^{-l}}}
 \]
 for $l\geq 0$.
 By (\ref{eq:zyandprime}) we infer that the induced action on $y_i$ 
 is as stated in \ref{item:actionofWeilgp}
 in this case.
 The computations proceed in a similar and easier way
 for the action on $z$ in the odd $\nu$ case
 and for the action on $y_i$ and $z$ in the even $\nu$ case.
 
 \paragraph{The inclusion $\calS \subset \Stab _{\nu}'$}
 Let $(g, d, \sigma) \in \calS$.
 Let us prove that $(g, d, \sigma)$ stabilizes $\calX _{\nu}$
 and induces an action on $\scrX _{\nu}$. 
 We have $\sigma \in W_{L'}$
 by Proposition \ref{Prop:S} \ref{item:defofL'},
 and so $\alpha =\sigma (\varphi _L)/{\varphi _L}\in \mu _{n_q}(K)$.
 We put $g'=\diag (\alpha ^{-1}, \dots, \alpha ^{-n})\in \GL _n(K)$
 and take $d'\in \mu _{q^n-1}(K_n)\subset D^{\times}$ such that $d'^{1-q}=\alpha$.
 Then we have
 \[
 \sigma(x)=g'xg'^{-1}=d'xd'^{-1}
 \]
 for all $x\in L^{\times}$.
 Thus
 by Proposition \ref{Prop:j} 
 there exists $\sigma' \in W_{L'}$ such that $(g', d', \sigma')\in \calS$
 and $\sigma |_L=\sigma'|_L$.
 It suffices to prove the claim for $(g', d', \sigma')$,
 because 
 \[
 (g, d, \sigma)^{-1}(g', d', \sigma')\in \Delta _{\xi}(L^{\times})\cdot \{ (1, a_{w}^{-1}, w)\mid  w \in W_L\} \subset \Stab_{\nu}'.
 \]
 
 As before, we have 
 by \eqref{eq:actiononxi}, \eqref{eq:actiononYi}, \eqref{eq:actiononZ}
 and by $(g', d', \sigma') \in \calS$
 \begin{align*}
 (g', d', \sigma')^{\ast}\bxi _i&=\sigma'(\bxi_i)=[\alpha ^{-i}d'^{-1}]_{\Nil ^{\flat}_{H_0}}(\bxi _i), \\
 (g', d', \sigma')^{\ast}\bY _i&=[\alpha ^{-i}d'^{-1}]_{\Nil ^{\flat}_{H_0}}(\bY _i), \\
 (g', d', \sigma')^{\ast}\bZ&=(\Nil ^{\flat}_{H_0})\sum _{1\leq i\leq n} [\varphi _D^{i-1}\alpha ^{-i}d'^{-1}]_{\Nil ^{\flat}_{H_0}} (\bY _i) \\
 &=(\Nil ^{\flat}_{H_0})\sum _{1\leq i\leq n} [\alpha ^{-i}\cdot \alpha^{i-1}d'^{-1}\varphi _D^{i-1}]_{\Nil ^{\flat}_{H_0}} (\bY _i)
 =[\alpha ^{-1}d'^{-1}]_{\Nil ^{\flat}_{H_0}}(\bZ). 
 \end{align*}
 Since $\alpha ^{-i}d'^{-1}\in U_D$, this is enough to conclude by Lemma \ref{lem:appD} \ref{htnnilaction}.

 \paragraph{The inclusion $\Stab _{\nu}' \subset (U_{\frakI}^{(\nu)}\times U_D^{(\nu)}\times \{ 1\})\cdot \calS$}
 To prove the inclusion 
 $\Stab _{\nu}' \subset (U_{\frakI}^{(\nu)}\times U_D^{(\nu)}\times \{ 1\})\cdot \calS$,
 we take an element in $\Stab _{\nu}'$
 and write it as $(g, d\varphi _D^{-n_\sigma}, \sigma )$
 with $(g, d, 1)\in G^0$ and $\sigma \in W_K$.
 As we have shown $\varphi _G\in \Stab _{\nu}'$,
 we may further assume $(g, 1, 1), (1, d, 1)\in G^0$.
 
 Let us first show $\sigma \in W_{L'}$.
 There exists an element $\zeta _n\in \mu _n(\overline{K})$
 such that $\sigma (\varphi _L)=\zeta _n\varphi _L$.
 We are to prove that $\zeta _n\in \mu _{q-1}(K)$.
 Since $\xi ^{(g, d\varphi _D^{-n_\sigma}, \sigma )}\in \calX _{\nu}$,
 we have by \eqref{eq:actiononYi}
 \begin{equation} \label{eq:Yigeneral}
 \lv \left[ \sum _{1\leq j\leq n}g_{j, i}d^{-1}\varphi _D^{i-j}\right] _{\Nil ^{\flat}_{H_0}}(\bxi _i) 
      -_{\Nil ^{\flat}_{H_0}} \sigma (\bxi _i)
 \rv \leq 
 \lv \bxi _i\rv ^{q^{c_{\nu}}}
 \end{equation}
 for all $1\leq i\leq n$.
 In particular,
 \[
 \lv \left[ \sum _{1\leq j\leq n}g_{j, i}d^{-1}\varphi _D^{i-j}\right] _{\Nil ^{\flat}_{H_0}}(\bxi _i) 
      -_{\Nil ^{\flat}_{H_0}} \sigma (\bxi _i)\rv 
 <
 \lv \bxi _i\rv
 \]
 for $1\leq i\leq n$.
 By Lemma \ref{lem:app2} and Lemma \ref{lem:appD} \ref{htnnilaction}
 this implies
 \[
 \sum _{1\leq j\leq n}g_{j, i}d^{-1}\varphi _D^{i-j} \in U_D \text{ and } \overline{\sum _{1\leq j\leq n}g_{j, i}d^{-1}\varphi _D^{i-j}}=\overline{{\xi _i}^{-1}\sigma (\xi _i)}.
 \]
 In particular we see that $\sum _{1\leq j\leq n}g_{j, i}\varphi _D^{i-j} \in U_L$.
 The second equation further implies
 $\overline{d}^{q-1}\in k^{\times}$
 because $\xi _i^{-1}\sigma (\xi _i)=(\xi _{i+1}^{-1}\sigma (\xi _{i+1}))^q$.
 On the other hand,
 by Proposition \ref{Prop:explicitCM} \ref{item:Weilactiononxi}, 
 we have 
 $\overline{\xi _1^{-1} \sigma (\xi _1)}^{q-1} = \overline{\zeta _n}$.
 Therefore,
 \[
 \overline{\zeta} _n=\overline{\xi _1^{-1} \sigma (\xi _1)}^{q-1}=\overline{\sum _{1\leq j\leq n}g_{j, 1}d^{-1}\varphi _D^{1-j}}^{q-1}=\overline{d}^{-(q-1)}\in k^{\times},
 \]
 which amounts to $\sigma \in W_{L'}$.
 
 Now we may assume $\sigma=1$, so that $(g, 1, 1), (1, d, 1)\in G^0$ and $(g, d, 1)\in \Stab _{\nu}'$.
 We consider \eqref{eq:Yigeneral} for $i=1$
 and apply Lemma \ref{lem:appD} \ref{htnnilaction} again to find that
 \[
 v_D(\sum _{1\leq j\leq n}g_{j, 1}d^{-1}\varphi _D^{1-j}-1)\geq \lfloor (\nu +1)/2\rfloor,
 \]
 which is to say,
 \[
 d\equiv \sum _{1\leq j\leq n}g_{j, 1}\varphi _D^{1-j} \pmod{U_D^{\lfloor (\nu +1)/2\rfloor}},
 \]
 and hence $d\in U_LU_D^{\lfloor (\nu +1)/2\rfloor}=U_LU_D^{(\nu)}$.
 Therefore, we may further assume $d=1$.
 Then $g\in \Stab _{\nu}' \cap \GL _n(K)=U_{\frakI}^{(\nu)}$.
 This completes the proof of the claim
 and also of the theorem.
 
 \subsection{Alternative description of the reductions in terms of algebraic groups and quadratic forms} \label{subsec:alggps}
 In \cite[\S3.4]{BWMax},
 algebraic varieties obtained by the reduction
 of affinoids
 are described in terms of the Lang torsors of 
 certain
 algebraic groups.
 Motivated by their observation, 
 we give here an alternative description of $Z_{\nu}$ 
 using algebraic groups $\calG _{\nu}$ 
 and quadratic forms $Q_{\nu}$
 for $\nu >0$ not divisible by $n$.
 It suffices to treat the cases where $0<\nu <2n$.
 
 Suppose first that $1\leq \nu =2\mu +1< 2n$ is odd. 
 We put $\calG _{\nu} =\bbG _a$,
 considered over $k$.
 We define a quadratic form $Q_{\nu}(y_1, \dots , y_n)\in k[y_1, \dots , y_n]$
 by
 \[
 Q_{\nu}(y_1, \dots , y_n)=\begin{cases}
                                -\sum _{\mu <j-i<n-\mu} y_iy_j         &\text{ if $1\leq \nu <n$} \\
                                \sum _{n-\mu \leq j-i\leq \mu} y_iy_j &\text{ if $n+1\leq \nu <2n$}.
                                \end{cases}
 \]
 If we denote by $F_q$ the $q$-th power Frobenius endomorphism,
 then the Lang torsor $L_{\calG}$ of an algebraic group $\calG$
 over $k$ is defined by
 \[
 L_{\calG} \colon \calG \rightarrow \calG ; \ x\mapsto F_q(x)\cdot x^{-1}.
 \]
 In this case 
 the Lang torsor $L _{\calG _{\nu}}$ of $\calG _{\nu}$
 is nothing but the Artin-Schreier map:
 \[
 L _{\calG _{\nu}}\colon \calG _{\nu}\rightarrow \calG _{\nu}; \ x\mapsto x^q-x.
 \]
 Then it is clear from Theorem \ref{Thm:reduction} \ref{item:red:smallodd}, \ref{item:red:largeodd}
 that $Z_{\nu}$ is isomorphic to the base change to $\overline{k}$ of 
 $Z_{\nu , 0}$
 defined by the following Cartesian diagram:
 \[
 \begin{CD}
 Z_{\nu , 0} @>>> \calG _{\nu} \\
 @VVV         @VV{L_{\calG _{\nu}}}V \\
 V @>>{Q_{\nu}|_{V}}> \calG _{\nu},
 \end{CD}
 \]
 where $V=V(y_1+\dots +y_n)\subset \bbA _{k}^{n}= \Spec k[y_1, \dots , y_n]$
 is a closed subscheme defined by
 $y_1+\dots +y_n=0$
 and $Q_{\nu}$ is considered as a morphism $\bbA _{k}^n\rightarrow \calG_{\nu}$.
 Note that the action of $S_{1, \nu}$ (and also $S_{2, \nu}$)
 agrees with the action of $\calG _{\nu}(k)$
 induced by the Lang torsor.
 
 Suppose that $\nu =2\mu$ is even.
 In this case
 we first define 
 an auxiliary algebraic group $\widetilde{\calG} _{\nu}$.
 We set $\widetilde{\calG} _{\nu}=\bbA _{k}^{n+1}$
 as a scheme
 and define a structure of an algebraic group
 by the same formula as \eqref{eq:S1formula}:
 \[
    (v, (w_i))\cdot (v', (w'_i))
    =(v+v'+\sum _{i}w_i w'_{i+\mu}, (w_i+w_i')) 
 \]
 for any $k$-algebra $R$ and
 for any $(v, (w_i))\in \widetilde{\calG} _{\nu}(R)$.
 We define
 \[
 \calG _{\nu}=\Ker \left( \widetilde{\calG} _{\nu}\rightarrow \bbG _a; \  (v, (w_i))\mapsto \sum _{i}w_i\right).
 \]
 By definition, we have $\calG _{\nu}(k)=S_{1, \nu}$.
 We put
 \[
 Q_{\nu}(y_1, \dots , y_n)=\begin{cases}
                                -\sum _{\mu <j-i<n-\mu} y_iy_j         &\text{ if $1\leq \nu <n$} \\
                                -\sum _{n-\mu < j-i< \mu} y_iy_j &\text{ if $n+1\leq \nu <2n$}
                                \end{cases}
 \]
 and put $f=(Q_{\nu}, \text{id})\colon \bbA _k^n\rightarrow \widetilde{\calG} _{\nu}$.

 If $1\leq \nu <n$, then we define $Z_{\nu ,0}$ by 
 the following Cartesian diagrams:
 \[
 \begin{CD}
 Z_{\nu, 0} @>>> \calG _{\nu} \\
 @VVV         @VV{(\cdot)^{-1}\circ L_{\calG _{\nu}}}V \\
 V @>>>           \calG _{\nu} \\
 @VVV                         @VVV \\
 \bbA _k^n @>>f> \widetilde{\calG} _{\nu}.
 \end{CD}
 \]

 If $n+1\leq \nu <2n$, then we define $Z_{\nu ,0}$ by 
 the following Cartesian diagrams:
 \[
 \begin{CD}
 Z_{\nu, 0} @>>> \calG _{\nu}\\
 @VVV         @VV{L_{\calG _{\nu}}}V \\
 V @>>> \calG _{\nu} \\
 @VVV                         @VVV \\
 \bbA _k^n @>>f> \widetilde{\calG} _{\nu}.
 \end{CD}
 \]
 Note that the Lang torsor (or its composite with $(\cdot)^{-1}$)
 induces an action of $S_{1, \nu}=\calG _{\nu}(k)$
 on $Z_{\nu, 0}$
 in each case.

 \begin{Prop}
 There exists a natural isomorphism
 between $Z_{\nu}$ and 
 the base change to $\overline{k}$ of $Z_{\nu, 0}$
 which respects the actions of $S_{1, \nu}$.
 \end{Prop}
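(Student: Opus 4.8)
The plan is to unwind the Cartesian diagrams defining $Z_{\nu,0}$ into explicit polynomial equations and to compare them with the equations for $Z_\nu$ obtained in Theorem~\ref{Thm:reduction}. Since both sides of the asserted isomorphism depend only on $\nu$ modulo $2n$, I would first reduce to $0<\nu<2n$; by the standing hypothesis $n\nmid\nu$, so $\nu\neq n$ and one always has $\mu:=\lfloor\nu/2\rfloor=\mu'$. The odd case turns out to be essentially formal, while the even case requires a genuine (if elementary) computation.

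Suppose first that $\nu$ is odd. Then $\calG_\nu=\bbG_a$, so $L_{\calG_\nu}$ is the Artin--Schreier isogeny $x\mapsto x^q-x$, the lower Cartesian square identifies $V$ with $\{(y_i)\in\bbA_k^n\mid\sum_i y_i=0\}$, and $V\to\calG_\nu$ is $y\mapsto Q_\nu(y)$. Hence $Z_{\nu,0}$ is the closed subscheme of $\bbA_k^{n+1}$ cut out by $\sum_i y_i=0$ and $x^q-x=Q_\nu(y)$; since $Q_\nu|_V=-\sum_{(i,j)\in T(\mu')}y_iy_j$ for $\nu<n$ and $Q_\nu|_V=\sum_{(i,j)\in T(\mu')}y_iy_j$ for $\nu>n$, renaming $x$ to $z$ gives exactly the description of $Z_\nu$ in Theorem~\ref{Thm:reduction}\,\eqref{item:red:smallodd},\eqref{item:red:largeodd}. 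The action of $\calG_\nu(k)=k$ from the Lang torsor is the translation $x\mapsto x+g$, which is precisely the $S_{1,\nu}$-action of Theorem~\ref{Thm:stabandaction}\,\eqref{item:actionofGL}.

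Now suppose $\nu$ is even. The first step is to compute $L_{\calG_\nu}$ in coordinates on $\calG_\nu=\Ker(\widetilde{\calG}_\nu\to\bbG_a)$. Writing a point as $x=(z,(u_i))$ with $\sum_i u_i=0$, the group law of $\widetilde{\calG}_\nu$ gives $x^{-1}=\bigl(-z+\sum_i u_iu_{i+\mu},(-u_i)\bigr)$, hence
\[
L_{\calG_\nu}(x)=F_q(x)x^{-1}=\Bigl(z^q-z+\sum_i u_iu_{i+\mu}-\sum_i u_i^qu_{i+\mu},\ (u_i^q-u_i)\Bigr)
\]
and $L_{\calG_\nu}(x)^{-1}=xF_q(x)^{-1}=\bigl(z-z^q+\sum_i u_i^qu_{i+\mu}^q-\sum_i u_iu_{i+\mu}^q,\ (u_i-u_i^q)\bigr)$. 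Feeding these into the stacked Cartesian squares, the $(w_i)$-components force the coordinates $y_i$ of $V$ to equal $u_i-u_i^q$ when $0<\nu<n$ (where the diagram uses $L_{\calG_\nu}(\cdot)^{-1}$) and $u_i^q-u_i$ when $n<\nu<2n$ (where it uses $L_{\calG_\nu}$); eliminating the $y_i$ presents $Z_{\nu,0}$ as the closed subscheme of $\bbA_k^{n+1}$, with coordinates $z,u_1,\dots,u_n$, defined by $\sum_i u_i=0$ together with the single relation coming from the $v$-component. It then remains to check that, under $z=z$ and $y_i=u_i$, this relation coincides with the defining equation of $Z_\nu$ in Theorem~\ref{Thm:reduction}\,\eqref{item:red:smalleven},\eqref{item:red:largeeven}.

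This final comparison is purely combinatorial; the only input is the cyclic-to-linear conversion
\[
\sum_{i\in\bbZ/n\bbZ}a_ib_{i+\mu}=\sum_{\substack{1\le i<j\le n\\ j-i=\mu}}a_ib_j+\sum_{\substack{1\le i<j\le n\\ j-i=n-\mu}}a_jb_i,
\]
legitimate because $1\le\mu\le n-1$ and $\mu\ne n-\mu$. Applying it to the quadratic terms produced by the group law, substituting $y_i=\pm(u_i-u_i^q)$, and using $\mu=\mu'$, I expect the mixed monomials $u_i^{q^a}u_j^{q^b}$ to cancel in pairs and the $v$-component relation to reduce exactly to the right-hand side for $Z_\nu$ — so that no additive correction to $z$ is needed and the isomorphism is simply $u_i=y_i$, $z=z$. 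For the $S_{1,\nu}$-equivariance I would use that $L_{\calG_\nu}$ and $L_{\calG_\nu}(\cdot)^{-1}$ are invariant under right translation by $\calG_\nu(k)$, so $g=(v_0,(w^0_i))\in\calG_\nu(k)=S_{1,\nu}$ acts on $Z_{\nu,0}$ by $x\mapsto xg$, that is $z\mapsto z+v_0+\sum_i u_iw^0_{i+\mu}$ and $u_i\mapsto u_i+w^0_i$; after reindexing $\sum_i u_iw^0_{i+\mu}=\sum_i w^0_iu_{i-\mu}$ and substituting $u_i=y_i$ this is verbatim the action of Theorem~\ref{Thm:stabandaction}\,\eqref{item:actionofGL}. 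The main obstacle is exactly this even-case bookkeeping: one must repeatedly pass between the cyclic indexing built into the group law of $\widetilde{\calG}_\nu$ and the ranges $1\le i<j\le n$ appearing in $Q_\nu$ and in Theorem~\ref{Thm:reduction}, handle the two sub-ranges $0<\nu<n$ and $n<\nu<2n$ (with their different $Q_\nu$ and different diagrams) separately, and keep careful track of the boundary terms $j-i=\mu'$ and $j-i=n-\mu'$, which do not come from a simple cyclic sum.
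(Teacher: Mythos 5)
Your proposal is correct and is essentially the paper's own argument: the paper's proof just records the coordinate formulas for $L_{\calG_{\nu}}(v,(w_i))$ and its inverse (which agree with yours after re-indexing $i\mapsto i-\mu$) and leaves the rest as a direct computation. Your unwinding of the Cartesian diagrams, the cyclic-to-linear conversion of $\sum_{i\in\bbZ/n\bbZ}a_ib_{i+\mu}$, and the right-translation equivariance all check out, and the cross terms do cancel as you expect, so the identity map $u_i=y_i$, $z=z$ indeed gives the isomorphism.
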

 
 \begin{proof}
  This can be verified by a computation.
  Note that if we set $\wp (x)=x^q-x$
  and $\nu =2\mu$,
  then we have
  \begin{align*}
   L_{\calG_{\nu}}(v, (w_i))&=\left( \wp (v)-\sum _{i}\wp (w_{i-\mu})w_i, \wp (w_1), \dots , \wp (w_n) \right) \\
   L_{\calG_{\nu}}(v, (w_i))^{-1}&=\left( -\wp (v)+\sum _{i}\wp (w_{i-\mu})w_i^q, -\wp (w_1), \dots , -\wp (w_n) \right)
  \end{align*}
  on valued points.
 \end{proof}
 
 \begin{Rem} \label{Rem:Lang}
 It is interesting that the complicated defining equation of $Z_{\nu}$
 simplifies  with the introduction of $\calG _{\nu}$
 when $\nu$ is even.
 While
 this description
 is not used 
 in the computation of the cohomology in the case where $\nu$ and $n$ are coprime
 (Proposition \ref{Prop:evencoh}),
 if $n$ and $\nu$ are not coprime
 then one can conceptually deduce a product decomposition of $Z_{\nu}$
 using a product decomposition of $\calG _{\nu}$
 and thereby compute the cohomology;
 details may appear elsewhere.
 \end{Rem}

\section{Cohomology of the reductions} \label{sec:cohofred}
 \subsection{Quadratic forms and $\ell$-adic cohomology} \label{subsec:quadformsandcoh}
 Let $V$ be a $k$-vector space of dimension $m$
 and $Q=Q(x_1, \dots , x_m)$ a quadratic form on $V$.

 Suppose that $p\neq 2$.
 We define the associated symmetric bilinear form $b_Q\colon V\times V \rightarrow k$
 by
 \[
 b_Q(v_1, v_2)=2^{-1}(Q(v_1+v_2)-Q(v_1)-Q(v_2)).
 \]
 Then $Q$ is non-degenerate if and only if
 $b_Q$ is non-degenerate.
 In this case,
 we put $\det Q=\det b_Q \pmod{k^{\times 2}}$.
 In general,
 we have an orthogonal decomposition
 $(V, Q)=(V_{\text{nd}}, Q_{\text{nd}}) \oplus (V_{\text{null}}, Q_{\text{null}})$,
 where $Q_{\text{nd}}$ is non-degenerate
 and $Q_{\text{null}}$ is zero.
 We call $r=\dim V_{\text{nd}}$ the rank of $Q$. 
 We simply write $Q=Q_{\text{nd}}\oplus Q_{\text{null}}$.
 Let $\psi $ be a non-trivial character of $k$.
 To treat quadratic exponential sums associated to 
 quadratic forms
 by reducing to the standard one,
 we put 
 \[
 \frak{g} (\psi)=\sum _{x\in k}\psi (x^2)=\sum _{x\in k}{x \overwithdelims ()k}\psi (x),
 \]
 where ${\cdot \overwithdelims ()k}$ is the quadratic residue symbol of $k$.
 
 Suppose that $p=2$.
 We define the associated alternating bilinear form $a_Q\colon V\times V\rightarrow k$
 by
 \[
 a_Q(v_1, v_2)=Q(v_1+v_2)-Q(v_1)-Q(v_2).
 \]
 For $a, b\in k$
 we define a quadratic form $Q_{a, b}$
 on $k^2$
 by $Q_{a, b}(x, y)=ax^2+xy+by^2$.
 For $c\in k$
 we define a quadratic form $Q_{c}$
 on $k$
 by $Q_{c}(z)=cz^2$. 
 It is well-known that
 there exists an orthogonal decomposition
 \begin{equation} \label{eq:quasi-diagonalization}
 (V, Q)\simeq \bigoplus _{1\leq i\leq r'}(k^2, Q_{a_i, b_i})
                   \oplus (k, Q_c)^{\oplus \varepsilon}
                   \oplus (k, Q_0)^{\oplus s},
 \end{equation} 
 where $r'\geq 0$, $\varepsilon \in \{ 0, 1\}$, $s\geq 0$,
 $a_i, b_i\in k$ and $c\in k^{\times}$.
 This is called a quasi-diagonalization of $Q$.
 Here $r'$, $\varepsilon$, $s$ do not depend on the choice of decomposition
 and we call $r=2r'$ the rank of $Q$.
 We denote by $(V_{\text{nd}}, Q_{\text{nd}})$ (resp.\ $(V_{\text{ql}}, Q_{\text{ql}})$)
 the quadratic space given by the non-degenerate subspace $\bigoplus _{1\leq i\leq r'}(k^2, Q_{a_i, b_i})$
 (resp.\ the ``quasi-linear'' subspace
 $(k, Q_c)^{\oplus \varepsilon}
 \oplus (k, Q_0)^{\oplus s}$) 
 and simply write $Q=Q_{\text{nd}}\oplus Q_{\text{ql}}$.
 We put 
 $\Arf (Q_{\text{nd}})=\sum _{1\leq i\leq r'}a_ib_i \pmod{\wp(k)}$,
 where $\wp(k)=\{ x^2+x\mid x\in k\}$.
 This is also an invariant of $Q$, called the Arf invariant.
 
 The following can be found, for instance, in \cite[Definition 10.2]{LicKnot}:
 \begin{Prop} \label{Prop:Arfbycount}
 Let $(V, Q)$ be as above.
 Suppose that $Q$ is non-degenerate (so that $m=2r'$ is even)
 and $k=\bbF _2$.
 Then the number of elements of the fiber $Q^{-1}(1)$
 is either $2^{m-1}-2^{r'-1}$ or $2^{m-1}+2^{r'-1}$.
 We have $\Arf (Q)=0$ in the first case, 
 and  $\Arf (Q)=1$ in the second case.
 \end{Prop}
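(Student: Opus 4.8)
The plan is to reduce, via the quasi-diagonalization \eqref{eq:quasi-diagonalization}, to an explicit evaluation of a character sum. Since $Q$ is non-degenerate and $k=\bbF _2$, in \eqref{eq:quasi-diagonalization} we have $\varepsilon =s=0$, so $(V,Q)\simeq \bigoplus _{1\leq i\leq r'}(k^2, Q_{a_i,b_i})$ with $m=2r'$. Let $\psi$ be the non-trivial additive character of $\bbF _2$, so that $\psi (x)=(-1)^x$. For any $v\in V$ one has $\tfrac12\sum _{a\in \bbF _2}\psi\bigl(a(Q(v)-1)\bigr)=1$ if $Q(v)=1$ and $=0$ otherwise; summing over $v\in V$ and separating the contributions of $a=0$ and $a=1$ (using $\psi(-1)=\psi(1)=-1$) yields
\[
 \bigl|Q^{-1}(1)\bigr| = \tfrac12\Bigl( 2^{m}-\sum _{v\in V}(-1)^{Q(v)}\Bigr).
\]

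The key step is to evaluate $S:=\sum _{v\in V}(-1)^{Q(v)}$. Since the decomposition above is orthogonal, $Q(v)=\sum _i Q_{a_i,b_i}(v_i)$, whence $S=\prod _{i=1}^{r'} S_i$ with $S_i=\sum _{(x,y)\in \bbF _2^2}(-1)^{a_ix^2+xy+b_iy^2}$. A direct four-term computation gives $S_i=2$ when $a_ib_i=0$ (the $i$-th plane is hyperbolic) and $S_i=-2$ when $a_ib_i=1$ (it is anisotropic). Therefore $S=(-1)^{\sum _i a_ib_i}\,2^{r'}$, and since $\wp(\bbF _2)=\{0\}$ the exponent is, by the definition recalled just before the proposition, $\Arf(Q_{\mathrm{nd}})=\Arf(Q)\in \bbF _2$. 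Substituting back,
\[
 \bigl|Q^{-1}(1)\bigr| = 2^{m-1}-(-1)^{\Arf(Q)}\,2^{r'-1},
\]
which equals $2^{m-1}-2^{r'-1}$ exactly when $\Arf(Q)=0$ and $2^{m-1}+2^{r'-1}$ exactly when $\Arf(Q)=1$, as asserted.

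I do not expect a genuine obstacle here: the only real content is the two-line evaluation of the local sums $S_i$ and the observation that $\sum _i a_ib_i$ is precisely the Arf invariant, whose well-definedness (independence of the chosen quasi-diagonalization) is part of the cited facts we may invoke. If one prefers to avoid character sums, the same result follows by induction on $r'$: writing $Q=Q'\oplus P$ with $P$ a single plane and $Q'$ of rank $2(r'-1)$, one has $|Q^{-1}(1)|=|P^{-1}(0)|\,|Q'^{-1}(1)|+|P^{-1}(1)|\,|Q'^{-1}(0)|$ together with $|Q'^{-1}(0)|+|Q'^{-1}(1)|=2^{2(r'-1)}$; since a hyperbolic (resp.\ anisotropic) plane has $|P^{-1}(1)|=1$ (resp.\ $=3$), one checks that passing from $Q'$ to $Q$ preserves (resp.\ flips) the sign in front of $2^{r'-1}$, matching the additivity $\Arf(Q'\oplus P)=\Arf(Q')+\Arf(P)$; the base case $r'=0$ is vacuous. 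The bookkeeping of signs along this inductive route is the only place demanding mild care.
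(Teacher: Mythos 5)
Your proof is correct. There is nothing in the paper to compare it with: the paper does not prove Proposition \ref{Prop:Arfbycount} at all, but simply quotes it as known with a pointer to \cite[Definition 10.2]{LicKnot}, so your argument is a genuine (and welcome) self-contained verification rather than a variant of an internal proof. The details check out: the indicator identity over $\bbF_2$ gives $\lvert Q^{-1}(1)\rvert=\tfrac12\bigl(2^m-\sum_{v}(-1)^{Q(v)}\bigr)$; the sum factors along the orthogonal quasi-diagonalization \eqref{eq:quasi-diagonalization}; the four-term evaluations $S_i=2$ when $a_ib_i=0$ and $S_i=-2$ when $a_ib_i=1$ are right; and since $\wp(\bbF_2)=\{0\}$ the exponent $\sum_i a_ib_i$ is literally $\Arf(Q)$, which is well defined by the invariance recalled before the statement. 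Your reading of ``non-degenerate'' as $\varepsilon=s=0$ in \eqref{eq:quasi-diagonalization} is the intended one (it is exactly what makes $m=2r'$ even). It is worth noticing that your evaluation of the local sums $S_i$ is essentially the same manipulation the paper performs later, in the $p=2$ part of the proof of Proposition \ref{Prop:quadandcoh}, where $\sum_{x,y}\psi_{0,c}(ax^2+xy+by^2)=\psi_0\bigl(\Tr_{k/\bbF_2}\Arf(Q)\bigr)q$ is computed; so your character-sum route sits squarely within the paper's own toolkit, while the inductive alternative you sketch (with fiber counts $1$ and $3$ for the hyperbolic and anisotropic planes) is a fine elementary substitute.
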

 
 We record an elementary computation of the cohomology
 of certain varieties associated to quadratic forms.
 \begin{Prop} \label{Prop:quadandcoh}
  Let $Q$ and $k$ be as above.
  Let $X_0 $ be the algebraic variety defined by the Cartesian diagram;
  \[
  \begin{CD}
   X_0 @>>> \bbA _k^1 \\
   @VVV     @VV\wp _k V \\
   \bbA _k^m @>>Q> \bbA _k^1,
  \end{CD} 
  \]
  where $\wp _k$ is the Artin-Schreier map $\wp _k(x)=x^q-x$
  and $Q$ is considered as a morphism.
  Denote by $X$ the base change of $X_0$ to the algebraic closure $\overline{k}$.
  Take a prime number $\ell \neq p$
  and put
  \[
  H_c^i=H_c^i(X, \overline{\bbQ} _{\ell}),
  \]
  which carries the actions of the additive group $k$ and $\Omega =\Gal (\overline{k} /k)$.
  Then the following assertions hold.
  \begin{enumerate}[(1)]
   \item Suppose that $p\neq 2$.
   Let $r$ be the rank of $Q$ 
   and express $Q$ as an orthogonal sum $Q=Q_{\emph{nd}}\oplus Q_{\text{null}}$ 
   as before.
   Suppose that $r>0$.
   Then we have
   \[
   H_c^i\simeq \begin{cases}
                     \bigoplus _{\psi \in k^{\vee} \setminus \{ 1\} } V_{\psi} &\text{ if $i=2m-r$} \\
                     \overline{\bbQ} _{\ell}(-m) &\text{ if $i=2m$} \\
                     0 &\text{ otherwise},
                    \end{cases}
   \]   
   where $V_{\psi}$ is a one-dimensional vector space 
   on which $k$ acts via $\psi$ and
   the $q$-th power geometric Frobenius element $\Frob _q$ acts as multiplication by the scalar
   \[
   (-1)^{2m-r}{{\det Q_{\emph{nd}}}\overwithdelims ()k} \frak{g} (\psi)^r q^{m-r},
   \]
   and $\overline{\bbQ} _{\ell}(-m)$ is 
   a one-dimensional vector space 
   on which $k$ acts trivially and
   $\Frob _q$ acts as multiplication by the scalar
   $q^m.$
   \item Suppose that $p=2$.
   Let $r$ be the 
   rank of $Q$ 
   and express $Q$ as an orthogonal sum $Q=Q_{\emph{nd}}\oplus Q_{\emph{ql}}$ 
   as before.
   Suppose that $r>0$.
   Let $\eps \in \{ 0, 1\}$ and $s\geq 0$ be as in \eqref{eq:quasi-diagonalization}.
   We denote by $\psi _0$ 
   the unique non-trivial character of $\bbF _2$.
   Then we have
   \[
   H_c^i\simeq \begin{cases}
                     \bigoplus _{\psi \in k^{\vee} \setminus \{ 1\} } V_{\psi} &\text{ if $i=r+2s(=2m-r-2\eps )$} \\
                     \overline{\bbQ} _{\ell}(-m) &\text{ if $i=2m$} \\
                     0 &\text{ otherwise},
                    \end{cases}
   \]   
   where $V_{\psi}$ is a one-dimensional vector space 
   on which $k$ acts via $\psi$ and
   $\Frob _q$ acts as multiplication by the scalar
   \[
   (-1)^{2m-r-2\eps }\psi _0\left( \Tr _{k/{\bbF _2}} \Arf \left( Q_{\emph{nd}}\right) \right) q^{m-r/2-\eps }
   \]
   $($which is independent of $\psi)$,
   and $\overline{\bbQ} _{\ell}(-m)$ is 
   a one-dimensional vector space 
   on which $k$ acts trivially and
   $\Frob _q$ acts as multiplication by the scalar
   $q^m.$
  \end{enumerate}
 \end{Prop}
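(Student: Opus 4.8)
The plan is to exploit the fact that $X_0$ is exactly the Artin--Schreier $k$-torsor over $\bbA _k^m$ attached to the function $Q$. Let $\pi \colon X_0\rightarrow \bbA _k^m$ be the projection forgetting the Artin--Schreier coordinate; then $\pi$ is finite \'etale Galois with group the additive group $k$, and $\pi _*\overline{\bbQ} _{\ell}=\bigoplus _{\psi \in k^{\vee}}Q^*\calL _{\psi}$, where $\calL _{\psi}$ is the Artin--Schreier sheaf on $\bbA ^1$ attached to $\psi$ and the isogeny $\wp _k$, each summand being defined over $k$ and carrying the $k$-action through $\psi$. Base-changing to $\overline{k}$ and taking compactly supported cohomology yields a $k\times \Omega$-equivariant decomposition $H_c^i=\bigoplus _{\psi}H_c^i(\bbA _{\overline{k}}^m, Q^*\calL _{\psi})$, so it suffices to compute, for each character $\psi$ of $k$, the cohomology of the twisted sheaf $Q^*\calL _{\psi}$ together with its Frobenius action. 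For $\psi$ trivial, $Q^*\calL _{\psi}$ is the constant sheaf and $H_c^*(\bbA _{\overline{k}}^m, \overline{\bbQ} _{\ell})$ is one-dimensional in degree $2m$ with $\Frob _q$ acting by $q^m$; this produces the summand written $\overline{\bbQ} _{\ell}(m)$.

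For $\psi$ non-trivial I would first apply a linear automorphism of $\bbA ^m$ (which alters $H_c^*$ only by a canonical isomorphism) so as to put $Q$ into its (quasi-)diagonal normal form: an orthogonal sum of the ``interesting'' blocks---rank-one blocks $az^2$ if $p\neq 2$, rank-two blocks $Q_{a,b}$ if $p=2$---together with the remaining directions, on which $Q$ either vanishes identically or, when $p=2$, is purely quasi-linear. Since $\calL _{\psi}$ is additive-to-multiplicative, i.e.\ $\calL _{\psi}(f+g)\simeq \calL _{\psi}(f)\otimes \calL _{\psi}(g)$, the pulled-back sheaf $Q^*\calL _{\psi}$ becomes an external tensor product of the sheaves attached to the individual blocks, and the K\"unneth formula then expresses $H_c^*(\bbA ^m, Q^*\calL _{\psi})$ as the tensor product of the block cohomologies, with Frobenius acting diagonally. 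In characteristic $2$ I would dispose of the quasi-linear directions separately, by the substitution replacing the Artin--Schreier coordinate $z$ by $z+\ell (\bx )$ for a suitable additive polynomial $\ell$, which absorbs such a direction into the Artin--Schreier equation.

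It then remains to evaluate each block, and each of these reduces to a classical exponential sum. For a rank-one block $\calL _{\psi}(az^2)$ with $p$ odd, the Grothendieck--Ogg--Shafarevich formula gives $\chi _c=-1$ and the sheaf is geometrically non-constant, so $H_c^1$ is one-dimensional and $\Frob _q$ acts there by $-\sum _{x\in k}\psi (ax^2)=-\bigl( \tfrac{a}{q}\bigr)\frak{g}(\psi )$; each vanishing direction contributes $H_c^2\simeq \overline{\bbQ} _{\ell}(-1)$ with $\Frob _q$ acting by $q$. For a rank-two block $Q_{a,b}$ with $p=2$, a Leray computation for the projection onto one coordinate (equivalently, a direct evaluation of $\sum _{x,y}\psi (ax^2+xy+by^2)$) shows the cohomology is concentrated in degree $2$ and one-dimensional, with $\Frob _q$ acting by $\psi _0\bigl( \Tr _{k/\bbF _2}(ab)\bigr)q$, the sign recording whether the block is split or anisotropic, i.e.\ its Arf invariant $ab\bmod \wp (k)$; multiplying over the blocks yields the factor $\psi _0\bigl( \Tr _{k/\bbF _2}\Arf (Q_{\mathrm{nd}})\bigr)$, and Proposition \ref{Prop:Arfbycount} is what identifies this sign when $k=\bbF _2$. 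Assembling the block contributions via K\"unneth, keeping track of the Tate twists, and using $(-1)^r=(-1)^{2m-r}$ (and its characteristic-$2$ analogue) produces the asserted degree, the one-dimensionality of each $V_{\psi}$, and the stated Frobenius scalar; the $k$-action on $V_{\psi}$ by $\psi$ is built into the decomposition of the first step.

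The main obstacle is the characteristic-$2$ case. One must pin down precisely the Frobenius eigenvalue on $H_c^2(\bbA ^2, \calL _{\psi}(Q_{a,b}))$ and its dependence on the Arf invariant, and one must carefully control the wild ramification at infinity---the Swan conductors, which in characteristic $p$ need not mimic the tame behaviour of the odd-$p$ situation---in order to see in which single degree each block has its cohomology. The quasi-linear directions, which have no analogue when $p$ is odd, have to be handled by the extra coordinate change above and are the source of the correction by $\eps$ in the characteristic-$2$ formula; this part is the one requiring the most care to treat uniformly.
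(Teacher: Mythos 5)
Your overall route coincides with the paper's: decompose $H_c^\bullet$ over the characters $\psi$ of $k$ via the Artin--Schreier torsor, dispose of the trivial character by the constant sheaf, and for non-trivial $\psi$ (quasi-)diagonalize $Q$, use the additivity of $\scrL_\psi$ and K\"unneth, and evaluate the blocks: the odd-$p$ rank-one block by Grothendieck--Ogg--Shafarevich plus the Lefschetz trace formula, and the $p=2$ rank-two block through the sum $\sum_{x,y}\psi(ax^2+xy+by^2)=\psi_0(\Tr_{k/\bbF_2}(ab))\,q$ (the paper converts this into the cohomological statement as in \cite{BoyDLconst}; your Leray fibering over one coordinate is an equally standard way to get concentration in degree $2$, and no delicate Swan-conductor analysis is needed). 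Incidentally, Proposition \ref{Prop:Arfbycount} is not needed here: $\Arf(Q_{a,b})=ab \bmod \wp(k)$ by definition, so the sign is read off directly.

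The genuine gap is your treatment of the characteristic-$2$ quasi-linear directions, which you correctly single out as the delicate point but then entrust to a device that cannot work. No substitution of the Artin--Schreier coordinate $z\mapsto z+\ell(\bx)$ with $\ell$ an additive polynomial absorbs a term $c z_j^2$ with $c\neq 0$: that would require $\wp_k(\ell(z_j))=cz_j^2$, impossible for degree reasons (and for $q=2$ the substitution $z\mapsto z+c^{1/2}z_j$ merely trades $cz_j^2$ for the linear term $c^{1/2}z_j$). What actually happens is a dichotomy, and it is exactly how the paper argues: a direction on which $Q$ vanishes contributes the constant sheaf, i.e.\ a factor $\overline{\bbQ}_\ell(1)[2]$ to every $\psi$-component (this is the source of the extra degrees and powers of $q$ beyond the nondegenerate part), whereas a direction with $Q(z)=cz^2$, $c\neq 0$, is a homomorphism of additive group schemes $\bbG_a\rightarrow\bbG_a$, so $Q^*\scrL_\psi$ is again a non-trivial Artin--Schreier-type sheaf and its compactly supported cohomology vanishes in \emph{all} degrees (the paper invokes \cite[Sommes trig.\ Th\'eor\`eme $2.7^*$]{DelCoet}; alternatively, the map is purely inseparable up to scaling, so the pullback has the same cohomology as $\scrL_{\psi'}$ for a non-trivial $\psi'$). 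In particular such a direction annihilates the whole non-trivial-$\psi$ part; it does not produce a shift recorded by $\eps$, so your attribution of the $\eps$-correction to these directions is mistaken, and your plan neither proves this vanishing nor isolates the constant-sheaf contribution of the genuinely zero directions. As written, part (2) is therefore not established; replacing the substitution step by the group-scheme/Deligne vanishing argument and the constant-sheaf observation closes the gap. (Note that in the paper's application the complement of $Q_{\mathrm{nd}}$ is the zero form, so only the constant-sheaf case of the dichotomy actually intervenes.)
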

 \begin{proof}
 For any additive character $\psi$ of $k$,
 let $\scrL_{\psi}$ 
 denote the Artin-Schreier 
 $\overline{\bbQ} _{\ell}$-sheaf on 
 $\bbA_k^1$ associated to $\psi$, 
 which is equal to 
 $\frakF (\psi)$ in the notation of 
 \cite[Sommes trig.\ 1.8 (i)]{DelCoet}. 
 No matter whether $p=2$ or not, we have an isomorphism
 \[
 H_c^i\simeq \bigoplus _{\psi \in k^{\vee}} H_c^i(\bbA _{\overline{k}}^m, Q^{\ast}\scrL_{\psi})
 \]
 as representations of $k\times \Omega$.
 
 Suppose that $p\neq 2$.
 The assertion in this case is well-known.
 Diagonalizing $Q$ and applying the K\"unneth formula, 
 we are reduced to computing
 $H_c^i(\bbA _{\overline{k}}^m, Q^{\ast}\scrL_{\psi})$
 for $m=1$ and $Q(x)=ax^2, (a\in k)$.
 As the pull-back of $\scrL_{\psi}$ by the zero map is the constant sheaf, 
 giving rise to $\overline{\bbQ} _{\ell}(-1)[-2]$,
 it suffices to show the proposition in $r=m=1$ case, 
 which is done by the Grothendieck-Ogg-Shafarevich formula and
 the Grothendieck-Lefschetz trace formula.
 
 Suppose now that $p=2$.
 Again, by quasi-diagonalizing $Q$, 
 we are reduced to computing
 $H_c^i(\bbA _{\overline{k}}^m, Q^{\ast}\scrL_{\psi})$
 for either $m=1$ and $Q=Q_a$
 or $m=2$ and $Q=Q_{a, b}$.
 As the computation is easier 
 if $\psi$ is trivial,
 we assume that $\psi$ is non-trivial.
 In the first case,
 if $a=0$, then the cohomology is 
 $\overline{\bbQ} _{\ell}(-1)[-2]$
 as above
 and if $a\neq 0$,
 then $Q$ is a morphism of 
 additive group schemes $\bbG _a\simeq \bbA _k^1$,
 which implies that the cohomology vanishes 
 in every degree
 by \cite[Sommes trig.\ Th\'eor\`eme $2.7^{\ast}$]{DelCoet}.
 In the remaining case
 we may assume that $\psi$ is non-trivial.
 Recall the isomorphism
 $k\xrightarrow{\sim} k^{\vee}; \ x\mapsto \psi _{0, x},$
 where $\psi _{0, x}(y)=\psi _0\left( \Tr _{k/{\bbF _2}}\left( xy\right) \right),$
 and take $c\in k^{\times}$ such that $\psi =\psi _{0, c}.$ 
 Then we see that
 $\psi _{0, c}(ax^2)=\psi _{0, d}(x)$ for all $x\in k$ 
 with $d=(ca)^{1/2}\in k.$
 Now we can turn 
 an elementary manipulation
 \begin{align*}
 \sum _{x, y\in k} \psi _{0, c}(Q(x, y))  &=\sum _{x, y} \psi _{0, c}(ax^2+xy+by^2) \\
                                                 &=\sum _y \psi _{0, c}(by^2)\sum _x \psi _{0, c}(ax^2+xy) \\
                                                 &=\sum _y \psi _{0, c}(by^2)\sum _x \psi _{0, d+cy}(x) \\
                                                 &=\psi _{0, c}\left( b(-d/c)^2\right)q \\
                                                 &=\psi _0\left( \Tr _{k/{\bbF _2}} \Arf (Q)\right)q
 \end{align*}
 into the desired cohomological statement
 as in the proof of \cite[Proposition 2.10]{BoyDLconst}.
 \end{proof}
 
 \subsection{Representations of a cyclic group in finite classical groups} \label{subsec:repofcycfgps}
 In \cite{BFGdiv} and \cite{BHetLLCII},
 one is naturally led to consider
 orthogonal and symplectic representations of a cyclic group over a finite field
 in order to compute subtle invariants of certain representations.
 We use the theory in our analysis of the cohomology of $Z_{\nu}$. 
 Thus we summarize parts of \cite[\S4]{BHetLLCII}
 in this subsection. 
 
 We put $\Omega =\Gal (\overline{k} /k)$
 and $\Gamma ={\bbZ}/{n\bbZ}$, 
 where $n$ is assumed to be coprime to $p$
 as always. 
 
 Let $\Omega$ act on $\widehat{\Gamma} =\Hom (\Gamma , \overline{k} ^{\times})$
 via its natural action on the target.
 For $\chi \in \widehat{\Gamma}$, 
 we define a $k[\Gamma]$-module $V_{\chi}$ 
 in the following way:
 the underlying vector space is 
 the field
 $k[\chi]\subset \overline{k}$
 generated by the values of $\chi$
 and 
 $\Gamma$ acts 
 via the character 
 $\chi \colon \Gamma \rightarrow k[\chi]^{\times}$.
 \begin{Prop}
 The $k[\Gamma]$-module $V_{\chi}$ is simple
 and its isomorphism class depends only on the $\Omega$-orbit of $\chi$.
 Moreover, we have a bijection
 between the set of $\Omega$-orbits of $\widehat{\Gamma}$ and 
 the set of isomorphism classes of simple $k[\Gamma]$-modules
 induced by $\chi \mapsto V_{\chi}$.
 In particular, the following decomposition holds;
 \[
 k[\Gamma]=\bigoplus _{\chi \in \Omega \backslash \widehat{\Gamma}} V_{\chi}.
 \]
 \end{Prop}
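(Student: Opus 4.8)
The plan is to reduce the statement to the classical structure theory of the group algebra of a cyclic group whose order is prime to the characteristic. First I would fix a generator $\gamma_0$ of $\Gamma$ and use it to identify $k[\Gamma]$ with $k[X]/(X^n-1)$, sending $\gamma_0$ to the class of $X$. Since $\gcd(n,p)=1$, the polynomial $X^n-1$ is separable over $k$, so it factors as $X^n-1=\prod_i f_i$ into pairwise distinct monic irreducibles, and the Chinese remainder theorem gives $k[\Gamma]\simeq\prod_i k[X]/(f_i)$, a finite product of finite fields. In parallel, I would record that $\chi\mapsto\chi(\gamma_0)$ is a bijection from $\widehat{\Gamma}=\Hom(\Gamma,\overline{k}^{\times})$ onto the set $\mu_n(\overline{k})$ of roots of $X^n-1$ in $\overline{k}$, and that it is $\Omega$-equivariant because $\Omega$ acts on $\widehat{\Gamma}$ through its action on the target. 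Since the roots of each $f_i$ form a single Galois orbit and the root sets of the $f_i$ partition $\mu_n(\overline{k})$, the factors $f_i$ correspond bijectively to the $\Omega$-orbits on $\widehat{\Gamma}$; the factor attached to $\chi$ is the minimal polynomial of $\zeta:=\chi(\gamma_0)$ over $k$.

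The second step is to match $V_{\chi}$ with the corresponding factor. All values of $\chi$ are powers of $\zeta$, so the subfield $k[\chi]\subset\overline{k}$ generated by them is exactly $k(\zeta)$, and sending $X$ to $\zeta$ identifies $k[X]/(f_i)$ with $k(\zeta)$ in such a way that $\gamma_0$ acts by multiplication by $\zeta$ --- which is precisely the action defining $V_{\chi}$. Hence $V_{\chi}\simeq k[X]/(f_i)$ as $k[\Gamma]$-modules. Simplicity of $V_{\chi}$ I would check directly: a $k[\Gamma]$-submodule is a $k$-subspace of $k(\zeta)$ stable under multiplication by $\zeta$, and since $k(\zeta)=k[\chi]$ is generated over $k$ by $\zeta$, such a subspace is an ideal of the field $k(\zeta)$, hence is $0$ or everything. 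The same identification shows that the isomorphism class of $V_{\chi}$ depends only on $f_i$, i.e. only on the $\Omega$-orbit of $\chi$.

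Finally I would assemble the bijection and the decomposition. By the previous step $\chi\mapsto V_{\chi}$ descends to a well-defined map from $\Omega$-orbits of $\widehat{\Gamma}$ to isomorphism classes of simple $k[\Gamma]$-modules; it is injective because $V_{\chi}\simeq V_{\chi'}$ forces $\chi(\gamma_0)$ and $\chi'(\gamma_0)$ to have the same minimal polynomial over $k$, hence $\chi'$ to lie in the $\Omega$-orbit of $\chi$, and it is surjective because every simple $k[\Gamma]$-module occurs as a factor of $k[\Gamma]\simeq\prod_i k[X]/(f_i)$ and each such factor is some $V_{\chi}$. Reading the Chinese remainder decomposition through this dictionary yields $k[\Gamma]=\bigoplus_{\chi\in\Omega\backslash\widehat{\Gamma}}V_{\chi}$. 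I do not expect a genuine obstacle here; the only point that deserves care is the compatibility between the $\Omega$-action on $\widehat{\Gamma}$ and the Galois action on $\mu_n(\overline{k})$, together with the standard correspondence between irreducible factors over $k$ of a separable polynomial and the Frobenius orbits of its roots, which is what makes all the orbit bookkeeping line up.
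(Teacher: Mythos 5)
Your proof is correct. The paper gives no proof of this proposition at all---it is recalled from \cite[Section 4]{BHetLLCII} as standard background---and your argument (separability of $X^n-1$ because $p\nmid n$, the Chinese remainder decomposition of $k[\Gamma]\simeq k[X]/(X^n-1)$ into finite fields, the $\Omega$-equivariance of $\chi\mapsto\chi(\gamma_0)$, and the identification of each factor with $V_{\chi}=k[\chi]$ via $X\mapsto\chi(\gamma_0)$) is exactly the standard reasoning behind that citation, with the simplicity and injectivity/surjectivity checks all done correctly.
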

 
 \begin{Rem} \label{Rem:action and orthogonality}
 Let $V_1$ and $V_2$ be $k[\Gamma]$-modules and $Q$ a possibly degenerate $\Gamma$-invariant quadratic form
 on $V=V_1\oplus V_2$.
 It is easy to see that if $V_1$ and the contragredient $V_2^{\vee}$ of $V_2$
 do not have any common simple factors,
 then $V_1$ and $V_2$ are orthogonal with respect to $Q$.
 \end{Rem}
 As we need to treat $p=2$ case,
 we define \emph{an orthogonal representation} $(V, Q)$ of $\Gamma$ over $k$ 
 to be a $k[\Gamma]$-module $V$ 
 endowed with a non-degenerate $\Gamma$-invariant quadratic form $Q$.
 If $p\neq 2$,
 non-degenerate quadratic forms $Q$ correspond to 
 non-degenerate symmetric bilinear forms $b_Q$
 and thus this notion coincides with the usual one.
 
 \begin{Prop} \label{Prop:orthrep} 
 Assume that $p\neq 2$. 
 Let $(V, Q)$ be an orthogonal representation 
 of $\Gamma$
 over $k$.
 \begin{enumerate}[(1)]
 \item Suppose that $(V, Q)$ is indecomposable. 
 Then exactly one of the following holds.
  \begin{enumerate}[(i)]
   \item The underlying $k[\Gamma]$-module $V$ is simple 
   and isomorphic to $V_{\chi}$ with $\chi ^2=1$.
   \item $V$ is isomorphic to $U\oplus U^{\vee}$,
   where $U$ is a simple $k[\Gamma]$-module 
   which is not isomorphic to its contragredient $U^{\vee}$.
   \item $V=V_{\chi}$ is simple, isomorphic to its contragredient,
   but $\chi ^2\neq 1$.
  \end{enumerate}
  Moreover, the isometry class of $(V, Q)$
  as an orthogonal representation
  is determined by the isomorphism class of $V$
  in the last two cases.
 \item There exists a decomposition of $(V, Q)$ 
 into an orthogonal sum of indecomposable orthogonal representations.
 In particular, 
 $\det Q$ is determined by 
 the isomorphism class of the underlying $k[\Gamma]$-module $V$ and
 the restriction of $Q$ 
 to the largest subspace of $V$ fixed by $\{ \gamma ^2\mid \gamma \in \Gamma \}$.
 \end{enumerate}
 \end{Prop}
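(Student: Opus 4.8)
The plan is to exploit the semisimplicity of $k[\Gamma]$ — valid because $n=|\Gamma|$ is prime to $p$ — together with the classification of simple $k[\Gamma]$-modules recalled above, and to translate the invariant quadratic form into the theory of Hermitian and symmetric forms over finite fields. Since $p\neq2$, the form $Q$ is encoded by its associated symmetric bilinear form $b_Q$, which is non-degenerate and $\Gamma$-invariant and hence gives a $\Gamma$-equivariant isomorphism $\phi\colon V\xrightarrow{\sim}V^{\vee}$. Decompose $V=\bigoplus_{\chi\in\Omega\backslash\widehat{\Gamma}}V_{\chi}^{m_{\chi}}$ into isotypic components. Since $(V_{\chi})^{\vee}\cong V_{\chi^{-1}}$, the map $\phi$ sends the $V_{\chi}$-isotypic part of $V$ onto the $V_{\chi}$-isotypic part of $V^{\vee}$, which is the $k$-linear dual of the $V_{\chi^{-1}}$-isotypic part of $V$; hence $b_Q$ annihilates $V_{\chi}^{m_{\chi}}$ against $V_{\psi}^{m_{\psi}}$ unless $\psi=\chi^{-1}$, pairs $V_{\chi}^{m_{\chi}}$ perfectly with $V_{\chi^{-1}}^{m_{\chi^{-1}}}$, and forces $m_{\chi}=m_{\chi^{-1}}$. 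Thus $(V,Q)$ splits orthogonally along the pairs $\{\chi,\chi^{-1}\}$, reducing everything to the two cases $\chi\neq\chi^{-1}$ and $\chi=\chi^{-1}$ in $\Omega\backslash\widehat{\Gamma}$, i.e.\ $V_{\chi}$ not self-dual, resp.\ self-dual.

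If $\chi\neq\chi^{-1}$, then $V_{\chi}^{m_{\chi}}$ and $V_{\chi^{-1}}^{m_{\chi}}$ are complementary totally isotropic subspaces in perfect duality. Such a hyperbolic orthogonal representation is determined up to isometry by its underlying module, because a $\Gamma$-equivariant perfect pairing between $V_{\chi}^{m_{\chi}}$ and its dual amounts to an element of $\GL_{m_{\chi}}\bigl(\End_{k[\Gamma]}V_{\chi}\bigr)$, which a change of basis on one factor normalizes; it thereby decomposes into $m_{\chi}$ orthogonal copies of $V_{\chi}\oplus V_{\chi}^{\vee}$, each indecomposable since $V_{\chi}\not\cong V_{\chi}^{\vee}$. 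This is case (ii), and when $(V,Q)$ is indecomposable it forces $m_{\chi}=1$.

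If $\chi=\chi^{-1}$ (as $\Omega$-orbits), then $b_Q$ restricts to a non-degenerate $\Gamma$-invariant symmetric $k$-form on $V_{\chi}^{m_{\chi}}$. Put $E=\End_{k[\Gamma]}V_{\chi}=k[\chi]$ and let $\tau$ be the automorphism of $E$ with $\tau(\chi(\gamma))=\chi(\gamma)^{-1}$ (a power of Frobenius, well-defined precisely by self-duality). Using that the trace pairing $E\times E\to k$ is non-degenerate and that $\chi(\gamma)\tau(\chi(\gamma))=1$, one identifies the $\Gamma$-invariant symmetric $k$-forms on $V_{\chi}^{m_{\chi}}\cong E^{m_{\chi}}$ with the non-degenerate $\tau$-Hermitian forms on $E^{m_{\chi}}$. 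If $\chi^{2}=1$ then $\tau=\id$, $E=k$ and $\dim_kV_{\chi}=1$, so the form is a symmetric bilinear form over $k$, diagonalizable into one-dimensional blocks; indecomposability forces $m_{\chi}=1$, giving case (i), and here the two isometry classes $Q(x)=x^{2}$ and $Q(x)=\varepsilon x^{2}$ with $\varepsilon$ a non-square show the class genuinely is not pinned down by $V$ alone — consistent with the statement, which claims determination only in cases (ii) and (iii). If $\chi^{2}\neq1$ then $\tau$ is the non-trivial automorphism of the quadratic extension $E/E^{\tau}$ of finite fields; over such an extension there is a unique non-degenerate $\tau$-Hermitian form of each rank (the norm $E^{\times}\to(E^{\tau})^{\times}$ being surjective), and any such of rank $\geq2$ splits off a rank-one summand, so indecomposability forces $m_{\chi}=1$ and the isometry class of $(V_{\chi},Q)$ is determined by $\dim_kV_{\chi}$. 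This is case (iii). The blocks produced above — the $V_{\chi}\oplus V_{\chi}^{\vee}$, the rank-one Hermitian $V_{\chi}$, and the one-dimensional $V_{\chi}$ — are all indecomposable and together furnish the orthogonal decomposition asserted in (2), and exhaust the indecomposable possibilities, proving (1).

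For the final assertion of (2): $V^{\Gamma^{2}}$ is exactly $\bigoplus_{\chi^{2}=1}V_{\chi}^{m_{\chi}}$, and its orthogonal complement (by the pairing rule of the first paragraph) is $V'=\bigoplus_{\chi^{2}\neq1}V_{\chi}^{m_{\chi}}$, so $\det Q=\det\bigl(Q|_{V^{\Gamma^{2}}}\bigr)\cdot\det\bigl(Q|_{V'}\bigr)$ in $k^{\times}/k^{\times2}$. By (1) every block of $(V',Q|_{V'})$ is of type (ii) or (iii) and hence determined up to isometry by its module, so $(V',Q|_{V'})$ — and in particular $\det(Q|_{V'})$ — depends only on $V$; therefore $\det Q$ is a function of $\det(Q|_{V^{\Gamma^{2}}})$, hence of $Q|_{V^{\Gamma^{2}}}$. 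The main obstacle is the self-dual case: constructing $\tau$ correctly, establishing the trace/Hermitian dictionary, and invoking the structure theory of Hermitian forms over finite fields; the non-self-dual case and the bookkeeping are routine. As this is essentially a recollection of \cite[Section 4]{BHetLLCII}, I would cite the finite-field form-theoretic inputs rather than reprove them.
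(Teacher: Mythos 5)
Your proof is correct: the orthogonal splitting along isotypic pairs $\{\chi,\chi^{-1}\}$, the hyperbolic analysis when $V_\chi\not\simeq V_\chi^{\vee}$, and the trace/Hermitian dictionary (with the Galois involution $\tau$ and the surjectivity of the norm for finite fields) when $V_\chi$ is self-dual with $\chi^2\neq 1$ are exactly the standard route, and your reduction of the final determinant claim to the determination of the type (ii)/(iii) blocks by their underlying modules is sound. Note that the paper itself gives no proof of this proposition -- it is quoted as a summary of \cite[Section 4]{BHetLLCII} -- and your argument follows essentially the same lines as that cited source.
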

 
 \begin{ex} \label{stdex}
 Let $a$ be a positive divisor of $n$
 and let $\Gamma ^a\subset \Gamma$ denote
 the unique subgroup of order $n/a$.
 The regular $k[\Gamma]$-module $k[\Gamma]$
 has canonical $\Gamma$-submodules
 \[
 k[{\Gamma}/{\Gamma ^a}]=k[\Gamma]^{\Gamma ^a}
                                   =\bigoplus _{\chi \in \Omega \backslash \widehat{\Gamma}, \chi ^a=1}
                                    V_{\chi}, \quad 
 I_k(\Gamma ; a, n)=\bigoplus _{\chi \in \Omega \backslash \widehat{\Gamma}, \chi ^a\neq1}
                                    V_{\chi}.
 \]
 Then $I_k(\Gamma ; a, n)$ is the unique complement of $k[{\Gamma}/{\Gamma ^a}]$ in $k[\Gamma]$
 as a $\Gamma$-submodule.
 Similarly, if $a\mid b\mid n$, we define $I_k(\Gamma ; a, b)$ by
 \[
 I_k(\Gamma ; a, n)=I_k(\Gamma ; b, n)\oplus I_k(\Gamma ; a, b).
 \]
 If $Q$ is a (possibly degenerate) $\Gamma$-invariant quadratic form on $I_k(\Gamma ; a, n)$,
 then this decomposition is orthogonal with respect to $Q$ by Remark \ref{Rem:action and orthogonality}.
 
 Suppose now that $p\neq 2$.
 Let $\varepsilon \colon k[\Gamma]\rightarrow k$ be the $k$-linear map
 sending $1\in \Gamma$ to $1\in k$ and 
 $1\neq \gamma \in \Gamma$ to $0\in k$,
 and let $x\mapsto \overline{x}$ be the standard $k$-linear involution
 on $k[\Gamma]$
 such that $\overline{\gamma}=\gamma ^{-1}$ for $\gamma \in \Gamma$.
 Then $Q_{\Gamma}(x)=\eps (x\overline{x})$ $(x\in k[\Gamma])$
 defines a $\Gamma$-invariant 
 non-degenerate quadratic form on $k[\Gamma]$
 such that $\det Q_{\Gamma}=1$.
 We see that
 \[
 Q_{\Gamma}|_{k[{\Gamma}/{\Gamma ^a}]}=a^{-1}nQ_{{\Gamma}/{\Gamma ^a}}
 \]
 and hence
 \[
 \det (Q_{\Gamma}|_{k[{\Gamma}/{\Gamma ^a}]})=(a^{-1}n)^a, \quad
 \det (Q_{\Gamma}|_{I_k(\Gamma ; a, n)})=(a^{-1}n)^a \pmod{k^{\times 2}}.
 \]
 \end{ex}
 
 As usual,
 by \emph{a symplectic representation}
 of $\Gamma$ over $k$
 we mean a pair $(V, b)$ consisting of 
 a $k[\Gamma]$-module $V$ 
 and a non-degenerate $\Gamma$-invariant 
 alternating form $b$.
 
  \begin{Prop}\footnote{In \cite{BFGdiv}, \cite{BHetLLCII}, 
                              this proposition is stated under the assumption
                              that $k=\bbF _p$.
                              However, it plays a role only in the discussion
                              of the computation of trace invariants
                              and this proposition remains true without the assumption.}
                 \label{Prop:symprep} 
 Let $(V, b)$ be a symplectic representation 
 of $\Gamma$
 over $k$.
 \begin{enumerate}[(1)]
 \item Suppose that $(V, b)$ is indecomposable. 
 Then exactly one of the following holds.
  \begin{enumerate}[(i)]
   \item The underlying $k[\Gamma]$-module $V$ 
   is isomorphic to $U\oplus U^{\vee}$,
   where $U$ is either isomorphic to $V_{\chi}$
   for some $\chi$ with $\chi ^2=1$,
   or is a simple $k[\Gamma]$-module 
   which is not isomorphic to its contragredient.
   \item $V=V_{\chi}$ is simple, isomorphic to its contragredient,
   but $\chi ^2\neq 1$.
  \end{enumerate}
  Moreover, the isometry class of $(V, b)$
  as a symplectic representation
  is determined by the isomorphism class of $V$
  in both the cases.
 \item There exists a decomposition of $(V, b)$ 
 into an orthogonal sum of indecomposable symplectic representations.
 \end{enumerate}
 \end{Prop}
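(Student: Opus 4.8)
The plan is to mimic the proof of the orthogonal analogue, Proposition~\ref{Prop:orthrep}, replacing symmetric forms by alternating ones and keeping careful track of which isotypic pieces can support a non-degenerate $\Gamma$-invariant alternating pairing at all. Since $p\nmid n$ the algebra $k[\Gamma]$ is semisimple, so $V$ is a direct sum of isotypic components indexed by the $\Omega$-orbits in $\widehat{\Gamma}$. First I would establish the basic orthogonality: for $\chi,\chi'\in\widehat{\Gamma}$ the central idempotent $e_\chi\in\overline{k}[\Gamma]$ projecting onto the $\chi$-isotypic part has adjoint $e_{\chi^{-1}}$ with respect to $b$ (because $\Gamma$-invariance makes the adjoint of $\gamma$ equal to $\gamma^{-1}$), hence $b$ vanishes identically between the $\chi$-isotypic and $\chi'$-isotypic components unless $\chi'=\chi^{-1}$. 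Grouping the $V_\chi$-isotypic and $V_{\chi^{-1}}$-isotypic parts, this already exhibits $(V,b)$ as an orthogonal sum over the symmetric orbit-pairs $\{\chi,\chi^{-1}\}$ and reduces the problem to two model cases: (a) $V\cong V_\chi^{\oplus m}\oplus V_{\chi^{-1}}^{\oplus m'}$ with $V_\chi\not\cong V_{\chi^{-1}}$, and (b) $V\cong V_\chi^{\oplus m}$ with $V_\chi\cong V_{\chi^{-1}}$.

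In case (a) the restriction of $b$ to $V_\chi^{\oplus m}$ corresponds to a $k[\Gamma]$-map $V_\chi^{\oplus m}\to(V_\chi^{\oplus m})^\vee\cong V_{\chi^{-1}}^{\oplus m}$, which vanishes because $\Hom_{k[\Gamma]}(V_\chi,V_{\chi^{-1}})=0$; so both $V_\chi^{\oplus m}$ and $V_{\chi^{-1}}^{\oplus m'}$ are totally isotropic, non-degeneracy forces $m=m'$ with $V_\chi^{\oplus m}$ Lagrangian, and the alternating condition then pins $b$ down to the standard hyperbolic form, up to the transitive action of $\End_{k[\Gamma]}(V_\chi)$ on perfect $\Gamma$-pairings $V_\chi\times V_{\chi^{-1}}\to k$; this yields the indecomposable $V_\chi\oplus V_{\chi^{-1}}$, its multiplicity-$m$ orthogonal powers, and the claimed uniqueness. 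In case (b) I would set $E=\End_{k[\Gamma]}(V_\chi)=k[\chi]$, a field with the involution $\sigma$ induced by $\chi\mapsto\chi^{-1}$; then $V\cong E^{\oplus m}$ and the trace form $E\to k$ converts a non-degenerate $\Gamma$-invariant alternating $k$-bilinear form on $V$ into a non-degenerate $\eps$-Hermitian form of rank $m$ over $(E,\sigma)$, with a sign $\eps$ dictated by whether $\chi^2=1$ and by $p$. If $\chi^2=1$ then $E=k$, $\sigma$ is trivial, $\Gamma$ acts by $\pm1$, and $b$ is an ordinary symplectic form on $k^{\oplus m}$: $m$ is even and $(V,b)$ is a sum of $m/2$ hyperbolic planes $V_\chi\oplus V_\chi$, which is case (i) with $\chi^2=1$. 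If $\chi^2\neq1$ one invokes that over a finite field every non-degenerate $\eps$-Hermitian form of given rank is diagonalizable and unique up to isometry, so $(V,b)$ is an orthogonal sum of $m$ copies of the rank-one form on $V_\chi$, giving the indecomposable of case (ii) together with its uniqueness.

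Putting the pieces back together proves part (2) (the displayed decomposition into the three families of indecomposables) and part (1) (the trichotomy, and the fact that the isometry type of an indecomposable --- indeed of an arbitrary $(V,b)$ --- is read off from the underlying $k[\Gamma]$-module, i.e.\ from the multiplicities). The step I expect to be the main obstacle, and which I would isolate as a lemma exactly as in \cite[Section~4]{BHetLLCII} and \cite{BFGdiv}, is the precise dictionary between $\Gamma$-invariant $k$-bilinear forms on $V_\chi^{\oplus m}$ and $\eps$-Hermitian forms over $(E,\sigma)$ together with the classification of the latter; here the case $p=2$ needs care, since ``alternating'' is strictly stronger than antisymmetry, and one must decide whether a rank-one alternating $\Gamma$-invariant form on $V_\chi$ exists when $\chi^2\neq1$, which governs whether case (ii) is non-vacuous in characteristic $2$. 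Throughout, the argument uses only that $k$ is finite and $p\nmid n$, never that $k=\bbF_p$, which is the extension asserted in the footnote.
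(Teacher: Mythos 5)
Your approach is correct, but note that the paper contains no proof of Proposition \ref{Prop:symprep} at all: the subsection only summarizes \cite[Section 4]{BHetLLCII} (going back to \cite{BFGdiv}), and the footnote merely asserts that the statement survives the passage from $\bbF_p$ to a general finite $k$. Your argument --- orthogonal decomposition of $(V,b)$ over the pairs of $\Omega$-orbits $\{[\chi],[\chi^{-1}]\}$ using that the adjoint of $\gamma$ is $\gamma^{-1}$ (take the orbit idempotents $e_{[\chi]}=\sum_{\chi'\in\Omega\chi}e_{\chi'}\in k[\Gamma]$, or extend scalars, since $b$ is only $k$-bilinear), the hyperbolic analysis when $V_\chi\not\cong V_{\chi^{-1}}$ or $\chi^2=1$, and the Hermitian dictionary over $(E,\sigma)$ with $E=k[\chi]$ and $\sigma$ induced by $\chi\mapsto\chi^{-1}$ otherwise --- is exactly the standard route of the cited sources, and since it nowhere uses $k=\bbF_p$ it also substantiates the footnote. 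The one point you leave open resolves affirmatively and is the only thing to add. When $V_\chi\cong V_{\chi^{-1}}$ but $\chi^2\neq 1$, every $\Gamma$-invariant $k$-bilinear form on $V_\chi$ is $b_c(x,y)=\Tr_{E/k}(c\,x\,\sigma(y))$ for a unique $c\in E$, and $b_c(x,x)=\Tr_{E_0/k}\bigl((c+\sigma(c))\,N_{E/E_0}(x)\bigr)$ where $E_0$ is the fixed field of $\sigma$; since $N_{E/E_0}$ is surjective and the trace form of $E_0/k$ is non-degenerate, $b_c$ is alternating exactly when $c+\sigma(c)=0$, and such $c\neq 0$ exist in every characteristic ($c\in E_0^{\times}$ if $p=2$, $c\in\delta E_0^{\times}$ with $\sigma(\delta)=-\delta$ if $p\neq 2$). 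The same computation on an isotypic block $V_\chi^{\oplus m}$ shows that ``alternating'' corresponds precisely to ``skew-Hermitian'' (the diagonal condition is implied), and non-degenerate skew-Hermitian forms over a finite $(E,\sigma)$ are diagonalizable and classified by their rank: for $p\neq 2$ multiply by $\delta$ to reduce to the Hermitian case, and for $p=2$ they are Hermitian, where a Hermitian form vanishing identically on the diagonal is zero, so anisotropic vectors exist. Hence case (ii) is non-vacuous also for $p=2$, your rank-one orthogonal splitting goes through, and both the trichotomy in (1), the uniqueness of the isometry class given the module, and the decomposition in (2) follow as you outline.
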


 \begin{Rem} \label{Rem:char2orth}
 Suppose that $p=2$. 
 Let $(V, Q)$ be an orthogonal representation of $\Gamma$ over $k$.
 Considering the orthogonal decomposition of 
 the alternating bilinear form $a_Q$
 associated to $Q$,
 we may separately study
 the isotypic components of $V$ appearing in 
 Proposition \ref{Prop:symprep}
 to compute the Arf invariant of $Q$.
 \end{Rem}
 
 \subsection{Certain Heisenberg groups and their representations} \label{subsec:Heisenberg}
 Let $H$ be a finite group
 and $Z\subset H$ its center.
 We say that a finite group $H$ is
 a Heisenberg group
 if it is not abelian
 and $Q=H/Z$ is abelian.
 If $H$ is a Heisenberg group,
 the map $H\times H\rightarrow Z; \ (x, y)\mapsto [x, y]=xyx^{-1}y^{-1}$ 
 induces an alternating bilinear map $[\cdot ]\colon Q\times Q\rightarrow Z$.
 
 The following is well-known; see for instance \cite[Exercises 4.1.4-4.1.7]{BumAuto}.
 \begin{Prop} \label{Prop:Heisenberg}
 Let $H, Z, Q$ be as above.
 Let $\psi \colon Z\rightarrow \overline{\bbQ} _{\ell}^{\times}$ be a character of $Z$.
 Assume that $\psi \circ [\cdot ]\colon Q\times Q\rightarrow \overline{\bbQ} _{\ell}^{\times}$
 is non-degenerate.
 Then there exists an irreducible representation $\rho _{\psi}$ of $H$,
 unique up to isomorphism,
 whose central character is $\psi$. 
 Moreover, $\dim \rho _{\psi}=\sqrt{c}$,
 where $c$ is the order of $Q$.
 \end{Prop}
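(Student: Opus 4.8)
The plan is to build $\rho_\psi$ as a representation induced from a ``Lagrangian'' subgroup, in the spirit of the Stone--von Neumann theorem for finite Heisenberg groups (see \cite[Exercises 4.1.4--4.1.7]{BumAuto}). Write $\langle x, y\rangle = \psi([x,y])$ for the resulting non-degenerate alternating pairing on the finite abelian group $Q = H/Z$. First I would use non-degeneracy to choose a maximal isotropic subgroup $A \subset Q$; maximality forces $A = A^{\perp}$, and since the pairing identifies $Q$ with its Pontryagin dual one gets $|A|^2 = |Q|$ (in particular $|Q|$ is a perfect square). Let $\widetilde A \subset H$ be the preimage of $A$, so that $Z \subset \widetilde A$ and $[\widetilde A, \widetilde A] \subset Z$; isotropy of $A$ says exactly that $\psi$ is trivial on $[\widetilde A, \widetilde A]$, and hence $\psi$ extends to a character $\widetilde\psi$ of $\widetilde A$ (a character of a subgroup of the finite abelian group $\widetilde A/[\widetilde A, \widetilde A]$ always extends). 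I would then put $\rho_\psi = \Ind_{\widetilde A}^{H} \widetilde\psi$; its dimension is $[H : \widetilde A] = [Q : A] = |A| = \sqrt{|Q|} = \sqrt{[H:Z]}$, and its central character is $\psi$ since $Z$ is central and $\widetilde\psi|_Z = \psi$.

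Next I would verify that $\rho_\psi$ is irreducible using Mackey's criterion. Since $H/Z$ is abelian we have $[H,H]\subseteq Z\subseteq\widetilde A$, so $\widetilde A$ is normal in $H$ and $\widetilde A\cap g\widetilde A g^{-1}=\widetilde A$ for every $g$; hence
\[
\langle\Ind_{\widetilde A}^{H}\widetilde\psi,\ \Ind_{\widetilde A}^{H}\widetilde\psi\rangle_{H}=\sum_{\bar g\in Q/A}\langle\widetilde\psi,\ {}^{g}\widetilde\psi\rangle_{\widetilde A}.
\]
A short commutator computation identifies ${}^{g}\widetilde\psi\cdot\widetilde\psi^{-1}$ with (a power of) the character $x\mapsto\langle\bar x,\bar g\rangle$ of $\widetilde A$, which is trivial precisely when $\bar g\in A^{\perp}=A$, i.e.\ only for the identity coset of $Q/A$. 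The sum therefore equals $1$, and $\rho_\psi$ is irreducible.

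For uniqueness I would analyse $\Ind_{Z}^{H}\psi$: by Frobenius reciprocity the multiplicity of an irreducible $\rho$ in it equals $\dim\rho$ if the central character of $\rho$ is $\psi$ and $0$ otherwise, so comparing dimensions gives $[H:Z]=\sum_{\rho\,:\,\omega_\rho=\psi}(\dim\rho)^2$. Since the single representation $\rho_\psi$ already contributes $(\dim\rho_\psi)^2=|Q|=[H:Z]$ to this sum, it is the unique irreducible representation of $H$ with central character $\psi$, and along the way this reconfirms $\dim\rho_\psi=\sqrt{[H:Z]}$.

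The only step that is not pure bookkeeping with Mackey theory and Frobenius reciprocity is the construction of the maximal isotropic subgroup $A$ with $A=A^{\perp}$ for a non-degenerate alternating pairing on a general finite abelian group $Q$ (not assumed elementary abelian); I expect this to be the main point to handle carefully. One can either induct on $|Q|$, repeatedly splitting off ``hyperbolic pairs'' $\langle x,y\rangle$ of full order, or invoke the classification of non-degenerate symplectic forms on finite abelian groups. Once $A$ is in hand, the rest of the argument is routine.
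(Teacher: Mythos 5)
Your proof is correct and is precisely the standard Stone--von Neumann argument for finite Heisenberg groups --- extend $\psi$ to the preimage $\widetilde A$ of a Lagrangian $A=A^{\perp}\subset Q$, induce, check irreducibility by Mackey (using $[H,H]\subset Z\subset\widetilde A$), and get uniqueness from the dimension count in $\Ind_Z^H\psi$ --- which is exactly the route of the reference (Bump, Exercises 4.1.4--4.1.7) that the paper cites in lieu of a proof. One small remark: what you actually prove, $\dim\rho_\psi=\sqrt{[H:Z]}=\sqrt{\lvert Q\rvert}$, is the intended reading of the statement; the phrase ``the order of $H$'' there is a slip for the order of $Q=H/Z$, as the later application $\dim\rho_{1,\psi}=q^{(n-1)/2}$ (with $\lvert S_{1,\nu}\rvert=q^{n}$ and center of order $q$) confirms.
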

 
 The following proposition can be verified easily.
 \begin{Prop} \label{Prop:ourHeisenberg}
  Let $1< \nu =2\mu <2n$ be an even integer.
  Let $S_{1, \nu}$ (resp.\ $S_{2, \nu}$) be 
  the group defined in Proposition \ref{Prop:gpS1}
  (resp.\ in Proposition \ref{Prop:gpS2}).
  Assume that $n$ and $\nu$ are coprime.
  \begin{enumerate}[(1)]
   \item The group $S_{1, \nu}$ is a Heisenberg group.
   In the notation of Proposition \ref{Prop:gpS1} \ref{item:evenS1},
   the center $Z(S_{1, \nu})$ is
   \[
   Z(S_{1, \nu})=k=\{ (v, (w_i))\in k\times k^{{\bbZ}/{n\bbZ}} \mid w_i=0 \text{ for all $i$} \}
   \subset S_{1, \nu}.
   \]
   Moreover, a character $\psi$ of $k$
   induces a non-degenerate alternating form
   $\psi \circ [\cdot ]\colon S_{1, \nu}/{Z(S_{1, \nu})}\times S_{1, \nu}/{Z(S_{1, \nu})}\rightarrow \overline{\bbQ} _{\ell}^{\times}$
   if and only if $\psi$ is non-trivial.
   
   \item The group $S_{2, \nu}$ is a Heisenberg group.
   In the notation of Proposition \ref{Prop:gpS2} \ref{item:evenS2},
   the center $Z(S_{2, \nu})$ is
   \[
   Z(S_{2, \nu})=k=\{ (v, w)\in k\times k_n \mid w=0 \}
   \subset S_{2, \nu}.
   \]
   Moreover, a character $\psi$ of $k$
   induces a non-degenerate alternating form
   $\psi \circ [\cdot ]\colon S_{2, \nu}/{Z(S_{2, \nu})}\times S_{2, \nu}/{Z(S_{2, \nu})}\rightarrow \overline{\bbQ} _{\ell}^{\times}$
   if and only if $\psi$ is non-trivial.
  \end{enumerate}
 \end{Prop}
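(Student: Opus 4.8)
The plan is to work entirely inside the explicit models of Propositions \ref{Prop:gpS1} \eqref{item:evenS1} and \ref{Prop:gpS2} \eqref{item:evenS2}, so that $S_{1,\nu}$ (resp.\ $S_{2,\nu}$) is the set $T_{1,\nu}$ (resp.\ $T_{2,\nu}$) equipped with the group law displayed there; throughout I abbreviate $\mu=\nu/2$. In either case the second coordinate of a product is simply the sum of the second coordinates, so $[S_{i,\nu},S_{i,\nu}]\subset k=\{(v,0)\}$. The first step is to record the commutator pairing explicitly: writing out inverses from the group law and using that $(v,0)$ is central, a short computation shows that two elements $g,h$ with respective second coordinates $w,w'$ satisfy
\[
[g,h]=\bigl(B(w,w'),\,0\bigr),\qquad B(w,w')=\sum_{i}w_i\bigl(w'_{i+\mu}-w'_{i-\mu}\bigr)
\]
in the first case, and $B(w,w')=\Tr_{k_n/k}\!\bigl((w^{q^{\mu}}-w^{q^{n-\mu}})w'\bigr)$ in the second (the latter form obtained by moving a power of $q$ through $\Tr$). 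Here the domain is $Q_1=\{\,(w_i)\mid\sum_i w_i=0\,\}$ resp.\ $Q_2=\{\,w\in k_n\mid\Tr_{k_n/k}w=0\,\}$, and in both cases $B$ is $k$-bilinear and alternating.

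The substance of the proof is the claim that $B$ is non-degenerate. For the radical, non-degeneracy of the standard pairing $\sum_i x_iy_i$ on $k^n$ (resp.\ of the trace form of the extension $k_n/k$) shows that $w$ lies in the radical of $B$ exactly when $w_{i-\mu}-w_{i+\mu}$ is independent of $i$ (resp.\ when $w^{q^{\mu}}-w^{q^{n-\mu}}$ lies in $k$); call this quantity $c$. Shifting the index by $\mu$ (resp.\ applying $x\mapsto x^{q^{\mu}}$) rewrites the condition as $w_i-w_{i+2\mu}=c$ for all $i$ (resp.\ $w-w^{q^{2\mu}}=c$). Summing this relation over all $i\in\bbZ/n\bbZ$ (resp.\ iterating it $n$ times and telescoping) yields $0=nc$, so $c=0$ since $p\nmid n$. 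Then $w_i=w_{i+2\mu}$ for all $i$ (resp.\ $w^{q^{2\mu}}=w$); since $\gcd(n,\nu)=\gcd(n,2\mu)=1$, the shift $i\mapsto i+2\mu$ is a single $n$-cycle on $\bbZ/n\bbZ$ (resp.\ $\{x\in k_n\mid x^{q^{2\mu}}=x\}=k$), which forces $w_i$ to be constant (resp.\ $w\in k$); finally $\sum_i w_i=0$ (resp.\ $\Tr_{k_n/k}w=nw=0$) gives $w=0$. This is the one step that requires care — keeping the Frobenius twists straight in the $S_{2,\nu}$ case is the only delicate bookkeeping — and it is here that both hypotheses $p\nmid n$ and $\gcd(n,\nu)=1$ are genuinely used.

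The rest is formal. Applying the same computation to the centrality condition rather than to the radical of $B$ identifies $Z(S_{i,\nu})$ with $\{(v,0)\}=k$; as $n\ge 2$ this is a proper subgroup, so $S_{i,\nu}$ is non-abelian while $S_{i,\nu}/k\cong Q_i$ is abelian, i.e.\ $S_{i,\nu}$ is a Heisenberg group with the stated centre. For the last assertion, if $\psi$ is trivial then $\psi\circ[\,\cdot\,]$ is the trivial pairing on $Q_i\neq\{0\}$ and hence degenerate; if $\psi$ is non-trivial, then for every $w\neq 0$ in $Q_i$ the image of the $k$-linear map $B(w,\cdot)\colon Q_i\to k$ is a non-zero $k$-subspace of $k$, hence all of $k$, so $\psi$ is non-trivial on it and $w$ is not in the radical of $\psi\circ[\,\cdot\,]$. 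Therefore $\psi\circ[\,\cdot\,]$ is non-degenerate, and this proves both (1) and (2).
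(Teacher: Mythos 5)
Your proof is correct: the commutator formula $[g,h]=(c(w,w')-c(w',w),0)$, the identification of the radical of the resulting alternating form on $\{\sum_i w_i=0\}$ (resp.\ $\{\Tr w=0\}$) using $p\nmid n$ and $\gcd(n,\nu)=1$, and the deduction of the centre and of the non-degeneracy criterion for $\psi\circ[\cdot]$ are all accurate, and you rightly read the cocycle in \eqref{eq:S1formula} as $\sum_i w_i w'_{i+\mu}$ (the primes are dropped there by an evident typo). The paper itself offers no argument beyond the assertion that the proposition ``can be verified easily,'' so your computation is precisely the verification intended.
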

 If $n$ and $\nu$ are coprime, then by Propositions \ref{Prop:Heisenberg} and \ref{Prop:ourHeisenberg},
 there exists a unique irreducible representation $\rho _{1, \psi}$ (resp.\ $\rho _{2, \psi}$)
 of $S_{1, \nu}$ (resp.\ of $S_{2, \nu}$)
 with the central character $\psi$,
 for any non-trivial character $\psi $ of $k$.

 \subsection{Cohomology of the reductions} \label{subsec:cohofreds}
 \begin{Prop}\footnote{Although  
                             this proposition allows us to compute the cohomology of $Z_{\nu}$
                             for any $\nu$ not divisible by $n$,
                             only the cases where $n$ and $\nu$ are coprime are relevant to Main Theorem;
                             we find the result interesting nonetheless.}
                             \label{Prop:oddcoh}
 Let $\nu$ be an integer satisfying $1\leq \nu <n$ or $n+1\leq \nu <2n$. 
 Suppose that $\nu$ is odd
  and write $\nu =2\mu +1.$
 Put $d=\gcd(n, \nu).$
 Let $Z_{\nu}$ be the algebraic variety defined in Theorem \ref{Thm:reduction},
 which we regard as the base change of $Z_{\nu, 0}$ as in Subsection \ref{subsec:alggps}.
 Put 
 \[
 H_c^i=H_c^i(Z_{\nu}, \overline{\bbQ} _{\ell}),
 \]
 which carries the action of $k$ and $\Omega$.
 We denote by ${\cdot \overwithdelims ()m}$ the Jacobi symbol 
 for any positive odd integer $m$.
 \begin{enumerate}[(1)]
         \item \label{item:oddvanish}
                 We have
                 $H_c^i=0$
                 unless $i=2(n-1), n+d-2.$
         \item \label{item:oddnondeg}
                 If $d\neq n$, then
                 \[
                 H_c^i\simeq \begin{cases}
                                  \bigoplus _{\psi \in k^{\vee}\setminus \{ 1\}} W_{\psi} &\text{if $i=n+d-2$} \\
                                  \overline{\bbQ} _{\ell}(-(n-1)) &\text{if $i=2(n-1)$},
                                  \end{cases}
                 \]       
                 where $W_{\psi}$ is a one-dimensional vector space
                 on which $k$ acts via $\psi$ and
                 $\Frob _q$ acts as multiplication by the scalar
                 \[
                 \begin{cases}
                 {q\overwithdelims () {n/d}}q^{\frac{n+d-2}{2}} &\text{ if $n$ is odd and $p=2$} \\ 
                 {{n/d}\overwithdelims ()k}\frakg(\psi)^{n-d}q^{d-1} &\text{ if $n$ is odd and $p\neq 2$} \\
                 -{-1\overwithdelims ()k}^{\mu}{2\overwithdelims ()k}{{n/d}\overwithdelims ()k}\frakg(\psi)^{n-d}q^{d-1} 
                 &\text{ if $n$ is even (and hence $p\neq 2$),}
                 \end{cases}
                 \]
                 and $k$ acts trivially on $\overline{\bbQ} _{\ell}(-(n-1))$.
         \item \label{item:odddeg}
                 If $d=n$, then
                 \[
                 H_c^{2(n-1)}\simeq \overline{\bbQ} _{\ell}[k]\boxtimes \overline{\bbQ} _{\ell}(-(n-1))
                 \] 
                 as a representation of $k\times \Omega$.
         \item \label{item:oddcyclic}
                 Consider the following natural action of the standard generators $\gamma \in \Gamma ={\bbZ}/{n\bbZ}$ 
                 and $\beta \in {\bbZ}/{2\bbZ}$ 
                 on $Z_{\nu}$;
                 \begin{gather*}
                 (z, y_1, y_2, \dots , y_n)\mapsto (z, y_n, y_1, \dots , y_{n-1}), \\
                 (z, y_1, \dots , y_n)\mapsto (z, -y_1, \dots , -y_n)
                 \end{gather*}
                 respectively (cf. Subsection \ref{subsec:stabandaction}).
                 Let $j$ be an integer coprime to $n$.
                 Let $\psi$ be a character of $k$.
                 Then both the actions of $\gamma^j$ and $\beta$ induce, on the $\psi$-isotypic component of $\bigoplus _{i}H_c^i$, 
                 multiplication by the scalar 
                 \[
                 \begin{cases}
                 (-1)^{n-1}&\text{ if $\psi$ is non-trivial} \\
                 1&\text{ otherwise}.
                 \end{cases}
                 \]                      
 \end{enumerate}
 \end{Prop}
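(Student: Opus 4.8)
The plan is to recognize $Z_\nu$ (for odd $\nu$ with $d=\gcd(n,\nu)\neq n$) as an Artin--Schreier pull-back, split off the degenerate directions of the associated quadratic form, and then apply Proposition~\ref{Prop:quadandcoh}. Concretely, by Theorem~\ref{Thm:reduction}~\eqref{item:red:smallodd}, \eqref{item:red:largeodd} the variety $Z_\nu$ is the pull-back of $\wp\colon\bbA^1\to\bbA^1$, $\wp(z)=z^q-z$, along $Q_\nu|_V\colon V\to\bbA^1$, where $V=V(y_1+\dots+y_n)\cong\bbA_{\overline k}^{n-1}$ and $Q_\nu$ is the form of \S\ref{subsec:alggps} (the sign in the two cases being absorbed into $Q_\nu$). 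I would then study $Q_\nu|_V$ as a quadratic form on the $k[\Gamma]$-module $V\cong I_k(\Gamma;1,n)$, $\Gamma=\bbZ/n\bbZ$ acting by $y_i\mapsto y_{i-1}$: since the relevant distance set is stable under $c\mapsto n-c$, the form $Q_\nu$ is $\Gamma$-invariant and its associated bilinear/alternating form is circulant, with $\overline k$-eigenvalue on the $\zeta$-eigenline ($\zeta$ a nontrivial $n$-th root of unity) proportional to $\sum_c\zeta^c$ over the distance set; a geometric-series computation shows this vanishes exactly when $\zeta^\nu=1$, i.e.\ $\zeta^d=1$. Hence the radical is the sum of eigenlines with $\zeta^d=1$, $\zeta\neq1$ — a $\Gamma$-submodule of dimension $d-1$ on which $Q_\nu$ vanishes (directly if $p\neq2$; if $p=2$ then $n$, $d$ are odd, so $\zeta^2\neq1$ there and $\Gamma$-invariance forces $Q_\nu=0$). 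Choosing a $\Gamma$-stable complement $V_{\mathrm{nd}}$ one gets $Z_\nu\cong Z_\nu^{\mathrm{nd}}\times\bbA_{\overline k}^{d-1}$, where $Z_\nu^{\mathrm{nd}}$ is the Artin--Schreier pull-back of the \emph{nondegenerate} rank-$(n-d)$ form $Q_{\mathrm{nd}}:=Q_\nu|_{V_{\mathrm{nd}}}$. The remaining case $d=n$, i.e.\ $\nu=n$ with $n$ odd, is Theorem~\ref{Thm:reduction}~\eqref{item:red:divisible}: $Z_n=\amalg_k\bbA^{n-1}$, which gives \eqref{item:odddeg} with the regular $k$-action.

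Next I would feed $Z_\nu^{\mathrm{nd}}$ into Proposition~\ref{Prop:quadandcoh} with $m=n-d$ and $Q=Q_{\mathrm{nd}}$: nondegeneracy gives $r=n-d$ and, in characteristic $2$, $\eps=0$, so $H_c^*(Z_\nu^{\mathrm{nd}})$ is concentrated in degree $n-d$, where it is $\bigoplus_{\psi\neq1}V_\psi$, and in degree $2(n-d)$, where it is $\overline{\bbQ}_\ell(n-d)$. Tensoring with $H_c^*(\bbA^{d-1})=\overline{\bbQ}_\ell(d-1)$ in degree $2(d-1)$ yields \eqref{item:oddvanish} and the shapes in \eqref{item:oddnondeg}, since $n+d-2=(n-d)+2(d-1)$ and $2(n-1)=2(n-d)+2(d-1)$; and $\Frob_q$ acts on $W_\psi=V_\psi\otimes\overline{\bbQ}_\ell(d-1)$ by the product of the scalar of Proposition~\ref{Prop:quadandcoh} (with $q^{m-r}=q^0$, resp.\ $q^{m-r/2}=q^{(n-d)/2}$) and $q^{d-1}$. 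This matches the asserted value once $\det Q_{\mathrm{nd}}$ (resp.\ $\Arf Q_{\mathrm{nd}}$) is identified, using also $(-1)^{n-d}=(-1)^{n-1}$ for $d$ odd.

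The computation of these arithmetic invariants is where the real work lies, and I expect it to be the main obstacle. The $k[\Gamma]$-module underlying $Q_{\mathrm{nd}}$ is $I_k(\Gamma;d,n)=\bigoplus_{\chi\neq1,\ \chi^d\neq1}V_\chi$; by Proposition~\ref{Prop:orthrep} (resp.\ Proposition~\ref{Prop:symprep} with Remark~\ref{Rem:char2orth}) the isometry class of $(V_{\mathrm{nd}},Q_{\mathrm{nd}})$ is determined by this module \emph{except} on the summand $V_{\chi_0}$ of the unique order-$2$ character $\chi_0$, which occurs only for $n$ even (and then lies in $I_k(\Gamma;d,n)$ as $d$ is odd). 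For $p\neq2$ I would compare with the standard form $Q_\Gamma$ of Example~\ref{stdex}: away from $V_{\chi_0}$ they agree, so $\det(Q_\Gamma|_{I_k(\Gamma;d,n)})=(n/d)^d\equiv n/d\pmod{k^{\times2}}$ ($d$ odd) handles $n$ odd, while for $n$ even one multiplies by the ratio of the restrictions to $V_{\chi_0}$, evaluated directly from the circulant eigenvalue at $\zeta=-1$ (an alternating partial sum over the distance set, equal to $\pm(-1)^{\mu+1}$), producing the correction factor $(-1)^\mu\cdot2$ and hence $\det Q_{\mathrm{nd}}\equiv(-1)^\mu\,2\,(n/d)$. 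For $p=2$ (so $n$, $d$ odd, $\chi_0$ absent) $(V_{\mathrm{nd}},Q_{\mathrm{nd}})$ is the standard orthogonal representation with module $I_k(\Gamma;d,n)$, and $\psi_0(\Tr_{k/\bbF_2}\Arf Q_{\mathrm{nd}})$ is computed orbit-by-orbit on $\Omega\backslash\widehat\Gamma$ via Proposition~\ref{Prop:Arfbycount} (a point-count over $\bbF_2$) and the ensuing cyclotomic Gauss-sum manipulation, which evaluates to the Jacobi symbol ${q\overwithdelims(){n/d}}$ exactly as in \cite{BFGdiv}.

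Finally, for \eqref{item:oddcyclic}: the cyclic shift $\gamma$ and the involution $\iota\colon y_i\mapsto-y_i$ both preserve $Q_\nu|_V$, hence act on $Z_\nu$ over $\bbA^1$, compatibly with the decomposition $V=V_{\mathrm{nd}}\oplus\mathrm{rad}$ and with the $\psi$-decomposition of $H_c^*$. On the trivial component $\psi=1$, lying in the top group $H_c^{2(n-1)}=\overline{\bbQ}_\ell(n-1)$, both act trivially, as any automorphism of an irreducible affine variety acts trivially on its top compactly-supported cohomology. For $\psi\neq1$ the component $W_\psi\subset H_c^{n+d-2}$ is $H_c^{n-d}(Z_\nu^{\mathrm{nd}})_\psi\otimes H_c^{2(d-1)}(\bbA^{d-1})$, and $\gamma$, $\iota$ act trivially on the second factor (again a degree-one automorphism) and on the first through $O(Q_{\mathrm{nd}})$ — via the determinant character if $p\neq2$ (checked on reflections, using that $w\mapsto-w$ acts by $-1$ on $H_c^1(\bbA^1,[aw^2]^*\scrL_\psi)$, since that group is entirely the $\chi_0$-isotypic part for this involution, the $\scrL_{\psi(au)}$-cohomology vanishing by \cite[Sommes trig.\ Th\'eor\`eme $2.7^\ast$]{DelCoet}), and via the Dickson invariant if $p=2$, which vanishes on the odd-order element $\gamma$ (and $\iota=\mathrm{id}$). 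Since $\det(\gamma|_V)=(-1)^{n-1}$ (the sign of an $n$-cycle) and $\det(\gamma|_{\mathrm{rad}})=\prod_{\zeta^d=1,\ \zeta\neq1}\zeta=1$ for $d$ odd, one gets $\det(\gamma|_{V_{\mathrm{nd}}})=(-1)^{n-1}$, and similarly $\det(\iota|_{V_{\mathrm{nd}}})=(-1)^{\dim V_{\mathrm{nd}}}=(-1)^{n-d}=(-1)^{n-1}$, completing the proof.
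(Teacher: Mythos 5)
Your treatment of parts (1)--(3) is essentially the paper's own argument: the paper likewise reduces to Proposition~\ref{Prop:quadandcoh}, regards $Q_{\nu}$ as a $\Gamma$-invariant form on $k[\Gamma]$, shows via the idempotents $e_{\chi}$ (your ``circulant eigenvalue'' computation) that the radical is $I_k(\Gamma;1,d)$ while $Q_{\nu}$ is non-degenerate on $I_k(\Gamma;d,n)$, computes the determinant for $p\neq 2$ by comparison with $Q_{\Gamma}$ of Example~\ref{stdex} together with the order-two-character correction $(-1)^{\mu}2$, and computes the characteristic-$2$ Arf invariant by reducing to $\bbF_2$ and counting points orbit-by-orbit via Proposition~\ref{Prop:Arfbycount}. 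The only cosmetic difference is that you split $Z_{\nu}\simeq Z_{\nu}^{\mathrm{nd}}\times \bbA_{\overline{k}}^{d-1}$ and use K\"unneth, whereas the paper feeds the degenerate form directly into Proposition~\ref{Prop:quadandcoh}, which already allows a zero part; your bookkeeping of degrees and Frobenius scalars comes out the same. One caution: in characteristic $2$ your claim that $(V_{\mathrm{nd}},Q_{\mathrm{nd}})$ ``is the standard orthogonal representation'' is not justified a priori (Proposition~\ref{Prop:symprep} and Remark~\ref{Rem:char2orth} only control the underlying symplectic structure, not its quadratic refinement); it is also unnecessary, since the point count via Proposition~\ref{Prop:Arfbycount} pins down the Arf invariant of whatever invariant non-degenerate form one has, which is exactly how the paper argues.

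The genuine issue is part (4) in characteristic $2$, where your route diverges from the paper's. For $p\neq 2$ your argument is fine: the action on the one-dimensional $\psi$-part factors through a character of $O(Q_{\mathrm{nd}})(k)$, the Kummer-sheaf computation shows a reflection acts by $-1$, Cartan--Dieudonn\'e generation identifies the character with $\det$, and your determinant evaluations $(-1)^{n-1}$ are correct. But for $p=2$ you assert without proof that this character is the Dickson invariant. That is a real claim: you would need either the analogue of your reflection computation for orthogonal transvections, or the statement that every $\overline{\bbQ}_{\ell}^{\times}$-valued character of a finite orthogonal group in characteristic $2$ has $2$-power order (so that the odd-order element $\gamma$ must act trivially); neither is supplied, and nothing in the paper's preliminaries provides it. The paper's proof of (4) avoids this entirely and works uniformly in all characteristics: by the Deligne--Lusztig fixed point formula \cite[Theorem 3.2]{DeLu}, since $p\nmid n$ the fixed locus of $\gamma$ (and of the involution) is the finite set $\{(z,0,\dots ,0)\mid z\in k\}$, on which the $k$-translation acts simply transitively, so $\sum_i(-1)^i\tr (\gamma x\mid H_c^i)$ is the regular character of $k$; applying the idempotents for each $\psi$ and using the concentration in a single degree from (1)--(3) forces the scalar $(-1)^{n-1}$ (resp.\ $1$). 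Substituting that argument, or supplying the transvection/abelianization input, closes the gap; note also that your final paragraph should be read against the degenerate case $d=n$, where $V_{\mathrm{nd}}=0$ and the claim reduces to the trivial statement since $n$ is odd.
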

 \begin{proof}
 We first treat \ref{item:oddvanish} to \ref{item:odddeg}.
 Let $Q_{\nu}$ be the quadratic form defined in Subsection \ref{subsec:alggps}.
 In view of Proposition \ref{Prop:quadandcoh}, 
 we need to compute various invariants 
 of the quadratic form $Q_{\nu}$
 defining $Z_{\nu}$.
 For this, we follow the approach of \cite[\S8.3]{BHetLLCII}
 and exploit the $\Gamma$-invariance of $Q_{\nu}$.
 We regard $Q_{\nu}=Q_{\nu}(y_1, \dots , y_n)$ 
 as a quadratic form on
 the group algebra
 $k[\Gamma]=\left\{ \sum _{1\leq i\leq n} y_i\gamma ^i \ \mid \ y_i\in k\right\}.$
 
 In the notation of Example \ref{stdex}
 we are to study the restriction of $Q_{\nu}$
 on $I_k(\Gamma ; 1, n)\subset k[\Gamma]$.
 We have an orthogonal decomposition
 \begin{equation*}
 I_k(\Gamma ; 1, n)=I_k(\Gamma ; d, n)\oplus I_k(\Gamma ; 1, d).
 \end{equation*} 
 First we claim, 
 with no assumption on the parity of $p$,
 that
 (the restriction of) $Q_{\nu}$ is non-degenerate (resp.\ zero) 
 on $I_k(\Gamma ; d, n)$ (resp.\ on $I_k(\Gamma ; 1, d)$).

 Suppose that $1\leq \nu < n$.
 Then, for $x\in k[\Gamma],$ 
 \[
 Q_{\nu}(x)=\begin{cases}
               -\eps \left( \sum _{\mu +1\leq i\leq (n-1)/2} x\overline{(\gamma ^{-i}x)}\right) &\text{ if $n$ is odd} \\
               -\eps \left( \sum _{\mu +1\leq i\leq n/2-1} x\overline{(\gamma ^{-i}x)}
                            +2^{-1}x\overline{(\gamma ^{-n/2}x)}\right) &\text{ if $n$ is even}. 
               \end{cases}
 \]
 By Remark \ref{Rem:action and orthogonality}, Proposition \ref{Prop:orthrep} and Remark \ref{Rem:char2orth}
 we may separately consider each isotypic component
 underlying some indecomposable orthogonal (or symplectic, if $p=2$) representation of $\Gamma$.
 Also, to prove non-degeneracy or triviality of $Q_{\nu}$, 
 we may assume that 
 all characters of $\Gamma$ take values in $k^{\times}$.
 
 For a character $\chi$ of $\Gamma$, let
 \[
 e_{\chi}=n^{-1}\sum _{1\leq i\leq n}\chi (\gamma ^{i})\gamma ^{-i}
 \]
 be the corresponding idempotent,
 so that 
 $V_{\chi}=e_{\chi}k[\Gamma].$
 First let $\chi$ be a character 
 such that $\chi ^2\neq 1$
 and consider $Q_{\nu}|_{V_{\chi}\oplus V_{\chi ^{-1}}}.$
 Put $\alpha =\chi (\gamma).$
 If $n$ is odd, 
 then we have, for $y, z\in k,$
 \begin{align*}
 Q_{\nu}(ye_{\chi}+ze_{\chi ^{-1}})&=-\eps \left( 
                                                            \sum _{\mu +1\leq i\leq (n-1)/2} 
                                                            \left(
                                                                  ye_{\chi}+ze_{\chi ^{-1}}
                                                            \right)
                                                            (
                                                             \alpha ^{-i}ye_{\chi ^{-1}}+
                                                             \alpha ^{i}ze_{\chi}
                                                             )
                                                           \right) \\
                                           &=-n^{-1}yz\left( \sum _{\mu +1\leq i\leq (n-1)/2}(\alpha ^i+\alpha ^{-i})\right) \\
                                           &=-n^{-1}yz\left( \alpha^{n-\mu}-\alpha ^{\mu +1}\right)/ \left( \alpha -1\right). 
 \end{align*}
 Similarly, if $n$ is even, 
 we see that
 \begin{align*}
 Q_{\nu}(ye_{\chi}+ze_{\chi ^{-1}})&=-n^{-1}yz\left( \sum _{\mu +1\leq i\leq n/2-1} 
                                                               (\alpha ^i+\alpha ^{-i})
                                                               +\alpha ^{n/2}
                                                         \right) \\
                                         &=-n^{-1}yz\left( \alpha^{n-\mu}-\alpha ^{\mu +1}\right)/ \left( \alpha -1\right). 
 \end{align*}
 Thus, $Q_{\nu}|_{V_{\chi}\oplus V_{\chi ^{-1}}}$ is trivial 
 if $\chi ^d=\id$
 and is non-degenerate
 otherwise.

 Next let $\chi$ be of order two
 (so that $n$ is even)
 and consider the restriction of $Q_{\nu}$ 
 on $V_{\chi}\subset I_k(\Gamma ; d, n)$.
 Then
 \begin{equation} \label{eq:detoftsd1}
 Q_{\nu}(e_{\chi})=-n^{-1}\left( \sum _{\mu +1\leq i\leq n/2-1} (-1)^i+2^{-1}(-1)^{n/2}\right) 
                     =(2n)^{-1}(-1)^{\mu}                  
 \end{equation}
 and $Q_{\nu}$ is non-degenerate on $V_{\chi}$.
 Hence we have proved the above claim 
 for $1\leq \nu <n.$
 The case where $n+1\leq \nu <2n$
 can be reduced to the above case 
 by noting that 
 $Q_{\nu}=-Q_{2n-\nu}.$
 In particular, we find that
 also in this case
 \begin{equation} \label{eq:detoftsd2}
 Q_{\nu}(e_{\chi})=(2n)^{-1}(-1)^{\mu} 
 \end{equation}
 for the character $\chi$ of order two, 
 if $n$ is even.
 
 Now suppose that 
 $p\neq 2$.
 Then we need to show that
 \[
 \det Q_{\nu}|_{I_k(\Gamma ; d, n)}=\begin{cases}
                                            n/d &\text{ if $n$ is odd} \\
                                            (-1)^{\mu}2n/d &\text{ if $n$ is even}  
                                            \end{cases}
 \pmod{k^{\times 2}}.
 \]
 We compare $Q_{\nu}$
 with the standard quadratic form $Q_{\Gamma}$ on $k[\Gamma].$
 If $n$ is even 
 and $\chi$ is the character of order two, 
 then the determinant 
 of $Q_{\Gamma}|_{V_{\chi}}$
 is $1/n.$
 By Proposition \ref{Prop:orthrep},
 (\ref{eq:detoftsd1}), (\ref{eq:detoftsd2}),
 we infer that
 the determinants of $Q_{\nu}|_{I_k(\Gamma ; d, n)}$
 and $Q_{\Gamma}|_{I_k(\Gamma ; d, n)}$
 differ by a factor of $(-1)^{\mu}2$
 if $n$ is even
 and coincide 
 if $n$ is odd.
 Now the determinant of $Q_{\Gamma}|_{I_k(\Gamma ; d, n)}$
 is indeed $n/d$, 
 as is seen from Example \ref{stdex}.
 
 Suppose that $p=2$ (and hence $n$ is odd).
 Then we are to prove that
 \[
 \psi _0\left( \Tr _{k/{\bbF _2}} \Arf \left( Q_{\nu}|_{I_k(\Gamma ; d, n)}\right) \right)={q\overwithdelims () {n/d}}.
 \]
 However, as $Q_{\nu}$ is clearly defined over $\bbF _2$
 the computation is reduced to $k=\bbF _2$ case.
 
 Assume therefore $k=\bbF _2$.
 Let $m>1$ be a divisor of $n$
 and set
 \[
 V_m=\bigoplus V_{\chi},
 \] 
 where the sum is taken over 
 all the orbits $\Omega \chi \in \Omega \backslash \Gamma ^{\vee}$ 
 of the characters of order $m$.
 Note that $\dim V_m=\varphi (m)$,
 where $\varphi$ is Euler's totient function.
 Take a prime divisor $l$ of $m$.
 Suppose that a non-degenerate quadratic form $Q$ on $V_m$ 
 is invariant under the action of $\Gamma$.
 Now every $\Gamma$-orbit of $V_m$
 except for $\{ 0\}$ 
 is of length $m$.
 Thus if $\Arf (Q)=0$ (or $=1$), 
 then $2^{\varphi (m)/2-1}+2^{\varphi (m)-1} \equiv 1\bmod{m}$
 (resp.\ $\equiv 0\bmod{m}$)
 by Proposition \ref{Prop:Arfbycount}.
 Since the two congruences in the latter condition never occur together 
 and $\Arf (Q)\in \{ 0, 1\}$,
 the two conditions are in fact equivalent.
 The congruences are further equivalent to
 $2^{\varphi (m)/2} \equiv 1\bmod{m}$
 (resp.\ $\equiv -1\bmod{m}$).
 We find that 
 these in turn are equivalent to 
 the same congruence $\bmod{l}$,
 again by observing that 
 $2^{\varphi (m)/2}$ can only be congruent to
 $1$ or $-1 \bmod{m}$.
 Now an elementary calculation shows that
 $2^{\varphi (m)/2} \equiv 2^{(l-1)/2}\bmod{l}$
 if $m$ is a prime power and 
 $2^{\varphi (m)/2} \equiv 1\bmod{l}$
 otherwise.
 Therefore,
 \[
 \psi _0\left( \Arf (Q)\right)=\begin{cases}
                                      {2\overwithdelims ()l_m} &\text{ if $m$ is a prime power} \\
                                      1 &\text{otherwise},
                                      \end{cases}
 \]
 where $l_m$ is the unique prime factor of $m$,
 from which we conclude 
 \begin{align*}
 \psi _0\left( \Arf \left( Q_{\nu}|_{I_k(\Gamma ; d, n)}\right) \right) 
 &=\prod_{\substack{ m\mid n \\ m\nmid d}} \psi _0\left( \Arf \left( Q_{\nu}|_{V_m}\right) \right) \\
 &=\prod_{\substack{ m\mid n \\m\nmid d \\ \text{power of a prime $l_m$}}} {2\overwithdelims () {l_m}}
 ={2\overwithdelims () {n/d}}
 \end{align*}
 as desired.
 
 Finally, let us prove \ref{item:oddcyclic}.
 As in \cite[\S4.4]{BWGeom} we apply \cite[Theorem 3.2]{DeLu}
 to deduce 
 \[
 \sum _{i}(-1)^i\tr \left( \gamma^j x \mid H_c^i
                       \right)
 =\sum _{i}(-1)^i\tr \left( x \mid H_c^i(Z_{\nu}^{\gamma^j}, \overline{\bbQ} _{\ell})\right),
 \]
 where $x\in k$ and 
 $Z_{\nu}^{\gamma^j}$ denotes the fixed point variety 
 with respect to the action of $\gamma^j \in \Gamma$.
 Since $Z_{\nu}^{\gamma^j}$ is clearly a discrete set of points indexed by $k$, 
 the right-hand side equals the trace of the regular representation of $k$.
 Now we find the required action of $\gamma^j$ 
 by applying the idempotents of the group ring
 corresponding to each characters of $k$.
 The action of $\beta \in {\bbZ}/{2\bbZ}$ is treated in exactly the same way
 because the fixed point variety remains the same 
 (unless $p=2$, in which case the statement is trivial).  
 \end{proof}
 
 \begin{Prop} \label{Prop:evencoh}
 Let $1\leq \nu < 2n$ be an integer. 
 Suppose that $\nu$ is even
 and write $\nu =2\mu.$
 Assume that $\nu$ and $n$ are coprime.
 Let $Z_{\nu}$ be the algebraic variety defined in Theorem \ref{Thm:reduction},
 which we regard as the base change of $Z_{\nu, 0}$ as in Subsection \ref{subsec:alggps}.
 Put 
 \[
 H_c^i=H_c^i(Z_{\nu}, \overline{\bbQ} _{\ell}),
 \]
 which carries the actions of $S_{1, \nu}\times S_{2, \nu}$
 (defined in Subsection \ref{subsec:stabandaction}) and $\Omega$.
 \begin{enumerate}[(1)]
         \item \label{item:evenvanish}
         We have
                 $H_c^i=0$
                 unless $i=2(n-1), n-1.$
         \item \label{item:evennondeg}
                 We have
                 \[
                 H_c^{n-1}\simeq \bigoplus _{\psi \in k^{\vee}\setminus \{ 1\}} 
                                       \rho _{1, \psi}\boxtimes \rho _{2, \psi}
                 \]       
                 as a representation of $S_{1, \nu}\times S_{2, \nu}$,
                 where $\rho _{1, \psi}$ $($resp.\ $\rho _{2, \psi})$ is 
                 the unique irreducible representation of 
                 $S_{1, \nu}$ $($resp.\ of $S_{2, \nu})$ 
                 with the central character $\psi$
                 (cf.\ Proposition \ref{Prop:ourHeisenberg}),
                 and
                 \[
                 \tr \left( \Frob _q \mid 
                              \rho _{1, \psi}\boxtimes \rho _{2, \psi}
                      \right) 
                 =q^{n-1}.       
                 \]
         \item \label{item:evencyclic}
                 Consider the following natural action of the standard generator $\gamma \in \Gamma ={\bbZ}/{n\bbZ}$ 
                 on $Z_{\nu};$
                 \[
                 (z, y_1, y_2, \dots , y_n)\mapsto (z, y_n, y_1, \dots , y_{n-1})
                 \]
                 Let $\psi$ be a character of $k$
                 and $H_{c, \psi}^{\bullet}$ the $\psi$-isotypic component of $\bigoplus _{i}H_c^i$.
                 Then we have
                 \[
                 \tr \left( \gamma ^{j}\mid H_{c, \psi}^{\bullet}\right)=1
                 \]
                 for any $j$ coprime to $n$.                      
 \end{enumerate}
 \end{Prop}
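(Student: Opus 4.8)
The plan is to mirror the proof of Proposition~\ref{Prop:oddcoh}, replacing the quadratic forms of the odd case by the Heisenberg groups $S_{1,\nu}$ and $S_{2,\nu}$. First I would record the geometry. By Theorem~\ref{Thm:reduction}~\eqref{item:red:smalleven}, \eqref{item:red:largeeven} the variety $Z_\nu$ is the closed subvariety $z^q-z=f(y_1,\dots ,y_n)$, $y_1+\dots +y_n=0$ of $\bbA_{\overline{k}}^{n+1}$, where every monomial of $f$ contains a factor $y_l^q-y_l$; in particular $f$ vanishes on the $k$-points of $V:=V(y_1+\dots +y_n)\cong\bbA^{n-1}$, and (looking at the $y_i^qy_j$-type terms) $f$ does not lie in the image of $g\mapsto g^q-g$, so $f^*\scrL_\psi$ is a non-trivial lisse sheaf on $V$ for each non-trivial character $\psi$ of $k$. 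The map $Z_\nu\to V$, $(z,y)\mapsto y$, is a finite \'etale Galois covering with group $k$, and by Theorem~\ref{Thm:stabandaction}~\eqref{item:actionofGL}, \eqref{item:actionofD} this copy of $k$ is the common image in $\Aut(Z_\nu)$ of the centres of $S_{1,\nu}$ and of $S_{2,\nu}$, both acting by $z\mapsto z+v$ through the same parameter $v\in k$. Hence
\[
H_c^i(Z_\nu,\overline{\bbQ}_\ell)=\bigoplus_{\psi\in k^\vee}H_{c,\psi}^i,\qquad H_{c,\psi}^i:=H_c^i(V_{\overline{k}},f^*\scrL_\psi),
\]
compatibly with the $S_{1,\nu}\times S_{2,\nu}$-action, $H_{c,\psi}^i$ being the part on which both centres act through $\psi$. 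For $\psi=1$ this is $H_c^i(V_{\overline{k}},\overline{\bbQ}_\ell)$, equal to $\overline{\bbQ}_\ell(n-1)$ for $i=2(n-1)$ and $0$ otherwise, with trivial group action; together with the next paragraph this shows that $Z_\nu$ is geometrically connected and that $H_c^{2(n-1)}(Z_\nu,\overline{\bbQ}_\ell)$ is one-dimensional.

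Next I would treat $\psi\neq 1$. Since the two centres have the same image in $\Aut(Z_\nu)$, and the anti-diagonal copy $\{(x,-x)\mid x\in k\}$ of $k$ acts trivially on $Z_\nu$, the space $H_{c,\psi}^i$ is a representation of the Heisenberg group $\mathbf{H}=(S_{1,\nu}\times S_{2,\nu})/\{(x,-x)\mid x\in k\}$ on which the centre $k$ acts through $\psi$; as the two Heisenberg commutator pairings are non-degenerate (Proposition~\ref{Prop:ourHeisenberg}), so is their orthogonal sum, and Proposition~\ref{Prop:Heisenberg} shows that $\rho_{1,\psi}\boxtimes\rho_{2,\psi}$ is, up to isomorphism, the unique irreducible representation of $\mathbf{H}$ with central character $\psi$, of dimension $q^{n-1}$ (note $n$ is odd, as $\gcd(n,\nu)=1$ and $\nu$ is even). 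By semisimplicity $H_{c,\psi}^i\cong(\rho_{1,\psi}\boxtimes\rho_{2,\psi})^{\oplus c_i}$ with $c_i\geq 0$. Since $V\cong\bbA^{n-1}$ is smooth affine and connected and $f^*\scrL_\psi$ is non-trivial lisse, Artin vanishing gives $c_i=0$ for $i<n-1$, and the vanishing of $H^0$ of a non-trivial lisse sheaf gives $c_{2(n-1)}=0$.

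It then remains to show that $H_{c,\psi}^\bullet$ is concentrated in degree $n-1$, with $c_{n-1}=1$. For this I would pass to the presentation of Subsection~\ref{subsec:alggps}, in which $Z_\nu$ is the pull-back of the Lang torsor $L_{\calG_\nu}\colon\calG_\nu\to\calG_\nu$ along the section $(Q_\nu,\mathrm{id})$ of $\calG_\nu\to\calG_\nu/Z(\calG_\nu)\cong\bbA^{n-1}$, where now $Q_\nu$ is an honest quadratic form in $n-1$ variables, non-degenerate precisely because $\gcd(n,\nu)=1$ (cf.\ the analysis of $Q_\nu$ in the proof of Proposition~\ref{Prop:oddcoh}). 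Decomposing the cohomology of the resulting $S_{1,\nu}$-torsor $Z_\nu\to\bbA^{n-1}$ and tracking the induced Heisenberg local system through its restriction to the centre, the computation of $H_{c,\psi}^\bullet$ should reduce to that of Proposition~\ref{Prop:quadandcoh} for a non-degenerate quadratic form of rank $n-1$, which is concentrated in degree $n-1$; this forces $c_{n-1}=1$ and $c_i=0$ for $i>n-1$, proving \eqref{item:evenvanish} and the isomorphism in \eqref{item:evennondeg}. The remaining scalar then drops out of the Grothendieck--Lefschetz formula: because every monomial of the defining polynomial of $Z_\nu$ has a factor $y_l^q-y_l$, it vanishes on $V(k)$, so $\sum_i(-1)^i\tr(\Frob_q\mid H_{c,\psi}^i)=\sum_{y\in V(k)}\psi(f(y))=\#V(k)=q^{n-1}$, and since the cohomology sits in the even degree $n-1$ this gives $\tr(\Frob_q\mid\rho_{1,\psi}\boxtimes\rho_{2,\psi})=q^{n-1}$. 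The step I expect to be the main obstacle is precisely this concentration claim: carefully matching the two coordinate descriptions of $Z_\nu$ and carrying the Heisenberg local system through to an honest non-degenerate quadratic form so that Proposition~\ref{Prop:quadandcoh} applies.

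Finally, for \eqref{item:evencyclic} I would argue as in the proof of Proposition~\ref{Prop:oddcoh}~\eqref{item:oddcyclic}. For $j$ coprime to $n$ the element $\gamma^j$ generates $\Gamma$, so $Z_\nu^{\gamma^j}=Z_\nu^\Gamma$ is the locus $y_1=\dots =y_n$; since $y_1+\dots +y_n=0$ and $p\nmid n$ this forces $y=0$, whence $z^q-z=f(0)=0$, and $Z_\nu^\Gamma=\{(z,0,\dots ,0)\mid z\in k\}$ is a set of $q$ rational points on which $k$ (the common centre) acts by translation. The Deligne--Lusztig fixed-point formula \cite[Theorem~3.2]{DeLu} applied to $\gamma^jx$, $x\in k$, gives
\[
\sum_i(-1)^i\tr\bigl(\gamma^jx\mid H_c^i(Z_\nu,\overline{\bbQ}_\ell)\bigr)=\sum_i(-1)^i\tr\bigl(x\mid H_c^i(Z_\nu^\Gamma,\overline{\bbQ}_\ell)\bigr),
\]
whose right-hand side is the trace of $x$ on the regular representation of $k$; extracting the $\psi$-isotypic part yields $\sum_i(-1)^i\tr(\gamma^j\mid H_{c,\psi}^i)=1$, which by \eqref{item:evenvanish} is $\tr(\gamma^j\mid H_{c,\psi}^\bullet)=1$. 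The case $\psi=1$, where $H_{c,\psi}^\bullet=\overline{\bbQ}_\ell(n-1)$ with trivial $\gamma$-action, is immediate.
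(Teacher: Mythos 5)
Your overall framework agrees with the paper: the decomposition of $H_c^i$ into $\psi$-isotypic pieces $H^i_{c,\psi}\simeq H^i_c(\bbA^{n-1}_{\overline{k}},P_\nu^{\ast}\scrL_\psi)$, the identification of the $\psi\neq 1$ pieces as multiples of $\rho_{1,\psi}\boxtimes\rho_{2,\psi}$ via the central characters, the Grothendieck--Lefschetz computation of the Frobenius trace using that the defining polynomial vanishes on $k$-points and that $n-1$ is even, and the Deligne--Lusztig fixed-point argument for assertion (3) are all as in the paper. But the step you yourself flag as the main obstacle is a genuine gap, and it is precisely the heart of the proof: the concentration of $H^\bullet_{c,\psi}$ in degree $n-1$ together with $\dim H^{n-1}_{c,\psi}=q^{n-1}$ (equivalently $c_{n-1}=1$ and $c_i=0$ for $n-1<i<2(n-1)$). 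Your proposed route through Subsection \ref{subsec:alggps} does not reduce to Proposition \ref{Prop:quadandcoh} as stated: the $\rho_{1,\psi}$-isotypic part of the cohomology of the $S_{1,\nu}$-torsor $Z_\nu\to V$ is the cohomology of an irreducible local system of rank $q^{(n-1)/2}$ on $\bbA^{n-1}$, not of a rank-one Artin--Schreier sheaf pulled back by a quadratic form; in the even case the defining polynomial has degree $q+1$ even after the harmless substitution $y_i^qy_j^q\mapsto y_iy_j$, so there is no quadratic form to feed into Proposition \ref{Prop:quadandcoh}. One could imagine realizing $\rho_{1,\psi}$ as induced from a character of a maximal isotropic subgroup of $S_{1,\nu}$ and descending to an intermediate covering, but that argument is not carried out, and nothing in your sketch guarantees it terminates in a quadratic form of the right rank.

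The paper closes this gap by a different, concrete argument: it applies Deligne's theorem \cite[(3.7.2.3)]{DelWeilII}, which says that if a polynomial $P$ on $\bbA^m$ has degree $d$ prime to $p$ and its degree-$d$ homogeneous part defines a smooth hypersurface in $\bbP^{m-1}$, then $H^i_c(\bbA^m_{\overline{k}},P^{\ast}\scrL_\psi)$ is concentrated in degree $m$ with dimension $(d-1)^m$. Here one replaces each $y_i^qy_j^q$ by $y_iy_j$ to get $P'_\nu$ of degree $q+1$ (this does not change $P_\nu^{\ast}\scrL_\psi$), and smoothness of the top-degree part is checked by the Jacobian criterion: the partial derivatives of $(\widetilde{P}')^{(q+1)}_\nu$ are $-\sum_{\mu\le d<n-\mu}y_{i+d}^q$, the Jacobian of the pair $\bigl((\widetilde{P}')^{(q+1)}_\nu,\,y_1+\dots+y_n\bigr)$ drops rank only if all $Y_i^q$ coincide --- and this is exactly where $\gcd(n,\nu)=1$ is used --- which together with $\sum_iY_i=0$ and $p\nmid n$ forces $Y=0$. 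This gives simultaneously the vanishing outside degrees $n-1$ and $2(n-1)$ and $\dim H^{n-1}_{c,\psi}=q^{n-1}$, hence multiplicity one. To complete your write-up you should either adopt this Weil II argument or genuinely carry out a Heisenberg-descent computation; as it stands, the central claim of parts (1) and (2) is unproved.
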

 
 \begin{proof}
 Let us prove the assertions \ref{item:evenvanish}, \ref{item:evennondeg}.
 As the case where $n<\nu <2n$ is settled in exactly the same way,
 we only treat the case $\nu <n$.
 We denote by $\widetilde{P}_{\nu}(y_1, \dots , y_n)$
 the polynomial appearing 
 in the right-hand side of the second equation
 in Theorem \ref{Thm:reduction} \ref{item:red:smalleven}.
 We put $P_{\nu}(y_2, \dots , y_n)=\widetilde{P} _{\nu}(-(y_2+\dots +y_n), y_2, \dots , y_n)$.
 Then for any $i$ we have the following decomposition:
 \[
 H_c^i\simeq \bigoplus _{\psi \in k^{\vee}} H_c^i(\bbA _{\overline{k}}^{n-1}, P_{\nu}^{\ast}\scrL_{\psi})
 \]
 as a representation of $S_{1, \nu}\times S_{2, \nu}\times \Omega$. 
 It suffices to prove 
 \[
 \dim H_c^i(\bbA _{\overline{k}}^{n-1}, P_{\nu}^{\ast}\scrL_{\psi})=\begin{cases}
                                                                                     q^{n-1}&\text{if $i=n-1$} \\
                                                                                     0&\text{otherwise}
                                                                                    \end{cases} 
 \]
 for any non-trivial $\psi \in k^{\vee}$.
 Indeed, 
 $\psi =1$ non-trivially contributes to the above decomposition 
 only if $i=2(n-1)$,
 and 
 we have
 $\dim \rho _{1, \psi}=\dim \rho _{2, \psi}=q^{(n-1)/2}$
 if $\psi$ is non-trivial
 by Proposition \ref{Prop:Heisenberg}.
 Also, as $\psi (x^q-x)=1$ for any $x\in k$ and any $\psi \in k^{\vee}$,
 the statement for the Frobenius trace immediately follows from 
 the Grothendieck-Lefschetz trace formula
 and the above vanishing.
 
 Our basic strategy is to apply \cite[(3.7.2.3)]{DelWeilII}\footnote{In fact, it also asserts that the cohomology in degree $m$ is pure of weight $m$.
                                                         However, we only need the dimension assertions in what follows.}:
 \begin{quote}
 Let $P\in k[T_1, \dots , T_m]$ be a polynomial of degree $d$.
 Suppose that $d$ is coprime to $p$
 and that the homogeneous part $P^{(d)}$ of degree $d$ of $P$
 defines a smooth hypersurface in $\bbP _k^{m-1}$.
 Then
 \[
 \dim H_c^i(\bbA _{\overline{k}}^m, P^{\ast}\scrL_{\psi})=\begin{cases}
                                                                         (d-1)^m&\text{if $i=m$} \\
                                                                         0&\text{otherwise}. 
                                                                        \end{cases}
 \]                                                                       
 \end{quote}
 
 Although the polynomial $P_{\nu}$ is of degree $2q$,
 we may replace each monomial of the form $y_i^qy_j^q$
 with $y_iy_j$,
 because $f^{\ast}\scrL_{\psi}$ is a constant sheaf
 if $f=g^q-g$ for some polynomial $g$.
 We denote by $P' _{\nu}\in k[y_2, \dots , y_n]$
 the polynomial obtained by applying the above procedures
 to all monomials of the form $y_i^qy_j^q$.
 We similarly denote by $\widetilde{P}'_{\nu}\in k[y_1, \dots , y_n]$
 the polynomial obtained from $\widetilde{P} _{\nu}$ in the same way.
 Then we have $\deg P' _{\nu}=\deg \widetilde{P}'_{\nu}=q+1$
 and $P'_{\nu}(y_2, \dots , y_n)=\widetilde{P}' _{\nu}(-(y_2+\dots +y_n), y_2, \dots , y_n)$.
 Thus it suffices to show that 
 ${P'}^{(q+1)}_{\nu}$ defines a smooth hypersurface in $\bbP _{\overline{k}}^{n-2}$.
 As this hypersurface is isomorphic to 
 the projective variety $V$ defined by $\widetilde{P}'^{(q+1)} _{\nu}$ and $y_1+\dots +y_n$
 in $\bbP _{\overline{k}}^{n-1}$,
 we are reduced to proving that the Jacobian
 \[
 \begin{pmatrix}
 1&1&\dots &1 \\
 \frac{\partial}{\partial y_1}\widetilde{P}'^{(q+1)} _{\nu}& \frac{\partial}{\partial y_2}\widetilde{P}'^{(q+1)} _{\nu}&\dots &  \frac{\partial}{\partial y_n}\widetilde{P}'^{(q+1)} _{\nu}
 \end{pmatrix}
 \]
 is of rank two at every $\overline{k}$-valued point $[Y_1: \dots :Y_n]$ of $V$.
 Regarding $\{ y_i\}$ as indexed by ${\bbZ}/{n\bbZ}$
 we easily verify
 that 
 \[
 \widetilde{P}'^{(q+1)} _{\nu}(y_1, \dots , y_n)=-\sum _{1\leq i\leq n}y_i\sum_{\mu \leq d<n-\mu}y_{i+d}^q
 \]
 and hence 
 \[
 \frac{\partial}{\partial y_i}\widetilde{P}'^{(q+1)} _{\nu}(y_1, \dots , y_n)=-\sum _{\mu \leq d<n-\mu}y_{i+d}^q.
 \]
 The rank of the Jacobian is not maximal
 if and only if these partial derivatives are all equal,
 that is,
 $Y_{i}^q=Y_{j}^q$
 for all $1\leq i, j\leq n$,
 by the assumption that $n$ and $\nu=2\mu$ are coprime.
 Together with $Y_1+\dots +Y_n=0$,
 this implies $Y_1=\dots =Y_n=0$, as required.
 
 Now the assertion \ref{item:evencyclic} follows from \cite[Theorem 3.2]{DeLu}
 exactly as in Proposition \ref{Prop:oddcoh} \ref{item:oddcyclic}. 
 \end{proof}
 \begin{Rem}
 Although we omit details here
 the author computed the cohomology groups $H_c^{i}(Z_{\nu}, \overline{\bbQ} _{\ell})$
 even when $\nu$ is even and not coprime to $n$
 (cf.\ Remark \ref{Rem:Lang}).
 In particular, it can be shown that $H_c^{n-1}(Z_{\nu}, \overline{\bbQ} _{\ell})$
 is non-trivial if and only if $n$ and $\nu$ are coprime,
 which is in parallel with the situation for odd $\nu$ cases.
 \end{Rem}
 
 Given the preceding propositions,
 the cohomology of the reduction $\overline{\scrZ}_{\nu}$ is computed 
 by exploiting the periodicity of $Z_{\nu}$ with respect to $\nu$ and 
 the following proposition.
 
 \begin{Prop} \label{Prop:redcoh}
 Let $\nu >0$ be an integer,
 not divisible by $n$.
 Then we have the following isomorphism
 \[
 H_c^{i}(\overline{\scrZ}_{\nu}, \overline{\bbQ} _{\ell})
 \simeq
 \bigoplus _{\chi \in \left( U_K^{\lceil \nu /n\rceil} \right)^{\vee}} 
 H_c^{i}(Z_{\nu}, \overline{\bbQ} _{\ell})\otimes (\chi \circ N_G).
 \]
 of representations of $\Stab _{\nu}$
 for any $i$.
 Here $\left( U_K^{\lceil \nu /n\rceil} \right)^{\vee}$ denotes the group of smooth characters of $U_K^{\lceil \nu /n\rceil}$.
 \end{Prop}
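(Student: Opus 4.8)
The plan is to read the statement off the Cartesian diagram (\ref{eq:cartdiag}) of Theorem \ref{Thm:reduction}, after observing that the hypothesis $n \nmid \nu$ collapses that diagram. Writing $\nu = rn+s$ with $1 \le s \le n-1$, one has $\lceil \nu/n\rceil = \lceil (\nu+1)/n\rceil = r+1$; since $L/K$ is totally tamely ramified (here $p \nmid n$) the norm satisfies $N_{L/K}U_L^m = U_K^{\lceil m/n\rceil}$ on the unit filtration, so $N_{L/K}U_L^{\nu} = N_{L/K}U_L^{\nu+1}$ and the central node ${N_{L/K}U_L^{\nu}}/{N_{L/K}U_L^{\nu+1}}$ of (\ref{eq:cartdiag}) is trivial. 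Hence the upper Cartesian square becomes a product, $\overline{\scrZ}_{\nu} \simeq (Z_{\nu})^{\text{perf}} \times \Spec \Map(U_K^{\lceil\nu/n\rceil}, \overline{k})$. By Theorem \ref{Thm:stabandaction} \eqref{item:stabinducesaction} this square is $\Stab_\nu$-equivariant, and for a fibre product over a point the compatibility of the two projections forces the action on $\overline{\scrZ}_\nu$ to be the product of the action on $(Z_{\nu})^{\text{perf}}$ studied in Theorem \ref{Thm:stabandaction} and the translation action on $U_K^{\lceil\nu/n\rceil}$ of Theorem \ref{Thm:stabandaction} \eqref{item:actiononcomps}.

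Next I would apply the Künneth formula. Étale cohomology is insensitive to inverse perfection, so $H_c^i((Z_\nu)^{\text{perf}}, \overline{\bbQ}_\ell) = H_c^i(Z_\nu, \overline{\bbQ}_\ell)$. The space $\Spec \Map(U_K^{\lceil\nu/n\rceil}, \overline{k})$ is the cofiltered limit of the finite sets $U_K^{\lceil\nu/n\rceil}/U'$ over open subgroups $U'$, so its compactly supported cohomology is $H_c^0 = \varinjlim_{U'} \overline{\bbQ}_\ell^{\,U_K^{\lceil\nu/n\rceil}/U'} = C^\infty(U_K^{\lceil\nu/n\rceil}, \overline{\bbQ}_\ell)$ (the smooth functions), with $H_c^j = 0$ for $j>0$; commuting $H_c^\bullet$ with this limit is the usual continuity property and is exactly the mechanism giving $\overline{\scrZ}_\nu$ a smooth $\Stab_\nu$-representation as cohomology in the first place. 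Therefore $H_c^i(\overline{\scrZ}_\nu, \overline{\bbQ}_\ell) \simeq H_c^i(Z_\nu, \overline{\bbQ}_\ell) \otimes C^\infty(U_K^{\lceil\nu/n\rceil}, \overline{\bbQ}_\ell)$ as $\Stab_\nu$-representations, the group acting on the first tensor factor through its action on $(Z_\nu)^{\text{perf}}$ and on the second through translation by the image of $\Stab_\nu \hookrightarrow G^0 \xrightarrow{N_G} K^{\times}$ inside $U_K^{\lceil\nu/n\rceil}$.

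Finally I would decompose the second factor. Since $K$ has equal characteristic $p$, every $U_K^m$ with $m \ge 1$ is a pro-$p$ group, so $U_K^{\lceil\nu/n\rceil}$ is a cofiltered limit of finite abelian $p$-groups; as $\ell \ne p$ and $\overline{\bbQ}_\ell \simeq \bbC$, each group algebra $\overline{\bbQ}_\ell[U_K^{\lceil\nu/n\rceil}/U']$ splits as the sum of its characters, whence
\[
C^\infty(U_K^{\lceil\nu/n\rceil}, \overline{\bbQ}_\ell) \simeq \bigoplus_{\chi \in (U_K^{\lceil\nu/n\rceil})^\vee} \overline{\bbQ}_\ell(\chi)
\]
as a smooth representation of $U_K^{\lceil\nu/n\rceil}$ under translation, the sum running over smooth characters. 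On the $\chi$-line the translation action of $\Stab_\nu$ via $N_G$ is multiplication by $\chi \circ N_G$, so substituting this decomposition into the Künneth isomorphism yields
\[
H_c^i(\overline{\scrZ}_\nu, \overline{\bbQ}_\ell) \simeq \bigoplus_{\chi \in (U_K^{\lceil\nu/n\rceil})^\vee} H_c^i(Z_\nu, \overline{\bbQ}_\ell) \otimes (\chi \circ N_G)
\]
as $\Stab_\nu$-representations, as claimed. The main obstacle is purely organizational: verifying that the product action is indeed forced by the equivariance of (\ref{eq:cartdiag}), and making rigorous the interchange of $H_c^\bullet$ with the limit over finite quotients of $U_K^{\lceil\nu/n\rceil}$ — both routine in this setting.
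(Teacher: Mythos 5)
Your proof is correct and is essentially the argument the paper intends: the paper's own proof is just a one-line reference to Theorem \ref{Thm:stabandaction} \eqref{item:stabinducesaction} and \cite[Corollary 3.6.2]{BWMax}, whose mechanism is exactly what you spell out (collapse of the middle node of \eqref{eq:cartdiag} when $n\nmid\nu$, product decomposition with the profinite factor, passage to the limit over finite quotients, and the character decomposition of the smooth functions on $U_K^{\lceil \nu/n\rceil}$ with the translation action via $N_G$). The details you supply — equivariance of the product decomposition and continuity of $H_c^{\bullet}$ along the limit — are the routine points, and you handle them correctly.
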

 \begin{proof}
 The proposition follows from 
 Theorem \ref{Thm:stabandaction} \ref{item:stabinducesaction}, \ref{item:actiononcomps}
 in the same way as 
 \cite[Corollary 3.6.2]{BWMax}.
 \end{proof}

\section{Realization of the correspondences} \label{sec:real}
 \subsection{Special cases of the essentially tame local Langlands and Jacquet-Langlands correspondences} \label{subsec:etc}
 Let us allow $n$ to be divisible by $p$ 
 \emph{only in this subsection}.
 
 \begin{Def} (\cite[1, 2]{BHetLLCI})
 Let $\rho$ be an $n$-dimensional irreducible smooth representation of $W_K$. 
 Let $t(\rho)$ be the number of unramified characters $\chi$ of $K^{\times}$
 such that $\chi \otimes \rho \simeq \rho$.
 Then $t(\rho)$ divides $n$ and 
 $\rho$ is said to be \emph{essentially tame}
 if $p$ does not divide $n/{t(\rho)}$.
 
 We denote by $\calG _n^{\text{et}}(K)$ the set of isomorphism classes
 of $n$-dimensional (irreducible) essentially tame representations of $W_K$.
 
 Similarly, let $\pi$ be an irreducible supercuspidal representation of $\GL _n(K)$
 or an irreducible smooth representation of $D^{\times}$.
 We say that $\pi$ is \emph{essentially tame}
 if $p$ does not divide $n/{t(\pi)}$,
 where $t(\pi)$ is the number of unramified characters $\chi$ of $K^{\times}$
 such that $\chi \pi \simeq \pi$. 
 \end{Def}
 
 \begin{Def} (\cite[p.\ 437]{HoTame}; see also \cite[3.\ Definition]{BHetLLCI})
 \emph{An admissible pair} (of degree $n$)
 is a pair $(F/K, \xi)$
 in which $F/K$ is a tamely ramified extension of degree $n$
 and $\xi$ is a character of $F^{\times}$
 such that \begin{enumerate}[(1)]
              \item if $\xi$ factors through the norm map $N_{F/E}\colon F^{\times}\rightarrow E^{\times}$
              for a subextension $K\subset E\subset F$,
              then $F=E$;
              \item if $\xi |_{U_F^1}$ factors through $N_{F/E}\colon F^{\times}\rightarrow E^{\times}$ 
              for a subextension $K\subset E\subset F$,
              then $F/E$ is unramified.
              \end{enumerate}
 Two admissible pairs $({F_1}/K, \xi _1)$, $({F_2}/K, \xi _2)$
 are said to be $K$-isomorphic
 if there exists a $K$-isomorphism $i: F_1\xrightarrow{\sim} F_2$ 
 such that $\xi _1=\xi _2\circ i$.
 
 We denote by $P_n(K)$ the set 
 of $K$-isomorphism classes of admissible pairs of degree $n$.
 \end{Def}
 
 The proof of the following is found in \cite[A.3 Theorem]{BHetLLCI}.
 \begin{Prop} 
 The following map is a bijection$:$
 \[
 P_n(K)\rightarrow \calG _n^{\emph{et}}(K); \ (F/K, \xi)\mapsto \Ind _{F/K}\xi =\Ind _{W_F}^{W_K}\xi. 
 \]
 \end{Prop}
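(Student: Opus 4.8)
The plan is to deduce bijectivity of $(F/K,\xi)\mapsto\Ind_{F/K}\xi$ from four steps: the map is well defined (lands in $\calG_n^{\mathrm{et}}(K)$), it is surjective, it is injective, and — implicitly — each step reduces to Mackey theory for $W_K$ plus the pro-solvability of inertia. \emph{Well-definedness.} Since $[F:K]=n$, the representation $\rho=\Ind_{F/K}\xi=\Ind_{W_F}^{W_K}\xi$ has dimension $n$. For irreducibility I would invoke Mackey's criterion: $\rho$ is irreducible iff $\xi$ is $K$-regular, i.e. $\xi^{\sigma}\neq\xi$ for every $\sigma\in W_K$ whose image in $\Gal$ of the Galois closure of $F/K$ does not fix $F$; in the tamely ramified setting this is exactly admissibility condition (1) (a character of $F^{\times}$ factors through no proper norm $N_{F/E}$ precisely when it is $K$-regular, which is the standard translation). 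To see $\rho$ is essentially tame, compute $t(\rho)$: by the projection formula $\chi\otimes\rho\simeq\Ind_{F/K}(\xi\cdot\Res_{W_F}\chi)$ for unramified $\chi$, so $\chi\otimes\rho\simeq\rho$ forces $\xi\cdot\Res_{W_F}\chi=\xi^{\sigma}$ for some $\sigma$; counting such $\chi$ shows $t(\rho)\mid n$, and admissibility condition (2) — which forbids $\xi|_{U_F^1}$ from factoring through a norm to a properly ramified subextension — is exactly what forces $\xi^{\sigma}/\xi$ to be unramified only for $\sigma$ coming from the unramified part of $F/K$, hence $p\nmid n/t(\rho)$.

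\emph{Surjectivity.} Let $\rho\in\calG_n^{\mathrm{et}}(K)$. Since the inertia subgroup of $W_K$ is pro-solvable, every irreducible smooth representation of $W_K$ is monomial, so $\rho\simeq\Ind_{E/K}\theta$ with $[E:K]=n$ and $\theta$ a character of $E^{\times}$. Among all monomial presentations of $\rho$ choose one with $E/K$ of least possible ramification; I would then argue that $E/K$ is in fact tamely ramified, because otherwise the restriction of $\rho$ to wild inertia would split into strictly more than $t(\rho)$ Galois-conjugate characters, contradicting $p\nmid n/t(\rho)$. With $E/K$ tame, minimality of the presentation forces $\theta$ to be $K$-regular — if it factored through a norm $N_{E/E'}$ with $E'\subsetneq E$, then $\rho$ would contain the properly induced, lower-dimensional subrepresentation $\Ind_{E'/K}(\cdot)$, contradicting irreducibility — which is condition (1); a further normalization, transporting the ramified and unramified parts of $\theta$ along $E/K$ without changing $\rho$, produces a pair also satisfying condition (2). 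Hence $\rho$ is in the image.

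\emph{Injectivity.} Suppose $\Ind_{F_1/K}\xi_1\simeq\Ind_{F_2/K}\xi_2$ with both pairs admissible of degree $n$. By Frobenius reciprocity and the Mackey decomposition of $\Res_{W_{F_2}}\Ind_{W_{F_1}}^{W_K}\xi_1$ over $W_{F_2}\backslash W_K/W_{F_1}$, non-vanishing of $\Hom_{W_K}$ forces one double-coset summand to contain $\xi_2$; using $K$-regularity of $\xi_1$ (condition (1)) this pins down $\sigma\in W_K$ with $\sigma F_1=F_2$ and $\xi_1^{\sigma}=\xi_2$, i.e. a $K$-isomorphism of admissible pairs. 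So the map is injective, and combined with the previous two steps it is a bijection.

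\emph{Expected obstacle.} The formal parts are well-definedness and injectivity; the genuine work is in surjectivity, and specifically in the two reductions there: first, extracting from essential tameness ($p\nmid n/t(\rho)$) that a monomial presentation of $\rho$ may be chosen with a \emph{tamely} ramified inducing field, which needs a careful analysis of the wild-inertia decomposition of $\rho$ and of how $t(\rho)$ behaves under induction; and second, the bookkeeping that upgrades an arbitrary $K$-regular tame pair to one satisfying admissibility condition (2). I expect the first of these to be the main difficulty.
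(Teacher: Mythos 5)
The paper offers no argument of its own here: it simply invokes \cite[A.3 Theorem]{BHetLLCI}, so your proposal has to be judged as a from-scratch proof, and it has a genuine gap at the very start of the surjectivity step. The claim ``since the inertia subgroup of $W_K$ is pro-solvable, every irreducible smooth representation of $W_K$ is monomial'' is false: (pro-)solvability does not imply monomiality (already among finite solvable groups there are non-M-groups, e.g. $\mathrm{SL}_2(\bbF_3)$ has a $2$-dimensional irreducible representation not induced from any character), and concretely $W_K$ admits primitive, non-induced irreducible representations whenever $p$ divides the dimension (dimension $2$, $p=2$, projective image $A_4$ or $S_4$, for instance). So you cannot begin with an arbitrary monomial presentation of $\rho$ and minimize its ramification: for an essentially tame $\rho$, the existence of \emph{any} presentation $\rho\simeq\Ind_{E/K}\theta$ with $[E:K]=n$ is essentially the content of the theorem. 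The argument in the cited appendix runs in the opposite direction: one restricts $\rho$ to the wild inertia subgroup, applies Clifford theory, and uses the hypothesis $p\nmid n/t(\rho)$ to manufacture a character of the Weil group of a \emph{tamely ramified} degree-$n$ extension inducing $\rho$, after which one adjusts to meet the admissibility conditions. Your sentence about the wild-inertia restriction splitting into ``strictly more than $t(\rho)$ Galois-conjugate characters'' gestures at this analysis but is not a proof, and, as you yourself flag, this is exactly where the real work lies.

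There are also smaller gaps you should not wave through. In the injectivity step, Mackey theory only produces $\sigma\in W_K$ with ${}^{\sigma}\xi_1=\xi_2$ on the intersection $W_{F_2}\cap{}^{\sigma}W_{F_1}$; since $F_1,F_2$ need not be Galois over $K$, concluding $\sigma(F_1)=F_2$ requires a further argument (e.g. a dimension count combined with the admissibility of the pairs), not just ``$K$-regularity pins down $\sigma$.'' Similarly, the dictionary ``$\xi$ factors through no proper norm $N_{F/E}$ $\Leftrightarrow$ $\xi$ is $K$-regular'' and the computation of $t(\Ind_{F/K}\xi)$ from condition (2) are correct in the tame setting but are themselves lemmas needing proof (class field theory applied through the Galois closure of the non-Galois extension $F/K$). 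None of these by itself is fatal, but together with the monomiality error the proposal does not yet constitute a proof; the honest options are to cite \cite[A.3 Theorem]{BHetLLCI}, as the paper does, or to reproduce its wild-inertia/Clifford-theoretic argument in full.
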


 In \cite{BHetLLCI}, \cite{BHetLLCII}, \cite{BHetLLCIII},
 a canonical bijection $(F/K, \xi) \mapsto \pi _{\xi}$
 between $P_n(K)$ and the set of isomorphism classes of
 essentially tame representations of $\GL _n(K)$
 is constructed, 
 and the existence and an explicit description of 
 tamely ramified characters ${}_{K}\mu _{\xi}$ of $F^{\times}$
 are established 
 such that $\Ind _{F/K}\xi \mapsto \pi _{{}_{K}\mu _{\xi}\xi}$
 is the local Langlands correspondence.
 Likewise, a special case of the main result of \cite{BHetJL}
 yields a canonical injection $(F/K, \xi)\mapsto \pi _{\xi}^D$ 
 from $P_n(K)$ to the set of isomorphism classes of
 essentially tame representations of $D^{\times}$,
 and the description of tamely ramified characters ${}_{D}\iota _{\xi}$ of $F^{\times}$
 such that $\pi _{{}_{D}\iota _{\xi}\xi}\mapsto \pi _{\xi}^D$
 is the local Jacquet-Langlands correspondence.
 
 In what follows,
 we review the construction of $\pi _{\xi}$ and $\pi _{\xi}^D$
 and the description of ${}_{K}\mu _{\xi}$ and ${}_{D}\iota _{\xi}$
 for certain admissible pairs $(F/K, \xi)$
 that are relevant to our results.
 
 \paragraph{Minimal pairs}
 \begin{Def} \label{def:minimalpair}
 Let $i\geq 0$ be an integer.
 An admissible pair $(F/K, \xi)$ 
 is said to be \emph{minimal with the jump at $i$}
 if $\xi |_{U_F^{i+1}}$ factors through the norm map $N_{F/K}$
 and $\xi |_{U_F^{i}}$ does not factor through $N_{F/E}$
 for any subextension $K\subset E\subsetneq F$.
 
 We say that an admissible pair is \emph{minimal}\footnote{
                                                                             Note that some authors further impose 
                                                                             the triviality of $\xi |_{U_F^{i+1}}$
                                                                             in the definition of minimality.
                                                                             This definition is
                                                                             taken from \cite[\S2.2]{BHetLLCI}
                                                                             (except that $i$ is assumed to be positive there).
                                                                             They also discuss jumps of 
                                                                             possibly not minimal pairs.
                                                                             }
 if it is minimal with the jump at $i$
 for some $i\geq 0$.
 \end{Def}
 
 \begin{Rem} \label{Rem:minimalpairs}
 \begin{enumerate}[(1)]
  \item If $n$ is a prime, then any admissible pairs are minimal.
  \item If $(F/K, \xi)$ is a minimal pair with the jump at $i$,
  then there exists a decomposition
  \begin{equation} \label{eq:decompofadmpair}
  \xi =\left( \varphi \otimes (\chi \circ N_{F/K}) \right),
  \end{equation}
  where $\varphi$ is a character of $F^{\times}$ trivial on $U_F^{i+1}$
  and $\chi$ is a character of $K^{\times}$.
  \item If $(F/K, \xi)$ is a minimal pair with the jump at $i$, 
  then the ramification index of $F/K$ is coprime to $i$.
  \item In this paper
  we are interested in 
  minimal pairs $(F/K, \xi)$ with the jump at $\nu$
  in which $F/K$ is totally ramified 
  (hence, $p\nmid n$ and $\nu$ is coprime to $n$).
  We will see that the cohomology of 
  each reduction $\overline{\scrZ} _{\nu}$, with $\nu$ coprime to $n$, realizes 
  the local Langlands and Jacquet-Langlands correspondences
  for representations parametrized by such minimal pairs
  (for a specific $F$).
  \item \label{item:precedingresults}
  Many of the preceding results 
  treat the representations parametrized by minimal pairs;
  \begin{itemize}
   \item The cohomology of each reduction in \cite{BWMax} 
   deals with minimal pairs $(F/K, \xi)$ with the jump at some $i\geq 1$
   in which $F/K$ is unramified.
   \item That of \cite{ITepitame}
   deals with minimal pairs $(F/K, \xi)$ with the jump at $1$
   in which $F/K$ is totally ramified.
   These representations are exactly 
   character twists of
   \emph{simple supercuspidal representations}
   if $p$ does not divide $n$.
   In \cite{ITsimpwild} Imai and Tsushima 
   also treated the cases where $p$ divides $n$.
   In these cases 
   simple supercuspidal representations 
   are not essentially tame.
   \item In \cite{WeSemi},
   it is assumed that $n=2$ and $p\neq 2$,
   in which case all representations involved are
   parametrized by minimal pairs.
  \end{itemize} 
  \end{enumerate}
 \end{Rem}
 
 \paragraph{Construction of $\pi _{\xi}$ and $\pi _{\xi}^D$ in special cases}
 In the rest of this subsection
 we assume that $(F/K, \xi)$ is a minimal pair with the jump at $i$
 and $F/K$ is totally ramified.
 
 Fix a character $\psi$ of $K$ \footnote{Thus, we change notation here;
                                                    in Section \ref{sec:cohofred}
                                                    $\psi$ generally denotes characters of $k$.} 
 which is trivial on $\frakp$, but not on $\calO _K$.
 For $\alpha \in F$, define a function $\psi _{\alpha}^F$ by 
 $\psi _{\alpha}^F(u)=\psi (\Tr _{F/K}\alpha (u-1))$,  $(u\in F).$
 Then by the tame ramification assumption
 we have
 \begin{equation} \label{eq:paramofchars}
 {\frakp _F^{-s}}/{\frakp _F^{-r}}\xrightarrow{\sim} ({U_F^{r+1}}/{U_F^{s+1}})^{\vee}; \ 
 \alpha +\frakp _F^{-r}\mapsto \psi _{\alpha}^F
 \end{equation} 
 for any integers $r, s$ such that $0\leq r<s\leq 2r+1$.
 Similarly, for $\beta \in M_n(K)$ and $\gamma \in D$,
 set
 \[
 \psi _{\beta}(g)=\psi (\tr \beta (g-1)), \ (g\in M_n(K)) \text{ and }
 \psi _{\gamma}^D(d)=\psi (\Trd \gamma (d-1)), \ (d\in D).
 \]
 
 Let us construct $\pi _{\xi}$ and $\pi _{\xi}^D$.
 
 First suppose that $\xi$ is trivial on $U_F^{i+1}$.
 Then there exists an $\alpha \in F$ with $v_F(\alpha)=-i$ 
 such that 
 $\xi |_{U_F^{\lfloor i/2\rfloor +1}}=\psi _{\alpha}^F$.
 Take a $K$-embedding $F\rightarrow M_n(K)$ (resp.\ $F\rightarrow D$)
 and regard $F$ as a $K$-subalgebra $F\subset M_n(K)$ (resp.\ $F\subset D$). 
 Let $\frakI=\frakI _{\xi}\subset M_n(K)$ be the unique hereditary $\calO _K$-order 
 normalized by $F^{\times}$
 (later we will always arrange it to be the standard Iwahori order;
 see Remark \ref{Rem:connwiththegeom}).
 Denote by $\frakP _{\frakI}=\rad \frakI$ the Jacobson radical of $\frakI$
 and set $U_{\frakI}=\frakI ^{\times}$, 
 $U_{\frakI}^i=1+\frakP _{\frakI}^i$ for $i\geq 1$
 as usual.
 Similarly let $U_D^i$ be the subgroup defined after Proposition \ref{Prop:gpS1} for $i\geq 1$.
 Define a character $\theta _{\xi}$ (resp.\ $\theta _{\xi}^D$)
 of $H_{\xi}^1=U_F^1U_{\frakI}^{\lfloor i/2\rfloor +1}$
 (resp.\ of $H_{\xi}^{1, D}=U_F^1U_D^{\lfloor i/2\rfloor +1}$)
 by
 \begin{align*}
 &\theta _{\xi}|_{U_F^1}=\xi |_{U_F^1}, \quad 
  \theta _{\xi}|_{U_{\frakI}^{\lfloor i/2\rfloor +1}}=\psi _{\alpha}|_{U_{\frakI}^{\lfloor i/2\rfloor +1}}, \\
 &\theta _{\xi}^D|_{U_F^1}=\xi |_{U_F^1}, \quad
  \theta _{\xi}^D|_{U_D^{\lfloor i/2\rfloor +1}}=\psi _{\alpha}^D|_{U_D^{\lfloor i/2\rfloor +1}}.
 \end{align*}
 Set $J_{\xi}^1=U_F^1U_{\frakI}^{\lfloor (i+1)/2\rfloor}$, $J_{\xi}^{1, D}=U_F^1U_D^{\lfloor (i+1)/2\rfloor}$,
 $J_{\xi}=F^{\times}J_{\xi}^1$ and $J_{\xi}^D=F^{\times}J_{\xi}^{1, D}$.
 To construct an irreducible smooth representation $\Lambda _{\xi}$ (resp.\ $\Lambda _{\xi}^D$)
 of $J_{\xi}$ (resp.\ of $J_{\xi}^D$),
 we use the following lemma. 

 \begin{lem} \label{lem:constoflambda}
 Let $\theta =\theta _{\xi}$ or $\theta=\theta _{\xi}^D$.
 Accordingly, set $H^1=H_{\xi}^1$ (resp.\ $H_{\xi}^{1, D}$),
 $J^1=J_{\xi}^1$ (resp.\ $J_{\xi}^{1, D}$)
 and $J=J_{\xi}$ (resp.\ $J_{\xi}^D$).
 \begin{enumerate}[(1)]
 \item The conjugation by $F^{\times}$ 
 stabilizes $\theta$.
 Thus the cyclic group $\Gamma ={F^{\times}}/{K^{\times}U_F^1}$
 acts on the finite $p$-group $\calQ ={J^1}/{\Ker \theta}$.
 The center $\calZ$ of $\calQ$ is the cyclic group $\calZ =H^1/{\Ker \theta}$,
 which is also the $\Gamma$-fixed part $\calZ =\calQ ^{\Gamma}$.
 \item There exists a unique irreducible smooth representation $\eta$ of $\calQ$
 whose central character is $\theta$.
 \item There exists a unique irreducible smooth representation $\tilde{\eta}$ of $\Gamma \ltimes \calQ$
 such that $\tilde{\eta}|_{\calQ}\simeq \eta$ and $\det \tilde{\eta} |_{\Gamma}=1$.
 \item There exists a constant $\epsilon \in \{ \pm 1\}$ 
 such that $\tr \tilde{\eta} (\gamma u)=\epsilon \xi (u)$ 
 for any generator $\gamma \in \Gamma$ and 
 $u\in U_F^1$.
 \item There exists a unique irreducible smooth representation $\Lambda$ of $J$
 such that $\Lambda |_{J^1}\simeq \eta$ and
 \begin{equation} \label{eq:charreloflambda}
 \tr \Lambda (h)=\epsilon \xi (h)
 \end{equation}
 for any $h\in F^{\times}$ whose image in $\Gamma$ is a generator.
 \item \label{item:isotypic}
 The restriction $\Lambda|_{K^{\times}U_F}$ is $\xi|_{K^{\times}U_F}$-isotypic.
 \end{enumerate}
 \end{lem}
 
 \begin{proof}
 All the assertions except for \ref{item:isotypic} follow from \cite[(4.1.1) and Lemma 4.1]{BHetLLCI} 
 and \cite[\S5.2 Lemma 1]{BHetJL},
 where the construction of $\Lambda$ using $\tilde{\eta}$ is given.
 The assertion \ref{item:isotypic} is a consequence of \eqref{eq:charreloflambda};
 since $K^{\times}U_F=K^{\times}U_F^1\subset K^{\times}H^1$,
 the restriction $\Lambda|_{K^{\times}U_F}$ is $\xi'$-isotypic for some character $\xi'$
 and one can show $\xi'=\xi|_{K^{\times}U_F}$ by comparing \eqref{eq:charreloflambda} for $h$ and $uh$ with $u\in K^{\times}U_F$.
 
 Note that the statements are trivial if $i$ is odd,
 in which case $H^1=J^1$.
 In fact, then $\Lambda$ is one-dimensional,
 $\Lambda |_{F^{\times}}=\xi$ and $\epsilon =1$.
 Note also that the existence of $\eta$ (if $i$ is even)
 is a consequence of Proposition \ref{Prop:Heisenberg}.
 \end{proof}
 
 According to whether $\theta =\theta _{\xi}$ or $\theta =\theta _{\xi}^D$,
 we denote the sign $\epsilon$ appearing in the proposition
 by $\epsilon _{\xi}$ (resp.\ $\epsilon _{\xi}^D$)
 and similarly denote the representation $\Lambda$
 by $\Lambda _{\xi}$ (resp.\ $\Lambda _{\xi}^D$).
 We set
 \[
 \pi _{\xi}=\cInd _{J_{\xi}}^{\GL _n(K)}\Lambda _{\xi}, \quad \pi _{\xi}^D=\Ind _{J_{\xi}^D}^{D^{\times}}\Lambda _{\xi}^D.
 \]
 
 Finally, if $\xi$ is not trivial on $U_F^{i+1}$,
 we take a decomposition of $\xi$ as in (\ref{eq:decompofadmpair})
 and put
 \[
 \pi _{\xi}=\chi\pi _{\varphi}, \quad \pi _{\xi}^D=\chi\pi _{\varphi}^D.
 \]
 The representation $\pi _{\xi}$ (resp.\ $\pi _{\xi}^D$) is irreducible and supercuspidal (resp.\ irreducible).
 The isomorphism classes of $\pi _{\xi}$ and $\pi _{\xi}^D$ 
 only depend on the $K$-isomorphism class of $(F/K, \xi)$. 

 We need an explicit description of the signs $\epsilon _{\xi}$ and $\epsilon _{\xi}^D$.
 
 \begin{Prop} \label{Prop:sympsign}
 Suppose that $i$ is even.
 In the situation of Lemma \ref{lem:constoflambda}
 the sign $\epsilon$ equals the Jacobi symbol
 \[
 \epsilon ={q\overwithdelims ()n}.
 \]
 \end{Prop}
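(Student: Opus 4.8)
The plan is to identify $\epsilon$ with the sign of the trace of a generator of $\Gamma$ on the canonical extension $\tilde\eta$ of a Heisenberg (Weil) representation, and then to compute that trace by decomposing the underlying symplectic, resp. orthogonal, representation of the cyclic group $\Gamma$ and invoking the structure theory recalled in Subsection \ref{subsec:repofcycfgps}. Evaluating the identity in Lemma \ref{lem:constoflambda}(4) at $u=1$ gives $\tr\tilde\eta(\gamma)=\epsilon$ for any generator $\gamma\in\Gamma$, a scalar in $\{\pm1\}$; since $F/K$ is totally ramified of degree $n$ and $(F/K,\xi)$ has jump at the \emph{even} integer $i$, the ramification index $n$ is coprime to $i$ and hence odd, so the Jacobi symbol $\left(\frac{q}{n}\right)$ is well defined (and the statement is vacuous for $n=1$). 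Twisting $\xi$ by a character of $K^{\times}$ leaves $\calQ$, $\eta$, $\tilde\eta$ and $\gamma$ unchanged, so we may assume $\xi$ is trivial on $U_F^{i+1}$.

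Next I would make the relevant data explicit. With $i$ even one has $J^1=U_{\frakI}^{i/2}$ and $H^1=U_F^1U_{\frakI}^{i/2+1}$, and (in the equal-characteristic setting) $\calQ=J^1/\Ker\theta$ is a Heisenberg $p$-group with cyclic centre $\calZ=H^1/\Ker\theta=\calQ^{\Gamma}$; using the decomposition $\frakP_{\frakI}^{j}=\frakp_F^{j}\oplus\frakP_{C_1}^{j}$ of Subsection \ref{subsec:stabandaction} one gets a $k$-linear identification $V:=\calQ/\calZ\cong\frakP_{C_1}^{i/2}/\frakP_{C_1}^{i/2+1}$, a $k$-space of dimension $n-1$, on which $\Gamma$ acts by conjugation by $\varphi_F$, i.e.\ by the cyclic shift (cf.\ Theorem \ref{Thm:stabandaction}\eqref{item:actionofvarphi}). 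Thus $V$ is the augmentation-type module $I_k(\Gamma;1,n)$ of Example \ref{stdex}, equipped with the $\Gamma$-invariant non-degenerate form induced by the $\theta$-twisted commutator pairing (resp., for $p=2$, with the quadratic refinement given by the squaring map $\calQ\to\calZ$); by Propositions \ref{Prop:orthrep} and \ref{Prop:symprep} this form is the standard one up to isometry. The very same description holds with $C_2\subset D$ in place of $C_1\subset M_n(K)$, which is why $\epsilon_{\xi}=\epsilon_{\xi}^D$; so it suffices to treat one case.

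The representation $\eta$ is then the Weil representation attached to $(V,Q)$, and after normalising $\det\tilde\eta|_{\Gamma}=1$ as in Lemma \ref{lem:constoflambda}(3), $\tilde\eta|_{\Gamma}$ is the Weil representation of $V$ composed with the map $\Gamma\to\mathrm{Sp}(V)$ (resp.\ $\Gamma\to\mathrm{O}(V)$ when $p=2$); consequently $\epsilon=\tr\tilde\eta(\gamma)$ is the product, over the $\Gamma$-isotypic decomposition of $(V,Q)$ into indecomposables provided by Proposition \ref{Prop:symprep} (and Remark \ref{Rem:char2orth} when $p=2$), of the Weil-index signs of the summands, exactly as in \cite{BHetLLCII} (see also \cite{BFGdiv}). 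Grouping the components $V_{\chi}$ by the order $m>1$ of $\chi$, each block contributes a local Gauss-sum sign attached to a standard form on $I_k(\Gamma;m',m)$, and an elementary congruence computation — of the same flavour as the "$2^{\varphi(m)/2}\equiv\pm1\bmod l_m$" manipulation in the proof of Proposition \ref{Prop:oddcoh}, using the Gauss sum $\frakg(\psi)$ for $p\neq2$ and Proposition \ref{Prop:Arfbycount} (Arf invariants) for $p=2$ — shows that the telescoping product of these signs is precisely $\left(\frac{q}{n}\right)$ (for $p=2$, this is $\left(\frac{2}{n}\right)^{\log_2 q}$).

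The main obstacle I expect is carrying this out uniformly across $p\neq2$ and $p=2$. For $p\neq2$ the sign is expressed through the Weil index of symmetric bilinear forms over finite fields and the quantity $\det Q_{\mathrm{nd}}\pmod{k^{\times2}}$, and the relevant pieces of Bushnell--Henniart's theory apply more or less directly. For $p=2$, however, the Heisenberg group carries the extra quadratic structure of the squaring map, so one must instead track Arf invariants of orthogonal $\Gamma$-representations via Remark \ref{Rem:char2orth} and Proposition \ref{Prop:Arfbycount}, verify that the "no nonzero fixed vectors" hypothesis forces $V^{\gamma}=0$ so that the trace is indeed a sign, and check by the same block-by-block argument that the output is again the Jacobi symbol $\left(\frac{q}{n}\right)$ — so that both computations agree.
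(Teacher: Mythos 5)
Your structural reduction is essentially the one the paper uses: evaluating Lemma \ref{lem:constoflambda}(4) at $u=1$ to get $\epsilon=\tr\tilde{\eta}(\gamma)$, identifying $\calQ/\calZ$ with the $(n-1)$-dimensional module $I_k(\Gamma;1,n)$ (the same for $\theta_{\xi}$ and $\theta_{\xi}^D$, whence $\epsilon_{\xi}=\epsilon_{\xi}^D$), and using the fact that the sign depends only on the symplectic $k[\Gamma]$-module and is multiplicative over orthogonal sums (this is exactly what the paper takes from \cite[(8.6.1)]{BFGdiv} together with Proposition \ref{Prop:symprep}). Up to a small slip ($J^1=U_F^1U_{\frakI}^{i/2}$, not $U_{\frakI}^{i/2}$), that part is fine, and your observation that $n$ is odd because $i$ is even is correct.

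The gap is in the last step, which is where the actual content of the proposition lies. The paper finishes by writing $I_k(\Gamma;1,n)\simeq I_{\bbF_p}(\Gamma;1,n)\otimes_{\bbF_p}k$, reducing to $k=\bbF_p$, and quoting the explicit evaluation \cite[(9.3.5)]{BFGdiv}; you instead promise a block-by-block Gauss-sum/Arf computation and simply assert that ``an elementary congruence computation'' of the flavour of Proposition \ref{Prop:oddcoh} yields $\left(\tfrac{q}{n}\right)$. That assertion is precisely what has to be proved, and for $p\neq 2$ it is not a counting congruence of the Arf-invariant type: one must evaluate the trace of a Weil-type representation at a fixed-point-free generator of $\Gamma$, i.e.\ compute quadratic Gauss sums over the fields $k[\chi]$ on the anisotropic blocks $V_{\chi}$ with $\chi^2\neq 1$, control the signs of the hyperbolic blocks $U\oplus U^{\vee}$, keep track of the normalization $\det\tilde{\eta}|_{\Gamma}=1$ (your identification of $\tilde{\eta}|_{\Gamma}$ with the Weil representation needs this check), and then perform a reciprocity-type conversion of $\left(\tfrac{n}{q}\right)$-type factors into $\left(\tfrac{q}{n}\right)$. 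This is the multi-step calculation occupying \cite[\S 9]{BFGdiv}; without carrying it out, or citing it (or the corresponding statement in \cite{BHetLLCII}), your argument does not establish the stated value. The $p=2$ discussion has the same status: the Arf-invariant bookkeeping you outline is plausible but not executed, whereas the paper's reduction to $\bbF_p$ and the quoted result dispose of all residue characteristics at once.
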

 
 \begin{proof}
 By \cite[(8.6.1)]{BFGdiv}, 
 the sign $\epsilon$ is
 determined by 
 the symplectic representation
 $(\calQ /\calZ, h_{\theta})$
 of $\Gamma$
 induced by $h_{\theta} \colon (x, y)\mapsto \theta (xyx^{-1}y^{-1})$
 and hence by the $k[\Gamma]$-module $\calQ /\calZ$ (cf.\ Proposition \ref{Prop:symprep}).
 Also it is multiplicative with respect to orthogonal sums of symplectic representations.
 We have 
 $\calQ /\calZ \simeq I_k(\Gamma ; 1, n)\simeq I_{\bbF _p}(\Gamma ; 1, n)\otimes _{\bbF _p}k$,
 no matter whether $\theta =\theta _{\xi}$ or $\theta =\theta _{\xi}^D$.
 Therefore, the assertion is reduced to $k=\bbF _p$ case,
 which is treated in \cite[(9.3.5)]{BFGdiv}
 \end{proof}
  
 \begin{Rem} \label{Rem:connwiththegeom}
  Let us temporarily return 
  to the situation of Theorem \ref{Thm:stabandaction}.
  There $L=K(\varphi _L)$ is a totally tamely ramified extension of $K$
  of degree $n$
  and it is considered as a $K$-subalgebra of
  $M_n(K)$ (resp.\ $D$)
  via a fixed embedding $\varphi _L\mapsto \varphi$ 
  (resp.\ of $\varphi _L\mapsto \varphi _D$)
  arising from the fixed CM point.
  It is easily seen that $L^{\times}$
  does normalize the standard Iwahori order,
  which is denoted there
  again by $\frakI \subset M_n(K)$.
  We apply the preceding constructions
  with respect to this field, these embeddings and this order.
  Note also the equalities $L^{\times}U_{\frakI}^{(\nu)}=L^{\times}U_{\frakI}^{\lfloor (\nu +1)/2\rfloor}$
  and $L^{\times}\cap U_{\frakI}^{(\nu)}=U_{L}^{\nu}$,
  and the analogous equalities for $U_D^{(\nu)}$.
 \end{Rem}
 
 \paragraph{Description of ${}_{K}\mu _{\xi}$ and ${}_{D}\iota _{\xi}$ in special cases}
 Let us first define some invariants 
 attached to $F/K$, $\psi$ and $\xi$.
 We set
 \[
 R_{F/K}=\Ind _{F/K}1_{K}, \quad \delta _{F/K}=\det R_{F/K},
 \]
 where $1_{K}$ denotes the trivial representation of $W_K$.
 We define the Langlands constant $\lambda _{F/K}(\psi )$ by
 \[
 \lambda _{F/K}(\psi )=\frac{\eps (R_{F/K}, 1/2, \psi)}{\eps (1 _{F}, 1/2, \psi \circ \Tr _{F/K})},
 \]
 where the denominator and the numerator denote the Langlands-Deligne local constants
 (see \cite[\S30]{BHLLCGL2} for these two constants).
 
 Take $\alpha =\alpha (\xi)\in F$ as in the construction of $\pi _{\xi}$,
 so that $v_F(\alpha)=-i$
 and 
 $\varphi |_{U_F^{\lfloor i/2\rfloor +1}}=\psi _{\alpha}^F$
 where $\varphi$ is as in the decomposition (\ref{eq:decompofadmpair}).
 Note that $\alpha (\xi) U_F^1$ only depends on $\xi$ 
 by (\ref{eq:paramofchars}).
 For any uniformizer $\varpi _F\in F$,
 we define $\zeta (\varpi _F,\xi)\in F$ as the unique root of unity satisfying
 \[
 \zeta (\varpi _F,\xi)\equiv \varpi _F^i\alpha (\xi) \pmod{U_F^1}.
 \]
 
 In our case,
 \cite[Theorem 2.1]{BHetLLCII}
 reads as follows.
 \begin{Thm} \label{Thm:GLrect}
 Let $(F/K, \xi)$ be an admissible pair as above,
 i.e. it is minimal with the jump at $i$ 
 and $F/K$ is totally ramified. 
 Then the image of $\Ind _{F/K}\xi$
 under the local Langlands correspondence
 is $\pi _{{}_{K}\mu _{\xi}\xi}$,
 where ${}_{K}\mu _{\xi}$ is 
 a character
 of $F^{\times}$ defined below.
 \begin{enumerate}[(1)]
 \item If $n$ is odd, 
 then ${}_{K}\mu _{\xi}$ is unramified and, for any uniformizer $\varpi _F\in F$, 
 \[
 {}_{K}\mu _{\xi}(\varpi _F)=\lambda _{F/K}(\psi ).
 \]
 \item If $n$ is even,
 then ${}_{K}\mu _{\xi}$ is determined by the following conditions
 \begin{align*}
 &{}_{K}\mu _{\xi}|_{U_F^1}=1, \quad {}_{K}\mu _{\xi}|_{K^{\times}}=\delta _{F/K}, \\
 &{}_{K}\mu _{\xi} (\varpi _F)={\zeta (\varpi _F,\xi) \overwithdelims ()k}
                                     {-1 \overwithdelims ()k}^{(i-1)/2}
                                     \lambda _{F/K}(\psi )
 \end{align*}
 for any uniformizer $\varpi _F\in F$.
 \end{enumerate}
 \end{Thm}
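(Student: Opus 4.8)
The plan is to obtain this statement by specializing the general rectifier formula \cite[Theorem 2.1]{BHetLLCII}, which covers an arbitrary admissible pair, to the two standing hypotheses in play: that $(F/K,\xi)$ is minimal with the jump at $i$, and that $F/K$ is totally ramified, so that the ramification index $e(F/K)$ of $F/K$ equals $n$, one has $f(F/K)=1$, and the extension is tame since $p\nmid n$. In \cite{BHetLLCII} the rectifier ${}_K\mu_\xi$ is built as a product of tamely ramified characters of $F^\times$, one factor for each layer of the canonical tower $K\subset K_1\subset F$ with $K_1/K$ unramified and $F/K_1$ tamely totally ramified, each factor being of ``quadratic Gauss sum'' or ``Langlands constant'' type and governed by the behaviour of $\xi$ at the corresponding jump. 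First I would check that under our hypotheses this data is as degenerate as possible: total ramification of $F/K$ forces $K_1=K$, killing the unramified factor, and minimality of $(F/K,\xi)$ produces a single jump, so only the tamely totally ramified factor survives, and it is exactly the character described in the statement.

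Next I would pin down that surviving factor explicitly. Its restriction to $U_F^1$ is trivial and its restriction to $K^\times$ is $\delta_{F/K}=\det R_{F/K}$, whose behaviour on $U_K$ and on uniformizers depends on the parity of $n$ and is the source of the case split. On a uniformizer $\varpi_F$ its value is the Langlands constant $\lambda_{F/K}(\psi)$, corrected, when $n$ is even, by the quadratic Gauss sum attached to the symplectic $k[\Gamma]$-module $\calQ/\calZ\simeq I_k(\Gamma;1,n)$ appearing in Lemma \ref{lem:constoflambda}. Invoking the sign evaluation of Proposition \ref{Prop:sympsign} together with the computation of \cite{BFGdiv} that underlies it, this Gauss sum is identified with $\left(\frac{\zeta(\varpi_F,\xi)}{q}\right)\left(\frac{-1}{q}\right)^{(i-1)/2}$; the exponent $(i-1)/2$ is an integer because $i$ is coprime to $e(F/K)=n$ by Remark \ref{Rem:minimalpairs}, hence odd whenever $n$ is even. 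When $n$ is odd the Gauss sum degenerates, contributing only a sign absorbed into $\lambda_{F/K}(\psi)$, and ${}_K\mu_\xi$ comes out unramified with value $\lambda_{F/K}(\psi)$ at every uniformizer.

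Finally I would reduce the case of a general admissible $\xi$ to the case where $\xi$ is trivial on $U_F^{i+1}$, which is the situation in which the constructions above and the definition of $\zeta(\varpi_F,\xi)$ were set up: by Remark \ref{Rem:minimalpairs} we may write $\xi|_{U_F^i}=(\varphi\otimes(\chi\circ N_{F/K}))|_{U_F^i}$ with $\varphi$ trivial on $U_F^{i+1}$, and since $\pi_{(\chi\circ N_{F/K})\eta}=\chi\,\pi_\eta$ for every admissible $\eta$ and the local Langlands correspondence commutes with character twists, one gets ${}_K\mu_\xi={}_K\mu_\varphi$ and $\zeta(\varpi_F,\xi)=\zeta(\varpi_F,\varphi)$, so the statement for $\xi$ follows from that for $\varphi$. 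I expect the real work to be bookkeeping rather than conceptual: \cite{BHetLLCII} carries its own normalizations for the additive character $\psi$, for the parametrization \eqref{eq:paramofchars} of characters of $U_F$-quotients, for the choice of $\alpha(\xi)$, and for the signs in the Langlands constant and the quadratic Gauss sum, and the delicate point is to match these precisely with the conventions fixed in this subsection so that the exponent $(i-1)/2$ and the argument of the Jacobi symbol emerge exactly as stated rather than off by a harmless-looking but incorrect twist.
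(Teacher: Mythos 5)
Your proposal matches what the paper actually does: Theorem \ref{Thm:GLrect} is not given an independent proof but is presented as the specialization of \cite[Theorem 2.1]{BHetLLCII} to a minimal admissible pair with $F/K$ totally ramified, read in the conventions fixed in this subsection, which is exactly your plan. The only superfluous step in your sketch is the reduction along a tower $K\subset K_1\subset F$ with an unramified layer --- \cite{BHetLLCII} is already stated for totally ramified pairs (the unramified-layer bookkeeping belongs to the general case of \cite{BHetLLCIII}), so what remains is precisely the matching of normalizations ($\psi$, $\alpha(\xi)$, $\zeta(\varpi_F,\xi)$, the twist reduction via (\ref{eq:decompofadmpair})) that you anticipate.
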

 \begin{Rem}
 In both cases, 
 ${}_{K}\mu _{\xi}$ does not depend on the choice of $\psi$
 (see \cite[Remark 2.1.3]{BHetLLCII}).
 \end{Rem}
 
 Similarly, we need the following special case of \cite[Theorem 5.3]{BHetJL}. 
 \begin{Thm} \label{Thm:Drect}
 Let $(F/K, \xi)$ be as above.
 Then the image of $\pi _{\xi}^D$ 
 under the local Jacquet-Langlands correspondence is 
 $\pi _{{}_{D}\iota _{\xi}\xi}$,
 where $\iota ={}_{D}\iota _{\xi}$ is the unramified character of $F^{\times}$
 sending uniformizers to $(-1)^{n-1}$.
 \end{Thm}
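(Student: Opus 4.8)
The plan is to obtain Theorem~\ref{Thm:Drect} as the totally ramified specialization of the general essentially tame Jacquet--Langlands correspondence of Bushnell--Henniart, \cite[Theorem 5.3]{BHetJL}. Thus I would not reprove the correspondence from scratch; rather, the task is to verify that, when $F/K$ is totally (tamely) ramified of degree $n$ and $(F/K,\xi)$ is minimal with the jump at $i$, the Jacquet--Langlands rectifier furnished by \cite{BHetJL} collapses to the unramified character of $F^{\times}$ carrying uniformizers to $(-1)^{n-1}$. Concretely, I would recall that in \cite{BHetJL} the rectifier ${}_{D}\iota _{\xi}$ is assembled from two pieces: an unramified piece governed by the ratio $\epsilon _{\xi}^{D}/\epsilon _{\xi}$ of the signs produced by the Heisenberg lifts of Lemma~\ref{lem:constoflambda} (together with Langlands-constant and $\delta _{F/K}$ contributions arising from the comparison with the $\GL _n$ rectifier of Theorem~\ref{Thm:GLrect}), and a second piece controlled entirely by the maximal unramified subextension of $F/K$. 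Since $F/K$ is totally ramified the latter piece is trivial, so ${}_{D}\iota _{\xi}$ is unramified, and it remains only to evaluate it on a uniformizer.

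For that evaluation I would use two inputs. The first is that the finite symplectic $k[\Gamma]$-modules $\calQ /\calZ$ attached to the $\GL _n(K)$-construction and the $D^{\times}$-construction are both isomorphic to $I_k(\Gamma ;1,n)$ --- this is precisely the uniformity recorded in Proposition~\ref{Prop:sympsign} --- whence $\epsilon _{\xi}=\epsilon _{\xi}^{D}=\bigl(\tfrac{q}{n}\bigr)$ and the sign ratio contributes nothing. The second is that the one systematic discrepancy between the construction of $\pi _{\xi}$ by compact induction inside $\GL _n(K)$ and that of $\pi _{\xi}^{D}$ by ordinary induction inside $D^{\times}$ is a parity sign, which on the Galois side is visible through $\delta _{F/K}$: for a totally tamely ramified extension of degree $n$ this is the signature character, accounting for the factor $(-1)^{n-1}$. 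Combining these gives ${}_{D}\iota _{\xi}(\varpi _F)=(-1)^{n-1}$, which is the assertion.

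I expect the genuine difficulty to be almost entirely one of normalization bookkeeping: the papers \cite{BHetLLCI,BHetLLCII,BHetLLCIII,BHetJL} and the present article fix the Artin reciprocity map, the local $\varepsilon$-factors and the half-integral unramified twists with conventions that must be carefully reconciled before the cancellations above may be asserted, and this is most delicate in the even-$i$ case, where the Heisenberg representations are genuinely of dimension greater than one and the sign $\epsilon$ of Lemma~\ref{lem:constoflambda} is nontrivial (so that one must really invoke Proposition~\ref{Prop:sympsign} rather than an elementary argument). A cleaner alternative, which I would keep in reserve, is to compose with the local Langlands correspondence: combining Theorem~\ref{Thm:GLrect} with the compatibility of the Jacquet--Langlands transfer with the two Langlands correspondences reduces the determination of ${}_{D}\iota _{\xi}$ to comparing two explicitly known tame characters, the totally ramified hypothesis again forcing all ramified contributions to cancel.
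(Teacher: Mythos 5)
Your proposal takes the same route as the paper: Theorem \ref{Thm:Drect} is not given an independent proof there, but is stated as the direct specialization of \cite[Theorem 5.3]{BHetJL} to the case of a totally (tamely) ramified pair $(F/K,\xi)$ and the division algebra of invariant $1/n$, which is precisely the reduction you propose. The extra rectifier bookkeeping you sketch (the sign ratio $\epsilon_{\xi}^{D}/\epsilon_{\xi}$, $\delta_{F/K}$, Langlands constants) is not required for this step --- the unramified character with value $(-1)^{n-1}$ on uniformizers is already part of Bushnell--Henniart's explicit statement in this case --- so it serves at most as a consistency check rather than as the proof.
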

 
 We record here some of the explicit values
 used later.
 \begin{Prop} \label{Prop:valuesforinvs}
 Let the notation be as above.
  \begin{enumerate}[(1)]
   \item \label{item:valueforoddlambda}
   Suppose that $n$ is odd.
   Then 
   $
   \lambda _{F/K}(\psi)={q\overwithdelims ()n}.
   $
   \item \label{item:valuefordelta}
   Suppose that $n$ is even.
   Then
   $
   \delta _{F/K}(u)={{\overline{u}} \overwithdelims ()k}
   $
   for $u\in U_K$.
  \end{enumerate}
 \end{Prop}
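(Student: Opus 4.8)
The plan is to prove the two parts independently, the second being routine and the first carrying the real content.

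For part \eqref{item:valuefordelta}, I would begin from the fact that $\delta_{F/K}=\det R_{F/K}$ is the determinant of the permutation representation of $W_K$ on the set of $K$-embeddings $F\hookrightarrow\overline{K}$; it is therefore a character of order dividing $2$, and as a character of $W_K$ it is precisely the one cutting out $K(\sqrt{d_{F/K}})$, where $d_{F/K}\in K^{\times}/(K^{\times})^{2}$ is the discriminant class of $F/K$ --- this is the classical identification of the sign of the Galois action on the embeddings with the square class of the discriminant (see \cite{BHLLCGL2}). Via $\Art_K$ this says that $\delta_{F/K}$ corresponds to the character $x\mapsto(x,d_{F/K})_K$ given by the quadratic norm-residue symbol. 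Since $F/K$ is totally tamely ramified of degree $n$, its different is $\frakp_F^{n-1}$, so $v_K(d_{F/K})=n-1$; writing $d_{F/K}=c\varpi^{n-1}$ with $c\in\calO_K^{\times}$ and $\varpi$ a uniformizer of $K$, tameness of the symbol gives $(u,c)_K=1$ and $(u,\varpi)_K={\overline{u}\overwithdelims()q}$ for $u\in U_K$, whence $\delta_{F/K}(u)={\overline{u}\overwithdelims()q}^{n-1}={\overline{u}\overwithdelims()q}$ because $n-1$ is odd.

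For part \eqref{item:valueforoddlambda}, the plan is to evaluate $\lambda_{F/K}(\psi)$ via the tame case of the Langlands--Deligne formula. First I would use the inductivity in degree $0$ of local constants, together with the decomposition of $R_{F/K}=\Ind_{F/K}1_K$ over the Galois closure of $F/K$, to express $\lambda_{F/K}(\psi)$ as $\eps(1_F,1/2,\psi\circ\Tr_{F/K})^{-1}$ times a product of abelian $\eps$-factors of tamely ramified characters, each of which is a normalized Gauss sum over the finite extension of $k$ generated by the pertinent roots of unity. Since $\psi\circ\Tr_{F/K}$ has level $n-1$ (the different of $F/K$ being $\frakp_F^{n-1}$), the first factor supplies exactly the power of $q$ that cancels the absolute value of the Gauss-sum product, so that only a root of unity remains. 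When $n$ is odd, the characters that occur are stable under $\chi\mapsto\chi^{-1}$ and satisfy $\chi(-1)=1$, so each pair $\{\chi,\chi^{-1}\}$ contributes $\chi(-1)q=q$ and the residual quantity depends only on the Frobenius action on the relevant roots of unity; by Stickelberger's congruence (equivalently the Hasse--Davenport relations) this evaluates to the Jacobi symbol ${q\overwithdelims()n}$. Independence of $\psi$ is built into the definition of $\lambda_{F/K}$. In practice I would most likely cite the corresponding tame computation from \cite{BHLLCGL2} or \cite{BHetLLCI} rather than reproduce it in full.

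I expect the main obstacle to be the bookkeeping in part \eqref{item:valueforoddlambda}: pinning down the normalization conventions for the $\eps$-factors, correctly handling the conductor of $\psi\circ\Tr_{F/K}$, and tracking the signs and auxiliary Langlands constants introduced by induction from the unramified subextensions, so that the Gauss-sum product collapses to exactly ${q\overwithdelims()n}$ with no leftover root of unity; this is precisely where the oddness of $n$ is used, through $\chi(-1)=1$. Part \eqref{item:valuefordelta}, by contrast, requires nothing beyond the discriminant-character identification and the explicit tame symbol.
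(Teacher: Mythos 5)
Your proposal is correct in substance, but it is worth saying how it sits relative to the paper: the paper's own proof is nothing more than two citations --- part (1) is quoted from \cite[Lemma 1.5(2)]{BHetLLCII} and part (2) from \cite[Lemma 5.3]{ITepitame}. For part (2) you actually supply a self-contained argument in place of the citation, and it is sound: $\delta_{F/K}=\det\Ind_{F/K}1$ is the quadratic character attached via $\Art_K$ to the discriminant class, a totally tamely ramified degree-$n$ extension has different $\frakp_F^{n-1}$ and hence discriminant of valuation $n-1$, and since $n$ even forces $p\neq 2$ (the standing hypothesis $p\nmid n$) the tame symbol computation gives $\left(\overline{u}\overwithdelims()q\right)^{n-1}=\left(\overline{u}\overwithdelims()q\right)$ on units; this is essentially the content of the cited lemma of Imai--Tsushima, so you gain transparency at no cost. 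For part (1) your route is, in the end, the same as the paper's: you sketch the standard Gauss-sum evaluation of the tame Langlands constant but concede you would cite the literature, which is exactly what the paper does. As a sketch it is plausible but not yet a proof: the decomposition of $\Ind_{F/K}1$ over the Galois closure produces constituents induced from characters of unramified extensions of varying degrees, and the delicate point is precisely the self-dual (inversion-stable) orbits and the unramified $\lambda$-factor signs picked up along the intermediate inductions --- that is where the Jacobi symbol actually materializes, and your blanket "each pair $\{\chi,\chi^{-1}\}$ contributes $q$, the rest follows from Hasse--Davenport/Stickelberger" passes over it. Since you close that gap by appeal to \cite{BHetLLCII}/\cite{BHetLLCI}, the argument is acceptable and matches the paper; if you wanted a fully self-contained part (1) you would need to carry out that bookkeeping explicitly.
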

 \begin{proof}
 The assertion \ref{item:valueforoddlambda} (resp.\ \ref{item:valuefordelta})
 is part of \cite[Lemma 1.5 (2)]{BHetLLCII}
 (resp.\ part of \cite[Lemma 5.3]{ITepitame}).
 \end{proof}
    
 \subsection{Realization of the correspondences} \label{subsec:real}
 In this subsection we finally prove Main Theorem in the introduction. The argument is inspired by that in \cite[\S5.2]{ITepitame}.
 
 Let $\psi$ be the additive character of $K$
 fixed in the previous subsection
 and denote by $\overline{\psi}$
 the non-trivial additive character of $k$ 
 obtained as the reduction of $\psi |_{\calO _K}$.
 We also denote by $\overline{\psi} _{\zeta}$
 the character
 $\overline{\psi} _{\zeta} \colon k\rightarrow \overline{\bbQ} _{\ell}^{\times}; \ x\mapsto \overline{\psi} (\overline{\zeta}x)$
 for any $\zeta \in \mu _{q-1}(K)$.
 
 Let $\nu >0$ be an integer
 and assume that it is coprime to $n$.
 We return to our analysis of the cohomology 
 in Section \ref{sec:cohofred}.
 Put $H_{\nu}=H_c^{n-1}(Z_{\nu}, \overline{\bbQ} _{\ell})((n-1)/2)$ 
 and $\Pi _{\nu}=H_c^{n-1}(\overline{\scrZ}_{\nu}, \overline{\bbQ} _{\ell})((n-1)/2)$.
 As in Proposition \ref{Prop:oddcoh} (2)
 (resp.\ Proposition \ref{Prop:evencoh} (2))
 we consider the standard action of $k$
 (resp.\ the action of the center $k$ of $S_{1, \nu}$)
 if $\nu$ is odd
 (resp.\ if $\nu$ is even)
 and
 denote by $H_{\nu, \zeta}$ 
 the $\overline{\psi} _{\zeta}$-isotypic component
 of $H_{\nu}$
 and set
 \[
 \Pi _{\nu, \zeta}=\bigoplus _{\chi \in \left( U_K^{\lceil \nu /n\rceil} \right)^{\vee}}
                      H_{\nu, \zeta}\otimes (\chi \circ N_G).
 \]
 Then we have 
 $H_{\nu}=\bigoplus _{\zeta \in \mu _{q-1}(K)} H_{\nu, \zeta}$ and
 $\Pi _{\nu}=\bigoplus _{\zeta \in \mu _{q-1}(K)} \Pi_{\nu, \zeta}$
 by  
 Proposition \ref{Prop:oddcoh} \ref{item:oddnondeg}, 
 Proposition \ref{Prop:evencoh} \ref{item:evennondeg}
 and Proposition \ref{Prop:redcoh}.
  
 \begin{lem} \label{lem:homfromind}
 Let $\pi$ be an irreducible smooth representation 
 of $GL_n(K)$.
 Set $G_1=GL_n(K)$ and $G_2=D^{\times}\times W_K$.
 Denote by $\overline{\Stab} _{\nu} \subset G_2$ 
 the image of $\Stab _{\nu}$
 under the projection $G\rightarrow G_2$.
 Then we have
 a canonical isomorphism
 \[
 \Hom _{G_1}
         \left( \cInd _{\Stab _{\nu}}^G
         \Pi _{\nu},
         \pi
         \right)
 \simeq
 \Ind _{\overline{\Stab} _{\nu}}^{G_2}
 \Hom _{U_{\frakI}^{(\nu)}}
         \left( \Pi _{\nu},
         \pi
         \right)
 \]
 of representations of $G_2$,
 where the action of  
 $(d, \sigma) \in \overline{\Stab} _{\nu}$
 on 
 $\Hom _{U_{\frakI}^{(\nu)}}
         \left( \Pi _{\nu},
         \pi
         \right)$
 is given by the composition
 of the action of $(g, d, \sigma)^{-1}\in \Stab _{\nu}$
 on the source
 and that of $g\in G_1$ on the target
 for some lift $(g, d, \sigma) \in \Stab _{\nu}$ 
 of $(d, \sigma) \in \overline{\Stab} _{\nu}$.         
 \end{lem}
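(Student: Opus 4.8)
\medskip
\noindent\textbf{Proof plan.}
The plan is to reduce the statement to two applications of Frobenius reciprocity, bridged by a Mackey decomposition in which only one double coset occurs. Only smoothness of $\pi$ will be used, so irreducibility is irrelevant. The key structural input, which I would record first, is that by Theorem~\ref{Thm:stabandaction}~\eqref{item:actionofGL} one has $\Stab _{\nu}\cap \GL _n(K)=U_{\frakI}^{(\nu)}$; equivalently the projection $G=G_1\times G_2\to G_2$ restricts to a surjection $\Stab _{\nu}\twoheadrightarrow \overline{\Stab} _{\nu}$ with kernel $U_{\frakI}^{(\nu)}$. Since $G_1$ and $G_2=D^{\times}\times W_K$ commute and meet trivially inside $G$, this immediately gives $G_1\cdot \Stab _{\nu}=G_1\times \overline{\Stab} _{\nu}$, and for every $h\in G_2$ one has $G_1\cap h(G_1\times \overline{\Stab} _{\nu})h^{-1}=G_1$, with conjugation by $h$ acting trivially on $G_1$. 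I would also note that $\overline{\Stab} _{\nu}$ is open in $G_2$ (its image in $W_K$ is the finite-index subgroup $W_{L'}$, and its intersection with $D^{\times}$ contains the open subgroup $U_D^{(\nu)}$) but need not be cocompact — which is exactly why a full smooth induction $\Ind$, rather than $\cInd$, shows up on the right.

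Next I would use induction in stages to write $\cInd _{\Stab _{\nu}}^{G}\Pi _{\nu}\cong \cInd _{G_1\times \overline{\Stab} _{\nu}}^{G}W$ with $W:=\cInd _{\Stab _{\nu}}^{G_1\times \overline{\Stab} _{\nu}}\Pi _{\nu}$. Because $G_1\cdot \Stab _{\nu}$ already exhausts $G_1\times \overline{\Stab} _{\nu}$, Mackey's restriction formula for $\Res _{\GL _n(K)}W$ involves exactly one double coset, and restriction of functions yields an isomorphism of $\GL _n(K)$-modules $\Res _{\GL _n(K)}W\cong \cInd _{U_{\frakI}^{(\nu)}}^{\GL _n(K)}\Pi _{\nu}$. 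On the other hand, since $G_1$ is a direct factor of $G$ contained in the inducing subgroup, $\cInd _{G_1\times \overline{\Stab} _{\nu}}^{G}W$ is an induction ``in the $G_2$-direction only'': it is realized on the space of $\overline{\Stab} _{\nu}$-equivariant functions $G_2\to W$ that are compactly supported modulo $\overline{\Stab} _{\nu}$, with $G_1$ acting on the values. Applying $\Hom _{G_1}(-,\pi)$ in the category of smooth representations (equivalently, taking $G_2$-smooth vectors) converts the compact-support condition into no condition, so $\Hom _{G_1}(\cInd _{G_1\times \overline{\Stab} _{\nu}}^{G}W,\pi)\cong \Ind _{\overline{\Stab} _{\nu}}^{G_2}\Hom _{G_1}(W,\pi)$; combining with the single-double-coset isomorphism above and Frobenius reciprocity $\Hom _{\GL _n(K)}(\cInd _{U_{\frakI}^{(\nu)}}^{\GL _n(K)}\Pi _{\nu},\pi)\cong \Hom _{U_{\frakI}^{(\nu)}}(\Pi _{\nu},\pi)$ gives the asserted isomorphism of underlying $G_2$-representations.

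It then remains to pin down the residual $\overline{\Stab} _{\nu}$-action on the coefficient space. Acting on $W$ by right translation by $(1,d,\sigma)$ and restricting functions to $\GL _n(K)$, one writes a value $f(g)$ ($g\in G_1$) as $f\bigl((g_0,d,\sigma)\cdot (g_0^{-1}g)\bigr)$ for a chosen lift $(g_0,d,\sigma)\in \Stab _{\nu}$ of $(d,\sigma)$, and by $\Stab _{\nu}$-equivariance obtains the operator $\widetilde f\mapsto \bigl(g\mapsto (g_0,d,\sigma)\cdot \widetilde f(g_0^{-1}g)\bigr)$ on $\cInd _{U_{\frakI}^{(\nu)}}^{\GL _n(K)}\Pi _{\nu}$. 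Since $\Stab _{\nu}$ normalizes $U_{\frakI}^{(\nu)}=\Stab _{\nu}\cap G_1$, its $\GL _n(K)$-component $g_0$ normalizes $U_{\frakI}^{(\nu)}$, so this operator is well defined and independent of the lift. Transporting it through Frobenius reciprocity — using the elementary identity $(1,d,\sigma)^{-1}f_v=g_0\cdot f_{(g_0,d,\sigma)^{-1}v}$ for the standard generators $f_v$ supported on $U_{\frakI}^{(\nu)}$ with $f_v(1)=v$ — produces precisely $\psi\mapsto g_0\circ \psi\circ (g_0,d,\sigma)^{-1}$, which is the action described in the statement, the independence from the lift already being explained above.

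The genuinely delicate step, I expect, is the middle paragraph: making precise that $\Hom _{G_1}$ out of the compactly-supported induction, after passing to $G_2$-smooth vectors, is the \emph{full} smooth induction of the $\Hom$ (here the finite-dimensionality of $\Pi _{\nu}$, which keeps every representation in sight smooth and of finite type modulo the centre, is what makes the interchange of $\Hom$ and $\cInd$ clean), and simultaneously keeping the two distinct ``induction directions'' — the $\GL _n(K)$-direction collapsed by a single double coset, and the $G_2$-direction carrying the final $\Ind$ — properly separated. Everything else is routine bookkeeping with Mackey theory and Frobenius reciprocity.
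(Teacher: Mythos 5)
Your proof is correct and takes essentially the same route as the paper: the stage-wise induction through $G_1\times\overline{\Stab}_{\nu}$ with its single double coset is just a repackaging of the paper's Mackey decomposition of $(\cInd_{\Stab_{\nu}}^{G}\Pi_{\nu})|_{G_1}$ over $\Stab_{\nu}\backslash G/G_1\simeq\overline{\Stab}_{\nu}\backslash G_2$, followed by the same Frobenius reciprocity for the open subgroup $U_{\frakI}^{(\nu)}\subset G_1$ and the same identification of the residual $G_2$-action (which you spell out more explicitly than the paper does). The only slip is the side remark that $\overline{\Stab}_{\nu}$ \emph{need not} be cocompact: since $D^{\times}=L^{\times}U_D$ and $[W_K:W_{L'}]<\infty$, it is in fact of finite index in $G_2$, but this is harmless because your use of $\Ind$ matches the statement in any case.
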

 \begin{proof} 
 This is straightforward; 
 one only needs to check the action of $G_2$
 on the right-hand side of the following isomorphism
 \[
 \Hom _{G_1}
         \left( \cInd _{\Stab _{\nu}}^G
         \Pi _{\nu},
         \pi
         \right)
 \simeq
 \bigoplus _{{\overline{\Stab} _{\nu}} \backslash G_2}
 \Hom _{U_{\frakI}^{(\nu)}}
         \left( \Pi _{\nu},
         \pi
         \right)         
 \]
 induced by the Mackey decomposition
 \begin{align*}
 (\cInd _{\Stab _{\nu}}^G \Pi _{\nu})|_{G_1}&\simeq \bigoplus _{\Stab _{\nu}gG_1\in {\Stab _{\nu}} \backslash G/{G_1}}
                                                                \cInd _{\Stab _{\nu}^g\cap G_1}^{G_1} \Pi _{\nu}^g \\
                                                     &\simeq \bigoplus _{\overline{\Stab} _{\nu} \backslash {G_2}}
                                                                \cInd _{\Stab _{\nu} \cap G_1}^{G_1} \Pi _{\nu}  
 \end{align*}
 and the Frobenius reciprocity.
 \end{proof}
 \begin{Rem} \label{Rem:Stabastorsor}
 	Recall that 
 	we have $\Stab _{\nu}=(U_{\frakI}^{(\nu)}\times U_D^{(\nu)}\times \{ 1\})\cdot \calS$ by Theorem \ref{Thm:stabandaction} \ref{item:stabinducesaction}
 	and the stabilizer $\calS$ of the CM point is expressed by Proposition \ref{Prop:S} and \eqref{eq:stabofxi}.
 	In particular, 
 	\begin{itemize}
 		\item We have a natural short exact sequence:
 		\[
 		1 \to \Delta_{\xi}(L^{\times})\to \calS \to W_{L'}\to 1.
 		\]
 	    \item The pull-back of $W_L\subset W_{L'}$ is
 	    $(\Delta_{\xi}(L^{\times})\times \{ 1\}) 
 	    \cdot \{ (1, a_{\sigma}^{-1}, \sigma) \mid \sigma \in W_L\}$,
 	    where $a_{\sigma}=\Art_L^{-1}(\sigma)$ for $\sigma \in W_L$ as before. 
    \end{itemize}
 	Consequently,
 	\begin{itemize}
 		\item We have natural short exact sequences:
 		\begin{gather*}
 		1 \to (U_{\frakI}^{(\nu)}\times U_D^{(\nu)})\cdot \Delta_{\xi}(L^{\times})
 		\to \Stab _{\nu}\to W_{L'}\to 1, \\
 		1\to L^{\times}U_D^{(\nu)}=L^{\times}U_D^{\lfloor (\nu +1)/2\rfloor}\to \overline{\Stab} _{\nu}\to W_{L'}\to 1.
 		\end{gather*}
 		\item The pull-back of $W_{L}\subset W_{L'}$ to $\Stab_{\nu}$ (resp.\ $\overline{\Stab} _{\nu}$) is
 		$(U_{\frakI}^{(\nu)}\times U_D^{(\nu)}\times \{ 1\})
 		\cdot (\Delta_{\xi}(L^{\times})\times \{ 1\}) 
 		\cdot \{ (1, a_{\sigma}^{-1}, \sigma) \mid \sigma \in W_L\}$
 		(resp.\ $L^{\times}U_D^{\lfloor (\nu +1)/2\rfloor} \times W_{L}$).
 	\end{itemize}
    The action of the pull-back of $W_{L}\subset W_{L'}$ to $\Stab_{\nu}$ on $H_{\nu}$ is completely explicit (Theorem \ref{Thm:stabandaction} and Propositions \ref{Prop:oddcoh}, \ref{Prop:evencoh}) and accordingly we will first study the action of 
    $L^{\times}U_D^{\lfloor (\nu +1)/2\rfloor} \times W_{L}\subset \overline{\Stab} _{\nu}$ in Proposition \ref{Prop:homasrep} below.
 \end{Rem}
 \begin{Prop} \label{Prop:occurrencecond}
 Let $(\pi, V)$ be an irreducible smooth representation 
 of $GL_n(K)$.
         For $\zeta \in \mu _{q-1}(K)$ 
         we have
         \[
         \Hom _{U_{\frakI}^{(\nu)}}
         \left( H _{\nu, \zeta},
         \pi
         \right)
         \neq 0
         \]
         if and only if
         $\pi$ is an essentially tame (supercuspidal) representation
         parametrized by
         a minimal admissible pair 
         $(L/K, \xi)$
         such that $\xi |_{U_L^{\nu}}=\psi _{\zeta \varphi _L^{-\nu}}^L$.
         Moreover, 
         if this space is non-zero,
         then we have
         \[
         \dim \Hom _{U_{\frakI}^{(\nu)}} \left( 
                                                 H _{\nu, \zeta}, \pi
                                                 \right)=\begin{cases}
                                                           1 &\text{ if $\nu$ is odd} \\
                                                           q^{(n-1)/2} &\text{ if $\nu$ is even}.
                                                           \end{cases}
         \]
 \end{Prop}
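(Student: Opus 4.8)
The plan is to reduce the Hom-space to a statement about which irreducible smooth representations of $\GL_n(K)$ contain a certain representation of $U_{\frakI}^{(\nu)}$, and then to recognise that representation as (a multiple of) the type built into $\pi_\xi$ in Subsection \ref{subsec:etc}.

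First I would pin down $H_{\nu,\zeta}$ as a representation of $U_{\frakI}^{(\nu)}$. By Theorem \ref{Thm:stabandaction}\eqref{item:actionofGL} the action of $U_{\frakI}^{(\nu)}$ on $(Z_\nu)^{\mathrm{perf}}$, hence on $H_c^{\bullet}(Z_\nu,\overline{\bbQ}_\ell)$, factors through $U_{\frakI}^{(\nu)}\to S_{1,\nu}$. Combining this with Proposition \ref{Prop:oddcoh}\eqref{item:oddnondeg} when $\nu$ is odd (note $\gcd(n,\nu)=1$, so the relevant degree is $n-1$) and Proposition \ref{Prop:evencoh}\eqref{item:evennondeg} when $\nu$ is even, one finds: if $\nu$ is odd, $H_{\nu,\zeta}$ is the one-dimensional character $\chi_\zeta$ of $U_{\frakI}^{(\nu)}$ obtained from $U_{\frakI}^{(\nu)}\to S_{1,\nu}=k\xrightarrow{\overline{\psi}_\zeta}\overline{\bbQ}_\ell^\times$; if $\nu$ is even, then as an $S_{1,\nu}\times S_{2,\nu}$-module $H_{\nu,\zeta}=\rho_{1,\overline{\psi}_\zeta}\boxtimes\rho_{2,\overline{\psi}_\zeta}$, so as a $U_{\frakI}^{(\nu)}$-module (via $U_{\frakI}^{(\nu)}\to S_{1,\nu}$) it is $\rho_{1,\overline{\psi}_\zeta}^{\oplus q^{(n-1)/2}}$, using $\dim\rho_{2,\overline{\psi}_\zeta}=q^{(n-1)/2}$. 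Thus it is enough to compute $\Hom_{U_{\frakI}^{(\nu)}}(\chi_\zeta,\pi)$ (odd) resp. $\Hom_{U_{\frakI}^{(\nu)}}(\rho_{1,\overline{\psi}_\zeta},\pi)$ (even), and the asserted dimensions $1$, $q^{(n-1)/2}$ follow once these spaces are shown to be at most one-dimensional.

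Next I would make the bridge to the construction of $\pi_\xi$. The condition $\xi|_{U_L^\nu}=\psi^L_{\zeta\varphi_L^{-\nu}}$ forces $\xi$ to be trivial on $U_L^{\nu+1}$, and, since $\gcd(n,\nu)=1$ makes $\zeta\varphi_L^{-\nu}$ a minimal element of $L$, it forces $(L/K,\xi)$ to be minimal with jump at $\nu$; conversely such a minimal pair has $i=\nu$ and $\alpha=\alpha(\xi)\equiv\zeta\varphi_L^{-\nu}\pmod{U_L^1}$. Using the embedding $L=K(\varphi_L)\hookrightarrow M_n(K)$, $\varphi_L\mapsto\varphi$, of Remark \ref{Rem:connwiththegeom} and $C_1=L^{\perp}$, a short trace computation (only the $\frakp_L$-component of $g-1$ contributes, because $\tr(\varphi^{-\nu}c)=0$ for $c\in C_1$) gives $\psi_\alpha|_{U_{\frakI}^{(\nu)}}=\chi_\zeta$, and more generally identifies the Heisenberg structure on the image of $U_{\frakI}^{(\nu)}$ in $\calQ=J_\xi^1/\Ker\theta_\xi$ with that of $S_{1,\nu}$ in Propositions \ref{Prop:gpS1}\eqref{item:evenS1} and \ref{Prop:ourHeisenberg}. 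Since $U_{\frakI}^{(\nu)}\subset J_\xi^1\subset J_\xi$ and $\Lambda_\xi|_{J_\xi^1}=\eta_\xi$, restriction shows $\Lambda_\xi|_{U_{\frakI}^{(\nu)}}$ equals $\chi_\zeta$ when $\nu$ is odd, and contains $\rho_{1,\overline{\psi}_\zeta}$ when $\nu$ is even, by uniqueness of the Heisenberg representation with a given central character (Proposition \ref{Prop:Heisenberg}). Hence $\pi_\xi=\cInd_{J_\xi}^{\GL_n(K)}\Lambda_\xi$ contains the required $U_{\frakI}^{(\nu)}$-representation, which gives the ``if'' direction; the multiplicity count then follows from a Mackey decomposition of $\pi_\xi|_{U_{\frakI}^{(\nu)}}$ in which only the trivial double coset intertwines.

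For the converse, suppose the Hom-space is nonzero, so $\pi|_{U_{\frakI}^{(\nu)}}$ contains $\chi_\zeta=\psi_\alpha|_{U_{\frakI}^{(\nu)}}$ (odd) resp. $\rho_{1,\overline{\psi}_\zeta}$ (even), with $\alpha\equiv\zeta\varphi^{-\nu}$ minimal and generating the totally ramified $L$. I would then argue that extending $\chi_\zeta$ (resp. inducing $\rho_{1,\overline{\psi}_\zeta}$) along $U_{\frakI}^{(\nu)}\subset J_\xi^1$ produces exactly the characters $\theta_{\xi'}$ (resp. Heisenberg representations $\eta_{\xi'}$) attached to minimal pairs $(L/K,\xi')$ with $\xi'|_{U_L^\nu}=\psi^L_{\zeta\varphi_L^{-\nu}}$, and that, by the type theory for $\GL_n(K)$ in the totally ramified case (Bushnell--Kutzko; cf.\ the construction of $\pi_\xi$ in \cite{BHetLLCI} and the argument in \cite{ITepitame}), the only irreducible smooth representations of $\GL_n(K)$ containing such a type are the $\pi_{\xi'}$. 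The main obstacle is precisely this step: controlling \emph{all} irreducible smooth $\pi$ (not supercuspidal a priori) that contain the $U_{\frakI}^{(\nu)}$-type, and, for even $\nu$, carrying out the comparison between the Heisenberg group $S_{1,\nu}$ of Proposition \ref{Prop:gpS1} and the Heisenberg group $\calQ$ appearing in the construction of $\pi_\xi$ accurately enough to match central characters and intertwining.
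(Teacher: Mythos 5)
Your first two steps (identifying $H_{\nu,\zeta}|_{U_{\frakI}^{(\nu)}}$ as $\chi_\zeta$ for odd $\nu$ and as $\rho_{1,\overline{\psi}_\zeta}^{\oplus q^{(n-1)/2}}$ for even $\nu$, and checking that $\Lambda_\xi|_{U_{\frakI}^{(\nu)}}$ contains this representation, so that $\pi_\xi$ does) agree with the paper's set-up and give the ``if'' direction. The genuine gap is exactly where you flag it: the converse. The assertion that ``by the type theory for $\GL_n(K)$ \ldots the only irreducible smooth representations containing such a type are the $\pi_{\xi'}$'' is the statement to be proved, not something you can cite; nothing in Lemma \ref{lem:constoflambda} or in the construction of $\pi_\xi$ says that an arbitrary irreducible smooth $\pi$ whose restriction to the small group $U_{\frakI}^{(\nu)}$ contains $\chi_\zeta$ (resp. $\rho_{1,\overline{\psi}_\zeta}$) must contain the full type $\Lambda_\xi$ on $J_\xi$. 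The paper closes this with a concrete mechanism: since $L^{\times}$ normalizes $C_1$ and the central character of $\rho_1$, it normalizes $(U_{\frakI}^{(\nu)},\rho_1)$, so $L^{\times}U_{\frakI}^{(\nu)}=L^{\times}U_{\frakI}^{\lfloor(\nu+1)/2\rfloor}$ acts on the $\rho_1$-isotypic subspace of $\pi$; an irreducible constituent $\Lambda$ of that subspace restricts on $U_{\frakI}^{\nu}$ to a sum of $\psi_{\zeta\varphi_L^{-\nu}}$, and Carayol's classification \cite[5.6, 5.7]{CarCusp} of the irreducible representations of $L^{\times}U_{\frakI}^{\lfloor(\nu+1)/2\rfloor}$ containing this character on $U_{\frakI}^{\nu}$ forces $\Lambda\simeq\Lambda_\xi$ for some $\xi$ as in the statement; the resulting nonzero map $\pi_\xi\to\pi$ is then an isomorphism by irreducibility. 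Without this input (or an equivalent intertwining/exhaustion argument carried out in detail), the ``only if'' half, which also yields supercuspidality of $\pi$, remains unproved.

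Your multiplicity count is likewise only asserted: the claim that in the Mackey decomposition of $\pi_\xi|_{U_{\frakI}^{(\nu)}}$ ``only the trivial double coset intertwines'' amounts to an intertwining computation for the minimal stratum character $\psi_{\zeta\varphi_L^{-\nu}}$ across $J_\xi\backslash\GL_n(K)/U_{\frakI}^{(\nu)}$, which you do not carry out and which is essentially the same input the paper imports from Carayol. The paper gets the multiplicity differently and more cheaply at that point: any two irreducible constituents of the $\rho_1$-isotypic subspace are $\Lambda_{\xi}$ and $\Lambda_{\xi'}$ with $\pi_\xi\simeq\pi_{\xi'}$, so by injectivity of the parametrization $\xi'=\xi^{\sigma}$ with $\sigma\in\Aut(L/K)$ fixing $\zeta\varphi_L^{-\nu}$, hence $\sigma=\mathrm{id}$; thus the isotypic subspace is $\Lambda_\xi$-isotypic and
\[
\dim \Hom _{U_{\frakI}^{(\nu)}} (\rho _1, \pi)
 =\dim \Hom _{L^{\times}U_{\frakI}^{\lfloor (\nu +1)/2\rfloor}} (\Lambda _{\xi}, \pi)
 =\dim \End _{\GL _n(K)} (\pi)=1,
\]
with the factor $q^{(n-1)/2}=\dim\rho_2$ accounting for the stated dimension when $\nu$ is even. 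You should either reproduce this argument or actually establish the intertwining claim; as written, both the exhaustion step and the multiplicity-one step are missing.
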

 
 \begin{proof}
 Define irreducible representations 
 $\rho _1$ and $\rho _2$ of $S_{1, \nu}$ and $S_{2, \nu}$ by expressing
 $H_{\nu, \zeta}\simeq \rho _1\boxtimes \rho _2$ 
 as a representation of $S_{1, \nu}\times S_{2, \nu}$.
 Thus, if $\nu$ is even, $\rho _1=\rho _{1, \overline{\psi} _{\zeta}}$ in the notation of
 Proposition \ref{Prop:evencoh} (2)
 and similarly for $\rho _2$,
 so that 
 \[
 \dim \rho _1=\dim \rho _2=\begin{cases}
                                    1& \text{ if $\nu$ is odd} \\
                                    q^{(n-1)/2} &\text{ if $\nu$ is even.}
                                    \end{cases}
 \]
 Then we need to determine 
 the condition for $\rho _1$ to occur in $\pi$
 and prove that 
 the multiplicity is (at most) one. 
 
 Suppose $\rho _1$ occurs in $\pi$.
 Note first that 
 $L^{\times}$ normalizes
 $C_1$ (appearing in the definition of $U_{\frakI}^{(\nu)}$)
 and the central character of $\rho _1$,
 which in turn implies 
 that it normalizes
 $(U_{\frakI}^{(\nu)}, \rho _1)$.
 Thus $L^{\times}U_{\frakI}^{(\nu)}=L^{\times}U_{\frakI}^{\lfloor (\nu +1)/2\rfloor}$
 acts on the $\rho _1$-isotypic part $V^{\rho _1}$
 of $\pi$.
 Take an irreducible subrepresentation $\Lambda \subset V^{\rho _1}$.
 Let us show that $\Lambda \simeq \Lambda _{\xi}$
 for some $\xi$ such that $\Lambda|_{K^{\times}U_{L}^1}$ is $\xi|_{K^{\times}U_{L}^1}$-isotypic.
 We use the following commutative diagrams
 \[
 \xymatrix{
 	\frakP_{\frakI}^{-\nu}/\frakP_{\frakI}^{-\lfloor \nu/2\rfloor}\ar[r]^-{\sim} \ar[d]_-{\text{can}} &(U_{\frakI}^{\lfloor \nu/2\rfloor +1}/U_{\frakI}^{\nu+1})^{\vee} \ar[d]^-{\text{restriction}} 
 	&\frakP_{\frakI}^{-\nu}/\frakP_{\frakI}^{-\lfloor \nu/2\rfloor}\ar[r]^-{\sim} \ar[d]_-{p_1} &(U_{\frakI}^{\lfloor \nu/2\rfloor +1}/U_{\frakI}^{\nu+1})^{\vee} \ar[d]^-{\text{restriction}} \\
 	\frakP_{\frakI}^{-\nu}/\frakP_{\frakI}^{1-\nu}\ar[r]^-{\sim} &(U_{\frakI}^{\nu}/U_{\frakI}^{\nu+1})^{\vee},
 	&\frakp_{L}^{-\nu}/\frakp_{L}^{-\lfloor \nu/2\rfloor}\ar[r]^-{\sim}
 	&(U_{L}^{\lfloor \nu/2\rfloor +1}/U_{L}^{\nu+1})^{\vee},
 }
 \]
 where the horizontal morphisms are from \eqref{eq:paramofchars} for $L$ and analogously defined for $\frakI$, 
 the morphism ``can" is the canonical projection 
 and $p_1$ is from Proposition \ref{Prop:GL(nu)explicit}.
As $\rho_1|_{U_{\frakI}^{\nu}}$ is $\psi _{\zeta \varphi _L^{-\nu}}$-isotypic and in particular trivial on $U_{\frakI}^{\nu+1}$,
the restriction $\Lambda|_{U_{\frakI}^{\lfloor \nu/2\rfloor +1}}$ decomposes into characters.
Take such a character of $U_{\frakI}^{\lfloor \nu/2\rfloor +1}$ occurring in $\Lambda$
and write $\psi_{\alpha}$ with $\alpha \in \frakP_{\frakI}^{-\nu}$,
so that $\alpha+\frakP_{\frakI}^{1-\nu}=\zeta \varphi_L^{-\nu}+\frakP_{\frakI}^{1-\nu}$ by the first diagram.
Now $\rho_1$ is moreover trivial on $U_{\frakI}^{(\nu+1)}$,
which implies that $\alpha$ lies in fact in $\frakp_{L}^{-\nu}+\frakP_{\frakI}^{-\lfloor \nu/2\rfloor}$. 
Therefore we may assume that $\alpha \in \frakp_{L}^{-\nu}$.
The discussions in \cite[\S5.6, 5.7]{CarCusp},
which treat more general cases,
yield in this case 
the classification of
irreducible representations of 
the normalizer of $(U_{\frakI}^{\lfloor \nu/2\rfloor +1}, \psi_{\alpha})$ in $\varphi^{\bbZ}U_{\frakI}=L^{\times}U_{\frakI}$
containing $\psi _{\alpha}$
when restricted to $U_{\frakI}^{\lfloor \nu/2\rfloor+1}$.
Furthermore, by \cite[Proposition 3.6]{CarCusp} 
the normalizer is 
$K(\alpha)^{\times}U_{\frakI}^{\lfloor (\nu +1)/2\rfloor}
=L^{\times}U_{\frakI}^{\lfloor (\nu +1)/2\rfloor}$.
Hence $\Lambda$ arises in the following way:
\begin{itemize}
	\item Extend $\psi _{\alpha}$ to a character $\Theta$ of $K^{\times}U_L^1U_{\frakI}^{\lfloor \nu /2\rfloor +1}$.
	\item Take an irreducible representation $\Upsilon$ of $K^{\times}U_L^1U_{\frakI}^{\lfloor (\nu +1)/2\rfloor}$ containing $\Theta$, which is in fact unique up to isomorphism and $\Theta$-isotypic.
	\item Extend $\Upsilon$ to an irreducible representation $\Lambda$ of $L^{\times}U_{\frakI}^{\lfloor (\nu +1)/2\rfloor}$.
\end{itemize}
Thus if we set $\xi_0=\Theta|_{K^{\times}U_L^1}$, then
the restriction of $\Lambda$ to $K^{\times}U_L^1U_{\frakI}^{\lfloor \nu /2\rfloor +1}$
(resp.\ to $K^{\times}U_L^1$) 
is $\Theta$-isotypic (resp.\ $\xi_0$-isotypic).
We take an extension $\xi$ of $\xi_0$ to $L^{\times}$ and form the character $\theta_{\xi}$ of $U_L^1U_{\frakI}^{\lfloor \nu/2\rfloor +1}$ as defined before Lemma \ref{lem:constoflambda}.
It is independent of the choice of $\xi$ and we denote it by $\theta_{\xi_0}$. 
By the definition of $\theta_{\xi_0}$ and the second diagram we have 
$\theta_{\xi_0}|_{U_{\frakI}^{\lfloor \nu/2\rfloor +1}}=\psi_{p_1(\alpha)}=\psi_{\alpha}$.
Therefore, $\theta_{\xi_0}$ is equal to the restriction of $\Theta$.
It easily follows that $\eta$ constructed from $\theta_{\xi_0}$ is isomorphic to the restriction of $\Upsilon$,
and the possible $n$ extensions of $\xi_0$ to $L^{\times}$ corresponds precisely to the possible $n$ extensions of $\Upsilon$ to $L^{\times}U_{\frakI}^{\lfloor (\nu+1)/2\rfloor}$.
We have shown that $\Lambda \simeq \Lambda_{\xi}$ 
for some $\xi$ extending $\xi_0$ as claimed, and in particular
$\xi|_{U_{L}^{\nu}}=\psi_{\zeta\varphi_L^{-\nu}}^L$.
Hence we obtain a homomorphism
$\pi _{\xi}=\cInd \Lambda _{\xi} \rightarrow \pi$,
which is an isomorphism
by the irreducibility of $\pi$.
The converse being easy,
we deduce the desired condition for the occurrence of $\rho _1$.
  
 Since $\dim \Lambda _{\xi} =\dim \rho _1$ as above,
 the claim about the multiplicity
 is reduced to certain
 multiplicity one statement 
 in the theory of types.
 We can argue as follows.
 Take once again 
 a subrepresentation $\Lambda '$ in $V^{\rho _1}$.
 By the above argument
 we have
 $\Lambda '\simeq \Lambda _{\xi '}$
 for some $\xi '$
 and $\pi _{\xi}\simeq \pi _{\xi '}$.
 Then $\xi ^{\sigma}=\xi '$ 
 for some $\sigma \in \Aut (L/K)$
 by the injectivity of the parametrization,
 which implies $\zeta \varphi _L^{-\nu}=\zeta (\varphi _L^{-\nu})^{\sigma}$
 and hence $\sigma =\mathrm{id}$.
 This shows that $V^{\rho _1}$ is $\Lambda _{\xi}$-isotypic.
 Therefore, 
 \[
 \dim \Hom _{U_{\frakI}^{(\nu)}} (\rho _1, \pi)
 =\dim \Hom _{L^{\times}U_{\frakI}^{\lfloor (\nu +1)/2\rfloor}} (\Lambda _{\xi}, \pi)
 =\dim \End _{\GL _n(K)} (\pi)=1
 \]
 as desired
 (see also \cite[\S15.7 Proposition (3)]{BHLLCGL2}).
 \end{proof}

 \begin{Prop} \label{Prop:homasrep}
         Let $\pi$ be as above.
         Suppose that 
         $\Hom _{U_{\frakI}^{(\nu)}}
         \left( H _{\nu},
         \pi
         \right)
         \neq 0$,
         so that
         $\pi \simeq \pi _{\xi}$
         for some minimal admissible pair 
         $(L/K, \xi)$
         by Proposition \ref{Prop:occurrencecond}.
         Then this space contains
         \[
         \Lambda _{\iota \xi}^{D} \boxtimes {}_{K}\mu _{\xi}^{-1}\xi
         \]
         as a representation of
         $L^{\times}U_D^{\lfloor (\nu +1)/2\rfloor} \times W_L \subset \overline{\Stab} _{\nu}$.
 \end{Prop}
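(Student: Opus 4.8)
The strategy is to compute the $L^{\times}U_D^{(\nu)}\times W_L$-representation on $\Hom_{U_{\frakI}^{(\nu)}}(H_\nu,\pi)$ directly from the explicit geometric actions of Theorem~\ref{Thm:stabandaction}, and to recognise the result through the characterisation of $\Lambda^D_{\iota\xi}$ in Lemma~\ref{lem:constoflambda} and the formula for ${}_{K}\mu_{\xi}$ in Theorem~\ref{Thm:GLrect}; throughout I use $L^{\times}U_D^{(\nu)}=L^{\times}U_D^{\lfloor(\nu+1)/2\rfloor}=J^D_{\iota\xi}$ and $U_L^1 U_D^{(\nu)}=J^{1,D}_{\iota\xi}$ (Remark~\ref{Rem:connwiththegeom}).

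\emph{Reductions.} Replacing $\xi$ by $(\chi\circ N_{L/K})\xi$ for a character $\chi$ of $K^{\times}$ transforms both sides of the asserted identity compatibly, using $\pi_{(\chi\circ N_{L/K})\xi}=\chi\pi_{\xi}$, $\pi^D_{(\chi\circ N_{L/K})\xi}=\chi\pi^D_{\xi}$ and the invariance of ${}_{K}\mu_{\xi}$, ${}_{D}\iota_{\xi}$ under such twists (Theorems~\ref{Thm:GLrect},~\ref{Thm:Drect}); so we may assume $\xi|_{U_L^{\nu+1}}=1$, whence $\pi=\pi_{\xi}=\cInd_{J_{\xi}}^{\GL_n(K)}\Lambda_{\xi}$ with $J_{\xi}=L^{\times}U_{\frakI}^{(\nu)}$. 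Let $\zeta_0\in\mu_{q-1}(K)$ be the unique root of unity with $\xi|_{U_L^{\nu}}=\psi^L_{\zeta_0\varphi_L^{-\nu}}$; by Proposition~\ref{Prop:occurrencecond} only the $\zeta_0$-summand of $H_\nu$ contributes, and its proof shows the $\rho_1$-isotypic subspace $V^{\rho_1}$ of $\pi$ is a single copy of $\Lambda_{\xi}$ and that $\Lambda_{\xi}|_{U_{\frakI}^{(\nu)}}\simeq\rho_1$, where $H_{\nu,\zeta_0}\simeq\rho_1\boxtimes\rho_2$ over $S_{1,\nu}\times S_{2,\nu}$. Hence $M:=\Hom_{U_{\frakI}^{(\nu)}}(H_\nu,\pi)=\Hom_{U_{\frakI}^{(\nu)}}(H_{\nu,\zeta_0},\Lambda_{\xi})$; restricting $\rho_1\boxtimes\rho_2$ and $\Lambda_{\xi}$ to $U_{\frakI}^{(\nu)}$ and applying Schur's lemma gives a canonical isomorphism $M\simeq(\rho_2)^{\vee}$ of vector spaces, of dimension $\dim\rho_2=\dim\Lambda^D_{\iota\xi}$, so the asserted inclusion will in fact be an isomorphism.

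\emph{Restriction to $D^{\times}$.} For $d\in U_D^{(\nu)}$ the lift $(1,d,1)\in\Stab_{\nu}$ makes $U_D^{(\nu)}$ act on $M$ by precomposition with $\rho_2(d)^{-1}$, so $M|_{U_D^{(\nu)}}\simeq(\rho_2)^{\vee}$, which factors through $S_{2,\nu}$ (Proposition~\ref{Prop:ourHeisenberg}); since $\Delta_{\xi}(U_L)$ acts trivially on $H_{\nu,\zeta_0}$ (Theorem~\ref{Thm:stabandaction}~\eqref{item:actionofU_L}) and $\Lambda_{\xi}|_{U_L}=\xi|_{U_L}$ (Lemma~\ref{lem:constoflambda}), $U_L^1$ acts on $M$ by the scalar $\xi|_{U_L^1}$. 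Thus $M|_{J^{1,D}_{\iota\xi}}$ is irreducible, and comparing its central character with $\theta^D_{\iota\xi}$ (using $\iota\xi|_{U_L^{\nu}}=\xi|_{U_L^{\nu}}=\psi^L_{\zeta_0\varphi_L^{-\nu}}$ and Theorem~\ref{Thm:stabandaction}~\eqref{item:actionofD}) identifies it with $\Lambda^D_{\iota\xi}|_{J^{1,D}_{\iota\xi}}$; therefore $M|_{L^{\times}U_D^{(\nu)}}$ and $\Lambda^D_{\iota\xi}$ differ by a character $\chi_0$ of $L^{\times}U_D^{(\nu)}/J^{1,D}_{\iota\xi}\simeq L^{\times}/U_L^1$, trivial on $\mu_{q-1}(K)$ by the preceding. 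To see $\chi_0(\varphi_L)=1$ one uses that $\varphi_G=\Delta_{\xi}(\varphi_L)$ acts on $H_{\nu,\zeta_0}$ by the cyclic shift $\gamma$ of Propositions~\ref{Prop:oddcoh}~\eqref{item:oddcyclic},~\ref{Prop:evencoh}~\eqref{item:evencyclic} (Theorem~\ref{Thm:stabandaction}~\eqref{item:actionofvarphi}), of trace $(-1)^{n-1}$ resp.\ $1$ on $H_{\nu,\zeta_0}$ according as $\nu$ is odd or even for $\nu$ coprime to $n$; feeding this, together with $\tr\Lambda_{\xi}(\varphi)=\epsilon_{\xi}\xi(\varphi_L)$ and the Clifford-theoretic description of how $\gamma$ and $\Lambda_{\xi}(\varphi)$ act on the $\rho_1\boxtimes\rho_2$-structure, into the computation of $\tr(\varphi_L\mid M)$, and comparing with $\tr\Lambda^D_{\iota\xi}(\varphi_L)=\epsilon^D_{\iota\xi}\,\iota(\varphi_L)\,\xi(\varphi_L)$ via $\epsilon_{\xi}=\epsilon^D_{\iota\xi}={q\overwithdelims()n}$ (Proposition~\ref{Prop:sympsign}), $\iota(\varphi_L)=(-1)^{n-1}$ (Theorem~\ref{Thm:Drect}) and the fact that $n$ is odd whenever $\nu$ is even, one obtains $\chi_0=1$, i.e.\ $M|_{L^{\times}U_D^{(\nu)}}\simeq\Lambda^D_{\iota\xi}$.

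\emph{The $W_L$-character.} Since $W_L$ lies in a factor of $\overline{\Stab}_{\nu}$ commuting with $L^{\times}U_D^{(\nu)}$ and $M|_{L^{\times}U_D^{(\nu)}}$ is irreducible, Schur's lemma gives $M\simeq\Lambda^D_{\iota\xi}\boxtimes\chi$ for a character $\chi$ of $W_L$, and it remains to check $\chi=({}_{K}\mu_{\xi}^{-1}\xi)|_{W_L}$. For $\sigma\in W_L$ put $a_{\sigma}=\Art_L^{-1}\sigma$, $n_{\sigma}=v(a_{\sigma})$, $u_{\sigma}=a_{\sigma}\varphi_D^{-n_{\sigma}}$; the element $(i_{\xi}(a_{\sigma}),1,\sigma)=\Delta_{\xi}(a_{\sigma})\cdot(1,a_{\sigma}^{-1},\sigma)\in\calS$ lifts the element of $\overline{\Stab}_{\nu}$ with trivial $D^{\times}$-component and acts on $H_{\nu,\zeta_0}$ as $\gamma^{n_{\sigma}}\circ\Frob_q^{n_{\sigma}}$, composed with $y_i\mapsto\overline u_{\sigma}^{(q-1)/2}y_i$ if $\nu$ is odd (Theorem~\ref{Thm:stabandaction}~\eqref{item:actionofvarphi},~\eqref{item:actionofWeilgp}). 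Evaluating the resulting scalar on $M$ from the $\Frob_q$-eigenvalues on $H_c^{n-1}(Z_{\nu})$ after the twist $((1-n)/2)$ (Propositions~\ref{Prop:oddcoh}~\eqref{item:oddnondeg},~\ref{Prop:evencoh}~\eqref{item:evennondeg}), together with $\tr\Lambda_{\xi}(\varphi)=\epsilon_{\xi}\xi(\varphi_L)$ and $\xi(u_{\sigma})$, and comparing with the formula for ${}_{K}\mu_{\xi}$ in Theorem~\ref{Thm:GLrect} via $\lambda_{L/K}(\psi)={q\overwithdelims()n}$, $\delta_{L/K}(u)={\overline u\overwithdelims()q}$ (Proposition~\ref{Prop:valuesforinvs}) and the standard identities for the quadratic Gauss sum $\frakg(\overline\psi_{\zeta_0})$, yields $\chi=({}_{K}\mu_{\xi}^{-1}\xi)|_{W_L}$. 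The genuinely delicate part of the proof is this last reconciliation — matching the geometric Frobenius eigenvalues, which carry quadratic-form Gauss sums and the $\pm1$ signs produced by Arf invariants and by the parity of $n$, against the arithmetic rectifier ${}_{K}\mu_{\xi}$, and likewise pinning down the sign in the identification $M|_{U_D^{(\nu)}}\simeq\Lambda^D_{\iota\xi}|_{U_D^{(\nu)}}$ above; as every ingredient is already explicit, this amounts to a somewhat intricate case-by-case verification parallel to those in the proofs of Propositions~\ref{Prop:sympsign} and~\ref{Prop:oddcoh}.
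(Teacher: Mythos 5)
Your skeleton coincides with the paper's own proof: identify $\Hom_{U_{\frakI}^{(\nu)}}(H_{\nu,\zeta_0},\Lambda_{\xi})$ with $\rho_2^{\vee}$, show $U_L$ acts through $\xi$, pin down the $L^{\times}U_D^{\lfloor(\nu+1)/2\rfloor}$-structure by a trace identity at $\varphi_D$, and then determine the $W_L$-character by comparison with ${}_K\mu_{\xi}$. The problem is that the two steps carrying the actual content are asserted, not proved. First, for even $\nu$ you never compute $\tr(\varphi_D^j\mid M)$ (equivalently, you never show $\chi_0(\varphi_L)=1$): there $M$ has dimension $q^{(n-1)/2}$ and the relevant operator mixes the geometric action of $(x\varphi^j,\varphi_D^j,1)^{-1}$ on $H_{\nu,\zeta_0}$ with $\Lambda_{\xi}(x\varphi^j)$ on the target, so its trace is not a product of traces. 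The paper evaluates it via the averaging identity
\[
\tr \Lambda '(\varphi _D^j)=\lvert Q\rvert ^{-1}\sum _{x\in Q}\tr \left( \left( x\varphi ^j, \varphi _{D}^j, 1\right)^{-1}\middle| H_{\nu, \zeta} \right)\tr \Lambda _{\xi} (x\varphi ^j),
\qquad Q={U_{\frakI}^{(\nu)}}/{U_{\frakI}^{(\nu +1)}},
\]
and then needs the conjugacy--counting input of \cite[Lemma A1.3]{BHtameliftII} to collapse the sum to the $x=1$ term before the cyclic-action traces of Propositions \ref{Prop:oddcoh}, \ref{Prop:evencoh} can be fed in. Your appeal to a ``Clifford-theoretic description of how $\gamma$ and $\Lambda_{\xi}(\varphi)$ act'' does not supply this mechanism, and this is exactly where an unnoticed sign would destroy the identification with $\Lambda^D_{\iota\xi}$. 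Second, the determination of the $W_L$-character \emph{is} the case-by-case reconciliation of Frobenius eigenvalues (Gauss sums, Arf-invariant signs, parity of $n$, $p=2$ versus $p\neq2$) with the rectifier ${}_K\mu_{\xi}$; the paper carries this out in full, using Proposition \ref{Prop:valuesforinvs} and the expression of $\lambda_{L/K}(\psi)$ through $\frakm(\overline{\psi})^{n-1}$, whereas you state the outcome and defer the verification, so the heart of the proposition is missing. Note also that your choice of lift $(i_{\xi}(a_{\sigma}),1,\sigma)$ makes this computation strictly harder than necessary: its $\GL_n$-component is nontrivial, so for even $\nu$ the scalar must again be extracted by the same averaging argument; the paper instead computes the trace of the pair $(a_{\sigma}^{-1},\sigma)$, whose lift $(1,a_{\sigma}^{-1},\sigma)$ has trivial $\GL_n$-component, so the operator on the Hom space comes from the source alone and its trace is read off directly from the Frobenius trace on $H_{\nu,\zeta_0}$.

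Two smaller points: the opening twisting reduction is superfluous, since the condition $\xi|_{U_L^{\nu}}=\psi^L_{\zeta\varphi_L^{-\nu}}$ coming from Proposition \ref{Prop:occurrencecond} already forces $\xi$ to be trivial on $U_L^{\nu+1}$, and the asserted compatibility of both sides under such twists is itself unchecked; and the triviality of $\chi_0$ on $\mu_{q-1}(K)$ needs the full statement of Theorem \ref{Thm:stabandaction} \eqref{item:actionofU_L} for $U_L$, not only the $U_L^1$-computation you record.
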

 \begin{proof}
 By Proposition \ref{Prop:occurrencecond}
 we have
 $\Hom _{U_{\frakI}^{(\nu)}}
  \left( H_{\nu, \zeta},
         \pi
         \right)
  \neq 0$
 for some element $\zeta \in \mu _{q-1}(K)$
 such that
 $\xi |_{U_L^{\nu}}=\psi _{\zeta \varphi _L^{-\nu}}^L$.  
 We claim that this subspace 
 $\Hom _{U_{\frakI}^{(\nu)}}
  \left( H_{\nu, \zeta},
         \pi
         \right)
  \subset 
  \Hom _{U_{\frakI}^{(\nu)}}
  \left( H_{\nu},
         \pi
         \right)$
 is isomorphic to the representation
 $\Lambda _{\iota \xi}^{D} \boxtimes {}_{K}\mu _{\xi}^{-1}\xi$ 
 appearing in the assertion.
 
 First 
 it is indeed stable under the action of
 $L^{\times}U_D^{\lfloor (\nu +1)/2\rfloor} \times W_L 
 \subset \overline{\Stab} _{\nu}$
 because
 the action of the pull-back of 
 $L^{\times}U_D^{\lfloor (\nu +1)/2\rfloor}\times W_L$
 in $\Stab _{\nu}$ 
 (cf.\ Remark \ref{Rem:Stabastorsor})
 on 
 $H_{\nu}=\bigoplus _{\zeta '}H_{\nu, \zeta'}$
 preserves each summand.
 
 By the proof of Proposition \ref{Prop:occurrencecond},
  $\Hom _{U_{\frakI}^{(\nu)}}
  (H _{\nu, \zeta}, \pi _{\xi})
  =\Hom _{U_{\frakI}^{(\nu)}}
  (H _{\nu, \zeta}, \Lambda _{\xi})$
 is isomorphic to
 $\rho _{2}^{\vee}$ 
 (inflated via $U_D^{(\nu)}\rightarrow S_{2, \nu}$)  
 as a representation of 
 $U_D^{(\nu)}$.
 Therefore we may set
 $\Hom _{U_{\frakI}^{(\nu)}}
  \left( H_{\nu, \zeta},
         \pi
         \right)
  =\Hom _{U_{\frakI}^{(\nu)}}
  (H _{\nu, \zeta}, \Lambda _{\xi})
  =\Lambda '\boxtimes \xi '$
  with some irreducible smooth representation 
  $\Lambda '$ of $L^{\times}U_D^{\lfloor (\nu +1)/2\rfloor}$
  whose restriction to $U_D^{(\nu)}$ 
  is isomorphic to $\rho _{2}^{\vee}$
  and some smooth character $\xi '$ of
  $W_L$.
  
 Let us first show 
 $\Lambda '\simeq \Lambda _{\iota \xi}^{D}$.
 The action of $x\in L^{\times}\subset L^{\times}U_D^{\lfloor (\nu +1)/2\rfloor}$
 is given by the composition of the action of $(x, x, 1)^{-1}\in \Stab _{\nu}$ on the source
 and that of $x\in L^{\times}U_{\frakI}^{\lfloor (\nu +1)/2\rfloor}$ on the target.
 As $(x, x, 1)\in \Stab _{\nu} \ (x\in U_L)$ acts trivially 
 by Theorem \ref{Thm:stabandaction} \ref{item:actionofU_L} 
 and $\Lambda _{\xi} |_{U_L}$ is 
 $\xi|_{U_L}$-isotypic
 by Lemma \ref{lem:constoflambda} \ref{item:isotypic},
 we find that $U_L$ acts via the character $\xi$
 on ${\Lambda '}$, as desired.
 Thus,
 to conclude 
 $\Lambda '\simeq \Lambda _{\iota \xi}^{D}$, 
 we need to show
 \begin{equation} \label{eq:trforD}
 \tr \Lambda '(\varphi _D^j)=\epsilon _{\iota \xi}^D(\iota \xi) (\varphi _L^j)
 \end{equation}
 for any $j$ coprime to $n$
 by (\ref{eq:charreloflambda}).
 Let us express the left-hand side in terms of the traces of 
 $H_{\nu, \zeta}$ and $\Lambda_{\xi}$.
 Recall $S_{1, \nu}={U_{\frakI}^{(\nu)}}/{U_{\frakI}^{(\nu +1)}}$
 and write $W$ for the representation space of $\Lambda_{\xi}$.
 We view $\Hom _{U_{\frakI}^{(\nu)}}
 (H _{\nu, \zeta}, \Lambda _{\xi})=\Hom _{U_{\frakI}^{(\nu)}}
 (H _{\nu, \zeta}, W)=\Hom _{S_{1, \nu}}
 (H _{\nu, \zeta}, W)$
 as the $S_{1, \nu}$-fixed part of $\Hom _{\overline{\bbQ}_{\ell}}
 (H _{\nu, \zeta}, W)=H _{\nu, \zeta}^{\vee}\otimes W$
 and write $\mathrm{pr}_{S_{1, \nu}}$ for the projection.
 The automorphisms $(\varphi^j, \varphi_{D}^j, 1)$ 
 and $\varphi^j$ of $H _{\nu, \zeta}$
 and $W$
 (as $\overline{\bbQ}_{\ell}$-vector spaces) 
 together induce
 an automorphism $(\varphi^j, \varphi_{D}^j, 1)\otimes \varphi^j $ of $H _{\nu, \zeta}^{\vee}\otimes W$
 which restricts to the automorphism of the $S_{1, \nu}$-fixed part 
 whose trace we need to compute.
 Hence we have
 \begin{align*}
 \tr \Lambda '(\varphi _D^j)
 &=\tr \left( \left( (\varphi^j, \varphi_{D}^j, 1)\otimes \varphi^j \right)
 \circ \mathrm{pr}_{S_{1, \nu}} \middle|
  H _{\nu, \zeta}^{\vee}\otimes W \right) \\
 &=|S_{1, \nu}|^{-1}
 \sum _{x\in S_{1, \nu}}
 \tr \left( \left( \varphi ^jx, \varphi _{D}^j, 1\right) \middle| H_{\nu, \zeta}^{\vee} \right)
 \tr \Lambda _{\xi} (\varphi ^jx) \\
 &=|S_{1, \nu}|^{-1}
 \sum _{x\in S_{1, \nu}}
 \tr \left( \left( \varphi ^jx, \varphi _{D}^j, 1\right)^{-1}\middle| H_{\nu, \zeta} \right)
 \tr \Lambda _{\xi} (\varphi ^jx).
 \end{align*}
 A key to further computation is the fact that one can take a convenient conjugate of $\varphi^jx$ for each $x\in S_{1, \nu}$. 
 This follows from general results in \cite[Appendix 1]{BHtameliftII}, but
 we extract an argument for our simple situation.\footnote{The conclusion below is trivial if $\nu$ is odd and thus $Z=S_{1, \nu}$; one can take $y=1, z=x$.}
 Let $Z$ be the center of $S_{1, \nu}$. 
 Let $\gamma$ be the automorphism of the $\bbF_p$-vector space $Q=S_{1, \nu}/Z$ induced by
 the conjugation action of $\varphi$ on $S_{1, \nu}$.
 As $j$ and $n$ are coprime, $\gamma^{-j}$ has the trivial fixed point and 
 hence $\gamma^{-j}-\mathrm{id}$ 
 is also invertible.
 By applying $(\gamma^{-j}-\mathrm{id})^{-1}$ to the image of $x$ in $Q$,
 we see that there exist elements $y\in S_{1, \nu}$ and $z\in Z$
 such that $x=\varphi^{-j}y\varphi^{j}y^{-1}z$,
 namely
 $\varphi^{j}x=y\varphi^{j}zy^{-1}$.
 As $H_{\nu, \zeta}|_{S_{1, \nu}}$ and $\Lambda_{\xi}|_{S_{1, \nu}}$ share the central characters,  
 \begin{align*}
 &\tr \left( \left( \varphi ^jx, \varphi _{D}^j, 1\right)^{-1}\middle| H_{\nu, \zeta} \right)
 \tr \Lambda _{\xi} (\varphi ^jx) \\
 &=\tr \left( \left( y\varphi^{j}zy^{-1}, \varphi _{D}^j, 1\right)^{-1}\middle| H_{\nu, \zeta} \right)
 \tr \Lambda _{\xi} (y\varphi^{j}zy^{-1}) \\
 &=\tr \left( \left( \varphi^{j}, \varphi _{D}^j, 1\right)^{-1}\middle| H_{\nu, \zeta} \right)
 \tr \Lambda _{\xi} (\varphi^{j}) 
 =(-1)^{n-1}\epsilon _{\xi}\xi (\varphi _L^j),
 \end{align*}
 where
 we use Theorem \ref{Thm:stabandaction} \ref{item:actionofvarphi}, Proposition \ref{Prop:oddcoh} \ref{item:oddcyclic},
 Proposition \ref{Prop:evencoh} \ref{item:evencyclic} and 
 (\ref{eq:charreloflambda})
 in the last equality.
 This shows $\tr \Lambda '(\varphi _D^j)=(-1)^{n-1}\epsilon _{\xi}\xi (\varphi _L^j)$.
 Finally the equality (\ref{eq:trforD}) follows 
 since $\epsilon _{\iota \xi}^D=\epsilon _{\xi}$
 for any $\nu$
 by Proposition \ref{Prop:sympsign}
 and one can check $\iota(\varphi_L^j)=(-1)^{j(n-1)}=(-1)^{n-1}$.
 
 To prove $\xi '= {}_{K}\mu _{\xi}^{-1}\xi$
 we check 
 \begin{equation} \label{eq:trforWeil}
 \tr \left( (a_{\sigma}^{-1}, \sigma) \middle| 
            \Hom _{U_{\frakI}^{(\nu)}}
            \left( H_{\nu, \zeta},
            \Lambda _{\xi}
            \right) 
      \right)
  =\tr \left( \Lambda _{\iota \xi}^{D} \boxtimes {}_{K}\mu _{\xi}^{-1}\xi \right)
        (a_{\sigma}^{-1}, \sigma)
 \end{equation}
 for any $\sigma \in W_L$ with $n_{\sigma}=v_L(a_{\sigma})=-1$;
 this is sufficient,
 as these traces will turn out to be non-zero
 and a character of $W_L$ is clearly determined by 
 its restriction to $\{ \sigma \in W_L\mid n_{\sigma}=-1\}$.
 
 Now we proceed by cases.
 First suppose that $\nu$ is even.
 Noting the twist and the multiplicity of $\Lambda _{\xi}$ in $H_{\nu, \zeta}$,
 we see 
 \[
 \tr \left( (a_{\sigma}^{-1}, \sigma) \middle| 
            \Hom _{U_{\frakI}^{(\nu)}}
            \left( H_{\nu, \zeta},
            \Lambda _{\xi}
            \right) 
      \right)=1
 \] 
 by Theorem \ref{Thm:stabandaction} \ref{item:actionofWeilgp}
 and Proposition \ref{Prop:evencoh} \ref{item:evennondeg}.
 By (\ref{eq:charreloflambda}), Theorems \ref{Thm:GLrect} and \ref{Thm:Drect}, 
 we have
 \[
 \tr \left( \Lambda _{\iota \xi}^{D} \boxtimes {}_{K}\mu _{\xi}^{-1}\xi \right)
        (a_{\sigma}^{-1}, \sigma)
 =(-1)^{n-1}\epsilon _{\xi}^D 
  \lambda _{L/K}(\psi).
 \] 
 As $n$ is odd,
 the equality (\ref{eq:trforWeil}) follows from 
 Proposition \ref{Prop:sympsign} 
 and Proposition \ref{Prop:valuesforinvs} \ref{item:valueforoddlambda}.
 
 Suppose next that $\nu$ is odd.
 Put $u_{\sigma}=a_{\sigma}\varphi _D\in U_L$
 and $\frakm (\overline{\psi} _{\zeta})=q^{-1/2}\frakg (\overline{\psi} _{\zeta})
 ={\overline{\zeta} \overwithdelims ()k}\frakm (\overline{\psi})$.
 We have
 \begin{align*}
 &\tr \left( (a_{\sigma}^{-1}, \sigma) \middle| 
            \Hom _{U_{\frakI}^{(\nu)}}
            \left( H_{\nu, \zeta},
            \Lambda _{\xi}
            \right) 
      \right) \\
 &\quad =\begin{cases}
                   {q \overwithdelims ()n}
                     &\text{if $p=2$ and $n$ is odd} \\
                   {n \overwithdelims ()k} \frakm (\overline{\psi} _{\zeta})^{n-1}
                     &\text{if $p\neq 2$ and $n$ is odd} \\
                   -{{\overline{u} _{\sigma}} \overwithdelims ()k}^{n-1}
                     {{-1} \overwithdelims ()k}^{(\nu -1)/2}
                     {2 \overwithdelims ()k}
                     {n \overwithdelims ()k} 
                     \frakm (\overline{\psi} _{\zeta})^{n-1}
                     &\text{if $p\neq 2$ and $n$ is even},   
             \end{cases}
 \end{align*}
 by Theorem \ref{Thm:stabandaction} \ref{item:actionofWeilgp} and
 Proposition \ref{Prop:oddcoh} \ref{item:oddnondeg}, \ref{item:oddcyclic}.
 On the other hand, 
 we have
 \begin{align*}
 \tr \left( \Lambda _{\iota \xi}^{D} \boxtimes {}_{K}\mu _{\xi}^{-1}\xi \right)
        (a_{\sigma}^{-1}, \sigma)
 &=(-1)^{n-1}{}_{K}\mu _{\xi}^{-1}(a_{\sigma}) \\
 &=\begin{cases}
     \lambda _{L/K}(\psi ) 
      &\text{if $n$ is odd} \\
     -\delta _{L/K}(u_{\sigma}^{-1}) 
      {\zeta \overwithdelims ()k}
      {{-1} \overwithdelims ()k}^{(\nu -1)/2}
      \lambda _{L/K}(\psi )
      &\text{if $n$ is even},
    \end{cases}.
 \end{align*}
 by Theorems \ref{Thm:GLrect}, \ref{Thm:Drect} 
 and Lemma \ref{lem:constoflambda}.
 Now the equality (\ref{eq:trforWeil}) follows 
 from Proposition \ref{Prop:valuesforinvs} \ref{item:valueforoddlambda}
 if $p=2$,
 and from Proposition \ref{Prop:valuesforinvs} \ref{item:valuefordelta}
 and the equalities
 \[
 \lambda _{L/K}(\psi )=\begin{cases}
                               {n \overwithdelims ()q} 
                               \frakm (\overline{\psi} )^{n-1}
                                &\text{if $n$ is odd} \\
                               {2 \overwithdelims ()k}
                               {n \overwithdelims ()k} 
                               \frakm (\overline{\psi} )^{n-1} 
                                &\text{if $n$ is even},
                             \end{cases}
 \]
 which appear in \cite[(5.22)]{ITepitame},
 if $p\neq 2$. 
 \end{proof}
 
 Let $\LJ (\pi)$ (resp.\ $\LL (\pi)$)
 be the image of $\pi$ under the local Jacquet-Langlands correspondence
 (resp.\ the local Langlands correspondence).
 \begin{Thm} \label{Thm:realizationinPi}
 Let $\pi$ be as above.
 We have
         \[
         \Hom _{\GL _n(K)}
         \left( \cInd _{\Stab _{\nu}}^G\Pi _{\nu},
         \pi
         \right)
         \neq 0
         \]
         if and only if
         $\pi$ is parametrized by
         a minimal admissible pair 
         $(L/K, \xi)$
         with the jump at $\nu$.
 Moreover, 
 if non-zero,
 this space is isomorphic to
 $\LJ (\pi) \boxtimes \LL (\pi)$
 as a representation of 
 $D^{\times}\times W_K$.        
 \end{Thm}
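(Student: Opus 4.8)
The plan is to combine Lemma \ref{lem:homfromind}, Proposition \ref{Prop:occurrencecond} and Proposition \ref{Prop:homasrep}, and then to identify the representation of $D^\times\times W_K$ appearing on the right-hand side of Lemma \ref{lem:homfromind}. First I would apply Lemma \ref{lem:homfromind} to rewrite
\[
\Hom_{\GL_n(K)}\left(\cInd_{\Stab_\nu}^G\Pi_\nu,\pi\right)
\simeq
\Ind_{\overline{\Stab}_\nu}^{G_2}\Hom_{U_{\frakI}^{(\nu)}}\left(\Pi_\nu,\pi\right),
\]
so that non-vanishing of the left side is equivalent to non-vanishing of $\Hom_{U_{\frakI}^{(\nu)}}(\Pi_\nu,\pi)$. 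Since $\Pi_\nu=\bigoplus_\chi H_\nu\otimes(\chi\circ N_G)$ by Proposition \ref{Prop:redcoh} and the twist by $\chi\circ N_G$ is trivial on $U_{\frakI}^{(\nu)}\subset\GL_n(K)$ (note $N_G$ restricted to $U_{\frakI}^{(\nu)}$ lands in $U_K$, and $\chi$ is a character of $U_K^{\lceil\nu/n\rceil}$), we have $\Hom_{U_{\frakI}^{(\nu)}}(\Pi_\nu,\pi)\neq 0$ iff $\Hom_{U_{\frakI}^{(\nu)}}(H_\nu,\pi)\neq 0$. By Proposition \ref{Prop:occurrencecond} this happens precisely when $\pi$ is parametrized by a minimal admissible pair $(L/K,\xi)$ with $\xi|_{U_L^\nu}=\psi^L_{\zeta\varphi_L^{-\nu}}$ for some $\zeta$; the condition $\xi|_{U_L^\nu}=\psi^L_{\zeta\varphi_L^{-\nu}}$ with $v_L(\zeta\varphi_L^{-\nu})=-\nu$ means exactly, via \eqref{eq:paramofchars}, that $\xi|_{U_L^{\nu+1}}$ factors through $N_{L/K}$ but $\xi|_{U_L^\nu}$ does not (being a non-trivial additive character), i.e. the jump is at $\nu$. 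This gives the stated occurrence criterion.

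Next, assuming non-vanishing, I would compute the $G_2=D^\times\times W_K$-representation. By Proposition \ref{Prop:homasrep}, the subspace $\Hom_{U_{\frakI}^{(\nu)}}(H_{\nu,\zeta},\pi)\subset\Hom_{U_{\frakI}^{(\nu)}}(\Pi_\nu,\pi)$ carries the representation $\Lambda_{\iota\xi}^D\boxtimes{}_K\mu_\xi^{-1}\xi$ of $L^\times U_D^{\lfloor(\nu+1)/2\rfloor}\times W_L\subset\overline{\Stab}_\nu$. The key point now is that inducing this from $\overline{\Stab}_\nu$ up to $G_2$ reproduces exactly $\pi_{\iota\xi}^D\boxtimes({}_K\mu_\xi^{-1}\xi)$, because $\Ind$ is transitive and one only needs to understand $\overline{\Stab}_\nu$ relative to $J_{\iota\xi}^D\times W_L$ and to $D^\times\times W_K$. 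More precisely, by Remark \ref{Rem:connwiththegeom} we have $L^\times U_D^{(\nu)}=L^\times U_D^{\lfloor(\nu+1)/2\rfloor}=J_{\iota\xi}^D$ (the relevant level being $\lfloor\nu/2\rfloor+1=\lfloor(\nu+1)/2\rfloor$ since $\nu$ is odd or even accordingly), and $\overline{\Stab}_\nu\cap(D^\times\times W_K)$ is generated by $J_{\iota\xi}^D\times W_L$ together with the piece coming from $\calS$ over $W_{L'}\setminus W_L$; and $W_{L'}/W_L\cong\Aut(L/K)$ matches the index $[D^\times:J_{\iota\xi}^D]$ bookkeeping, so that $\Ind_{\overline{\Stab}_\nu}^{G_2}(\Lambda_{\iota\xi}^D\boxtimes{}_K\mu_\xi^{-1}\xi)=\Ind_{J_{\iota\xi}^D\times W_L}^{D^\times\times W_K}(\Lambda_{\iota\xi}^D\boxtimes{}_K\mu_\xi^{-1}\xi)=\pi_{\iota\xi}^D\boxtimes\Ind_{L/K}({}_K\mu_\xi^{-1}\xi)$.

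Finally I would identify these two factors with $\LJ(\pi)$ and $\LL(\pi)$. For the $D^\times$-factor: since $\pi\simeq\pi_\xi$, Theorem \ref{Thm:Drect} gives $\LJ(\pi_\xi)=\pi_{\iota\xi}^D$ with $\iota={}_D\iota_\xi$, which is exactly $\pi_{\iota\xi}^D$ above. For the $W_K$-factor: Theorem \ref{Thm:GLrect} says $\LL(\pi_{{}_K\mu_\xi\xi})=\Ind_{L/K}\xi$, equivalently $\LL(\pi_\xi)=\LL(\pi_{{}_K\mu_\xi({}_K\mu_\xi^{-1}\xi)})=\Ind_{L/K}({}_K\mu_\xi^{-1}\xi)$, which matches. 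Hence $\Hom_{\GL_n(K)}(\cInd_{\Stab_\nu}^G\Pi_\nu,\pi)\simeq\LJ(\pi)\boxtimes\LL(\pi)$ as $D^\times\times W_K$-representations. The main obstacle I anticipate is the bookkeeping in the second paragraph: verifying carefully that the induction from $\overline{\Stab}_\nu$ collapses correctly to the induction defining $\pi_{\iota\xi}^D$ and the smooth induction $\Ind_{L/K}$, i.e. that the orbit/stabilizer combinatorics of $W_{L'}$ versus $W_L$ and of $\GL_n(K)$-cosets match up without over- or under-counting; the rest is a direct appeal to the already-established theorems, and the character-twist identity $\pi\simeq\pi_\xi\Rightarrow\pi\simeq\pi_{{}_K\mu_\xi({}_K\mu_\xi^{-1}\xi)}$ together with $\iota\xi$ having the same underlying field data as $\xi$.
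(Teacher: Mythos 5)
Your overall route (Lemma \ref{lem:homfromind} + Proposition \ref{Prop:redcoh} + Proposition \ref{Prop:occurrencecond} for the occurrence criterion, then Proposition \ref{Prop:homasrep} and transitivity of induction for the $D^\times\times W_K$-structure) is the paper's route, but two steps as you state them do not hold up. First, the claim that $\chi\circ N_G$ is trivial on $U_{\frakI}^{(\nu)}$ is false: $U_L^{\nu}=1+\frakp_L^{\nu}\subset U_{\frakI}^{(\nu)}$, and on this subgroup $N_G$ is $\det=N_{L/K}$, which maps \emph{onto} $U_K^{\lceil \nu/n\rceil}=N_{L/K}U_L^{\nu}$, the whole domain of $\chi$; so the twist is non-trivial whenever $\chi$ is. Relatedly, your identification of the condition ``$\xi|_{U_L^{\nu}}=\psi^L_{\zeta\varphi_L^{-\nu}}$ for some $\zeta$'' with ``jump at $\nu$'' is not correct: a minimal pair has jump at $\nu$ when $\xi|_{U_L^{\nu+1}}$ \emph{factors through} $N_{L/K}$, not when it is trivial, whereas $\psi^L_{\zeta\varphi_L^{-\nu}}$ is trivial on $U_L^{\nu+1}$. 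The pairs whose restriction to $U_L^{\nu+1}$ is a non-trivial norm-factoring character are exactly the ones accounted for by the twists $\chi\circ N_G$ occurring in $\Pi_\nu$; with your reduction from $\Pi_\nu$ to $H_\nu$ the ``if'' direction of the theorem would fail for such $\pi$. So you cannot discard the $\chi$'s; you must run the Frobenius-reciprocity argument over all pairs $(\zeta,\chi)$, as the paper does.

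Second, Proposition \ref{Prop:homasrep} only says that $\Hom_{U_{\frakI}^{(\nu)}}(\Pi_\nu,\pi)$ \emph{contains} $\Lambda^D_{\iota\xi}\boxtimes{}_{K}\mu_\xi^{-1}\xi$ as a representation of $L^\times U_D^{\lfloor(\nu+1)/2\rfloor}\times W_L$; to conclude you must show that the Frobenius-reciprocity map $\Ind_{L^\times U_D^{\lfloor(\nu+1)/2\rfloor}\times W_L}^{\overline{\Stab}_\nu}\bigl(\Lambda^D_{\iota\xi}\boxtimes{}_{K}\mu_\xi^{-1}\xi\bigr)\rightarrow \Hom_{U_{\frakI}^{(\nu)}}(\Pi_\nu,\pi)$ is an isomorphism, which is precisely the ``over- or under-counting'' bookkeeping you flag as the main obstacle but do not carry out. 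Without it you only obtain a nonzero map, i.e.\ that $\LJ(\pi)\boxtimes\LL(\pi)$ maps to the Hom-space, not the stated isomorphism. The paper closes this by observing that the source is irreducible (so the map is injective) and counting dimensions: the left-hand side has dimension $\dim\Lambda^D_{\iota\xi}\cdot[W_{L'}:W_L]$, while by Proposition \ref{Prop:occurrencecond} the right-hand side has dimension $\dim\Lambda^D_{\iota\xi}$ times the number of $\zeta\in\mu_{q-1}(K)$ with $\Hom_{U_{\frakI}^{(\nu)}}(\Pi_{\nu,\zeta},\pi)\neq 0$, and this number equals $[W_{L'}:W_L]$ by the injectivity of the parametrization of essentially tame representations by admissible pairs. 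Once that equality is in place, your final steps (transitivity of induction giving $\pi^D_{\iota\xi}\boxtimes\Ind_{L/K}({}_K\mu_\xi^{-1}\xi)$, and the identifications with $\LJ(\pi)$ and $\LL(\pi)$ via Theorems \ref{Thm:Drect} and \ref{Thm:GLrect}, using that ${}_K\mu_{\xi}$ is unchanged under the tame twist) are correct and coincide with the paper's.
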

 \begin{proof}
  The first assertion follows immediately 
  from Lemma \ref{lem:homfromind},
  Proposition \ref{Prop:redcoh},
  Proposition \ref{Prop:occurrencecond}
  and the Frobenius reciprocity.
  
  To prove the second assertion
  let $\pi \simeq \pi _{\xi}$ occur in $\cInd _{\Stab _{\nu}}^G\Pi _{\nu}$.
  By Theorems \ref{Thm:GLrect}, \ref{Thm:Drect}
  it suffices to show that the following morphism, induced by the Frobenius reciprocity, 
  is an isomorphism;
  \[
  \Ind _{L^{\times}U_D^{\lfloor (\nu +1)/2\rfloor} \times W_L}^{\overline{\Stab} _{\nu}}
  \left( \Lambda _{\iota \xi}^{D} \boxtimes {}_{K}\mu _{\xi}^{-1}\xi \right)
  \rightarrow \Hom _{U_{\frakI}^{(\nu)}}(\Pi _{\nu}, \pi).
  \]
  Since the source is irreducible,
  we only need to show the equality of the dimensions.
  By Remark \ref{Rem:Stabastorsor} we have 
  \begin{align*}
  \dim \Ind _{L^{\times}U_D^{\lfloor (\nu +1)/2\rfloor} \times W_L}^{\overline{\Stab} _{\nu}}
  \left( \Lambda _{\iota \xi}^{D} \boxtimes {}_{K}\mu _{\xi}^{-1}\xi \right)
  &=\dim \Lambda _{\iota \xi}^{D}\cdot [\overline{\Stab} _{\nu} : L^{\times}U_D^{\lfloor (\nu +1)/2\rfloor} \times W_L] \\
  &=\dim \Lambda _{\iota \xi}^{D}\cdot [W_{L'} : W_L].
  \end{align*}
  By Proposition \ref{Prop:occurrencecond}
  we are reduced to showing
  that $[W_{L'} : W_L]$ 
  equals the number of $\zeta \in \mu _{q-1}(K)$
  such that $\Hom _{U_{\frakI}^{(\nu)}}(\Pi _{\nu, \zeta}, \pi)\neq 0$.
  This can be done readily
  by the injectivity of the paramatrization of essentially tame representations.
 \end{proof}
 
 Now recalling that a totally ramified extension $L/K$ is 
 arbitrarily given after Lemma \ref{lem:Lpi},
 we complete the proof of Main Theorem in the introduction.

\end{document}